\documentclass{amsart}
\usepackage{mathtools}  
\usepackage{mathrsfs}   
\usepackage{bm}         
\usepackage{physics}
\usepackage{tikz-cd}
\usepackage{enumitem}
\usepackage{amssymb}
\usepackage{stmaryrd}
\usepackage[T1]{fontenc}
\usepackage[utf8]{inputenc} 
\usepackage{CJKutf8}        
\usepackage{mathtools}  
\usepackage{extarrows}
\usepackage{mathrsfs}
\usepackage[mathcal]{eucal}
\usepackage{footnote}
\usepackage[backend=biber, style=alphabetic]{biblatex}
\usepackage{graphicx}
\usepackage{caption}
\usepackage{bbm}

\usepackage[dvipsnames,svgnames,table]{xcolor}
\usepackage[unicode=true,pdfusetitle,
 bookmarks=true,bookmarksnumbered=false,bookmarksopen=false,
 breaklinks=false,pdfborder={0 0 0},pdfborderstyle={},backref=false,colorlinks=true]{hyperref}
 
\hypersetup{
    colorlinks, linkcolor=OliveGreen,
    citecolor=OliveGreen, urlcolor=OliveGreen
}
\usepackage[nameinlink,capitalise,noabbrev]{cleveref}

\crefalias{theorem}{theorem}
\crefalias{proposition}{proposition}
\crefalias{corollary}{corollary}
\crefalias{lemma}{lemma}
\crefalias{example}{example}
\crefalias{remark}{remark}

\theoremstyle{definition}
\newtheorem{definition}{Definition}[section]
\newtheorem{example}[definition]{Example}

\newtheorem{remark}[definition]{Remark}

\theoremstyle{plain}
\newtheorem{theorem}[definition]{Theorem}
\newtheorem{proposition}[definition]{Proposition}
\newtheorem{corollary}[definition]{Corollary}

\newtheorem{lemma}[definition]{Lemma}

\newcommand{\adjoint}{\ \substack{ \longrightarrow \\[-1em] \longleftarrow}\ }

\title[Cohomology in Supergeometry]{Derived Methods in Supergeometry: Fundamental Classes and Cohomology}
\author{Marcel Dang}
\begin{document}
\begin{titlepage}
    \begin{abstract}
    In this paper we apply ideas and methods from derived algebraic geometry to supergeometry. Using this, we define virtual fundamental classes in supergeometry and obtain the $\Theta$-classes of the moduli of curves as the pushforward of a super fundamental class. \\
    We also extend Simpson's transmutation stacks, which are geometric avatars of Betti, de Rham and Dolbeault cohomology.
    Leveraging this more modern viewpoint, we obtain new proofs for Penkov's theorem on $D$-Modules and the invariance of de Rham cohomology under fermionic thickenings. \\
    The goal of this paper is to demonstrate the usage of ideas and methods coming from derived algebraic geometry in the supergeometric setting as derived and super have geometric similarities, which lead to the same considerations when adapting classical notions to their respective settings.
\end{abstract}
\maketitle
\begin{figure}[h]
    \centering
    \includegraphics[width=0.8\linewidth]{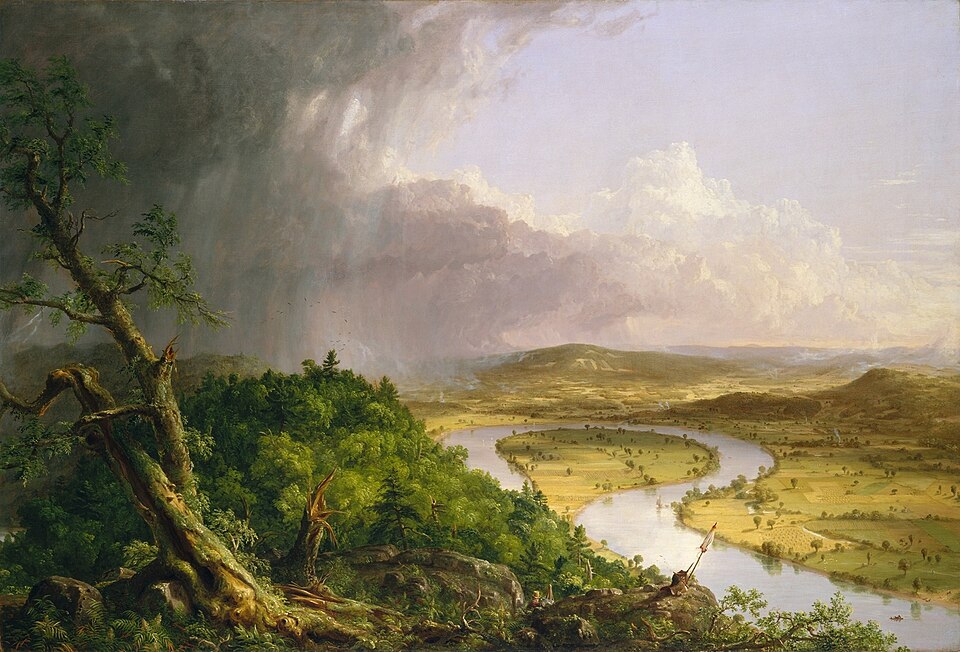}
    \caption*{Cole, Thomas (1863). The Oxbow via Wikimedia Commons}
\end{figure}
\end{titlepage}

\tableofcontents
\section{Introduction}
\subsection{Background}
The category of schemes is built out of the category of commutative rings as its local model. By realizing that all one needed were commutative algebra objects in any symmetric monoidal categoy, one may contemplate variants of algebraic geometry by formally declaring $\mathsf{CAlg}(\mathcal{C})^{\mathsf{op}} = \mathsf{Aff}_\mathcal{C}$ to be the local model, which we built our algebraic geometry on top of.

Derived algebraic geometry (DAG) is a variant of algebraic geometry emerging out of the category of simplicial abelian groups $\mathcal{C}=\mathsf{Ab}_\Delta$, considered up to weak equivalence after taking commutative algebra objects, which enlarges the category of commutative rings by higher nilpotent thickenings and, in turn, forms a more well-behaved category with many desirable properties. For instance, a technical upshot is that if one considers a base-change situation, it will be true without flatness assumption and it turns the derived category into the intrinsic notion of module category, which naturally equips us with a 6-functor formalism and lets us realize the tangent complex of a non-regular scheme as a natural geometric object in the category of derived schemes.

The enlargement of the category of schemes also yields conceptual power as it explains excess intersection situations, which are controlled by Serre's intersection formula, as the intrinsic notion of intersections for derived schemes. Similarly, virtual fundamental classes, originating from Kontsevichs hidden smoothness principle and  constructed by Behrend-Fantechi, allowed one to compute the expected intersection numbers in Gromov-Witten theory, and have a natural interpretation using DAG. The virtual fundamental class is the fundamental class of a natural derived enhancement of the moduli stack that describes the enumerative problem one is interested in. However, the conceptual clarity does not come for free, as it is difficult to construct derived enhancements of moduli stacks and extract the virtual fundamental classes outside of the quasi-smooth situation of Behrend-Fantechi, in which case they are given by the following formula:
\begin{theorem}[cf. {\cite[Proposition 5.6.]{behrend1997intrinsic}}]
    Let $X$ be a smooth Deligne-Mumford stack over $\mathbb{C}$ and $E^\bullet$ a perfect, locally free obstruction theory then the virtual fundamental class is 
    \begin{align}
        [X,E^\bullet] = c_{\mathrm{top}}(H^1(E^\bullet)^\vee) \cap [X].
    \end{align}
\end{theorem}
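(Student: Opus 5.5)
The plan is to unwind the Behrend--Fantechi definition of the virtual fundamental class and reduce it to a computation with the zero section of a vector bundle. Recall that $[X,E^\bullet]$ is defined as $0^!_{E_1}[C]$. Here $E_\bullet = (E^\bullet)^\vee$, concentrated in degrees $0,-1$ with $E^\bullet=[E^{-1}\to E^0]$, $\mathfrak{E} = h^1/h^0(E_\bullet)=[E_1/E_0]$ is the associated vector bundle stack, $C=C_X\subset \mathfrak{E}$ is the intrinsic normal cone embedded via the obstruction theory $E^\bullet\to L_X$, and $0^!$ is the refined Gysin pullback along the zero section. The strategy is to exploit that $X$ is smooth, so the intrinsic normal cone is as simple as possible.

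\textbf{Step 1: the cone.} Since $X$ is smooth, $L_X\simeq \Omega_X$ is concentrated in degree $0$, and the intrinsic normal cone is $C_X = [X/T_X] = BT_X$, the zero section of $h^1/h^0(L_X^\vee)$. This is computed locally: for an embedding $X\hookrightarrow M$ into a smooth $M$, the normal cone is the vector bundle $N_{X/M}$, and $C_X=[N_{X/M}/T_M|_X]=[X/T_X]$.

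\textbf{Step 2: locate $C_X$ inside $\mathfrak{E}$.} The map $\phi\colon E^\bullet\to L_X$ is an isomorphism on $h^0$ and a surjection on $h^{-1}$. Because $h^{-1}(L_X)=0$, we get $h^0(E^\bullet)\cong\Omega_X$ locally free, so $E^\bullet$ has locally free cohomology sheaves. Dually, $E_\bullet$ has $h^0(E_\bullet)=T_X$ and $h^1(E_\bullet)=H^1(E^\bullet)^\vee=:\mathrm{Ob}$, a vector bundle. This is the reading of the hypothesis ``locally free'' that the argument uses. The closed embedding $C_X\hookrightarrow \mathfrak{E}$ is induced by $\phi^\vee$. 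Under the identification $\mathfrak{E}\simeq [\mathrm{Ob}/T_X]$, which is valid since $E_0\to E_1$ has locally free kernel and cokernel, $C_X=[X/T_X]$ becomes the zero section.

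\textbf{Step 3: Gysin map.} The class $[C_X]\in A_*(\mathfrak{E})$ corresponds, under the isomorphism given by flat pullback along the smooth map $E_1\to\mathfrak{E}$, to the class of the subbundle $E_0\to E_1$ image $\ker(E_1\to \mathrm{Ob})$. Equivalently, it is the zero section class of $\mathrm{Ob}$ pulled back. The refined Gysin map for the zero section of $\mathrm{Ob}$ applied to the zero section $[X]\subset\mathrm{Ob}$ is the excess intersection formula: $0^!_{\mathrm{Ob}}[X]=c_{\mathrm{top}}(\mathrm{Ob})\cap[X]$ (Fulton, Prop.~6.3 / Example~6.3.5). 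Compatibility of $0^!_{\mathfrak{E}}$ with $0^!_{E_1}$ and $0^!_{\mathrm{Ob}}$ then gives $[X,E^\bullet]=c_{\mathrm{top}}(H^1(E^\bullet)^\vee)\cap[X]$. For a DM stack one uses Vistoli/Kresch Chow groups, and the formulas carry over.

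The main obstacle is Step 3's bookkeeping on the vector bundle stack: checking that the Gysin pullback on $[E_1/E_0]$ agrees with the one on the bundle $\mathrm{Ob}$ under the identification. I would handle this by choosing a global resolution $E_\bullet=[E_0\to E_1]$ and working upstairs on $E_1$, where the cone is $\pi^{-1}$ of the kernel subbundle. There the statement reduces to $0_{E_1}^![K]=c_{\mathrm{top}}(E_1/K)\cap[X]$ for a subbundle $K\subset E_1$, which follows from the Whitney formula.
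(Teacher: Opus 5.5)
The paper gives no proof of this statement; it is quoted from Behrend--Fantechi. Your argument is correct, and it is essentially the standard argument behind that citation. For smooth $X$ one has $\mathfrak{C}_X=BT_X$. Its preimage under $E_1\to[E_1/E_0]$ is the subbundle $\mathrm{im}(E_0\to E_1)=\ker(q\colon E_1\to\mathrm{Ob})$. Finally $0^!_{E_1}[\ker q]=c_{\mathrm{top}}(\mathrm{Ob})\cap[X]$. A few points should be tightened.

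\textbf{The identification $\mathfrak{E}\simeq[\mathrm{Ob}/T_X]$.} With trivial action, this does not follow globally from local freeness of $h^0$ and $h^1$. The complex $E_\bullet$ is an extension of $\mathrm{Ob}[-1]$ by $T_X$, and its class in $\mathrm{Ext}^2(\mathrm{Ob},T_X)$ need not vanish. So in general $\mathfrak{E}\to\mathrm{Ob}$ is only a gerbe banded by $T_X$. What is true, and all you actually use, is that $\mathfrak{C}_X$ is the fibre of $\mathfrak{E}\to\mathrm{Ob}$ over the zero section, hence $\mathfrak{C}_X\times_{\mathfrak{E}}E_1=\ker q$. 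Your upstairs computation on $E_1$ is therefore unaffected.

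\textbf{The last identity.} This is not really the Whitney formula; it comes from flat pullback along $q$ together with the self-intersection formula: $[\ker q]=q^*(0_{\mathrm{Ob}})_*[X]=q^*\pi_{\mathrm{Ob}}^*(c_{\mathrm{top}}(\mathrm{Ob})\cap[X])=\pi_{E_1}^*(c_{\mathrm{top}}(\mathrm{Ob})\cap[X])$. Then use $0^!_{E_1}=(\pi_{E_1}^*)^{-1}$.

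\textbf{The notation $H^1(E^\bullet)^\vee$.} Since $E^\bullet$ sits in degrees $[-1,0]$, the sheaf written this way in the statement and in your Step 2 must be read as $h^{-1}(E^\bullet)^\vee\cong h^1(E_\bullet)=\mathrm{Ob}$. This isomorphism uses the local freeness you established. Reading it instead as the dual of the obstruction sheaf would change the answer by $(-1)^{\operatorname{rk}\mathrm{Ob}}$.
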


On the other hand, there is another natural variant of $\mathcal{C}$-algebraic geometry, designed for the study of supersymmetric phenomena in mathematical physics, called supergeometry, where $\mathcal{C}=\mathsf{Ab}_{\mathbb{Z}_2}$, which is the category of $\mathbb{Z}_2$-graded abelian groups with the Koszul sign rule as symmetric monoidal structure. In this setting one enlarges the category of rings by so-called fermionic nilpoent thickenings, which allows one to study bosonic and fermionic variables on even grounds and allows one to make supersymmetry manifest, meaning that one can realize supersymmetry as a supergroup symmetry of the physical system.

As both variants are nilpotent thickenings of ordinary algebraic geometry they share similar geometric interpretations, which lead to similar considerations and phenomena when studying either of them. 
In the study of the moduli stack of super Riemann surfaces $\mathfrak{M}_{g,n}$, it was proven in \cite{stanford2020jtgravityensemblesrandom} and \cite{norbury2020enumerative} that the $\Theta$-classes introduced in \cite{norbury2023new} are related to volumes of the moduli stack of super Riemann surfaces, where the symplectic volume of $\mathfrak{M}_{g,n}$ is shown to be given by the formula:
\begin{align}
    \mathsf{Vol}(\mathfrak{M}_{g,n}) \coloneqq \int_{\mathcal{M}_{g,n}} e(\mathcal{E}^\vee_{g,n}) \mathrm{exp(\omega_{\mathrm{WP}})}.
\end{align}
The purpose of this paper is to apply methods of DAG to supergeometry.
\subsection{Fundamental Classes}
By observing that the virtual fundamental class is the intrinsic notion of fundamental class for a derived scheme or stack, we prove a d\'evissage theorem, which is inspired by its derived analogue.
\begin{theorem}[D\'evissage]
    Let $X$ be a noetherian geometric superstack then the map 
    \begin{align}
        j_*:K(\mathsf{sCoh}(X_\mathsf{bos})) \longrightarrow K(\mathsf{sCoh}(X)).
    \end{align}
    is an equivalence.
\end{theorem}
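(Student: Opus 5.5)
The plan is to reduce to Quillen's classical dévissage theorem for abelian categories, applied affine-locally and then globalized by descent. Here $X_{\mathsf{bos}}$ is the bosonic truncation: the closed subsuperstack cut out by the ideal $\mathcal{J}$ generated by the odd part of $\mathcal{O}_X$, with closed immersion $j: X_{\mathsf{bos}}\to X$. The key structural fact is that $\mathcal{J}$ is nilpotent locally on $X$. On an affine noetherian superscheme $\operatorname{Spec} A$, the odd part $A_1$ is a finitely generated $A_0$-module, and odd elements square to zero. If $A_1$ is generated by $\theta_1,\dots,\theta_n$, then $J = A_1 A$ satisfies $J^{n+1}=0$.

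\textbf{Affine case.} I will show that $j_*:\mathsf{sCoh}(A/J)\to \mathsf{sCoh}(A)$ satisfies the hypotheses of Quillen's dévissage theorem.
\begin{enumerate}
\item $j_*$ is an exact, fully faithful embedding of the abelian category $\mathsf{sCoh}(A/J)$ of finitely generated $\mathbb{Z}_2$-graded modules, with essential image closed under subobjects and quotients. This holds because the image consists precisely of the modules killed by $J$.
\item Every object $M$ of $\mathsf{sCoh}(A)$ has the finite filtration $M \supseteq JM \supseteq J^2M \supseteq \dots \supseteq J^{n+1}M = 0$. Each graded piece $J^kM/J^{k+1}M$ is a finitely generated graded $A/J$-module, using noetherianity.
\end{enumerate}
Quillen's dévissage then gives $K(\mathsf{sCoh}(A/J))\simeq K(\mathsf{sCoh}(A))$. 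Since $\mathbb{Z}_2$-graded modules form an abelian category with parity shift as an autoequivalence, no further modification of the classical argument is needed. Only abelian-ness and the filtration are used.

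\textbf{Global case.} For $X$ a superscheme, and more generally a geometric superstack, I expect to apply the same argument directly to the abelian categories $\mathsf{sCoh}(X_{\mathsf{bos}})$ and $\mathsf{sCoh}(X)$. Quillen's theorem does not require affineness. What it requires is a finite filtration by subobjects whose graded pieces lie in the image of $j_*$.
\begin{itemize}
\item If $X$ is quasi-compact, for instance noetherian with a finite atlas, then the local nilpotency orders are bounded, so $\mathcal{J}^{N}=0$ globally for some $N$.
\item The filtration $\mathcal{F}\supseteq \mathcal{J}\mathcal{F}\supseteq\cdots$ is then defined globally, since $\mathcal{J}$ is a quasi-coherent ideal compatible with smooth pullback.
\item Its graded pieces are coherent $\mathcal{O}_{X_{\mathsf{bos}}}$-modules pushed forward along $j$.
\end{itemize}
So the same proof goes through verbatim, with no descent argument for $K$-theory required.

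\textbf{Main obstacle.} The hardest step is making sure that "noetherian geometric superstack" guarantees that $\mathsf{sCoh}(X)$ is an abelian category and that $\mathcal{J}$ is globally nilpotent.
\begin{itemize}
\item For the first point, I will check that coherence is smooth-local and that kernels and cokernels are computed on an atlas.
\item For the second, I will use quasi-compactness of the atlas. Failing that, I would fall back on writing $X$ as a filtered union of quasi-compact opens and using that $K$ of coherent sheaves commutes with the corresponding colimit.
\end{itemize}
I would also check that $j_*$ preserves coherence: $j$ is a closed immersion, so $j_*$ is exact and fully faithful.
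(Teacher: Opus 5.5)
Your core argument is the paper's own. Its Proposition establishing $K(\mathsf{sCoh}(X_{\mathsf{bos}})^\heartsuit)\simeq K(\mathsf{sCoh}(X)^\heartsuit)$ checks exactly your two conditions for Quillen's d\'evissage: objects killed by $\mathcal{J}$ are closed under subobjects and quotients, and the $\mathcal{J}$-adic filtration is finite with graded pieces killed by $\mathcal{J}$.

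In the paper, however, $\mathsf{sCoh}(X)$ is the stable category of coherent complexes. The abelian category you work with is its heart $\mathsf{sCoh}(X)^\heartsuit$, and the statement is about $K$ of the stable categories. The paper finishes with a step you omit:
\begin{enumerate}
\item $j_*$ is t-exact, being affine, so it restricts to the hearts and the square comparing $K(\mathsf{sCoh}(-)^\heartsuit)\to K(\mathsf{sCoh}(-))$ commutes;
\item the vertical maps are equivalences by Barwick's theorem of the heart, since the t-structure on bounded coherent complexes is bounded.
\end{enumerate}
This is routine, but you need to say it.

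\textbf{The genuine gap.} You assert that $\mathcal{J}$ is a quasi-coherent ideal compatible with smooth pullback, and this is false. Smooth maps of superschemes can have odd relative dimension. Along $\mathbb{A}^{0|1}\to\mathsf{Spec}(k)$, the odd ideal $0$ of the base pulls back to $0$, whereas the odd ideal of $\mathbb{A}^{0|1}$ is $(\theta)$. So on an Artin superstack the odd ideal is not defined smooth-locally, $U\times_X X_{\mathsf{bos}}\neq U_{\mathsf{bos}}$ in general, and $j$ need not be a closed immersion.

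A concrete counterexample is $X=B\mathbb{G}_a^{0|1}$:
\begin{enumerate}
\item $\mathbb{G}_a^{0|1}$ has no nontrivial points over even rings, and its torsors over even affines are trivial. Hence $X_{\mathsf{bos}}=\mathsf{Spec}(k)$ and $j$ is the atlas.
\item $\mathsf{sCoh}(X)^\heartsuit$ consists of finite-dimensional super vector spaces with an odd square-zero endomorphism. So both sides have $K_0\cong\mathbb{Z}[k]\oplus\mathbb{Z}[\Pi k]$.
\item $j_*V=V\otimes k[\theta]$ gives $[V]\mapsto[V]+[\Pi V]$, which is not an isomorphism.
\end{enumerate}

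Your filtration argument needs $j$ to be a closed immersion with nilpotent ideal. This holds for superschemes, algebraic superspaces and Deligne--Mumford superstacks. The reason is that an \'etale algebra over an even ring is even: its parity automorphism agrees with the identity modulo the nilpotent odd ideal, so equals it by formal \'etaleness. Hence the odd ideal is \'etale-local. You must impose or verify this hypothesis rather than assert smooth-locality. The paper's own proof tacitly makes the same assumption.
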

This allows us to define the notion of a $K$-theoretic super fundamental class.
\begin{definition}[Super Fundamental $K$-Class]
    Let $X$ be an Artin superstack and $\iota: X_\mathsf{bos}\rightarrow X$ be the inclusion of the bosonic truncation. We define the \textit{super fundamental class} $ [X]$ to be the unique class that satisfies
    \begin{align}
        \iota_*[X]^K = [\mathcal{O}_X] \in K_0(\mathsf{sCoh}(X_\mathsf{bos}))
    \end{align}
\end{definition}
Furthermore, we prove that, after a Grothendieck-Riemann-Roch transformation, the super fundamental class bears strong similarity to the quasi-smooth situation of DAG:

\begin{proposition}
    Let $X$ be a smooth Deligne-Mumford superstack and let $[X] \in A_*(X_\mathsf{bos})$ its super fundamental class then 
    \begin{align}
        [X] \simeq c_r(\mathcal{E}^\vee) \cdot [X]. 
    \end{align}
\end{proposition}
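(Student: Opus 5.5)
We need to interpret the statement. $X$ is a smooth DM superstack, so étale locally $X$ is split: $\mathcal{O}_X \simeq \Lambda^\bullet_{\mathcal{O}_{X_\mathsf{bos}}}\mathcal{E}$ with $\mathcal{E}$ the odd conormal bundle of rank $r$. This holds at least up to a filtration by $\mathcal{J}$-adic powers, where $\mathcal{J}$ is the fermionic ideal, and that filtration is all $K$-theory needs. The claim is that the super fundamental class, transported to $A_*(X_\mathsf{bos})_{\mathbb{Q}}$ via Grothendieck--Riemann--Roch, equals $c_r(\mathcal{E}^\vee)\cap[X_\mathsf{bos}]$, possibly up to a Todd-class correction hidden in the ``$\simeq$''. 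The plan has three steps: compute $[X]^K$ explicitly in $K_0(\mathsf{sCoh}(X_\mathsf{bos}))$ using dévissage, convert it to Chow via GRR, and compare with the top Chern class.

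\textbf{Step 1: the $K$-theory class.} The sheaf $\mathcal{O}_X$ carries the finite filtration $\mathcal{J}^k$, with $\mathcal{J}^{r+1}=0$. Each graded piece $\mathcal{J}^k/\mathcal{J}^{k+1}$ is an $\mathcal{O}_{X_\mathsf{bos}}$-module, and smoothness gives a canonical isomorphism $\mathcal{J}^k/\mathcal{J}^{k+1}\simeq \Lambda^k\mathcal{E}$, placed in parity $k \bmod 2$. This step needs only the local structure, not a global splitting. Additivity of $K_0$, together with the Dévissage Theorem (the map $j_*=\iota_*$ is an equivalence), then gives
\begin{align}
[X]^K=\sum_{k=0}^r [\Pi^k\Lambda^k\mathcal{E}] \in K_0(\mathsf{sCoh}(X_\mathsf{bos})),
\end{align}
where $\Pi$ denotes parity shift. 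To reach ordinary Chow groups one must pass to a parity-forgetting or supertrace realization, which sends $[\Pi M]\mapsto -[M]$. This sign convention should be fixed by the paper's GRR transformation. Under it,
\begin{align}
[X]^K\mapsto \sum_k(-1)^k[\Lambda^k\mathcal{E}]=\lambda_{-1}(\mathcal{E}).
\end{align}

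\textbf{Step 2: GRR and Chern characters.} By the splitting principle, $\mathrm{ch}(\lambda_{-1}\mathcal{E})=\prod_i(1-e^{-x_i})$, where the $x_i$ are the Chern roots of $\mathcal{E}^\vee$. Applying $\tau=\mathrm{ch}(-)\cdot\mathrm{td}(T_{X_\mathsf{bos}})\cap[X_\mathsf{bos}]$ then gives
\begin{align}
\tau([X]^K)=c_r(\mathcal{E}^\vee)\,\mathrm{td}(\mathcal{E}^\vee)^{-1}\,\mathrm{td}(T_{X_\mathsf{bos}})\cap[X_\mathsf{bos}].
\end{align}
This is the familiar Koszul computation for the virtual class of a zero locus, with $\mathcal{E}^\vee$ playing the role of an obstruction bundle. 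The leading term is $c_r(\mathcal{E}^\vee)\cap[X_\mathsf{bos}]$, and the correction terms are Todd classes. I would read ``$\simeq$'' as equality after normalizing by the virtual Todd class $\mathrm{td}(T_X^{\mathrm{vir}})=\mathrm{td}(T_{X_\mathsf{bos}})\mathrm{td}(\mathcal{E}^\vee)^{-1}$. Equivalently, I would define the Chow super fundamental class as $\tau([X]^K)\cdot\mathrm{td}(T^{\mathrm{vir}})^{-1}$, exactly as in the derived setting, and then the identity holds on the nose.

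\textbf{Main obstacle.} The delicate point is not the computation but the setup. First, one must justify the isomorphism $\mathcal{J}^k/\mathcal{J}^{k+1}\simeq\Lambda^k\mathcal{E}$ on a DM superstack; this is étale-local and canonical, so it descends. Second, one must get the parity and sign conventions of the GRR transformation right, so that the answer is $c_r(\mathcal{E}^\vee)$ rather than $c_r(\mathcal{E})$ or $0$. Forgetting parity instead of taking the supertrace would give $\mathrm{ch}(\lambda_{+1}\mathcal{E})$, whose top-degree behaviour is wrong. I would therefore first pin down which realization $K_0(\mathsf{sCoh})\to K_0(\mathsf{Coh})$ the paper uses. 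For DM stacks I would also work with rational Chow groups, using Vistoli/Toën GRR, so that the Chern character is available.
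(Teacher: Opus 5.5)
Your proposal is correct and is essentially the paper's proof: the $\mathcal{J}$-adic filtration plus the specialization $K(p)$ at $\sigma=-1$ (your supertrace) gives $[\mathcal{O}_X]=\sum_k(-1)^k[\wedge^k\mathcal{E}]$, Fulton's Koszul identity (Example 3.2.5) turns its Chern character into $c_r(\mathcal{E}^\vee)\,\mathsf{Td}(\mathcal{E}^\vee)^{-1}$, and the paper's definition $[X]\coloneqq\tau([\mathcal{O}_X])\cap\mathsf{Td}([\mathcal{T}_X])^{-1}$ with $[\mathcal{T}_X]=[\mathcal{T}_{X_\mathsf{bos}}]-[\mathcal{E}^\vee]$ is exactly your normalization by $\mathrm{td}(T^{\mathrm{vir}})$, so the Todd factors cancel to leave $c_r(\mathcal{E}^\vee)\cap[X_\mathsf{bos}]$. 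Your tracking of $\mathcal{E}$ versus $\mathcal{E}^\vee$ in the Chern character step is, if anything, more consistent than the paper's.
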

As an application, we prove that the $\Theta$-classes in \cite{norbury2023new} on the moduli stack of curves arise by pushing forward the super fundamental class of the moduli stack of super Riemann surfaces $\mathfrak{M}_{g,n}$ onto the moduli of curves. This provides an interpretation of $\mathfrak{M}_{g,n}$ as a natural super enhancement of the moduli of $r$-spin curves for $r=2$, and the volume calculations as intersection theory on the super enhancement. In particular, this demonstrates that some enumerative problems are more naturally described through their super enhancement, which is 1-categorical in nature as opposed to its derived counterpart. 
\begin{proposition}
Let $p: \overline{\mathcal{SM}}_{g,n}\longrightarrow \overline{\mathcal{M}}_{g,n}$ be the forgetful morphism, forgetting the spin structure, then we have the following identity:
\begin{align}
	p_*[\overline{\mathfrak{M}}_{g,n}] = \Theta_{g,n} \cap [\overline{\mathcal{M}}_{g,n}].
\end{align}
\end{proposition}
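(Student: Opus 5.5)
We plan to reduce the identity to the preceding proposition, which computes the super fundamental class of a smooth Deligne--Mumford superstack as a top Chern class, and then to use Norbury's definition of $\Theta_{g,n}$. First we identify the bosonic truncation: $(\overline{\mathfrak{M}}_{g,n})_{\mathsf{bos}} \simeq \overline{\mathcal{SM}}_{g,n}$, the moduli of stable spin curves (with Neveu--Schwarz punctures), which is a smooth proper Deligne--Mumford stack. Next we identify the odd normal directions. Deformation theory of super Riemann surfaces over a bosonic base shows that the fermionic directions of $\overline{\mathfrak{M}}_{g,n}$ along $\overline{\mathcal{SM}}_{g,n}$ are $\mathcal{E}^\vee$. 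Here $\mathcal{E}_{g,n} = R^1\pi_*\mathcal{L}^\vee$, with $\mathcal{L}$ the universal spin bundle ($\mathcal{L}^{2}\cong\omega_{\log}$). The odd cotangent piece is $\pi_*(\omega\otimes\mathcal{L})$, which is Serre dual to $R^1\pi_*\mathcal{L}^\vee$. Concretely, the structure sheaf satisfies $\mathcal{O}_{\overline{\mathfrak{M}}}\cong \Lambda^\bullet \mathcal{E}_{g,n}$ locally, and the associated graded is canonically $\Lambda^\bullet$ of the odd conormal sheaf. Since $h^0(\mathcal{L}^\vee)=0$ by degree considerations on stable curves, $\mathcal{E}_{g,n}$ is locally free of rank $r=2g-2+n$.

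Applying the preceding proposition to the smooth DM superstack $\overline{\mathfrak{M}}_{g,n}$ gives
\begin{align}
[\overline{\mathfrak{M}}_{g,n}] = c_{2g-2+n}(\mathcal{E}_{g,n}^\vee)\cap[\overline{\mathcal{SM}}_{g,n}] \in A_*(\overline{\mathcal{SM}}_{g,n})_{\mathbb{Q}}.
\end{align}
Strictly, that proposition is stated for smooth DM superstacks, and $\overline{\mathfrak{M}}_{g,n}$ is one, being locally split with smooth bosonic base. We then push forward along $p$. Norbury defines $\Theta_{g,n} = 2^{g-1+n}\, p_*\big(c_{2g-2+n}(\mathcal{E}_{g,n}^\vee)\big)$ (up to the sign convention $(-1)^n$ or the use of $e(\mathcal{E}^\vee)$), so
\begin{align}
p_*[\overline{\mathfrak{M}}_{g,n}] = p_*\big(c_{r}(\mathcal{E}^\vee)\cap[\overline{\mathcal{SM}}_{g,n}]\big) = \Theta_{g,n}\cap[\overline{\mathcal{M}}_{g,n}].
\end{align}
This holds after matching normalizations: the factor $2^{g-1+n}$ and the degree of $p$ coming from the $\mu_2$-automorphisms of spin structures. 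We fix these by the convention for $p_*$ on the stacky spin moduli, or absorb them into the definition as Norbury does.

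The main obstacle is the middle step: identifying the odd normal bundle of the bosonic truncation with $\mathcal{E}^\vee$ globally over the compactification, including the boundary. At Ramond and Neveu--Schwarz nodes the spin bundle is twisted, and one must check that the local deformation theory of super stable curves still yields $R^1\pi_*\mathcal{L}^\vee$ as a vector bundle of the right rank. I would handle this first via the description of $\overline{\mathfrak{M}}_{g,n}$ as a locally split superstack built from $\overline{\mathcal{SM}}_{g,n}$ and $\mathcal{L}$, following Deligne's and Witten's constructions. After that I would verify that the Grothendieck--Riemann--Roch comparison in the preceding proposition applies, since it requires the K-class $[\Lambda^\bullet\mathcal{E}]$ to have the correct top Chern character term.
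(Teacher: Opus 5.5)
Your proposal takes essentially the same route as the paper: apply the top-Chern-class formula for the super fundamental class of $\overline{\mathfrak{M}}_{g,n}$ on $\overline{\mathcal{SM}}_{g,n}$, then push forward along the proper flat map $p$. The paper phrases the pushforward via $p^*[\overline{\mathcal{M}}_{g,n}]=[\overline{\mathcal{SM}}_{g,n}]$ and the projection formula, simply defines $\Theta_{g,n}\coloneqq p_*c_{\mathrm{top}}(\widehat{\mathcal{E}}_{g,n})$, and leaves the factor $(-1)^n2^{g-1+n}$ relative to Norbury to a remark. One bookkeeping slip: the $\mathcal{E}$ in the preceding proposition is the conormal $\mathcal{J}/\mathcal{J}^2$, so the formula gives $c_{2g-2+n}$ of the odd normal bundle $R^1\pi_*\mathcal{L}^\vee$ (your $\mathcal{E}_{g,n}$), not of its dual; this changes the result by $(-1)^{n}$, exactly the sign the paper absorbs into the normalization.
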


\subsection{Cohomology}
The $\infty$-categorical nature of DAG forces one to have robust and flexible definitions, which isolate a certain core aspect of the original definition, which we will use to obtain definitions in the supergeometric setting, where in particular we are interested in applying it to de Rham cohomology.

Classically, it is known that standard cohomological invariants of supermanifolds, such as de Rham or singular cohomology, are isomorphic to those of their underlying reduced manifolds. In this sense, the superstructure cannot be detected cohomologically. However, we can still analyze these invariants, by shifting our perspective from the cohomology groups themselves to the geometric spaces that represent them. This was originally introduced by Simpson \cite{simpson1999algebraic} and extended by Drinfeld \cite{drinfeld2024prismatization} and upcoming work of Bhatt-Lurie \cite{bhatt2022prismatic} \cite{BhattLurieGauges}. We extend Simpson's classical transmutations\footnote{This was coined by Bhatt in 
\cite{bhatt2022prismatic}} to the supergeometric setting, explicitly constructing the Betti, de Rham, and Dolbeault stacks of superschemes. While our approach to define the de Rham stack is based on geometric intuition, the de Rham stack has also been defined using a different categorical approach in Carchedi \cite{carchedi2025quasicoherentsheavesdmodulesderived}, to develop a theory of differential equations on derived  differential superstacks. Nevertheless, both approaches, when applied to underived superschemes, produce the same functor of points. In the case of the de Rham transmutation we observe that the de Rham stack of a superscheme and its underlying bosonic truncation are identical. From this perspective, the following classical theorems from supergeometry are the same theorem. First, this identity yields a new and conceptually transparent proof for Penkov's theorem \cite{penkov1983d}, allowing us to extend it to pre-superstacks:
\begin{theorem}[Penkov for Pre-Superstacks]
    Let $X$ be a pre-superstack, then the category of crystals on $X$ is equivalent to the category of crystals on $X_{\mathsf{bos}}$. 
\end{theorem}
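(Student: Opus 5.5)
The plan is to reduce the statement to an identification of de Rham stacks, following Simpson's viewpoint that crystals on a (pre-)stack $X$ are quasi-coherent sheaves on its de Rham stack $X_{\mathrm{dR}}$. Concretely, I would define $X_{\mathrm{dR}}$ through its functor of points, $X_{\mathrm{dR}}(A) = X(A_{\mathrm{red}})$ for a supercommutative ring $A$. Here $A_{\mathrm{red}}$ is the quotient by the ideal of all nilpotents. Since odd elements square to zero, this ideal contains the fermionic ideal $J_A$ generated by the odd part, so $A_{\mathrm{red}} = (A_{\mathsf{bos}})_{\mathrm{red}}$ with $A_{\mathsf{bos}} = A/J_A$, and $A_{\mathrm{red}}$ is a purely even reduced ring. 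Crystals are then taken, as definition or as an earlier-established comparison, to be $\mathsf{QCoh}(X_{\mathrm{dR}})$, computed as the limit of $\mathsf{QCoh}(\operatorname{Spec} A)$ over $\operatorname{Spec} A \to X_{\mathrm{dR}}$.

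The key step is to show that the inclusion $\iota\colon X_{\mathsf{bos}} \to X$ induces an equivalence $\iota_{\mathrm{dR}}\colon (X_{\mathsf{bos}})_{\mathrm{dR}} \to X_{\mathrm{dR}}$ of pre-superstacks. The bosonic truncation of a pre-superstack is determined by its values on even rings, for instance as the left Kan extension along the inclusion of purely even rings, or by $X_{\mathsf{bos}}(B) = X(B)$ for $B$ even. Since $A_{\mathrm{red}}$ is even and reduced, I get
\begin{align}
(X_{\mathsf{bos}})_{\mathrm{dR}}(A) = X_{\mathsf{bos}}(A_{\mathrm{red}}) = X(A_{\mathrm{red}}) = X_{\mathrm{dR}}(A),
\end{align}
naturally in $A$, and this identification is induced by $\iota$. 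The point to check carefully is that $X_{\mathsf{bos}}$ agrees with $X$ on reduced even rings with the paper's definition of bosonic truncation. If the truncation is defined as a left Kan extension, then it agrees with $X$ on even rings, because the inclusion of even rings is fully faithful. If it is defined by precomposing with $A \mapsto A_{\mathsf{bos}}$, it agrees because $B_{\mathsf{bos}} = B$ for even $B$.

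Given the equivalence of prestacks, pulling back gives an equivalence $\mathsf{QCoh}(X_{\mathrm{dR}}) \simeq \mathsf{QCoh}((X_{\mathsf{bos}})_{\mathrm{dR}})$. This is compatible with the crystal categories, since both are defined through $\mathrm{dR}$, and that yields the theorem. The main obstacle I expect is the mismatch of test categories. $\mathsf{QCoh}$ of $(X_{\mathsf{bos}})_{\mathrm{dR}}$ viewed as a pre-superstack is a limit over all super test rings $A$, whereas crystals on the ordinary prestack $X_{\mathsf{bos}}$ are a limit over even rings. To reconcile them, I would show that the indexing category of even points of $(X_{\mathsf{bos}})_{\mathrm{dR}}$ is cofinal. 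Every map $\operatorname{Spec} A \to Y_{\mathrm{dR}}$ factors through $\operatorname{Spec} A_{\mathrm{red}} \to \operatorname{Spec} A$, so the value of a crystal on $A$ is determined by its value on $A_{\mathrm{red}}$ via base change. Equivalently, $Y_{\mathrm{dR}}$ is the left Kan extension of its restriction to reduced even rings, and $\mathsf{QCoh}$ turns this colimit presentation into a limit over even affines only.
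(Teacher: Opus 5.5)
Your proposal is correct and is essentially the paper's own argument. The paper also observes that $S_{\mathsf{br}}$ is a reduced bosonic affine on which $X_{\mathsf{bos}}$ and $X$ agree, so the map $(X_{\mathsf{bos}})_{\mathsf{dR}} \to X_{\mathsf{dR}}$ induced by $X_{\mathsf{bos}} \to X$ is the identity, and then applies $\mathsf{QCoh}$. Your final cofinality step is not needed in the paper's setup, because $\mathsf{Crys}(X_{\mathsf{bos}})$ is defined by regarding $X_{\mathsf{bos}}$ as a pre-superstack, with $\mathsf{QCoh}$ always meaning $\mathbb{Z}_2$-graded modules even over bosonic affines; cofinality only fixes the indexing category, so if you meant ordinary ungraded crystals on the right-hand side you would get $\mathbb{Z}_2$-graded crystals on $X_{\mathsf{bos}}$ (two copies of the ordinary category) rather than an equivalence.
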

Similarly, we give a new proof for de Rham cohomology and de Rham supercohomology being isomorphic
\begin{theorem}
    Let $X$ be a smooth superscheme then 
    \begin{align}
      R^\bullet\Gamma(X_{\mathsf{dR}}, \mathcal{O}_{X_{\mathsf{dR}}}) \simeq H_{\mathsf{dR}}(X, \mathbb{C})
    \end{align}
\end{theorem}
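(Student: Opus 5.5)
The plan is to reduce everything to the bosonic truncation and then use Simpson's classical comparison.

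The first step is the identification, observed in the paper, that the de Rham stack of a superscheme coincides with that of its bosonic truncation: $X_{\mathsf{dR}} \simeq (X_{\mathsf{bos}})_{\mathsf{dR}}$. By definition $X_{\mathsf{dR}}(R) = X(R_{\mathrm{red}})$, where $R_{\mathrm{red}}$ is the quotient of the supercommutative ring $R$ by its nilradical. Every odd element is nilpotent, so $R_{\mathrm{red}}$ is a purely even reduced ring. A map $\operatorname{Spec} R_{\mathrm{red}} \to X$ therefore factors uniquely through $X_{\mathsf{bos}}$ (indeed through $X_{\mathrm{red}}$). Hence $X(R_{\mathrm{red}}) = X_{\mathsf{bos}}(R_{\mathrm{red}})$ functorially in $R$. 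It also follows that $\mathcal{O}_{X_{\mathsf{dR}}}$ corresponds to $\mathcal{O}_{(X_{\mathsf{bos}})_{\mathsf{dR}}}$, so
\begin{align}
R^\bullet\Gamma(X_{\mathsf{dR}},\mathcal{O}_{X_{\mathsf{dR}}}) \simeq R^\bullet\Gamma((X_{\mathsf{bos}})_{\mathsf{dR}},\mathcal{O}).
\end{align}
One point needs care here: cohomology on the super side is computed with the site of affine superschemes, while Simpson's result uses ordinary affine schemes. I will check that restricting along the inclusion of even rings gives the same derived global sections. The reason is that $X_{\mathsf{dR}}$ is left Kan extended from (indeed factors through) reduced even rings.

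The second step uses that $X$ is smooth, so $X_{\mathsf{bos}}$ is a smooth scheme over $\mathbb{C}$. Simpson's theorem then gives $R\Gamma((X_{\mathsf{bos}})_{\mathsf{dR}},\mathcal{O}) \simeq R\Gamma(X_{\mathsf{bos}}, \Omega^\bullet_{X_{\mathsf{bos}}})$, which is the algebraic de Rham cohomology. By Grothendieck's comparison theorem this is isomorphic to $H_{\mathsf{dR}}(X_{\mathsf{bos}},\mathbb{C})$ in the analytic (or Betti) sense. I interpret $H_{\mathsf{dR}}(X,\mathbb{C})$ as the cohomology of the underlying topological space with complex coefficients. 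That space is $|X|=|X_{\mathsf{bos}}|$, since fermionic thickenings do not change the topology, which closes the argument. If instead $H_{\mathsf{dR}}(X)$ means super de Rham cohomology (the complex of superdifferential forms), I would add the classical Poincaré lemma in odd variables. Locally $X \simeq \operatorname{Spec} \mathcal{O}_{X_{\mathsf{bos}}}[\theta_1,\dots,\theta_q]$, and the de Rham complex in the odd directions (with polynomial $d\theta_i$) is a resolution of the constants. This gives a quasi-isomorphism $\Omega^\bullet_{X_{\mathsf{bos}}} \to \Omega^\bullet_X$, after choosing a splitting locally, which exists for affine smooth superschemes.

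The main obstacle I expect is the first step's site comparison. Even granting the pointwise identification of functors of points, one must verify that the structure sheaf and quasi-coherent cohomology on the prestack $X_{\mathsf{dR}}$ are insensitive to whether test objects are super or purely even affines. I would first try to write $X_{\mathsf{dR}}$ as a colimit over affines $\operatorname{Spec} R \to X_{\mathsf{dR}}$. I would then show that the subcategory of even reduced test rings is cofinal, since every point factors through $\operatorname{Spec} R_{\mathrm{red}}$. It follows that $\mathcal{O}(X_{\mathsf{dR}})=\lim R$ can be computed over even rings.
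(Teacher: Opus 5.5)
Your route differs from the paper's. The paper stays on the super side. It uses $\mathsf{Crys}(X)\simeq D\mathsf{Mod}(X)$ and the Spencer resolution to identify $\mathsf{Map}(\mathcal{O}_{X_{\mathsf{dR}}},\mathcal{O}_{X_{\mathsf{dR}}})$ with the super de Rham complex $\Omega^\bullet_X$. It then separately uses the presentation $X_{\mathsf{bos}}\to X_{\mathsf{dR}}$ to get $\Omega^\bullet_{X_{\mathsf{bos}}}$, so the comparison of super and bosonic de Rham cohomology is an \emph{output}. You push everything down to $X_{\mathsf{bos}}$, quote the classical theorem there, and use the odd Poincar\'e lemma as an \emph{input}. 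That is a legitimate alternative, and it avoids super $D$-modules entirely. However, the step you yourself flag as the main obstacle is justified by an argument that is false.

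\textbf{The gap.} $X_{\mathsf{dR}}$ is not left Kan extended from reduced even affines, and reduced test objects are not cofinal in $\mathsf{sAff}_{/X_{\mathsf{dR}}}$. A point $x\in X_{\mathsf{dR}}(R)=X(R_{\mathsf{br}})$ does not factor through a map $\mathsf{Spec}(R)\to\mathsf{Spec}(R_{\mathsf{br}})$, because in general there is none. The canonical map is $\mathsf{Spec}(R_{\mathsf{br}})\to\mathsf{Spec}(R)$, which points the wrong way for cofinality. In fact $X_{\mathsf{dR}}$ is the \emph{right} Kan extension from reduced affines, and taking the limit over reduced test objects gives the wrong answer. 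For $X=\mathbb{A}^1$, a reduced $T$ over $\mathbb{A}^1_{\mathsf{dR}}$ is just a reduced $T$ over $\mathbb{A}^1$. That category has terminal object $\mathbb{A}^1$, so the limit is $\mathsf{QCoh}(\mathbb{A}^1)$ with global functions $k[t]$, whereas $\Gamma(\mathbb{A}^1_{\mathsf{dR}},\mathcal{O})=k$. It is exactly the non-reduced test objects (e.g.\ $k[t,\varepsilon]/\varepsilon^2$ with $t\mapsto t+\varepsilon$) that impose horizontality. So your final step, as written, would erase the de Rham differential.

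\textbf{The repair.} Use even test rings, not reduced ones. Let $R$ be a superring with odd ideal $J$. The map $R_0\to R/J$ is surjective with nil kernel $R_1R_1$, so $(R_0)_{\mathsf{red}}\cong R_{\mathsf{br}}$. Hence restriction along $\mathsf{Spec}(R)\to\mathsf{Spec}(R_0)$ is an equivalence $X_{\mathsf{dR}}(\mathsf{Spec}(R_0))\xrightarrow{\sim}X_{\mathsf{dR}}(\mathsf{Spec}(R))$. Since $\mathsf{Spec}(R)\mapsto\mathsf{Spec}(R_0)$ is left adjoint to $\mathsf{Aff}\hookrightarrow\mathsf{sAff}$, each comma category $(\mathsf{Spec}(R),x)\downarrow\mathsf{Aff}_{/X_{\mathsf{dR}}}$ has an initial object. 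So $\mathsf{Aff}_{/X_{\mathsf{dR}}}\to\mathsf{sAff}_{/X_{\mathsf{dR}}}$ is final, and $\mathsf{QCoh}(X_{\mathsf{dR}})\simeq\lim_T\mathsf{sMod}(T)$ over even $T$. This is two copies of the classical $\mathsf{QCoh}((X_{\mathsf{bos}})_{\mathsf{dR}})$, with $\mathcal{O}$ in the even copy. Therefore $R\Gamma(X_{\mathsf{dR}},\mathcal{O})$ is the classical $R\Gamma((X_{\mathsf{bos}})_{\mathsf{dR}},\mathcal{O})$, and the rest of your argument goes through.

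One smaller correction in the last step: take the global quasi-isomorphism to be the canonical restriction $\Omega^\bullet_X\to i_*\Omega^\bullet_{X_{\mathsf{bos}}}$, and check it locally using a splitting. Maps $\Omega^\bullet_{X_{\mathsf{bos}}}\to\Omega^\bullet_X$ built from local splittings need not glue.
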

and we also give a simplified proof for the Beilinson-Bernstein localization theorem for quasi-reductive algebraic supergroups, originally due to Penkov \cite{penkov1994generic}.
\begin{theorem}[{\cite[Theorem 1.3.]{penkov1994generic}}]
    Let $G$ be a quasireductive algebraic supergroup over $\mathbb{C}$ and $B$ a Borel subgroup, then
    \begin{align}
        D\mathsf{Mod}(G/B) \simeq (\mathcal{U}(\mathfrak{g}_0)/\mathsf{ker}(\chi))\mathsf{-Mod}.
    \end{align}
\end{theorem}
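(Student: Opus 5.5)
The plan is to reduce the super statement to the classical Beilinson--Bernstein theorem for the even reductive group $G_{\mathsf{bos}} = G_0$ and its Borel $B_0 = B_{\mathsf{bos}}$, using the de Rham transmutation. There are four steps.

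\textbf{Step 1: identify the categories of $D$-modules.} For a smooth superscheme, $D$-modules are crystals, that is, quasi-coherent sheaves on the de Rham stack. By Penkov's theorem for pre-superstacks stated above, crystals on $G/B$ are equivalent to crystals on $(G/B)_{\mathsf{bos}}$. This rests on the identity $(G/B)_{\mathsf{dR}} \simeq ((G/B)_{\mathsf{bos}})_{\mathsf{dR}}$: a map $\operatorname{Spec} R \to X_{\mathsf{dR}}$ only sees $R_{\mathsf{red}}$, and the reduction of any superalgebra kills all odd elements. This gives
\begin{align}
D\mathsf{Mod}(G/B) \simeq D\mathsf{Mod}((G/B)_{\mathsf{bos}}).
\end{align}

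\textbf{Step 2: identify the bosonic truncation of the quotient.} I claim $(G/B)_{\mathsf{bos}} \simeq G_0/B_0$, the classical flag variety of $G_0$. I would prove this by writing $G \simeq G_0 \times \mathbb{A}^{0|\dim \mathfrak{g}_1}$ as superschemes, and similarly for $B$. Then $G/B$ is locally a product of $G_0/B_0$ with an odd affine space $\mathbb{A}^{0|\dim(\mathfrak{g}_1/\mathfrak{b}_1)}$, so killing the odd nilpotents recovers $G_0/B_0$. Functor-of-points check: $(G/B)(R)$ for purely even $R$ (modulo sheafification) is $G(R)/B(R) = G_0(R)/B_0(R)$, since $G(R) = G_0(R)$ for even $R$. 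The one subtlety is that bosonic truncation must commute with the fppf quotient. I would handle it with the local product decomposition, or by noting that $X \mapsto X_{\mathsf{bos}}$ is a left adjoint on the relevant prestacks and therefore preserves colimits, including quotients by groupoids.

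\textbf{Step 3: apply the classical theorem.} Classical Beilinson--Bernstein for $G_0$ gives
\begin{align}
D\mathsf{Mod}(G_0/B_0) \simeq (\mathcal{U}(\mathfrak{g}_0)/\ker \chi)\mathsf{-Mod},
\end{align}
where $\chi$ is the trivial central character, the one arising from global sections of $\mathcal{D}_{G_0/B_0}$. Composing Steps 1 through 3 gives the statement.

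\textbf{Step 4, the main obstacle: matching the algebra side.} One has to check that $\chi$ in the statement is exactly the central character of $\mathfrak{g}_0$ produced by this chain, rather than, say, one shifted by a $\rho_1$-type correction or the restriction of a central character of $\mathcal{U}(\mathfrak{g})$. My approach would be to compute global sections directly: $\Gamma(G/B, \mathcal{D})$ is identified, through the equivalence of crystals, with $\Gamma(G_0/B_0, \mathcal{D}_{G_0/B_0}) \cong \mathcal{U}(\mathfrak{g}_0)/\ker\chi$. I would then verify that the equivalence in Step 1 respects the global sections functor, since both sides compute $\mathrm{Hom}$ from the structure sheaf $\mathcal{O}_{X_{\mathsf{dR}}}$. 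Quasi-reductivity enters only through the requirement that $G_0$ be reductive, which is what makes the classical theorem applicable.
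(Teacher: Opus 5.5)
Your Steps 1--3 are a correct proof, organized differently from the paper's. The paper never computes $(G/B)_{\mathsf{bos}}$. It notes $[G/B]\simeq G/B$ because the action is free, and then applies \cref{quotientDmod}. That is, it commutes $\mathsf{dR}$ with the quotient via \cref{limitscolimits}, $(G/B)_{\mathsf{dR}}\simeq[G_{\mathsf{dR}}/B_{\mathsf{dR}}]$, replaces $G_{\mathsf{dR}},B_{\mathsf{dR}}$ by the de Rham stacks of the underlying even groups, and quotes classical Beilinson--Bernstein. You instead apply \cref{Crys} to $G/B$ directly and commute $(-)_{\mathsf{bos}}$, rather than $(-)_{\mathsf{dR}}$, with the quotient.

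The paper's version is purely formal and covers any $[X/G]$ without knowing anything about the superscheme $G/B$. Yours gives the concrete identification $(G/B)_{\mathsf{bos}}\cong G_0/B_0$, and it confronts the sheafification issue that the paper leaves implicit. Your left-adjoint argument does handle that issue. On prestacks, $(-)_{\mathsf{bos}}$ is restriction along $\mathsf{Aff}\hookrightarrow\mathsf{sAff}$, with right adjoint $Y\mapsto Y\bigl((-)_{\mathsf{bos}}\bigr)$. In the \'etale topology used in this section, that right adjoint preserves sheaves, because $U_{\mathsf{bos}}\simeq U\times_S S_{\mathsf{bos}}$ for $U\to S$ \'etale. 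Since $G\to G/B$ is smooth, the \'etale and fppf quotients agree.

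Step 4, however, should be dropped: it is not needed, and the argument you give for it is false. In the statement, $\chi$ is simply the central character produced by classical Beilinson--Bernstein for $G_0$. The equivalence is the composite of Steps 1--3, so nothing remains to be matched.

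Your proposed check fails for two reasons. First, $\mathrm{Hom}(\mathcal{O}_{X_{\mathsf{dR}}},-)$ computes de Rham cohomology (as in \cref{deRhamisCoherent}), not the Beilinson--Bernstein functor $\Gamma(X,\mathsf{oblv}(-))$. Second, the crystal equivalence does not intertwine the latter: the forgetful functor for $G_0/B_0$ is that of $G/B$ followed by pullback along $i\colon G_0/B_0\hookrightarrow G/B$.

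Concretely, $\mathcal{D}_{G/B}$ does not correspond to $\mathcal{D}_{G_0/B_0}$. Locally on $G_0/B_0$ it corresponds to $\mathcal{D}_{G_0/B_0}\otimes k^{2^{m-1}|2^{m-1}}$, with $m$ the odd dimension of $G/B$. Moreover, $\Gamma(G/B,\mathcal{D}_{G/B})\not\cong\mathcal{U}(\mathfrak{g}_0)/\mathsf{ker}(\chi)$. It receives $\mathcal{U}(\mathfrak{g})$, and each $x\in\mathfrak{g}_1\setminus\mathfrak{b}_1$ acts by a nonzero odd vector field, whereas $\mathcal{U}(\mathfrak{g}_0)/\mathsf{ker}(\chi)$ is purely even. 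So the equivalence in the statement is not realized by $\Gamma(G/B,-)$.
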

In the Betti setting, we give a refinement of the Betti stack, using the analogoue of the monodromy equivalence, called the exodromy equivalence, where one replaces the homotopy type of a space by the so-called exit path category. This allows us to extend the Betti stack to the setting of $P$-constructible sheaves for a poset $P$. 
\begin{theorem}
    Let $X \to P$ be a stratified space such that its exit path category $\mathsf{Exit}(X,P)$ is an $\infty$-category then   
    \begin{align}
        \mathsf{QCoh}(X_{B_P} \times \mathsf{Spec}(A)) \simeq \mathsf{Fun}(\mathsf{Exit}(X,P), \mathsf{Mod}_A).
    \end{align}
\end{theorem}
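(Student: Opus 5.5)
The plan is to define the $P$-stratified Betti stack by analogy with Simpson's Betti stack. Given a stratified space $X \to P$, let $X_{B_P}$ be the (pre)stack on affine superschemes obtained as the colimit, in prestacks, of the constant diagram $\mathsf{Exit}(X,P) \to \mathsf{PreStk}$ with value the point $\mathsf{Spec}(\mathbb{C})$. In other words, $X_{B_P}$ is the "constant stack" on the $\infty$-category $\mathsf{Exit}(X,P)$, so that maps out of it into any prestack $Y$ are computed as $\lim_{\mathsf{Exit}(X,P)} Y$. When $P$ is a point, $\mathsf{Exit}(X,P)$ is the $\infty$-groupoid $\Pi_\infty(X)$ and we recover the usual Betti stack $X_B$.

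The proof then follows Simpson's argument in three steps.

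\textbf{Step 1.} Since $\mathsf{QCoh}$ sends colimits of prestacks to limits of $\infty$-categories (by right Kan extension from affines), and since products with $\mathsf{Spec}(A)$ commute with colimits in prestacks (colimits are universal in presheaf $\infty$-topoi), we obtain
\begin{align}
\mathsf{QCoh}(X_{B_P}\times \mathsf{Spec}(A)) \simeq \lim_{\mathsf{Exit}(X,P)} \mathsf{QCoh}(\mathsf{Spec}(A)) \simeq \lim_{\mathsf{Exit}(X,P)} \mathsf{Mod}_A .
\end{align}

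\textbf{Step 2.} A limit of a constant diagram indexed by an $\infty$-category $K$ is the cotensor with $K$. Therefore $\lim_{K}\mathsf{Mod}_A \simeq \mathsf{Fun}(K,\mathsf{Mod}_A)$. Note that this uses that $K=\mathsf{Exit}(X,P)$ is a genuine $\infty$-category, not just a groupoid, which is exactly the hypothesis in the statement. For non-invertible exit paths the transition maps are the identity functor, but the limit records an actual morphism of modules along each exit path rather than an isomorphism. This is where the non-groupoid nature of the indexing category is used.

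\textbf{Step 3.} This step only identifies the result with constructible sheaves; it is not needed for the displayed equivalence itself. Combining with the exodromy equivalence $\mathsf{Fun}(\mathsf{Exit}(X,P),\mathsf{Mod}_A)\simeq \mathsf{Shv}_P(X;\mathsf{Mod}_A)$ identifies $\mathsf{QCoh}(X_{B_P}\times\mathsf{Spec} A)$ with $P$-constructible sheaves.

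The main obstacle is Step 1 in the super setting. We must check that the colimit formula for $\mathsf{QCoh}$ of a prestack holds over the site of affine superschemes, and that the product with $\mathsf{Spec}(A)$ commutes with the colimit even if we sheafify. Sheafification could in principle alter the colimit. To avoid this, I would first work purely at the level of prestacks, where everything is formal. Then I would invoke fpqc descent for $\mathsf{QCoh}$ on superschemes to conclude that sheafification does not change $\mathsf{QCoh}$. A secondary subtlety is the identification of the limit of a constant diagram with the functor category, which requires care with the $\mathbb{Z}_2$-graded module categories. Since $\mathsf{Mod}_A$ is just a fixed presentable $\infty$-category here, I expect the standard cotensor argument to go through verbatim.
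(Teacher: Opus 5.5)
Your Step 2 is false, and the same error is already built into your definition of $X_{B_P}$. In an $(\infty,1)$-category, the limit of a constant diagram indexed by $K$ depends only on the groupoidification $|K|$, because $\mathrm{Map}(d,\lim_K c)\simeq\lim_K\mathrm{Map}(d,c)\simeq\mathrm{Map}_{\mathsf{Ani}}(|K|,\mathrm{Map}(d,c))$. Concretely, a limit in $\mathsf{Cat}$ is the category of cocartesian sections of the unstraightening. For the constant diagram at $\mathsf{Mod}_A$, these are the functors $K\to\mathsf{Mod}_A$ that send every morphism to an equivalence, so $\lim_K\mathsf{Mod}_A\simeq\mathsf{Fun}(|K|,\mathsf{Mod}_A)$. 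What you describe (a genuine, possibly non-invertible map of modules along each exit path) is the lax limit, not the limit. Likewise, colimits of $\mathsf{Ani}$-valued presheaves are computed objectwise. Hence $\mathrm{colim}_{\mathsf{Exit}(X,P)}\mathsf{pt}$ is the constant prestack on the anima $|\mathsf{Exit}(X,P)|$, which for conically stratified $X$ is the homotopy type of $X$. So your $X_{B_P}$ is just the ordinary Betti stack $X_B$. Carried out correctly, your argument yields local systems $\mathsf{Fun}(\Pi_\infty(X),\mathsf{Mod}_A)$, not the stated equivalence. For instance, for $X=[0,1)$ stratified by $\{0\}\subset X$ one has $\mathsf{Exit}(X,P)\simeq\Delta^1$, and you get $\mathsf{Mod}_A$ instead of the arrow category $\mathsf{Fun}(\Delta^1,\mathsf{Mod}_A)$. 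No manipulation using only $\mathsf{Ani}$-valued prestacks and ordinary limits can avoid this collapse.

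The missing idea is to leave $\mathsf{Ani}$-valued prestacks. The paper takes $X_{B_P}$ to be the constant presheaf of $\infty$-categories (a lax prestack) with value $\mathsf{Exit}(X,P)$. It then sets $\mathsf{QCoh}(Y)\coloneqq\mathsf{Nat}(Y,\mathsf{QCoh})$, the $\infty$-category of natural transformations, written as an end over the twisted arrow category; on ordinary prestacks this recovers the usual $\mathsf{QCoh}$. Because $\mathsf{Nat}$ is $\mathsf{Cat}$-valued and $\mathsf{Cat}$ is cartesian closed, one can curry: $\mathsf{Fun}(\mathsf{Exit}(X,P)\times\mathsf{Spec}(A)(S),\mathsf{QCoh}(S'))\simeq\mathsf{Fun}(\mathsf{Exit}(X,P),\mathsf{Fun}(\mathsf{Spec}(A)(S),\mathsf{QCoh}(S')))$. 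One then pulls $\mathsf{Fun}(\mathsf{Exit}(X,P),-)$ out of the end and identifies what remains with $\mathsf{QCoh}(\mathsf{Spec}(A))\simeq\mathsf{Mod}_A$. Equivalently, taking the product with the constant lax prestack on $K$ is a $\mathsf{Cat}$-tensoring (a lax colimit), and $\mathsf{Nat}(-,\mathsf{QCoh})$ turns it into the cotensor $\mathsf{Fun}(K,-)$; an $(\infty,1)$-categorical colimit can only produce $\mathsf{Fun}(|K|,-)$. The points you flag as the main obstacle (sheafification, the $\mathbb{Z}_2$-grading) play no role here.
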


In the Dolbeault setting, we prove the Dolbeault stack is the only classical transmutation that inherently captures the underlying supergeometric data, which we can capture in the following theorem.

\begin{theorem}
    Let $X$ be a smooth scheme of purely odd dimension $m$, then 
    \begin{align}
        X^{\mathsf{nil}}_{\mathsf{Dol}} \simeq X_{\mathsf{Dol}}.
    \end{align}
\end{theorem}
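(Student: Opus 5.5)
Throughout, I read ``smooth scheme of purely odd dimension $m$'' as the statement does: $X$ is smooth, and its tangent sheaf $T_X$ is locally free of rank $0|m$, i.e.\ every fibre of $T_X$ is purely odd. I will not assume that $X$ itself is a point. The strategy is to compare the two Dolbeault stacks through the group objects they classify.

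\textbf{Setup.} Following Simpson's construction as extended to superschemes earlier in the paper, I take
\begin{align}
X_{\mathsf{Dol}} = B_X\widehat{T}_X, \qquad X^{\mathsf{nil}}_{\mathsf{Dol}} = B_X T_X^{\mathsf{nil}}.
\end{align}
Here $\widehat{T}_X$ is the formal completion of the total space $\mathbb{V}(T_X) = \underline{\mathrm{Spec}}_X\,\mathrm{Sym}_{\mathcal{O}_X}(\Omega_X)$ along the zero section. The object $T_X^{\mathsf{nil}}$ is the nilpotent variant: the subfunctor of sections that are nilpotent, i.e.\ whose image lies in the nilradical of the test superalgebra. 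Both are commutative formal group schemes over $X$ under addition. Both are subfunctors of the vector group $\mathbb{V}(T_X)$, and they are related by the inclusion $\widehat{T}_X \subseteq T_X^{\mathsf{nil}}$. It therefore suffices to show that this inclusion is an isomorphism of group objects over $X$, after which $B_X$ of both sides agree.

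\textbf{Key step.} The question is local on $X$, so I choose odd coordinates. Because $T_X$ has rank $0|m$, the dual $\Omega_X$ is locally free and generated by $m$ odd elements $d\theta_1,\dots,d\theta_m$. By the Koszul sign rule, the super-symmetric algebra on odd generators is the exterior algebra:
\begin{align}
\mathrm{Sym}_{\mathcal{O}_X}(\Omega_X) \cong \Lambda^\bullet_{\mathcal{O}_X}(\mathcal{O}_X^{m}).
\end{align}
Its augmentation ideal $I$ satisfies $I^{m+1}=0$. Consequently $\mathbb{V}(T_X)$ is finite over $X$ and is an infinitesimal thickening of the zero section. From this I get two facts. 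First, the formal completion along the zero section changes nothing, so $\widehat{T}_X = \mathbb{V}(T_X)$. Second, any $A$-point $X$-morphism $\mathrm{Spec}A \to \mathbb{V}(T_X)$ sends the odd generators to odd elements of $A$. Odd elements square to zero, so every such point is automatically nilpotent, and $T_X^{\mathsf{nil}} = \mathbb{V}(T_X)$ as well.

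\textbf{Conclusion and expected obstacle.} Both inclusions are equalities of group objects, compatible with the additive structure, so $B_X\widehat{T}_X \simeq B_X T_X^{\mathsf{nil}}$. This gives $X^{\mathsf{nil}}_{\mathsf{Dol}} \simeq X_{\mathsf{Dol}}$. I expect the main obstacle to be making sure the paper's definition of the nilpotent completion is exactly the ``nilpotent points'' subfunctor. If instead it is defined via the bosonic truncation, or via completion of $\mathrm{Sym}$ at a different ideal, I would first reduce that definition to the functor-of-points description. I would also check that gluing the local identifications over $X$ respects the group structures. This should be formal, since the identifications are canonical subfunctor equalities inside the globally defined $\mathbb{V}(T_X)$.
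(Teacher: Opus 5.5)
Your proof is correct and is essentially the paper's argument. The paper defines $X^{\mathsf{nil}}_{\mathsf{Dol}} = [X/TX]$ using the full total space $TX=\mathsf{Spec}(\mathsf{Sym}\,\Omega_X)$, not a nilpotent-points subfunctor (which would agree with $\widehat{T}_X$ for any $X$). So what is needed is exactly your first fact $\widehat{T}_X = \mathbb{V}(T_X)$: the paper checks it locally on $\mathbb{A}^{0|1}$ via $\mathsf{colim}_m\, \mathcal{O}[\theta]/(\theta)^m \cong \mathcal{O}[\theta]$ since the fibre coordinate is odd, and your rank-$0|m$ computation with $I^{m+1}=0$ simply does this local step directly rather than reducing to $\mathbb{A}^{0|1}$.
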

This means that the odd Higgs fields, one obtains by going to the super setting will be nilpotent. 
\subsection{Quasi-coherent Sheaves}
 To prove that quasi-coherent sheaves on the de Rham, stratified Betti or Dolbeault stack can be identified with $D$-modules, $P$-constructible sheaves or Higgs sheaves one needs several technical theorems such as descent for $\mathsf{QCoh}$, a base change theorem for pre-superstacks and an open-closed decomposition for superschemes also known as recollement, which are of independent interest. Instead of verifying these theorems directly in the super setting, we will deduce them from another variant of $\mathcal{C}$-algebraic geometry, where $\mathcal{C}=\mathsf{Ab}_{\mathbb{Z}}$, i.e. the category of $\mathbb{Z}$-graded abelian groups with the Koszul sign in the symmetric monoidal structure, which is also known as Dirac geometry. We will show that the category of quasi-coherent Dirac sheaves of a superstack is equivalent to the category of quasi-coherent supersheaves:
\begin{theorem}
    Let $X$ be a pre-superstack viewed as a Dirac prestack. Then there exists an equivalence of symmetric monoidal categories 
    \begin{align}
        \mathsf{QCoh}_{\mathsf{super}}(X) \simeq \mathsf{QCoh}_{\mathsf{Dirac}}(X).
    \end{align}
\end{theorem}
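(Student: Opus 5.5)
The plan is to reduce to the affine case via right Kan extension and then prove an affine comparison. Both sides are defined on prestacks as limits over the category of affines mapping to $X$:
\begin{align}
\mathsf{QCoh}_{\mathsf{super}}(X) \simeq \lim_{\mathsf{Spec}(A)\to X} \mathsf{Mod}^{\mathsf{super}}_A, \qquad \mathsf{QCoh}_{\mathsf{Dirac}}(X) \simeq \lim_{\mathsf{Spec}(B)\to X} \mathsf{Mod}^{\mathsf{Dirac}}_B.
\end{align}
A pre-superstack is regarded as a Dirac prestack by left Kan extension along the functor that views a supercommutative ring as a Dirac ring, i.e. as a $\mathbb{Z}$-graded ring concentrated in a suitable pattern of degrees with compatible Koszul signs. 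This must be set up with care, but by construction the comparison then reduces to the following claim: for each supercommutative $A$ there is a symmetric monoidal equivalence $\mathsf{Mod}^{\mathsf{super}}_A \simeq \mathsf{Mod}^{\mathsf{Dirac}}_{\iota A}$, natural in $A$ with respect to base change $-\otimes_A A'$.

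Which embedding $\iota$ to use is the key choice. I will take the one where a super ring $A=A_0\oplus A_1$ goes to the periodized $\mathbb{Z}$-graded ring with $(\iota A)_n = A_{n \bmod 2}$, adjoined with an invertible even-degree-$2$ periodicity element $u$, so that $\iota A \simeq A[u^{\pm1}]$ with $|u|=2$. Dirac modules over $A[u^{\pm1}]$ are then determined by their degree $0$ and $1$ parts, because multiplication by $u$ identifies degree $n$ with degree $n+2$. This gives a functor $M\mapsto (M_0\oplus M_1)$ to super $A$-modules, with inverse $N\mapsto N\otimes_{\mathbb{Z}}\mathbb{Z}[u^{\pm1}]$. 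The functor is symmetric monoidal since $\otimes_{A[u^{\pm 1}]}$ computes degreewise tensor products and the Koszul sign depends only on parity. The braiding matches because the sign $(-1)^{|m||n|}$ depends only on degrees mod $2$. Naturality in $A$ holds because both directions commute with base change.

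I then pass to limits. Since the equivalences are natural and $\iota$ is fully faithful on affines, the indexing diagrams are compatible: the Dirac limit over $X$ viewed as a Dirac prestack is cofinally computed by super affines, since a left Kan extension only sees those. Limits of symmetric monoidal equivalences give the symmetric monoidal equivalence in the statement.

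I expect the main obstacle to be the cofinality and Kan extension step, namely checking that $\mathsf{QCoh}_{\mathsf{Dirac}}$ of a left Kan extended prestack is computed by the limit over super affines only. This needs $\mathsf{QCoh}_{\mathsf{Dirac}}$ to be the right Kan extension of $\mathsf{Mod}$ from Dirac affines, together with the formal identification $\mathrm{Ran}\circ\mathrm{Lan}$ over the inclusion. I would first verify this formally using the universal property of $\mathsf{PShv}$. Failing that, I would restrict to pre-superstacks, where the category of elements of $\mathrm{Lan}_\iota X$ has the super affines as a cofinal subcategory.
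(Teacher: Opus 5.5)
This is essentially the paper's proof. Your affine step is its lemma $\mathsf{sMod}(A)\simeq\mathsf{grMod}(A[u^{\pm1}])$ with $|u|=2$, proved by the same $u$-periodicity argument. Your global step is its reindexing of the tautological colimit to $\mathsf{sAff}_{/X}$, combined with the fact that $\mathsf{QCoh}$ turns colimits into limits.

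One supporting claim is false, however: $\iota$ is not fully faithful on affines. The paper itself calls $\mathsf{sAff}$ a non-full subcategory of $2$-periodic Dirac affines. The reason is that Dirac maps need not fix $u$: for example, $u\mapsto -u$ is a Dirac automorphism of $\mathbb{Z}[u^{\pm1}]$ that comes from no superring endomorphism of $\mathbb{Z}$.

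You do not need full faithfulness, though. Since $U_!$ preserves colimits and sends $\mathsf{Spec}(A)$ to $\mathsf{Spec}(A[u^{\pm1}])$, you have $U_!X\simeq\mathrm{colim}_{\mathsf{sAff}_{/X}}\mathsf{Spec}(A[u^{\pm1}])$. Because $\mathsf{QCoh}_{\mathsf{Dirac}}$ is a right Kan extension along the Yoneda embedding, it sends this colimit to $\lim_{\mathsf{sAff}_{/X}^{\mathsf{op}}}\mathsf{Mod}_{A[u^{\pm1}]}$. All transition maps in this limit come from superring maps and hence fix $u$, so your naturality check already suffices. The step you flagged as the main obstacle is therefore formal; it is your first option via the universal property of presheaves. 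The functor $\mathsf{sAff}_{/X}\to\mathsf{DAff}_{/U_!X}$ is cofinal for this general reason, not because of full faithfulness.
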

Using this equivalence descent, base-change and recollement follow from the Dirac geometric case.
Moreover, we prove that the category of stacks is a colocalization of superstacks:
\begin{theorem}[Colocalization]
    The functor $\iota$ is fully faithful and admits a right adjoint given by $(-)_{\mathsf{bos}}$. 
\end{theorem}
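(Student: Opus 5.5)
We have to show that the inclusion $\iota$ of ordinary (pre)stacks into (pre)superstacks is fully faithful with right adjoint $(-)_{\mathsf{bos}}$. The plan is to prove this first on affines, where everything comes from an adjunction of rings, and then extend to prestacks and stacks by left Kan extension and sheafification.

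\emph{Affine level.} Let $\mathsf{CAlg}$ be commutative rings and $\mathsf{sCAlg}$ supercommutative $\mathbb{Z}_2$-graded rings. Viewing a commutative ring $R$ as purely even gives $i\colon\mathsf{CAlg}\to\mathsf{sCAlg}$. There is a functor $A\mapsto A_{\mathsf{bos}}\coloneqq A/(A_1)$, the quotient by the ideal generated by odd elements. This quotient is purely even and commutative, since odd elements anticommute and $a^2=0$ for odd $a$. For $R$ commutative, any graded map $A\to i(R)$ kills $A_1$, because $i(R)_1=0$. Hence
\[
\mathrm{Hom}_{\mathsf{sCAlg}}(A,i(R))\cong \mathrm{Hom}_{\mathsf{CAlg}}(A_{\mathsf{bos}},R),
\]
so $i$ is right adjoint to $(-)_{\mathsf{bos}}$, and $i(R)_{\mathsf{bos}}=R$. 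Passing to opposite categories, $\iota=\mathsf{Spec}\circ i\colon \mathsf{Aff}\to\mathsf{sAff}$ is left adjoint to $\mathsf{Spec}(A)\mapsto\mathsf{Spec}(A_{\mathsf{bos}})=\mathsf{Spec}(A)_{\mathsf{bos}}$. Also $\iota$ is fully faithful, because the unit $R\to i(R)_{\mathsf{bos}}$ is an isomorphism.

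\emph{Prestacks.} A prestack $F\colon\mathsf{CAlg}\to\mathsf{Spc}$ is sent to $\iota F\coloneqq F\circ(-)_{\mathsf{bos}}$, i.e.\ $\iota F(A)=F(A_{\mathsf{bos}})$. This agrees with left Kan extension along $i$, since $(-)_{\mathsf{bos}}$ is left adjoint to $i$, and on representables it recovers $\iota\mathsf{Spec}R$ by the affine adjunction. A pre-superstack $Y$ is sent to $Y_{\mathsf{bos}}\coloneqq Y\circ i$. Restriction along $i$ is right adjoint to left Kan extension along $i$, which gives $\iota\dashv(-)_{\mathsf{bos}}$. The unit $F\to (\iota F)_{\mathsf{bos}}=F\circ(-)_{\mathsf{bos}}\circ i=F$ is the identity, so $\iota$ is fully faithful.

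\emph{Stacks.} This is the main obstacle: we must check that both functors preserve the sheaf condition, for the étale or fppf topology the paper uses on superschemes. Restriction $Y\mapsto Y\circ i$ preserves sheaves, because $i$ sends covers $R\to\prod R_j$ to covers of superrings and preserves the relevant tensor products, which are purely even.

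For $\iota F=F\circ(-)_{\mathsf{bos}}$, we would show that $(-)_{\mathsf{bos}}$ sends covers to covers and commutes with the Čech nerves, i.e.\ $(A\otimes_B C)_{\mathsf{bos}}\cong A_{\mathsf{bos}}\otimes_{B_{\mathsf{bos}}}C_{\mathsf{bos}}$. This holds because $(-)_{\mathsf{bos}}$ is a left adjoint, hence preserves pushouts, which are tensor products. The remaining input is that an étale or flat cover $B\to A$ of superrings induces a cover $B_{\mathsf{bos}}\to A_{\mathsf{bos}}$ after the base change $-\otimes_B B_{\mathsf{bos}}$; this follows from stability of covers under base change. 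With these facts, $\iota F$ satisfies descent.

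If this check fails for some topology, the fallback is to define $\iota$ on stacks as the sheafification of the Kan extension. The adjunction then still holds formally, and full faithfulness follows because the unit $F\to(\iota F)_{\mathsf{bos}}$ can be computed before sheafifying.
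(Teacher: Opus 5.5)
\emph{There is a genuine gap: your $\iota$ on prestacks uses the wrong adjoint of $i$.} Your affine step is correct, and it matches the paper's. The error comes at the prestack level. For covariant functors on $\mathsf{CAlg}$, precomposing with a \emph{left} adjoint of $i$ computes the \emph{right} Kan extension. Since $(-)_{\mathsf{bos}}\dashv i$, the comma category $A\downarrow i$ has initial object $A\to i(A_{\mathsf{bos}})$, so $F\circ(-)_{\mathsf{bos}}\simeq\mathsf{Ran}_iF$. This functor is right adjoint to $Y\mapsto Y\circ i$. The left Kan extension is precomposition with the \emph{right} adjoint of $i$, the even part: $(\mathsf{Lan}_iF)(A)\simeq F(A_0)$. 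This is the paper's $F^*$, and the paper defines $\iota=L_{\mathsf{sStk}}\circ F^*\circ\mathsf{inc}_{\mathsf{Stk}}$.

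Your check on representables confuses variance. The adjunction you proved concerns maps $A\to i(R)$, but the $A$-points of the purely even superscheme $\mathsf{Spec}R$ are maps $i(R)\to A$, i.e.\ $\mathrm{Hom}(R,A_0)$. For $R=k[x]$ and $A=k[\theta_1,\theta_2]$ this is $A_0=k\oplus k\theta_1\theta_2$, while your formula gives $A_{\mathsf{bos}}=k$. So your $\iota$ is not the inclusion. For it the adjunction actually runs the other way, $(-)_{\mathsf{bos}}\dashv\iota$, and the claimed direction fails. Take $F=\mathbb{A}^1$ and $Y=\mathbb{A}^{1|0}$, so that $\mathsf{Map}(F,Y\circ i)=k[x]$. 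A natural map $A_{\mathsf{bos}}\to A_0$ restricts on even rings to a polynomial $p$. Compatibility with the automorphism $x\mapsto x+\theta_1\theta_2$ of $k[x|\theta_1,\theta_2]$, which is trivial on $A_{\mathsf{bos}}=k[x]$, gives $p(x)+p'(x)\theta_1\theta_2=p(x)$, so $p'=0$. Hence the restriction map, which is the adjunction map for your identity unit, is not surjective.

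\emph{What the repair requires.} With $\iota F=F\circ(-)_0$, the prestack adjunction and your unit computation go through, since $i(R)_0=R$. Your stack paragraph, however, does not transfer. It used that $(-)_{\mathsf{bos}}$ is a left adjoint and commutes with pushouts, and $(-)_0$ does neither: for instance $(k[\theta]\otimes_k k[\theta'])_0=k\oplus k\theta\theta'\neq k$. You therefore need either a separate descent argument for $F\circ(-)_0$ (for étale covers one can use that they are base changed from étale covers of $A_0$), or you must sheafify as the paper does. If you sheafify, the adjunction is formal, because restriction along $\mathsf{Aff}\to\mathsf{sAff}$ preserves sheaves. Full faithfulness, however, then needs that this restriction commutes with sheafification; for the étale topology this holds because étale covers of a purely even affine are purely even. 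Your remark that the unit ``can be computed before sheafifying'' is exactly this claim, and it needs proof.
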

This proves \cite[Remark 2.28.]{bruzzo2025foundations} for sheaves of anima.
\subsection{Organization}
The paper is structured as follows.  Section 2 develops the theory of superstacks and their derived categories in the fpqc-topology from an $\infty$-categorical perspective, where we establish the tools required for the theorems in section 4, using a reduction argument to deduce them from the Dirac geometric setting. Section 3 introduces the transmutation stacks of Simpson. In the de Rham setting we adapt definitions from derived algebraic geometry in a straightforward way and then demonstrate that we recover the classical definitions. Furthermore, we give new geometric proofs of classical theorems in supergeometry. In the Betti setting, we refine the notion of Betti stack to also allow for $P$-stratified constructible sheaves. In the Dolbeault setting, we show that the Dolbeault stack is not agnostic to supergeometric information. In section 4, we will study the $K$-theory spectrum of the category of coherent sheaves on a noetherian superstack, which will allow us to prove the aforementioned d\'evissage theorem and identify $\Theta$-classes as the fundamental class of the moduli stack of super Riemann surfaces.
\subsection{Terminology and Convention}
 \begin{itemize}
     \item By category we will mean an $(\infty,1)$-category. By $1$-category we will mean a $(1,1)$-category.
     \item We denote the category of $\infty$-groupoids/homotopy types/spaces by $\mathsf{Ani}$ and call an object an anima.
     \item By animation, we mean the sifted cocompletion of a category in the $\infty$-categorical sense.
     \item For an abelian group $G$ we denote the category of $G$-graded abelian groups by $\mathsf{Ab}_G\coloneqq \mathsf{Fun}(G,\mathsf{Ab})$ and we use the Day convolution as symmetric monoidal structure with the Koszul sign in the braiding isomorphism.
     \item By a superring (resp. Dirac ring) we mean a commutative algebra object in $\mathbb{Z}_2$-graded (resp. $\mathbb{Z}$-graded) abelian groups, and we denote its category by $\mathsf{sCalg}\coloneqq \mathsf{CAlg}(\mathsf{Ab}_{\mathbb{Z}_2})$ (resp. $\mathsf{DCalg}\coloneqq \mathsf{CAlg}(\mathsf
     {Ab}_\mathbb{Z})$).
     \item By affine superscheme (resp. Dirac scheme) we mean an object in the opposite category of superrings (resp. Dirac rings), which we denote by $\mathsf{sAff}$ (resp. $\mathsf{DAff}$).
     \item  We call the unbounded derived category on an algebraic stack $X$ the category of quasi-coherent sheaves on $X$ and denote it by $\mathsf{QCoh}(X)$.
     \item To avoid clutter, we will drop extra decorations when no confusion can arise, if not, we will write $?\mathsf{QCoh}(X)$ and $?\mathsf{Aff}$ for $? \in \{\mathsf{D},\mathsf{s},\mathsf{d}\}$ which stands for Dirac, super or derived respectively.
     \item By stack we mean an accessible sheaf of anima in the fpqc topology, and we denote the category of stacks as $?\mathsf{Stk} \coloneqq \mathsf{Shv}^{\mathrm{acc}}_{\mathsf{fpqc}}(?\mathsf{Aff})$
 \end{itemize}  

\subsection{Acknowledgement}
First, I would like to thank my supervisor Paul Norbury for his guidance throughout this project and his numerous suggestions regarding an earlier versions of this work. I owe special thanks to Gustav Berth and Lior Yanovski, who have taught me everything I know about $\infty$-categories. I am grateful to Oliver Li and Fei Peng for numerous enlightening discussions about algebraic stacks. I would also like to thank Tianqi Feng, Peter Haine and Max Treutlein for useful discussion. Lastly, I would like to thank Peter Scholze for a very helpful answer to my email regarding $!$-functors. The author is supported by the University of Melbourne Graduate Research Scholarship, and the Science Abroad Travelling Scholarship offered by the University of Melbourne.

We used LLMs to improve the writing. All mathematical errors are our own.
\section{Superstacks}
 In this section we develop the theory of derived categories on superstacks. We achieve this by reducing the constructions from \cite[Section 3]{hesselholt2024dirac} from the $\mathbb{Z}$-graded case to the $\mathbb{Z}_2$-graded case. We will prove that a superstack is a Dirac stack and we will prove that for superstacks the category of quasi-coherent Dirac sheaves will reduce to the category of quasi-coherent supersheaves. Consequently, we obtain the necessary theory in the $\mathbb{Z}_2$-graded case without having to give the analogous arguments.
 \subsection{Recollections from Dirac Geometry}
Let us now state the theorems that we adopt to the super setting. Let us start by defining the category of quasi-coherent sheaves.
   \begin{definition}
    If $S \simeq  \mathsf{Spec}(A)$ is an affine Dirac stack, then 
    \begin{align}
        \mathsf{QCoh}(S)_{\geq0} \simeq  \mathsf{An}(\mathsf{Mod}(A)^\heartsuit)
    \end{align}
    is the animation of the abelian category of $A$-modules and 
    \begin{align}
         \mathsf{QCoh}(S)_{\geq0} \longrightarrow \mathsf{QCoh}(S) \simeq  \mathsf{Sp}(\mathsf{An}(\mathsf{Mod}(A)^\heartsuit)).
    \end{align}
\end{definition}
 
\begin{definition}
   The category of quasi-coherent sheaves is defined as the right Kan extension of the functor $A \rightarrow \mathsf{Mod}(A)$ along the inclusion $\mathsf{DAff}\rightarrow \mathsf{PreDStk}$
    \begin{center}
        \begin{tikzcd}[column sep = large]
           \mathsf{DAff} \arrow[d, "\iota"] \arrow[r, "\mathsf{QCoh}|_{\mathsf{DAff}}"] & \mathsf{CAlg}(\mathsf{LPr}) \\
           \mathsf{PreDStk} \arrow[ur, "\mathsf{Ran}_{\iota}\mathsf{QCoh}|_{\mathsf{DAff}}"'],
        \end{tikzcd}
    \end{center}
     i.e. 
        \begin{align}
            \mathsf{QCoh}\coloneqq \mathsf{Ran}_{\iota}\mathsf{QCoh}|_{\mathsf{DAff}} : \mathsf{PreDStk} \longrightarrow \mathsf{CAlg}(\mathsf{LPr}).
        \end{align}
        Let $X$ be a Dirac prestack then the pointwise formula of $\mathsf{QCoh}$ is given by
    \begin{align}
        \mathsf{QCoh}(X) \coloneqq \underset{\mathsf{Spec}(A)\rightarrow X}{\mathsf{lim}}\mathsf{QCoh}(\mathsf{Spec}(A)).
    \end{align}
\end{definition}
The functor $\mathsf{QCoh}$ is naturally equipped with $*$-functors. In general, there is no formula for the $*$-pushforward of a morphism of stacks. Base-change theorems are situations where the $*$-pushforward is computable. 
\begin{lemma}[{\cite[Lemma 3.18.]{hesselholt2024dirac}}]\label[lemma]{basechangeAff}
    Given a cartesian square of affine superschemes
    \[
    \begin{tikzcd}
       T \arrow[d, "f'"] \arrow[r, "g'"] & T  \arrow[d, "f"] \\
       S' \arrow[r, "g"] & S 
    \end{tikzcd},
    \]
    such that $g$ is flat, the diagram 
      \[
    \begin{tikzcd}
       \mathsf{QCoh}(T')   & \arrow[l, "g'^*", swap] \mathsf{QCoh}(T)   \\
       \mathsf{QCoh}(S') \arrow[u, "f'^*", swap] & \arrow[u, "f^*", swap] \arrow[l, "g^*", swap] \mathsf{QCoh}(S) 
    \end{tikzcd},
    \]
is right adjointable in the sense that the canonical map 
\begin{align}
    f^* g_* \longrightarrow g'_* g'^* f^* g_* \simeq g'_* f'^* g^* g_* \longrightarrow g'_* f'^*
\end{align}
is an equivalence.
\end{lemma}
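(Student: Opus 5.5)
We reduce the claim to the classical flat base change for modules over graded-commutative rings, carried out at the level of hearts and then extended using animation and stabilization. Write $S=\mathsf{Spec}(A)$, $S'=\mathsf{Spec}(A')$, $T=\mathsf{Spec}(B)$ and $T'=\mathsf{Spec}(B')$, where $B'\simeq A'\otimes_A B$ (the top-left corner of the square should read $T'$). The functor $g_*$ is restriction of scalars along $A\to A'$. The functor $f^*$ is $B\otimes_A(-)$, the left derived tensor product obtained by animating the underived functor and extending to spectra. With these identifications the canonical map in the statement becomes, for $M\in\mathsf{QCoh}(S')$,
\begin{align}
B\otimes_A M \longrightarrow B'\otimes_{A'} M \simeq (B\otimes_A A')\otimes_{A'} M,
\end{align}
viewed as a map of $B$-modules. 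The key point is that $B'$ here is the \emph{derived} pushout, and that it agrees with the classical pushout because $g$ is flat.

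The steps, in order, are as follows.

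\begin{enumerate}
\item Check that in $\mathsf{An}(\mathsf{Mod}(A)^\heartsuit)$ the relative tensor product is associative: $(B\otimes_A A')\otimes_{A'}M\simeq B\otimes_A M$. This is the usual base change of scalars. We verify it on the compact projective generators, namely shifted free graded modules $A'(n)$, where both sides equal $B\otimes_A A'(n)$. Both functors preserve sifted colimits, so the identification extends to all connective objects. Both functors are also exact, so it extends further to all of $\mathsf{Sp}(\mathsf{An}(\cdots))$.
\item Identify the map in the statement, built from the unit and counit, with this associativity isomorphism. We do this by tracking the counit $g^*g_*M=A'\otimes_A M\to M$ through the composite. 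This is a formal check on generators.
\item Use flatness of $g$ to identify the Dirac (or super) affine pullback $T'$ with the classical graded tensor product $A'\otimes_A B$, with no higher Tor terms. This is needed so that $\mathsf{QCoh}(T')$ really means modules over the underived ring $B'$. Flatness in the graded Koszul-signed setting means that $A'\otimes_A-$ is exact on $\mathsf{Mod}(A)^\heartsuit$. By the Lazard-type argument for graded modules, this gives $\mathrm{Tor}_i^A(A',B)=0$ for $i>0$.
\end{enumerate}

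We expect the main obstacle to be step 3, together with the bookkeeping of the graded Koszul signs. In particular, we must make sure that $\mathsf{Mod}(A)^\heartsuit$ is a Grothendieck abelian category with enough projectives, given by the shifted free modules, so that the animation is well behaved and the derived tensor product computes Tor. Signs only enter through the braiding, and the braiding is never used in the associativity argument. So the argument is identical in the $\mathbb{Z}$-graded and $\mathbb{Z}_2$-graded cases, which is also why the paper can cite the Dirac version directly.
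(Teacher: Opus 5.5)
Your argument is correct. It is the standard one: both composites are tensor products that preserve colimits in $M$. On the generators $A'(n)$ the base-change map becomes $B\otimes^{L}_A A'(n)\to (B\otimes_A A')(n)$, and this is an equivalence because flatness of $g$ kills the higher Tor. The paper gives no proof of its own here and simply cites Hesselholt--Pstragowski; your argument is essentially what underlies that result.

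One small correction of framing. In $\mathsf{sAff}$ the corner $T'$ is by definition the classical fibre product, so $B'=A'\otimes_A B$ is already the classical ring. Flatness is therefore not needed to make $B'$ classical. It is needed to identify the derived tensor product $B\otimes^{L}_A A'$ coming from $f^*g_*$ with $B'$. That identification follows directly by tensoring a free graded resolution of $B$ with the flat module $A'$, so no Lazard-type argument is required.
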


We will also need a more general version of base-change theorem for which we will need the following definitions.
\begin{definition}
    A morphism of Dirac stacks $f: X\rightarrow \mathfrak{Y}$  is \textit{schematic} if, for every morphism $\eta: S \rightarrow Y$, the base change map  $f_S: X\times_Y S$ is equivalent to a morphism of Dirac schemes.
\end{definition}
\begin{definition}
    Let $f:X\rightarrow Y$ be a schematic morphism of Dirac prestacks. Then we call the morphism \textit{quasi-compact} (resp. \textit{quasi-separated}) if the base change along some affine Dirac scheme $X\times_Y S$ is a quasi-compact (resp. quasi-separated) Dirac scheme.
\end{definition}
The following lemma will be important in reducing the next theorem to a local calculation.
\begin{lemma}[{\cite[Proposition 3.6.2.]{gaitsgory2019study}}]\label[lemma]{schematic}
    Let $X$ be a Dirac scheme and $X'\rightarrow X \in \mathsf{DStk}$ be a schematic map. Then $X'$ is also a Dirac scheme. 
\end{lemma}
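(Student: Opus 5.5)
The plan is to prove that $X'$ is a Dirac scheme by exhibiting a Zariski open cover of $X'$ by affine Dirac schemes, pulled back from an affine cover of $X$. First fix a Zariski cover $\{U_i = \mathsf{Spec}(A_i) \to X\}_{i}$ by open affine Dirac subschemes, which exists since $X$ is a Dirac scheme. Since $X' \to X$ is schematic, each base change $X'_i \coloneqq X' \times_X U_i$ is a Dirac scheme. Then $X'_i \to X'$ is the base change of the open immersion $U_i \to X$. Open immersions are stable under base change in $\mathsf{DStk}$, since they are defined representably. Hence each $X'_i \to X'$ is an open immersion.

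Next I will check that the family $\{X'_i \to X'\}$ is jointly surjective as a map of fpqc (even Zariski) sheaves. Effective epimorphisms in the sheaf topos $\mathsf{DStk} = \mathsf{Shv}^{\mathrm{acc}}_{\mathsf{fpqc}}(\mathsf{DAff})$ are stable under pullback, because colimits are universal in an $\infty$-topos (or in its accessible analogue). So $\coprod_i X'_i \to X'$ is an effective epimorphism. Now choose an affine Zariski cover of each Dirac scheme $X'_i$. Composing with the open immersions $X'_i \to X'$ gives a family of open immersions of affine Dirac schemes into $X'$ that jointly covers $X'$. The last step is to note that a Dirac stack admitting such a Zariski atlas is a Dirac scheme. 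This holds either by the definition of Dirac scheme in \cite{hesselholt2024dirac} as a stack with an open affine cover, or by realizing $X'$ as the colimit of the Čech nerve of the cover, which is computed in stacks and agrees with the gluing of the $X'_i$ along the opens $X'_i \times_{X'} X'_j = X' \times_X (U_i \cap U_j)$.

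The main obstacle is definitional rather than computational. The notion of Dirac scheme has to be set up so that "admits a Zariski cover by affine opens in $\mathsf{DStk}$" is equivalent to being a Dirac scheme, and this requires the usual descent statement that Zariski-glued objects in the fpqc sheaf category are again schemes. I would establish this exactly as in \cite[Proposition 3.6.2.]{gaitsgory2019study}, transporting the argument verbatim, since it only uses formal properties of open immersions and of the Zariski topology. Those properties hold in $\mathsf{DAff}$ once one knows that localizations $A \to A[f^{-1}]$ for even homogeneous $f$ are flat epimorphisms and that Zariski covers are fpqc covers.
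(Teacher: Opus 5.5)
Your argument is correct and is essentially the paper's proof: pull back an affine Zariski atlas of $X$ along $X'\to X$, use schematicity to see that each $S_i\times_X X'$ is a Dirac scheme, cover these by affines, and use that open immersions and surjections are stable under base change and that open immersions compose, giving a Zariski atlas of $X'$. Your extra remarks on why a Dirac stack with such an atlas is a Dirac scheme only make explicit what the paper treats as definitional.
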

\begin{proof}
    Let $\underset{i}{\sqcup} S_i \rightarrow X$ be a Zariski atlas. By schematicity each $S_i \times_X X'$ is also a scheme, which we can Zariski cover by $\underset{j \in J_i}{\sqcup}T_j \rightarrow S_i \times_X X'$. Surjectivity is stable by base change, thus we obtain the following commutative diagram 
    \begin{center}
        \begin{tikzcd}
            \underset{i}{\sqcup}\underset{j \in J_i}{\sqcup}T_j \arrow[dr, two heads] \arrow[r, two heads] & \underset{i}{\sqcup} S_i \times_X X' \arrow[r, two heads] \arrow[d, two heads] & \underset{i}{\sqcup} S \arrow[d, two heads]\\
             &X' \arrow[r] & X
        \end{tikzcd}.
    \end{center}
    We claim $ \underset{i}{\sqcup}\underset{j \in J_i}{\sqcup}T_j$ is a Zariski cover of $X'$. This follows by inspecting a single piece of this diagram, i.e.
     \begin{center}
        \begin{tikzcd}
            T_j \arrow[r, hook,"\circ" marking] \arrow[dr, hook, "\circ" marking] &  S_i \times_X X' \arrow[r, hook,"\circ" marking] \arrow[d, hook,"\circ" marking] &  S_i \arrow[d, hook,"\circ" marking]\\
             &X' \arrow[r] & X
        \end{tikzcd},
    \end{center}
    where the hooked arrows with circle mean they are open immersions and the left downwards arrow is an open immersion, because open immersions are stable by base change. Open immersions compose thus $\underset{i}{\sqcup}\underset{j \in J_i}{\sqcup}T_j$ is a Zariski atlas.
\end{proof}

We can now prove a more general version of the base-change theorem, which was omitted in the proof of recollement in \cite{hesselholt2024dirac}, which requires the following lemma.
\begin{lemma} \label[lemma]{pullbackcofinal}
    Let $\mathcal{C}$ be a complete category and consider the commutative diagram
\begin{center}
    \begin{tikzcd}
        I \arrow[rr, "f"] \arrow[dr, "G"'] & &J \arrow[dl, "F"] \\
       & \mathcal{C} &
    \end{tikzcd}
\end{center}
    Furthermore, assume that $f$ fits into an adjunction
    \begin{align}
        f: I \adjoint J: g
    \end{align}
    then $f$ is cofinal, i.e. for $X \in \mathsf{Fun}(J,\mathcal{C})$ there exists an equivalence
    \begin{align}
        \underset{I}{\mathsf{lim}} X \circ f \xlongrightarrow{\sim} \underset{J}{\mathsf{lim}} X.
    \end{align}
\end{lemma}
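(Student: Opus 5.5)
The plan is to deduce the statement from the $\infty$-categorical version of Quillen's Theorem A, i.e.\ the cofinality criterion of Lurie (Higher Topos Theory, Theorem 4.1.3.1, in its dual form for limits). Since we are taking limits rather than colimits, the relevant notion is that $f$ is \emph{initial} (limit-cofinal). By the dual criterion, $f\colon I\to J$ is initial precisely when, for every object $j\in J$, the comma category
\begin{align}
I_{/j}\coloneqq I\times_J J_{/j}
\end{align}
is weakly contractible. Its objects are pairs $(i,\ f(i)\to j)$. For an initial $f$, restriction along $f$ induces, for every complete $\mathcal{C}$ and every $X\in\mathsf{Fun}(J,\mathcal{C})$, an equivalence between the canonical comparison map $\underset{J}{\mathsf{lim}}\, X\to \underset{I}{\mathsf{lim}}\, X\circ f$ and its inverse. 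The arrow written in the statement is this inverse. The functors $F$ and $G$ in the triangle play no role beyond fixing $X=F$ and $X\circ f=G$, so the argument is carried out for arbitrary $X$.

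The key step is to verify weak contractibility of $I_{/j}$ using the adjunction $f\dashv g$. I would show that $I_{/j}$ has a terminal object, namely $(g(j),\ \varepsilon_j\colon fg(j)\to j)$, where $\varepsilon$ is the counit.

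To see this, take an object $(i,\ \alpha\colon f(i)\to j)$ of $I_{/j}$. The mapping space into the candidate terminal object is the fiber of
\begin{align}
\mathrm{Map}_I(i,g(j))\longrightarrow \mathrm{Map}_J(f(i),j), \qquad \beta\longmapsto \varepsilon_j\circ f(\beta),
\end{align}
over the point $\alpha$. This map is exactly the adjunction equivalence, so every fiber is contractible. Hence $(g(j),\varepsilon_j)$ is terminal in $I_{/j}$. A category with a terminal object is weakly contractible, since its classifying anima is contractible. This is the only step with content.

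Applying the dual of Theorem 4.1.3.1 then finishes the proof. The main (mild) obstacle is bookkeeping of variance. I must use the slices $I_{/j}$, not $I_{j/}$: those are the ones relevant for limits. I must also make sure it is the left adjoint $f$ that is limit-cofinal; this holds because left adjoints preserve the needed terminal objects in the over-slices. If the notation $f\colon I \adjoint J\colon g$ were instead meant with $f$ as the right adjoint, the same argument would give that $f$ is colimit-cofinal, and the statement would not follow for limits. So I would fix the convention that $f$ is the left adjoint, which is the one making the lemma true.
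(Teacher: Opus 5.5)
The paper states this lemma without proof; it is only invoked inside the proof of the base-change theorem. So there is no argument of the paper to compare yours with. Your proof is the standard one and it is correct.

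The key computation is right. The mapping space in $I_{/j}\simeq I\times_J J_{/j}$ from $(i,\alpha)$ to $(g(j),\varepsilon_j)$ is the fibre over $\alpha$ of $\beta\mapsto\varepsilon_j\circ f(\beta)$. That map is the adjunction equivalence, so $(g(j),\varepsilon_j)$ is terminal and each $I_{/j}$ is weakly contractible. The dual of HTT, Theorem 4.1.3.1 then makes $f$ initial.

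One phrase should be corrected. The terminal object of $I_{/j}$ does not come from ``left adjoints preserving terminal objects''. It comes from the existence of the right adjoint $g$ together with the counit, exactly as in your computation.

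Your reading of $f\colon I\adjoint J\colon g$ with $f$ the left adjoint agrees with the paper's convention elsewhere, e.g.\ $f_!\colon\mathsf{Ab}_G\adjoint\mathsf{Ab}_H\colon f^*$ and $\Delta\dashv U$. Your variance check also matters in practice. In the application, the lemma must be applied to the pullback functor $S\mapsto S\times_Y X$ between the opposite slice categories. That functor is the left adjoint there, although the paper's display lists the two functors in the other order. Applied to it, the lemma yields $\mathsf{QCoh}(X)\simeq\lim_{S\to Y}\mathsf{QCoh}(S\times_Y X)$.

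If you want to avoid Quillen's Theorem A, there is a purely formal route.
\begin{itemize}
\item Precomposition turns $f\dashv g$ into an adjunction $g^*\dashv f^*$ between $\mathsf{Fun}(I,\mathcal{C})$ and $\mathsf{Fun}(J,\mathcal{C})$.
\item $g^*$ sends the constant diagram at $c$ to the constant diagram at $c$, and the unit is the identity on constant diagrams.
\item Hence restriction along $f$ gives $\mathrm{Map}(\mathrm{const}\,c,X)\simeq\mathrm{Map}(\mathrm{const}\,c,X\circ f)$, naturally in $c\in\mathcal{C}$.
\item Yoneda then gives $\lim_J X\simeq\lim_I X\circ f$.
\end{itemize}
That argument proves only the limit comparison, but it needs nothing beyond the $2$-functoriality of precomposition. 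Yours proves the stronger statement that $f$ is initial in Lurie's sense, which is what the word ``cofinal'' in the lemma is really asserting.
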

\begin{theorem}[{\cite[Proposition 2.2.2.]{gaitsgory2019study}}]\label[theorem]{basechangePre}
    Let $f:X \rightarrow Y$ be a flat, schematic, qcqs morphism of Dirac prestacks.  then $f_*$ preserves all colimits and the diagram
\[
\begin{tikzcd}
       \mathsf{QCoh}(X')   & \arrow[l, "f'^*", swap] \mathsf{QCoh}(Y')   \\
       \mathsf{QCoh}(X) \arrow[u, "g'^*", swap] & \arrow[u, "g^*", swap] \arrow[l, "f^*", swap] \mathsf{QCoh}(Y) 
    \end{tikzcd}
\]
is right adjointable in the sense that the base-change map
\begin{align}
    g^* f_* \longrightarrow f'_* f'^* g^* f_* \simeq f'_* g'^* f^* f_* \longrightarrow f'_* g'^*
\end{align}
\end{theorem}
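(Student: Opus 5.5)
We reduce to the affine statement \cref{basechangeAff} by writing everything as limits over affines and using that, for a schematic qcqs morphism, $f_*$ can be computed affine-locally on the target. Concretely, write $\mathsf{QCoh}(Y) \simeq \lim_{S\to Y}\mathsf{QCoh}(S)$ and $\mathsf{QCoh}(X)\simeq \lim_{S\to Y}\mathsf{QCoh}(X_S)$, where $X_S \coloneqq X\times_Y S$. This second identification is where \cref{pullbackcofinal} enters. The functor $\{T\to X\}\to\{S\to Y\}$ sending $T\to X$ to the composite $T\to X\to Y$ is not quite of the required form. Instead, we compare the index category of affines over $X$ with the category of pairs $(S\to Y,\ T\to X_S)$. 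The functor $(S,T)\mapsto (T\to X_S\to X)$ admits an adjoint sending $T\to X$ to $(T\to Y,\ T\to X_T)$, the section induced by the graph. \cref{pullbackcofinal} then identifies $\mathsf{QCoh}(X)$ with $\lim_{S}\lim_{T\to X_S}\mathsf{QCoh}(T)=\lim_S \mathsf{QCoh}(X_S)$.

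By schematicity and \cref{schematic}, each $X_S$ is a Dirac scheme, and it is qcqs and flat over $S$. We then define $f_*$ levelwise by the pushforwards $(f_S)_*:\mathsf{QCoh}(X_S)\to\mathsf{QCoh}(S)$. To see that these assemble into a functor between the limits that is right adjoint to $f^*=\lim f_S^*$, we must check that for every $S'\to S$ over $Y$ the Beck--Chevalley map $h^*(f_S)_*\to (f_{S'})_*h'^*$ is an equivalence. Here the affine base change is along an arbitrary $S'\to S$, but the morphism $f_S$ is flat. So we apply \cref{basechangeAff} with the roles of the two legs exchanged; the square is symmetric and the flat leg is $f_S$. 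This requires first extending \cref{basechangeAff} from affine $X_S$ to qcqs schemes $X_S$. We do this by choosing a finite affine Zariski cover of $X_S$ with affine intersections (quasi-separatedness gives intersections that are quasi-compact and again coverable by finitely many affines). Then $\mathsf{QCoh}(X_S)$ is a finite limit (a Čech totalization truncated by quasi-compactness) of affine categories. Pushforward is the corresponding finite limit of affine pushforwards, and pullbacks commute with finite limits because $h^*$ is exact. Granting this levelwise adjointability, a limit of adjunctions whose squares are right adjointable is an adjunction, computed levelwise. This gives $f_*$ together with the base change formula for every affine $S\to Y$.

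Colimit preservation now follows. Colimits in $\mathsf{QCoh}(Y)$ are detected levelwise by the exact pullbacks $\eta^*$ for $\eta:S\to Y$. Moreover $\eta^* f_*\simeq (f_S)_*\eta'^*$, and $(f_S)_*$ preserves colimits because it is a finite limit of affine pushforwards, which are forgetful functors and hence preserve colimits, and finite limits commute with colimits in stable categories. Finally, for a general prestack map $g:Y'\to Y$, we check the base change map $g^*f_*\to f'_*g'^*$ after pulling back along every affine $S'\to Y'$. Both sides reduce, by the affine-local formula just proved applied to $f$ and to $f'$ (note $X'_{S'}\simeq X_{S'}$ where $S'$ is viewed over $Y$), to the identity of $(f_{S'})_*$ applied to the restriction. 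Hence the map is an equivalence.

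The main obstacle is the first step: making the cofinality argument precise so that $\mathsf{QCoh}(X)\simeq\lim_S\mathsf{QCoh}(X_S)$ holds functorially. The second delicate point is the passage from affine to qcqs $X_S$. There I would first try the finite Čech argument above, and fall back on a Mayer--Vietoris induction on the number of affines if the truncated totalization is awkward in the Dirac setting.
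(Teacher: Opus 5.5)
Your proposal is correct and follows essentially the same route as the paper. Both arguments write $\mathsf{QCoh}(X)$ as a limit over affines $S\to Y$ of $\mathsf{QCoh}(X\times_Y S)$, using \cref{pullbackcofinal} and an adjunction between index categories; both use \cref{schematic} to make each $X_S$ a qcqs Dirac scheme, pass to a limit of right adjointable squares (the paper cites \cite[Proposition 4.7.4.19.]{lurie:ha}), and derive the qcqs case from \cref{basechangeAff} by a finite Zariski/Mayer--Vietoris induction. Your indexing by pairs $(S\to Y,\ T\to X_S)$, with the induction placed on a cover of the source $X_S$, is cleaner than the paper's write-up, which uses the composition/pullback adjunction of slice categories and inducts on a cover of $Y'$.
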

\begin{proof}
    The proof consists of two parts. The first one consists of multiple reduction arguments to reduce to the affine case,. In the affine case we argue by hand.
    For every prestack, it holds that $Y= \underset{S \rightarrow Y}{\mathsf{colim}} S$ and all $S$ can be covered by affines, thus it is enough to check base change along affines, so we can set $Y = \mathsf{Spec}(A)$. Thus, by \cref{schematic} $X$ is also a scheme and by definition of qcqs $X \in \mathsf{Sch}_{\mathsf{qcqs}}$. We observe that we have the factorization  
    \begin{center}
        \begin{tikzcd}[column sep = large, row sep = large]
            \mathsf{DAff}^{\mathsf{op}} \arrow[r, "\mathsf{QCoh}"] \arrow[d, hook, "\mathsf{inc}_1", swap] \arrow[dr, dashed, "\mathsf{inc}"] & \mathsf{LPr} \\
        \mathsf{DSch}_{\mathsf{qcqs}}^{\mathsf{op}} \arrow[r, hook, "\mathsf{inc}_2", swap] &  \mathsf{PreDStk} \arrow[u, "\mathsf{QCoh}", swap]
        \end{tikzcd},
    \end{center} 
    since $\mathsf{DAff} \subseteq \mathsf{DSch}_{\mathsf{qcqs}}$ is a full subcategory. Thus, using transitivity of Kan extensions,
    \begin{align}
        \mathsf{QCoh}(X) \xlongrightarrow{\sim} \underset{S \rightarrow X}{\mathsf{lim}} \mathsf{QCoh}(S),
    \end{align}
     is an equivalence, where $S \rightarrow X \in {(\mathsf{DSch}_{qcqs}}_{ / X})^{\mathsf{op}}$. Furthermore, by \cite[Lemma 6.1.1.1.]{lurie:htt} there exists an adjunction 
    \begin{align}
      \begin{split}
           F: {(\mathsf{DSch}_{qcqs}}_{ / X})^{\mathsf{op}} &\adjoint {(\mathsf{DSch}_{qcqs}}_{ / Y})^{\mathsf{op}} : G \\
       (S\to X)& \longmapsto (S\to X \rightarrow Y) \\
       (S\times_XY)&\longmapsfrom (S\to Y),
      \end{split}
    \end{align}
   which by \cref{pullbackcofinal} induces the equivalence  
    \begin{align}
        \mathsf{QCoh}(X) \xlongrightarrow{\sim} \underset{S \rightarrow X}{\mathsf{lim}} \mathsf{QCoh}(S \times_Y X).
    \end{align}
    By applying \cite[Proposition 4.7.4.19.]{lurie:ha}, it suffices to prove that $f_*$ preserves colimits for a flat morphism between qcqs schemes $X \rightarrow Y$ and show that the natural transformation is an isomorphism when all Dirac prestacks are assumed to be qcqs schemes, i.e. 
    \begin{center}
        \begin{tikzcd}
            X' \arrow[r, "f'"] \arrow[d, "g'"] & Y' \arrow[d, "g"] \\
            X \arrow[r, "f"] & Y 
        \end{tikzcd}.
    \end{center}
     Since $\mathsf{QCoh}$ satisfies fppf descent it follows that it also satisfies Zariski descent, so we only need to verify the base change isomorphism in this case. We will proceed by induction on the number of affines covering $Y'$, which is also finite by the quasi-separatedness assumption. The base case is $Y' = \mathsf{Spec}(A')$, thus $X' = \mathsf{Spec}(B')$ this now follows by \cref{basechangeAff}. Now let $Y' = U_1 \cup U_2$ and denote $U_{1,2} \coloneqq U_1 \times_{Y'} U_2$ and similarly $f_1 \coloneqq f|_{U_1}$, $f_2 \coloneqq f|_{U_2}$ and $f_{1,2}\coloneqq f|_{U_{1,2}}$. By the induction hypothesis, we can assume that the base change isomorphism holds for $f_1$, $f_2$ and $f_{1,2}$. Furthermore, we have 
    \begin{align}
        \begin{split}
            f_*: \mathsf{QCoh}(Y) &\longrightarrow \mathsf{QCoh}(Y') \simeq  \mathsf{QCoh}(U_1) \times_{\mathsf{QCoh}(U_{1,2})} \mathsf{QCoh}(U_2) \\
        \mathcal{F} &\longmapsto f_* \mathcal{F} \simeq  (f_1)_*(\mathcal{F}|_{U_1}) \times_{(f_{1,2})_*(\mathcal{F}|_{U_{1,2}})} (f_2)_*(\mathcal{F}|_{U,2})
        \end{split}
    \end{align}
\end{proof}
\begin{remark}\label[remark]{wecandoderived}
   If we choose to work in the derived setting, i.e. replacing the category of Dirac rings by the category of animated Dirac rings, the same proof holds without the flatness assumption.
\end{remark}
Another property of the category of quasi-coherent sheaves is that it is a sheaf, otherwise also known as descent. As a corollary one also obtain that $\mathsf{QCoh}$ does not differentiate between a prestack and its sheafification. One can also interpret this as $\mathsf{QCoh}$ being insensitive to the topology of a stack.
\begin{theorem}[{\cite[Theorem 3.24.]{hesselholt2024dirac}}] \label[theorem]{QCohIsSheaf}
    Up to contractible choice, there exists a unique factorization 
    \begin{center}
        \begin{tikzcd}
            \mathcal{P}(\mathsf{sAff}^{\mathsf{op}}) \arrow[dr, "\mathsf{QCoh}"'] \arrow[rr, "L"] & & \mathsf{Shv}(\mathsf{sAff})^{\mathsf{op}} \arrow[ld, "\mathsf{QCoh}"]\\
            &\mathsf{CAlg}(\mathsf{LPr}) &
        \end{tikzcd}
    \end{center}
    of the functor $\mathsf{QCoh}: \mathcal{P}(\mathsf{sAff}^{\mathsf{op}}) \rightarrow \mathsf{CAlg}(\mathsf{LPr})$ and the functor 
    \begin{align}
        \mathsf{QCoh}: \mathsf{Shv}(\mathsf{sAff})^{\mathsf{op}} \rightarrow \mathsf{CAlg}(\mathsf{LPr})
    \end{align}
    takes small colimits of Dirac stacks to limits of symmetric monoidal categories. In particular, let $X \in \mathcal{P}(\mathsf{sAff}$ we obtain that the unit of the sheafification adjunction $\eta: \iota \circ L \rightarrow \mathsf{id}$ induces an equivalence of categories 
    \begin{align}
        \mathsf{QCoh}((\iota \circ L)(X)) \xlongrightarrow{\eta^*} \mathsf{QCoh}(X).
    \end{align}
\end{theorem}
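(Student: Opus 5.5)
The statement is \cref{QCohIsSheaf}, cited from \cite[Theorem 3.24.]{hesselholt2024dirac}. The plan is to prove it in three steps: first descent on affines, then extension to sheaves of affines by abstract nonsense, and finally deduce the statement about sheafification. Throughout, I view $\mathsf{QCoh}$ on prestacks as the right Kan extension of its restriction to affines. Since $\mathcal{P}(\mathsf{sAff}^{\mathsf{op}})$, or rather presheaves on $\mathsf{sAff}$, is the free cocompletion of $\mathsf{sAff}$, this right Kan extension is the unique limit-preserving functor $\mathcal{P}(\mathsf{sAff})^{\mathsf{op}} \to \mathsf{CAlg}(\mathsf{LPr})$ extending the affine one. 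Limits in $\mathsf{CAlg}(\mathsf{LPr})$ are computed in $\mathsf{Cat}$, so it suffices to control the underlying categories.

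\textbf{Step 1: affine descent.} The key input is fpqc descent for the affine functor $\mathsf{Spec}(A) \mapsto \mathsf{Mod}(A)$. Concretely, for a faithfully flat map $A \to B$ of superrings with Čech nerve $B^{\otimes \bullet+1}$, I will show that
\begin{align}
\mathsf{Mod}(A) \longrightarrow \lim_{\Delta} \mathsf{Mod}(B^{\otimes \bullet +1})
\end{align}
is an equivalence, and similarly for finite disjoint unions, which reduces to products of rings. I will use the Barr–Beck–Lurie comonadicity argument for $B \otimes_A -$:
\begin{itemize}
\item faithful flatness makes $B \otimes_A -$ conservative;
\item flatness makes $B \otimes_A -$ t-exact, so it preserves the relevant totalizations of split cosimplicial objects.
\end{itemize}
Both properties hold for $\mathbb{Z}_2$-graded rings exactly as for Dirac rings, because they are statements about the underlying graded modules with the Koszul sign.

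Alternatively, I could bypass this step via the comparison $\mathsf{QCoh}_{\mathsf{super}} \simeq \mathsf{QCoh}_{\mathsf{Dirac}}$ announced in the introduction and directly cite the Dirac case. Since that comparison is proven later, I will argue directly.

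\textbf{Step 2: extension to sheaves.} Given affine descent, the Kan-extended functor $\mathcal{P}(\mathsf{sAff})^{\mathsf{op}} \to \mathsf{CAlg}(\mathsf{LPr})$ sends colimits to limits. Its restriction to representables satisfies the sheaf condition, so it inverts every covering sieve inclusion $R \hookrightarrow h_S$: the colimit description of $R$ turns $\mathsf{QCoh}(R)$ into the Čech limit. The class of maps inverted by a colimit-preserving functor (here contravariant, so limit-sending) is strongly saturated. The strongly saturated class generated by covering sieves consists exactly of the maps inverted by the sheafification $L$. Hence $\mathsf{QCoh}$ factors uniquely through $L$, by the universal property of localization \cite[Proposition 5.5.4.20]{lurie:htt}. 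In particular, $\eta^*$ is an equivalence, and the factored functor on $\mathsf{Shv}$ still sends small colimits to limits, because colimits in $\mathsf{Shv}$ are $L$ applied to colimits of presheaves.

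\textbf{Main obstacle.} The hard step is set-theoretic. The fpqc topology is not small, so $L$ need not exist on all presheaves. This is why the paper restricts to \emph{accessible} sheaves. I will handle it by working with a strongly inaccessible cardinal $\kappa$, with $\kappa$-small affines. Then:
\begin{itemize}
\item fpqc sheafification exists on accessible presheaves;
\item any accessible presheaf is a small colimit of representables, so the right Kan extension formula is a small limit in $\mathsf{LPr}$;
\item the result is independent of $\kappa$, since $\mathsf{QCoh}$ of an affine does not depend on the universe.
\end{itemize}
Step 1 is routine once flatness is available. The saturation argument in Step 2 is the conceptual core, but it is formal.
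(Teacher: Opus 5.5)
Your argument is correct in outline, but it is not the paper's route. The paper gives no argument for this statement. It quotes the $\mathbb{Z}$-graded result \cite[Theorem 3.24]{hesselholt2024dirac} and obtains the $\mathbb{Z}_2$-graded version by transport along the comparison $\mathsf{sQCoh}(X)\simeq\mathsf{DQCoh}(U_!X)$ from the subsection on reduction from Dirac geometry. The proof of that comparison itself invokes this theorem, in its Dirac form, applied to $U_!X$. You instead run the standard descent-plus-localization argument directly for superrings. First you get fpqc descent for $A\mapsto\mathsf{Mod}(A)$ via comonadicity of $B\otimes_A-$. Then you observe that a limit-sending functor on presheaves which inverts covering sieves inverts every $L$-equivalence, and so factors through sheaves. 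Your route is self-contained. It avoids the Dirac comparison and the compatibility that comparison tacitly needs: that $U_!$, along the non-full inclusion $\mathsf{sAff}\to 2\mathsf{DAff}$, carries super fpqc covers and sheafification to their Dirac counterparts. The paper's route saves redoing descent, at the price of those unverified compatibilities.

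Two steps need tightening.

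\textbf{Totalizations.} t-exactness alone does not make $B\otimes_A-$ preserve totalizations in the unbounded derived category. The actual argument runs as follows. If $B\otimes_A M^\bullet$ is split, the cochain complexes $\pi_n(B\otimes_A M^\bullet)\cong B\otimes_A\pi_n(M^\bullet)$ (by flatness) are exact in positive degrees. By faithful flatness, so are the complexes $\pi_n(M^\bullet)$. The Bousfield--Kan spectral sequence therefore collapses onto the line $s=0$, and $\pi_n\mathrm{Tot}(M^\bullet)$ is the equalizer of $\pi_nM^0\rightrightarrows\pi_nM^1$, which flat base change preserves. You also need the Beck--Chevalley condition (base change along the coface maps of the \v{C}ech nerve) to conclude $\mathsf{Mod}(A)\simeq\lim_\Delta\mathsf{Mod}(B^{\otimes\bullet+1})$, not merely comonadicity; see \cite[\S 4.7.5]{lurie:ha}.

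\textbf{Set theory.} The operative input is not that $\mathsf{QCoh}$ of an affine is universe-independent. It is the Waterhouse-type theorem: the fpqc sheafification of an accessible presheaf is obtained by sheafifying its restriction to $\kappa$-small affines for $\kappa\gg0$ and left Kan extending. Only this lets you see $\eta\colon X\to LX$ as an $L_\kappa$-equivalence on the small site. There the $L_\kappa$-equivalences form the strongly saturated class generated by covering sieves, and \cite[Proposition 5.5.4.20]{lurie:htt} applies. That result requires a presentable source, which accessible presheaves on all of $\mathsf{sAff}$ are not. Uniqueness of the factorization then follows from \cite[Proposition 5.2.7.12]{lurie:htt}.
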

The following theorem is the algebro-geometric version of the open-closed decomposition and a key result needed to prove that crystals are equivalent to $D$-modxules.
\begin{theorem}[cf. {\cite[Theorem 4.3.]{hesselholt2024dirac}}] \label[theorem]{recollement}
    Let $X$ be a Dirac scheme, and let $g: Z \rightarrow X$ be a closed immersion. Let $j: X_Z^\wedge \rightarrow X$ be the formal completion of $X$ along $g$, and let $i: U \simeq  X - Z \rightarrow X$ be the inclusion of the open complement of $Z$. In this situation, we have
    \begin{center}
        \begin{tikzcd}
\mathsf{QCoh}(U) \arrow[r, "i_* \simeq i_!"] 
& \mathsf{QCoh}(X) \arrow[l, bend right=40, "i^*"'] \arrow[l, bend left=40, "i^!"'] \arrow[r, "j^! \simeq j^*"] 
& \mathsf{QCoh}(X_Z^\wedge) \arrow[l, bend right=40, "j_!"'] \arrow[l, bend left=40, "j_*"']
\end{tikzcd}
    \end{center}
    and $j_*$ is $t$-exact.
\end{theorem}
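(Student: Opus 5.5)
The statement to prove is the recollement \cref{recollement}. The plan is to follow the proof in \cite[Theorem 4.3]{hesselholt2024dirac}: first reduce to the affine case using descent, and there construct all six functors explicitly.

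\textbf{Reduction to the affine case.} By \cref{QCohIsSheaf}, $\mathsf{QCoh}$ satisfies Zariski descent. Choose a Zariski cover of $X$ by affines $S_i=\mathsf{Spec}(A)$. The pullbacks $Z\times_X S_i$, $U\times_X S_i$ and $X^\wedge_Z\times_X S_i$ are again a closed subscheme, its open complement, and the formal completion. Since $i$ and $j$ are schematic qcqs (or ind-schematic), \cref{basechangePre} together with its ind-version gives base change for the functors $i^*, i_*, j^*, j_*$ along the cover.

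For this we need that the adjoints we construct are compatible with restriction to opens. This holds because $i^!$ and $j_!$ are defined by formulas, given below, that commute with flat base change. Hence the adjunctions, the fully faithfulness of $i_*$ and $j_*$, and the $t$-exactness of $j_*$ can all be checked locally, and we may assume $X=\mathsf{Spec}(A)$ with $Z=V(I)$, where $I=(f_1,\dots,f_n)$ is finitely generated by homogeneous elements.

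\textbf{The affine case.} Here $U$ is covered by the $D(f_k)$, and $i^*$ is the localization $\mathsf{Mod}_A\to\mathsf{QCoh}(U)$. We construct the functors as follows.

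\begin{enumerate}
\item $i_*$ is fully faithful and preserves colimits, since $U$ is quasi-compact and \cref{basechangePre} applies to it. So it has a right adjoint $i^!$, and $i_*\simeq i_!$ by definition.
\item Let $\Gamma_Z$ be the fibre of $\mathrm{id}\to i_*i^*$. This is the local cohomology functor, computed via the Čech complex of the $f_k$. Localizing at homogeneous elements is fine in the graded setting, because even or odd parity is allowed but only even (central, non-nilpotent) elements give nontrivial localizations.
\item Identify $\mathsf{QCoh}(X^\wedge_Z)=\lim_n\mathsf{QCoh}(\mathsf{Spec}\,A/I^n)$ with the $I$-nilpotent (torsion) modules. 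This uses that $X^\wedge_Z=\mathrm{colim}_n\,\mathsf{Spec}(A/(f_1^n,\dots,f_n^n))$ and that $\mathsf{QCoh}$ sends colimits to limits, again by \cref{QCohIsSheaf}.
\item Under this identification, $j^*$ is restriction and $j_!$ is the inclusion of torsion modules. Its right adjoint $\Gamma_Z$ then plays the role of $j^!$. The equality $j^!\simeq j^*$ must be read with this identification in mind. Finally, $j_*$ is the right adjoint of $j^*$: concretely it is derived $I$-completion, i.e. $\mathrm{Hom}$ out of $\Gamma_Z A$.
\end{enumerate}

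\textbf{Main obstacle: $t$-exactness of $j_*$.} The hardest step is proving that $j_*$ is $t$-exact. My first attempt is to compute $j_*$ on the torsion heart directly. A torsion module is a filtered colimit of modules killed by $I^n$, and $j_*$ evaluated on $\mathsf{QCoh}(\mathsf{Spec}\,A/I^n)$ is pushforward along a closed immersion, which is affine and hence $t$-exact. One then needs the limit over $n$ to introduce no $\lim^1$ terms. This comes down to a Mittag-Leffler or Artin–Rees type statement for finitely generated homogeneous ideals, which should transfer from the ungraded case because the odd generators square to zero.

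I would check this by computing on the Koszul complexes $K(f^n)$. Their pro-systems are pro-isomorphic to $A/I^n$, which gives the vanishing of the relevant higher limits. This matches the argument in \cite{hesselholt2024dirac} and should go through verbatim for both $\mathbb{Z}$- and $\mathbb{Z}_2$-gradings.
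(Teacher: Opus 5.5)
The paper gives no proof of this statement; it is quoted from \cite[Theorem 4.3]{hesselholt2024dirac}. Your sketch therefore has to stand on its own, and it fails at the step you identified as the main obstacle. Once you model $\mathsf{QCoh}(X_Z^\wedge)$ by $I$-torsion modules via $j_!$, you get $j^*\simeq\Gamma_Z$ (this is the real content of $j^!\simeq j^*$) and $j_*\simeq\mathrm{RHom}_A(\Gamma_Z A,-)$. However, the $t$-structure on $\mathsf{QCoh}(X_Z^\wedge)$ declares $\mathcal{F}$ connective iff every restriction $\iota_n^*\mathcal{F}$ to $Z_n=\mathsf{Spec}(A/I^n)$ is connective. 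This is not the standard $t$-structure on torsion modules, so ``the torsion heart'' is the wrong heart.

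Your reduction would in fact prove something false. Take $A=\mathbb{Z}$ and $I=(p)$. Then $j_*$ fixes each $\mathbb{Z}/p^n$, but $j_*(\mathbb{Z}/p^\infty)\simeq\mathbb{Z}_p[1]$, even though $\mathbb{Z}/p^\infty=\mathrm{colim}_n\,\mathbb{Z}/p^n$. So $j_*$, being a right adjoint, does not commute with this filtered colimit. The discrepancy is not a ${\lim}^1$ term: $\mathbb{Z}/p^\infty$ restricts to $\mathbb{Z}/p^n[1]$ on every $Z_n$, so it sits in degree $1$ for the correct $t$-structure.

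What does work, in the affine case, is the limit formula you were reaching for:
\begin{enumerate}
\item Since $\mathsf{QCoh}(X_Z^\wedge)\simeq\lim_n\mathsf{QCoh}(Z_n)$, every $\mathcal{F}$ satisfies $\mathcal{F}\simeq\lim_n\iota_{n*}\iota_n^*\mathcal{F}$.
\item Hence $j_*\mathcal{F}\simeq\lim_n g_{n*}\iota_n^*\mathcal{F}$, where $g_n=j\circ\iota_n$ is a closed immersion, so $g_{n*}$ is $t$-exact.
\item Left $t$-exactness of $j_*$ is automatic, because $j^*$ is right $t$-exact.
\item For connective $\mathcal{F}$, the terms $g_{n*}\iota_n^*\mathcal{F}$ are connective and the transition maps are surjective on $\pi_0$ (they are $M\mapsto M/I^nM$). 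The Milnor sequence then gives connectivity of the limit.
\end{enumerate}
This argument needs no Artin--Rees or Koszul input.

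Two further steps are not justified as written.

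\emph{Reduction to the affine case.} The map $j$ is neither flat nor schematic. Moreover $j_*$, and hence also $i^!$ by the fibre sequence $i_*i^!\to\mathrm{id}\to j_*j^*$, does not commute with restriction to Zariski opens, because completion does not commute with localization. For $A=k[x,y]$ and $I=(x)$, the restriction of $j_*j^*\mathcal{O}=k[y]\llbracket x\rrbracket$ to $D(y)$ is $k[y]\llbracket x\rrbracket[y^{-1}]$, which misses $\sum_n y^{-n}x^n\in k[y^{\pm1}]\llbracket x\rrbracket$. So your claim that $i^!$ and $j_*$ are compatible with flat base change is false. The recollement itself can still be localized through the torsion side: $i^*$, $i_*$, $\Gamma_Z$ and $j_!$ commute with flat pullback, and $j_*$ is then the right adjoint of $j^*$, fully faithful because $j_!$ is. But $t$-exactness of $j_*$ on non-affine $X$ needs an argument that does not go through base change.

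\emph{Pro-isomorphism with the Koszul tower.} The pro-isomorphism between $\{A/I^n\}$ and the Koszul tower is exactly weak proregularity of $I$. It follows from Artin--Rees when $A$ is noetherian, but fails in general. For example, take $A=k[f,y_1,y_2,\dots]/(f^ny_n)_{n\ge 1}$ over a countable field $k$, with $I=(f)$. Here $\{A[f^n]\}$ (transition maps multiplication by $f$) is not Mittag-Leffler, so ${\lim}^1_nA[f^n]\neq0$. This group is the kernel of $\pi_0$ of the derived completion $\Lambda A\to\lim_nA/f^n$. The recollement would force $A\to j_*j^*A=\lim_nA/f^n$ to be the derived completion, so it fails for this $A$. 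Your step (3) therefore needs such a hypothesis, as does your tacit assumption that $U$ is quasi-compact, and neither appears in the statement as transcribed.

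\emph{Minor point.} In a Dirac ring an odd element only satisfies $2x^2=0$ (for example $\mathbb{F}_2[x]$ with $x$ odd), so odd generators need not be nilpotent. This is harmless: replace each $f_k$ by $f_k^2$.
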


\subsection{Reduction from Dirac Geometry}
We consider the category of affine superschemes $\mathsf{sAff}$ and the category of dirac schemes $\mathsf{DAff}$. The categories of Dirac prestacks and pre-superstacks are closely related, which arises from the adjunction
\begin{align}
        U^* : \mathcal{P}(\mathsf{sAff}) \adjoint \mathcal{P}(\mathsf{DAff}): P^*
    \end{align}
    induced by 
    \begin{align}
  P:  \mathsf{DCAlg} \adjoint \mathsf{sCAlg}: U.
\end{align}
Let us first prove this is an adjunction. In particular, we will prove that there is an adjunction between the categories of $\mathbb{Z}$-graded abelian groups and $\mathbb{Z}_2$-graded abelian groups. The adjunction is monoidal and hence restricts to the categories of commutative algebras. To do this, let us quickly review the concept of Day convolution. 
\begin{definition}
    Let $(\mathcal{C}, \otimes_\mathcal{C}, \mathbbm{1}_\mathcal{C}),(\mathcal{D}, \otimes_\mathcal{D}, \mathbbm{1}_\mathcal{D})$ be symmetric monoidal categories. The \textit{external tensor product} is given by yhe composition:
    \begin{center}
        \begin{tikzcd}
            \mathsf{Fun}(\mathcal{C},\mathcal{D}) \times \mathsf{Fun}(\mathcal{C},\mathcal{D}) \arrow[r, hook] \arrow[rr, bend right = 20, " (- \boxtimes-) "'] & \mathsf{Fun}(\mathcal{C}\times \mathcal{C},\mathcal{D}\times \mathcal{D}) \arrow[r, "\otimes_\mathcal{D}"] & \mathsf{Fun}(\mathcal{C} \times \mathcal{C},\mathcal{D}).
        \end{tikzcd}
    \end{center}

\end{definition}
\begin{definition}[cf. {\cite[Definition 2.8.]{glasman2017day}}]
    Let $(\mathcal{C}, \otimes_\mathcal{C}, \mathbbm{1}_\mathcal{C}),(\mathcal{D}, \otimes_\mathcal{D}, \mathbbm{1}_\mathcal{D})$ be symmetric monoidal categories, where $-\otimes_\mathcal{D}-$ preserves colimits in each variable and let $\mathcal{D}$ be cocomplete. The \textit{Day Convolution} is a symmetric monoidal structure on $\mathsf{Fun}(\mathcal{C},\mathcal{D})$, where the multiplication is given by the following composition:
    \begin{center}
        \begin{tikzcd}[column sep = huge]
            \mathsf{Fun}(\mathcal{C}\times \mathcal{C},\mathcal{D}\times \mathcal{D}) \arrow[rr, bend right = 22, "\otimes_\mathsf{Day}"'] \arrow[r, "\otimes_\mathcal{D}"] &\mathsf{Fun}(\mathcal{C}\times \mathcal{C}, \mathcal{D}) \arrow[r, shift right = 2, "(\otimes_\mathcal{C})_!"', "\perp"] & \mathsf{Fun}(\mathcal{C},\mathcal{D}) \arrow[l, shift right =2, "(\otimes_\mathcal{C})^*"'].
        \end{tikzcd}
    \end{center}
    As it is defined by the left Kan extension, there exists a pointwise formula. Let $X \in \mathcal{C}$, then we have 
    \begin{align}
        (F\otimes G)(X) \coloneqq \underset{X_1 \otimes_\mathcal{C}X_2 \to X}{\mathsf{colim}}F(X_1)\otimes_\mathcal{D}G(X_2).
    \end{align}
\end{definition}
Furthermore, given an abelian group $G$, we can also identify the category of $G$-graded abelian groups as 
\begin{align}
   \mathsf{Ab}_G\simeq  \mathsf{Fun}(G,\mathsf{Ab}).
\end{align}
Moreover, $G$ can be viewed as a $0$-truncated anima equipped with a symmetric monoidal structure and similarly, if $f:G\rightarrow H$ is a morphism of abelian groups then it can also be viewed as a symmetric monoidal functor. Thus, we can equip $\mathsf{Ab}_G$ with a symmetric monoidal structure given by Day convolution. We will now prove that, under mild hypothesis, $f:G\to H$ induces a symmetric monoidal adjunction 
\begin{align}
    f_!: \mathsf{Ab}_G \adjoint \mathsf{Ab}_H: f^*.
\end{align}
\begin{proposition} \label[proposition]{FunwithDay}
    Let $f: G \to H$ be a morphism of abelian groups and let $\mathcal{C}$ be a symmetric monoidal category, whose tensor product $\otimes_\mathcal{C}$ preserves colimits in each variable. Then the functor 
    \begin{align}
        f_!: \mathsf{Fun}(G,\mathcal{C}) \longrightarrow \mathsf{Fun}(H,\mathcal{C})
    \end{align}
    is symmetric monoidal with respect to the Day convolution. In particular, the adjunction $f_! \dashv f^*$ is a monoidal adjunction.
\end{proposition}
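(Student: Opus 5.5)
We show directly that left Kan extension along the symmetric monoidal functor $f$ commutes with Day convolution, using the pointwise formulas. Recall that for $F\in\mathsf{Fun}(G,\mathcal{C})$ and $h\in H$ we have
\begin{align}
    (f_!F)(h) \simeq \underset{g\in G,\ f(g)\to h}{\mathsf{colim}} F(g) \simeq \bigoplus_{g\in f^{-1}(h)} F(g),
\end{align}
since $G$ and $H$ are discrete. The second identification holds because the comma category $G\times_H H_{/h}$ is equivalent to the discrete set $f^{-1}(h)$.

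\textbf{Comparison map.} Because $f^*=-\circ f$ is lax symmetric monoidal (the Day convolution of restrictions maps to the restriction of the Day convolution), its left adjoint $f_!$ is canonically oplax symmetric monoidal. This follows from doctrinal adjunction, or from the universal property of Day convolution as in Glasman, \cite{glasman2017day}. The resulting comparison maps are
\begin{align}
    f_!(F\otimes_{\mathsf{Day}} F') \longrightarrow f_!F\otimes_{\mathsf{Day}} f_!F', \qquad f_!\mathbbm{1}\longrightarrow \mathbbm{1},
\end{align}
and it suffices to show these are equivalences, checked pointwise at each $h\in H$.

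\textbf{Unit.} The unit of Day convolution on $\mathsf{Fun}(G,\mathcal{C})$ is the left Kan extension of $\mathbbm{1}_\mathcal{C}$ along $0:\ast\to G$, that is, $\mathbbm{1}_\mathcal{C}$ placed in degree $0$. Kan extensions compose and $f(0)=0$, so $f_!\mathbbm{1}\simeq (f\circ 0)_!\mathbbm{1}_\mathcal{C}$, which is the unit of $\mathsf{Fun}(H,\mathcal{C})$.

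\textbf{Tensor product.} Evaluating both sides at $h$ and using the pointwise formulas together with the fact that $\otimes_\mathcal{C}$ commutes with colimits in each variable gives
\begin{align}
    \bigl(f_!F\otimes f_!F'\bigr)(h) \simeq \bigoplus_{h_1+h_2=h}\ \bigoplus_{\substack{g_1\in f^{-1}(h_1)\\ g_2\in f^{-1}(h_2)}} F(g_1)\otimes F'(g_2),
\end{align}
and
\begin{align}
    f_!(F\otimes F')(h) \simeq \bigoplus_{g\in f^{-1}(h)}\ \bigoplus_{g_1+g_2=g} F(g_1)\otimes F'(g_2).
\end{align}
Both are indexed by the same set $\{(g_1,g_2)\in G^2 : f(g_1)+f(g_2)=h\}$, because $f$ is additive. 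More invariantly, both are left Kan extensions of $F\boxtimes F'$ along the single functor $G\times G\to H$, $(g_1,g_2)\mapsto f(g_1+g_2)=f(g_1)+f(g_2)$. One route factors it as $f\circ(+_G)$, the other as $(+_H)\circ(f\times f)$. By transitivity of left Kan extensions, together with the identity $(f\times f)_!(F\boxtimes F')\simeq f_!F\boxtimes f_!F'$ (again using that $\otimes_\mathcal{C}$ preserves colimits separately in each variable), both sides are canonically equivalent.

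The main obstacle is not the objectwise identification but the coherence: one must show that the equivalence just constructed is the oplax structure map, and not merely some abstract equivalence. To handle this, I would express the oplax structure through the mates of the lax structure of $f^*$. The unit of $f_!\dashv f^*$ is compatible with the Kan-extension descriptions above, so the structure map corresponds to the canonical comparison between the two factorizations $f\circ(+_G)=(+_H)\circ(f\times f)$, which is invertible. Once $f_!$ is strong monoidal, $f_!\dashv f^*$ is a monoidal adjunction (with $f^*$ lax monoidal) by the standard result that the right adjoint of a strong monoidal left adjoint inherits a lax structure making the adjunction monoidal.
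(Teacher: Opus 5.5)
Your proposal is correct, and its core step (writing both $f_!(F\otimes_{\mathsf{Day}}F')$ and $f_!F\otimes_{\mathsf{Day}}f_!F'$ as left Kan extensions of $F\boxtimes F'$ along $f\circ(+_G)=(+_H)\circ(f\times f)$, after commuting $(f\times f)_!$ past $\otimes_\mathcal{C}$ and using transitivity of Kan extensions) is exactly the paper's argument. You also check the unit and obtain the comparison map as the mate of the lax structure on $f^*$, so that strong monoidality reduces to a pointwise property rather than an uncontrolled objectwise equivalence; both are points the paper's proof passes over silently.
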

\begin{proof}
    Let $\mathcal{F}, \mathcal{G} \in \mathsf{Fun}(G,\mathcal{C})$, by denoting the group multiplication of $G$ and $H$ by $\otimes_G$ and $\otimes_H$ respectively, we obtain the following formulas 
    \begin{align}
            f_!(\mathcal{F}\otimes_\mathsf{Day}\mathcal{G}) &\simeq \mathsf{Lan}_{f} \circ \mathsf{Lan}_{\otimes_G}\circ (-\otimes_\mathcal{C}-)\circ (\mathcal{F}\times G) \\
           f_!\mathcal{F}\otimes_\mathsf{Day}f_!\mathcal{G}  &\simeq  \mathsf{Lan}_{\otimes_H}\circ (-\otimes_\mathcal{C}-)\circ \mathsf{Lan}_{f\times f} (\mathcal{F} \times \mathcal{G}). 
    \end{align}
    One can now compute the following:
    \begin{align}
        \begin{split}
    f_!\mathcal{F}\otimes_\mathsf{Day}f_!\mathcal{G}  &\simeq  \mathsf{Lan}_{\otimes_H}((-\otimes_\mathcal{C}-)\circ \mathsf{Lan}_{f\times f} (\mathcal{F} \times \mathcal{G}) \\
    & \simeq \mathsf{Lan}_{\otimes_H}\circ \mathsf{Lan}_{f\times f}\circ (-\otimes_\mathcal{C}-)\circ (\mathcal{F} \times \mathcal{G}) \ \ (\otimes_\mathcal{C} \ \text{is preserves colimits}) \\
    & \simeq \mathsf{Lan}_{\otimes_H \circ f \times f} \circ (-\otimes_\mathcal{C}-)\circ (\mathcal{F} \times \mathcal{G}) \ \ (\text{Kan extensions compose})
        \end{split}
    \end{align}
    It remains to show that $\mathsf{Lan}_{\otimes_H \circ f \times f} \simeq \mathsf{Lan}_{f\circ \otimes_G}$. Indeed, this follows from the fact that since $f:G \rightarrow H$ is a group homomorphism, the following diagram commutes: 
    \begin{center}
        \begin{tikzcd}
        G \times G \arrow[r, "f \times f"] \arrow[d, "\otimes_G"] & H \times H \arrow[d, "\otimes_H"] \\
            G \arrow[r, "f"] & H, 
        \end{tikzcd}
    \end{center}
    i.e. $(-\otimes_H -)\circ f \times f \simeq f \circ (-\otimes_G-)$ and the claim follows.
\end{proof}
\begin{corollary}
    There exists an adjunction
    \begin{align}
        P: \mathsf{DCAlg} \adjoint \mathsf{sCAlg}: U,
    \end{align}
    given by folding the $\mathbb{Z}$-grading and by extending the $\mathbb{Z}_2$-grading to a 2-periodic $\mathbb{Z}$-grading.
\end{corollary}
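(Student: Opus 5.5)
The plan is to obtain the adjunction at the level of graded abelian groups from \cref{FunwithDay}, and then pass to commutative algebra objects. Take $f:\mathbb{Z}\to\mathbb{Z}_2$ to be reduction mod $2$. Since $\mathsf{Ab}$ is cocomplete and its tensor product preserves colimits in each variable, \cref{FunwithDay} applies. It gives a symmetric monoidal left adjoint
\begin{align}
f_!:\mathsf{Ab}_{\mathbb{Z}}\longrightarrow \mathsf{Ab}_{\mathbb{Z}_2},
\end{align}
whose right adjoint $f^*$ is restriction along $f$. Because $\mathbb{Z}$ and $\mathbb{Z}_2$ are discrete, the pointwise Kan extension formula computes both functors explicitly:
\begin{align}
(f_!M)_{\bar k}=\bigoplus_{n\equiv k \ (2)} M_n, \qquad (f^*N)_n=N_{\bar n}.
\end{align}
So $f_!$ folds the $\mathbb{Z}$-grading, and $f^*$ turns a $\mathbb{Z}_2$-graded group into a $2$-periodic $\mathbb{Z}$-graded one. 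This identifies the functors named in the statement.

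Next I would check that the Koszul signs are compatible. The braiding on $\mathsf{Ab}_G$ is the Day convolution braiding twisted by the sign $(-1)^{\epsilon(a)\epsilon(b)}$, where $\epsilon$ is the parity. This sign depends on degrees only through their image in $\mathbb{Z}_2$. Hence the symmetric monoidal structure map $f_!(M)\otimes f_!(N)\simeq f_!(M\otimes N)$ from \cref{FunwithDay} intertwines the signed braidings, and $f_!$ is symmetric monoidal for the Koszul-signed structures. I expect this compatibility with the sign-twisted braiding to be the main point needing care, since \cref{FunwithDay} is stated only for the plain Day convolution. Directly, on components $M_m\otimes N_n$ both braidings multiply by $(-1)^{mn}$, and $(-1)^{mn}$ depends only on $m$ and $n$ modulo $2$.

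Finally, a symmetric monoidal left adjoint makes its right adjoint lax symmetric monoidal, by doctrinal adjunction. Both functors therefore induce functors on $\mathsf{CAlg}$, and the adjunction lifts:
\begin{align}
P\coloneqq \mathsf{CAlg}(f_!):\mathsf{DCAlg}\adjoint \mathsf{sCAlg}:\mathsf{CAlg}(f^*)\eqqcolon U.
\end{align}
One can also check this concretely in this $1$-categorical setting. For a Dirac ring $A$ and a superring $B$, a graded ring map $A\to f^*B$ is a family of maps $A_n\to B_{\bar n}$ that is multiplicative and unital. That is exactly a ring map $\bigoplus_n A_n\to B$ respecting parity, i.e. a map $P A\to B$ in $\mathsf{sCAlg}$. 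The bijection is natural in $A$ and $B$, which gives the unit and counit explicitly.
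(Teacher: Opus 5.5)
Your proposal is correct and follows the same route as the paper: apply \cref{FunwithDay} to the reduction map $\mathbb{Z}\to\mathbb{Z}_2$, then use monoidality of $f_!\dashv f^*$ to pass to commutative algebra objects. Your Koszul-sign check (which \cref{FunwithDay}, stated only for plain Day convolution, does not literally cover) and your doctrinal-adjunction step just spell out what the paper's two-line proof leaves implicit.
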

\begin{proof}
    We apply \cref{FunwithDay} for $f: \mathbb{Z} \to \mathbb{Z}_2$ the unique non-trivial group homomorphism. Due to monoidality of the adjunction, it restricts to commutative algebra objects, which completes the proof.
\end{proof}
One now also identifies $\mathsf{sAff}$ to be the non-full subcategory of $2$-periodic affine Dirac schemes, so we may view superschemes as special types of Dirac schemes. Let us now prove that the additional maps that exist in the category of Dirac prestacks do not affect the geometry of sheaves of a pre-superstack, when viewed as a $2$-periodic Dirac prestack. Let $\mathsf{sAff}$ be the non-full subcategory of 2-periodic Dirac affines. Furthermore, denote $2\mathsf{DAff}$ the full subcategory of $2$-periodic Dirac affine, which is the essential image of $U:\mathsf{sAff}\to \mathsf{DAff}$. 
\begin{lemma}\label[lemma]{supercofinal}
    Let $X \in \mathcal{P}(\mathsf{sAff})$, then 
    \begin{align}
        U_!: \mathcal{P}(\mathsf{sAff}) \longrightarrow \mathcal{P}(2\mathsf{DAff})
    \end{align}
    induces a map 
    \begin{align}
     U_! (\underset{\mathsf{sAff}_{/X}}{\mathsf{colim}} \mathsf{Spec}(A))  \longrightarrow \underset{2\mathsf{DAff}_{/U_!X}}{\mathsf{colim}} \mathsf{Spec}(B),
    \end{align}
    which is an equivalence.
\end{lemma}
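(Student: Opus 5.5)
The plan is to show that both sides are canonically equivalent to $U_!X$. For the right-hand side this is the co-Yoneda (density) formula in $\mathcal{P}(2\mathsf{DAff})$. For the left-hand side it follows because $U_!$ is a left Kan extension and therefore commutes with colimits and with representables. Write $h:\mathsf{sAff}\to\mathcal{P}(\mathsf{sAff})$ and $h':2\mathsf{DAff}\to\mathcal{P}(2\mathsf{DAff})$ for the Yoneda embeddings. Recall that $U_!$ is defined as $\mathsf{Lan}_h(h'\circ U)$, i.e. the unique colimit-preserving extension of $\mathsf{Spec}(A)\mapsto \mathsf{Spec}(UA)$. It has a right adjoint, given by restriction along $U$, and is therefore cocontinuous.

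First, since $h$ is fully faithful, the left Kan extension restricts back to the original functor. This gives $U_!(h_A)\simeq h'_{UA}$, naturally in $A\in\mathsf{sAff}$. Second, the co-Yoneda lemma in $\mathcal{P}(\mathsf{sAff})$ gives $X\simeq \mathsf{colim}_{\mathsf{sAff}_{/X}} h_A$. Applying the cocontinuous functor $U_!$ yields
\begin{align}
  U_!X \simeq U_!\Big(\underset{\mathsf{sAff}_{/X}}{\mathsf{colim}}\, \mathsf{Spec}(A)\Big) \simeq \underset{\mathsf{sAff}_{/X}}{\mathsf{colim}}\, \mathsf{Spec}(UA).
\end{align}
Third, the co-Yoneda lemma in $\mathcal{P}(2\mathsf{DAff})$ gives $U_!X\simeq \mathsf{colim}_{2\mathsf{DAff}_{/U_!X}}\mathsf{Spec}(B)$.

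It remains to check that the map in the statement is the composite of these equivalences. The map is induced by the functor
\begin{align}
  \Phi:\mathsf{sAff}_{/X}\longrightarrow 2\mathsf{DAff}_{/U_!X},\qquad (\mathsf{Spec}(A)\to X)\longmapsto (\mathsf{Spec}(UA)\simeq U_!h_A\to U_!X).
\end{align}
Composing the comparison map with the co-Yoneda equivalence for $U_!X$ gives the canonical map $\mathsf{colim}_{\mathsf{sAff}_{/X}}U_!h_A\to U_!X$. This is the image under $U_!$ of the co-Yoneda equivalence for $X$, and hence is an equivalence. By two-out-of-three, the comparison map is an equivalence.

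The step I expect to be the main obstacle is making this identification of maps precise. The difficulty is that $\mathsf{sAff}\to 2\mathsf{DAff}$ is not full: there are more Dirac morphisms between $2$-periodic affines than super morphisms. One might therefore worry that $\Phi$ is not cofinal and that the comparison is not the canonical map. The argument above avoids proving cofinality of $\Phi$ directly, since it uses only cocontinuity of $U_!$ and the two density formulas.

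To be safe, I would also verify the claim objectwise at $\mathsf{Spec}(B)\in 2\mathsf{DAff}$, using the pointwise formula
\begin{align}
  (U_!X)(B)\simeq \underset{(B\to UA)\in (B\downarrow U)^{\mathsf{op}}}{\mathsf{colim}} X(A).
\end{align}
Then I would check that both colimits reduce to this expression. If desired, one can also check cofinality of $\Phi$ via Quillen's Theorem A: for each object $\mathsf{Spec}(B)\to U_!X$, the comma category should be weakly contractible. I would argue this using the left adjoint $P$ of $U$, in the spirit of \cref{pullbackcofinal}.
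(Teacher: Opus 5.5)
Your argument is correct and is essentially the paper's proof. Both sides are identified with $U_!X$ via the tautological (co-Yoneda) colimits and cocontinuity of $U_!$; your two-out-of-three step makes precise that the comparison map is the canonical one, and shows that the essential surjectivity of $U_!$ on representables invoked by the paper is not needed. The optional cofinality check is superfluous, since the equivalence you prove is itself equivalent to cofinality of $\Phi$ (via $\mathcal{P}(2\mathsf{DAff})_{/U_!X}\simeq\mathcal{P}(2\mathsf{DAff}_{/U_!X})$), and the adjoint route you sketch would not work as stated: on affines $P$ is the right adjoint of $U$, whereas the adjoint criterion for colimit-cofinality would need $\Phi$ to admit a left adjoint.
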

\begin{proof}
   Let $Y\coloneqq U_!(X)$. Expanding the tautological colimit in both categories, we obtain 
    \begin{align}
        U_!(\mathsf{colim}_{\mathsf{sAff}_{/X}}\mathsf{Spec}(A)) \simeq \mathsf{colim}_{\mathsf{2DAff}_{/Y}} \mathsf{Spec}(B). 
    \end{align}
    Since $U_!$ is a left adjoint it commutes with colimits, thus 
    \begin{align}
        \mathsf{colim}_{\mathsf{sAff}_{/U_!X}}U_!\mathsf{Spec}(A) \simeq  \mathsf{colim}_{\mathsf{sAff}_{/U_!X}}\mathsf{Spec}(A) \simeq \mathsf{colim}_{\mathsf{2DAff}_{/U_!X}} \mathsf{Spec}(B),
    \end{align}
    as $U_!$ is essentially surjective on the Yoneda image.
\end{proof}
Let us now identify the quasi-coherent supersheaves with the two periodic quasi-coherent Dirac sheaves.
\begin{lemma}\label[lemma]{superandgraded}
    Let $A$ be a superring then 
    \begin{align}
        \mathsf{sMod}(A) \longrightarrow \mathsf{grMod}(A[u^{\pm}])
    \end{align}
    is a monoidal equivalence. 
\end{lemma}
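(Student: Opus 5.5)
We interpret $A[u^{\pm}]$ as the $2$-periodic Dirac ring $U(A)$, with $u$ a unit of degree $2$ (even, so central and sign-free). We read $\mathsf{grMod}(A[u^{\pm}])$ as $\mathbb{Z}$-graded modules over it in $\mathsf{Ab}_{\mathbb{Z}}$. The plan is to write down explicit mutually inverse functors, working at the level of the abelian (heart) categories, and then check monoidality. By \cref{FunwithDay} applied to $f:\mathbb{Z}\to\mathbb{Z}_2$, these will come from the adjunction $f_!\dashv f^*$.

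Define $\Phi:\mathsf{sMod}(A)\to\mathsf{grMod}(A[u^{\pm}])$ by $\Phi(M)_n \coloneqq M_{n \bmod 2}$, with $u$ acting by the identity $M_{\bar n}\to M_{\overline{n+2}}$. The $A[u^{\pm}]$-action is the evident one; this is the restriction $f^*$ of the $\mathbb{Z}_2$-graded object. Define $\Psi$ in the other direction by $\Psi(N)\coloneqq N\otimes_{A[u^{\pm}]}A$, where $A$ is viewed as an $A[u^{\pm}]$-algebra via $u\mapsto 1$ after folding. Concretely, $\Psi(N)_{\bar\epsilon}=N_\epsilon$ for $\epsilon\in\{0,1\}$, since $u$ identifies all $N_{n}$ of the same parity; equivalently $\Psi(N)=\mathsf{colim}_u$ of the fold $f_!N$ modulo $u-1$. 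The isomorphism $\Psi\Phi\simeq\mathrm{id}$ is immediate. For $\Phi\Psi\simeq\mathrm{id}$ we use the maps $u^{k}:N_\epsilon\to N_{\epsilon+2k}$, which are isomorphisms because $u$ is invertible. Both functors are clearly additive and exact, so they induce an equivalence of abelian categories; an equivalence of the animations, and hence of the stable categories, follows formally.

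For monoidality, compute $\Phi(M)\otimes_{A[u^{\pm}]}\Phi(M')$. In degree $n$, the Day convolution gives $\bigoplus_{a+b=n}M_{\bar a}\otimes M'_{\bar b}$, and the relation $u m\otimes m'=m\otimes um'$ identifies all terms with the same pair of parities. This leaves $\bigoplus_{\bar a+\bar b=\bar n}(M_{\bar a}\otimes_A M'_{\bar b})=(M\otimes_A M')_{\bar n}$, which is $\Phi(M\otimes_A M')_n$. The unit $A\mapsto A[u^{\pm}]$ is preserved.

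The main point requiring care is that the braiding is respected. The Koszul sign in $\mathsf{Ab}_{\mathbb{Z}}$ is $(-1)^{ab}$, which depends only on $\bar a\bar b$; this matches the $\mathbb{Z}_2$ Koszul sign because $f$ is a homomorphism, as \cref{FunwithDay} asserts. Since $u$ has even degree it commutes without sign, so quotienting by the $u$-relations is compatible with the symmetry. I expect the only real obstacle to be pinning down the convention: $A[u^{\pm}]$ must be the Dirac ring $U(A)$ rather than a polynomial ring with a new odd variable. Once that is fixed, everything reduces to the degreewise bookkeeping above.
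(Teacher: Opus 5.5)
Your proposal is correct and follows essentially the same route as the paper. Both arguments identify the functor with $f^*$ for $f\colon\mathbb{Z}\to\mathbb{Z}_2$, check strong monoidality via $f^*M\otimes_{f^*A}f^*N\simeq f^*(M\otimes_A N)$, use invertibility of $u$ to force $N_n\cong N_{n+2}$, and pass from the heart to the animated and stable categories. The differences are cosmetic: you write down the quasi-inverse $N\mapsto A\otimes_{f_!f^*A}f_!N$ explicitly rather than checking full faithfulness and essential surjectivity separately, and you also verify compatibility with the Koszul braiding, which the paper leaves implicit.
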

\begin{proof}
    Both categories are produced by animation, thus it suffices to prove this equivalence $1$-categorically, which we do by checking monoidality, fully faithfulness and essential surjectivity. By unraveling the definitions, we can rewrite the functor as
    \begin{align}
        f^*: \mathsf{Mod}_A(\mathsf{Ab}_{\mathbb{Z}_2}) \longrightarrow \mathsf{Mod}_{f^*A}(\mathsf{Ab}_\mathbb{Z}),
    \end{align}
    which makes it obvious, that it lax monoidal. Checking monoidality is a condition, thus let $M, N \in \mathsf{Mod}_A(\mathsf{Ab}_{\mathbb{Z}_2})$ and we compute
    \begin{align}
        f^*M \otimes_{f^*A} f^*N \simeq M[u^{\pm 1}] \otimes_{A[u^{\pm 1}]} N[u^{\pm 1}] \simeq (M \otimes_A N)[u^{\pm 1}] \simeq f^*(M \otimes_A N).
    \end{align}
    Now, we need to check this functor is fully faithful and essentially surjective. Consider a graded module $\{M_n\}_{n \in \mathbb
    Z}$ over $A[u^{\pm}]$. We need to prove that there exists an isomorphism 
    \begin{align}
        M_n \simeq M_{n+2} \ \forall n.
    \end{align}
    This follows from observing that the action of $u$ on $M$ induces a morphism 
    \begin{align}
        \mu_u: M_n \longrightarrow M_{n+2}
    \end{align}
    which is an isomorphism, since $u$ is invertible. This proves essential surjectivity and faithfulness. To prove fullness consider a morphism between $2$-periodic graded modules $\{M_n\}_{n \in \mathbb
    Z}\rightarrow \{N_n\}_{n \in \mathbb
    Z}$ over $A[u^{\pm}]$. We need to prove that there is an isomorphism $f_n \simeq f_{n+2} \ \forall n$. By definition, we have a commutative diagram
    \begin{center}
        \begin{tikzcd}
            M_n \arrow[r, "f_n"] \arrow[d, "\mu_u"] & N_n \arrow[d,"\mu_u"] \\
            M_{n+2} \arrow[r, "f_{n+2}"] & N_{n+2} 
        \end{tikzcd}
    \end{center}
    from which we obtain the equation 
    \begin{align}
        f_{n+2} \circ \mu_u \simeq \mu_u \circ f_n
    \end{align}
    and one can compute for an element $m \in M_{n+2}$
    \begin{align}
        f_{n+2}(m) = \mu_u(f_n(\mu_{u^{-1}}(m))) = uf_n(u^{-1}m) = f_n(m).
    \end{align}
    This proves fullness and completes the proof.
\end{proof}
\begin{theorem}
    Let $X$ be a superstack. Then there exists an equivalence of symmetric monoidal categories 
    \begin{align}
        \mathsf{sQCoh}(X) \simeq \mathsf{DQCoh}(U_!X)
    \end{align}
\end{theorem}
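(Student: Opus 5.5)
The plan is to reduce both sides to limits over affines and compare them affine by affine. By definition, $\mathsf{sQCoh}(X)$ is the right Kan extension of $\mathsf{Spec}(A)\mapsto \mathsf{sMod}(A)$ along $\mathsf{sAff}\to\mathcal{P}(\mathsf{sAff})$, so
\begin{align}
\mathsf{sQCoh}(X)\simeq \underset{\mathsf{Spec}(A)\to X}{\mathsf{lim}}\ \mathsf{sMod}(A),
\end{align}
with the limit over $\mathsf{sAff}_{/X}$. Likewise $\mathsf{DQCoh}(U_!X)$ is the limit of $\mathsf{Mod}(B)$ over $\mathsf{DAff}_{/U_!X}$. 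By \cref{QCohIsSheaf} we may work with prestacks, so the sheaf condition on $X$ plays no role, and limits in $\mathsf{CAlg}(\mathsf{LPr})$ are computed underlying.

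The first step is to shrink the Dirac-side indexing category to $2\mathsf{DAff}_{/U_!X}$. Since $U_!X$ is a colimit of objects $U\mathsf{Spec}(A)$, each lying in $2\mathsf{DAff}$, and $\mathsf{DQCoh}$ sends colimits of prestacks to limits, the limit can be taken over $2\mathsf{DAff}_{/U_!X}$. By \cref{supercofinal}, this colimit is identified with $U_!$ of the tautological colimit over $\mathsf{sAff}_{/X}$. This gives
\begin{align}
\mathsf{DQCoh}(U_!X)\simeq \underset{\mathsf{Spec}(A)\to X}{\mathsf{lim}}\ \mathsf{DQCoh}(U\mathsf{Spec}(A)) \simeq \underset{\mathsf{Spec}(A)\to X}{\mathsf{lim}}\ \mathsf{grMod}(A[u^{\pm}]),
\end{align}
where the second equivalence uses that $U\mathsf{Spec}(A)=\mathsf{Spec}(A[u^{\pm}])$ is affine.

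The second step is the affine comparison. \cref{superandgraded} supplies symmetric monoidal equivalences $\mathsf{sMod}(A)\simeq \mathsf{grMod}(A[u^{\pm}])$, induced by $f^*$, which is pullback of modules along the unit $A\to UPA$-type comparison. For these to assemble into an equivalence of limits, they must form a natural equivalence of functors $\mathsf{sAff}^{\mathsf{op}}\to\mathsf{CAlg}(\mathsf{LPr})$. Concretely, for a map $A\to A'$, the square formed by base change $-\otimes_A A'$ and $f^*$ must commute. This follows because $f^*M=M[u^{\pm1}]$ and $M[u^{\pm1}]\otimes_{A[u^{\pm}]}A'[u^{\pm}]\simeq (M\otimes_AA')[u^{\pm1}]$, by the same computation used for monoidality in \cref{superandgraded}. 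Coherence at the $\infty$-level comes for free: both functors are animations, that is, left Kan extended from compact projectives, of a natural transformation of 1-categorical functors. Taking limits then gives the equivalence, and it is symmetric monoidal because limits in $\mathsf{CAlg}(\mathsf{LPr})$ are computed underlying.

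The main obstacle is the first step. The Dirac slice category contains Dirac morphisms between $2$-periodic affines that do not come from super morphisms, for example maps not respecting $u$, and we must show the limit is insensitive to them. I would handle this by relying on \cref{supercofinal} together with the colimit-to-limit property of $\mathsf{DQCoh}$ from \cref{QCohIsSheaf}, rather than attempting a direct cofinality argument. Because this avoids comparing the slice categories themselves, only the identification of $U_!X$ as a colimit is needed. If \cref{supercofinal} proves insufficient, the fallback is to check \cref{pullbackcofinal}-style cofinality of $\mathsf{sAff}_{/X}\to 2\mathsf{DAff}_{/U_!X}$ directly, using the fact that every map $\mathsf{Spec}(B)\to U_!X$ factors through some $U\mathsf{Spec}(A)$.
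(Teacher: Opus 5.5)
Your proposal is correct and follows essentially the same route as the paper. You present $U_!X$ as the colimit of the $U\mathsf{Spec}(A)$ over $\mathsf{sAff}_{/X}$ (the real content of \cref{supercofinal}, which is just that $U_!$ preserves colimits), use that $\mathsf{QCoh}$ turns colimits of prestacks into limits, and compare affine by affine via \cref{superandgraded}. Your explicit check that the affine equivalences are natural in $A$, so that they assemble into an equivalence of limit diagrams, is a step the paper leaves implicit and is a worthwhile addition.
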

\begin{proof}
    We may view $X$ as a Dirac prestack $U_!X$ and is a 2-periodic presheaf, hence, by an analogous argument as in \cref{supercofinal}, we may also index the tautological colimit over $\mathsf{2DAff}_{/U_!X}$. By definition, it is an accessible presheaf, which means it can be written as a small colimit indexed by $\mathsf{2DAff}_{/U_!X}$. By \cref{supercofinal} we may reindex this diagram by $\mathsf{sAff}_{/X}$ and we obtain
    \begin{align}
    \begin{split}
        \mathsf{QCoh}(U_!X) & \coloneqq \mathsf{QCoh}(\underset{\mathsf{DAff}_{/X}}{\mathsf{colim}}\mathsf{Spec}(A)) \simeq  \underset{\mathsf{sAff}_X^{\mathsf{op}}}{\mathsf{lim}}\mathsf{QCoh}(\mathsf{Spec}(A)) \simeq \underset{\mathsf{sAff}_X^{\mathsf{op}}}{\mathsf{lim}} \mathsf{grMod}(A) \\
        & \simeq \underset{\mathsf{sAff}_X^{\mathsf{op}}}{\mathsf{lim}} \mathsf{sMod}(A),
    \end{split}
    \end{align}
    where the first equivalence is by definition, the second equivalence follows from \cref{QCohIsSheaf} and the last equivalence follows from \cref{superandgraded}.
\end{proof}
As a consequence all properties that were proven for quasi-coherent sheaves in the Dirac setting of \cite{hesselholt2024dirac} hold for superstacks.
\subsection{Stacks as Colocalization}
In supergeometry, there is a triple of adjoint functors 
\begin{center}
    \begin{tikzcd}[column sep=large]
        \mathsf{Aff} \arrow[rr, "\iota"] & & \mathsf{sAff} \arrow[ll, bend left, "G"] \arrow[ll, bend right, "F"']
    \end{tikzcd}
\end{center}
where $\iota$ is the inclusion, $F$ is taking the even part and $G$ is quotienting out by the odd part. In this section, we show how to construct these functors from general methods involving Kan extensions and extend this adjunction to superstacks, which is a proof of \cite[Remark 2.28.]{bruzzo2025foundations}. This section is inspired by \cite{390095} and we note that for the purposes of this section we will be working in the étale topology. \\
    We will work over an algebraically closed field of characteristic $0$, which we denote by $k$. Let $\mathsf{Vect}$ and $\mathsf{sVect}$ denote the category of k-vector spaces and super vector spaces respectively. We observe that we have an obvious adjunction 
\begin{center}
        \begin{tikzcd}
        \mathsf{Vect} \arrow[r, shift right =2 , "\iota_{\mathsf{Aff}}^{\mathsf{op}}"'] &  \arrow[l, shift right = 2, "F^{\mathsf{op}}"'] \mathsf{sVect}.
    \end{tikzcd}
\end{center}
The inclusion is the left adjoint and it is symmetric monoidal, thus this adjunction descends to commutative algebra objects

\begin{center}
        \begin{tikzcd}
        \mathsf{CAlg}(\mathsf{Vect}) \arrow[r, shift right =2 , "\iota_{\mathsf{Aff}}^{\mathsf{op}}"'] &  \arrow[l, shift right = 2, "F^{\mathsf{op}}"'] \mathsf{CAlg}(\mathsf{sVect}).
    \end{tikzcd}
\end{center}
We observe that as the inclusion preserves both limits and colimits and thus also admits a left adjoint. We can identify both sides as $\mathsf{CAlg}(\mathsf{Vect}) \simeq \mathsf{CAlg}$  and $\mathsf{CAlg}(\mathsf{sVect}) \simeq \mathsf{sCAlg}$. Moreover, by taking opposite categories we obtain the famous adjunction
\begin{center}
    \begin{tikzcd}
        \mathsf{Aff} \simeq \mathsf{CAlg}^{\mathsf{op}} \arrow[r, "\iota_{\mathsf{Aff}}"] & \mathsf{sCAlg}^{\mathsf{op}} \simeq \mathsf{sAff} \arrow[l, bend left, "G"] \arrow[l, bend right, "F"'].
    \end{tikzcd}
\end{center}
We can extend this to schemes and stacks by taking presheaf categories on both sides our adjunction gets propagated and we obtain even more adjunctions.

\begin{center}
    \begin{tikzcd}[column sep=40pt]
        \mathsf{Fun}(\mathsf{Aff}^{\mathsf{op}}, \mathsf{Ani}) 
        \arrow[rr, shift right=5, "G^*" description] 
        \arrow[rr, shift left=5, "F^*" description] 
        &[-20pt] & 
        \mathsf{Fun}(\mathsf{sAff}^{\mathsf{op}}, \mathsf{Ani})
        \arrow[ll, shift right=10, "\mathsf{Lan}_F" description] 
        \arrow[ll, "\iota_\mathsf{sAff}^*" description]
        \arrow[ll, shift left = 10, "\mathsf{Ran}_G" description]
    \end{tikzcd}
\end{center}
Since the left Kan extension is defined to be the left adjoint of the $*$-pullback functors, we can make the identifications $L^* = \mathsf{Lan}_{\iota}$ and $\iota^* =\mathsf{Lan}_R$. We have also exhibited the category of stacks to be a localization, i.e. it is a full subcategory, and it is equipped with a sheafification functor. Therefore, we can extend this diagram
\begin{center}
    \begin{tikzcd}[column sep=40pt]
    \mathsf{Stk} \arrow[r, "\mathsf{inc}_\mathsf{Stk}" description, shift right= 2]  & \arrow[l, shift right= 2, "L_{\mathsf{Stk}}" description] \mathsf{Fun}(\mathsf{Aff}^{\mathsf{op}}, \mathsf{Ani}) 
        \arrow[rr, shift right=5, "G^*" description] 
        \arrow[rr, shift left=5, "F^*" description] 
        &[-20pt] & 
        \mathsf{Fun}(\mathsf{sAff}^{\mathsf{op}}, \mathsf{Ani})
        \arrow[ll, shift right=10, "\mathsf{Lan}_F" description] 
        \arrow[ll, "\iota_\mathsf{Aff}^*" description]
        \arrow[ll, shift left = 10, "\mathsf{Ran}_G" description] \arrow[r, shift right= 2, "L_\mathsf{sStk}" description] & \arrow[l, "\mathsf{inc}_\mathsf{Stk}" description,shift right = 2] \mathsf{sStk}
    \end{tikzcd},
\end{center}
where we denote both sheafification functors by $L$ and their respective subscript. The sheafification functors are also left adjoints, and as the top three middle arrows are all left Kan extensions, hence all left adjoints, we can compose them with the sheafification functors and we obtain an adjoint triple in the superstack case. 

\begin{center}
    \begin{tikzcd}[column sep=large]
        \mathsf{Stk} \arrow[rr, "\iota"] & & \mathsf{sStk} \arrow[ll, bend left, "(-)_{\mathsf{bos}}"] \arrow[ll, bend right, "L"']
    \end{tikzcd}
\end{center}
One now needs to analyze whether this adjunction agrees with the adjunction in the scheme case, which is what we are going to do now.
\begin{proposition}
    The functor $\iota$ admits a right adjoint given by $(-)_{\mathsf{bos}}$. 
\end{proposition}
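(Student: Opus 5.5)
We prove the statement by reducing to the affine adjoint triple $G \dashv \iota_{\mathsf{Aff}} \dashv F$ and propagating it through presheaves and sheafification. Here $\iota(X) = L_{\mathsf{sStk}}(\mathsf{Lan}_{\iota_{\mathsf{Aff}}} X)$ and $X_{\mathsf{bos}} = \iota_{\mathsf{Aff}}^* X = X\circ \iota_{\mathsf{Aff}}^{\mathsf{op}}$ for $X \in \mathsf{sStk}$.

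\textbf{Presheaf level.} Since $\iota_{\mathsf{Aff}}$ has a left adjoint $G$, precomposition identifies
\begin{align}
\iota_{\mathsf{Aff}}^* \simeq \mathsf{Lan}_G : \mathcal{P}(\mathsf{sAff}) \to \mathcal{P}(\mathsf{Aff}).
\end{align}
Indeed, $\mathsf{Lan}_G$ restricted to representables sends $h_S$ to $h_{G(S)}$, and
\begin{align}
h_{G(S)}(T) = \mathrm{Hom}(T, G(S)) \simeq \mathrm{Hom}(\iota_{\mathsf{Aff}} T, S) = (\iota_{\mathsf{Aff}}^* h_S)(T).
\end{align}
Both functors preserve colimits (restriction preserves them pointwise), so they agree. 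In particular $\mathsf{Lan}_{\iota_{\mathsf{Aff}}} \dashv \iota_{\mathsf{Aff}}^*$ on presheaves, and $\mathsf{Lan}_{\iota_{\mathsf{Aff}}}$ sends $h_T$ to $h_{\iota_{\mathsf{Aff}} T}$.

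\textbf{Passing to stacks.} For $X \in \mathsf{Stk}$ and $Y \in \mathsf{sStk}$ we compute
\begin{align}
\mathrm{Map}_{\mathsf{sStk}}(L\,\mathsf{Lan}_{\iota_{\mathsf{Aff}}} X, Y) \simeq \mathrm{Map}_{\mathcal{P}(\mathsf{sAff})}(\mathsf{Lan}_{\iota_{\mathsf{Aff}}} X, Y) \simeq \mathrm{Map}_{\mathcal{P}(\mathsf{Aff})}(X, \iota_{\mathsf{Aff}}^* Y).
\end{align}
It therefore suffices to show that $\iota_{\mathsf{Aff}}^* Y$ is again an étale sheaf, since then the last space is $\mathrm{Map}_{\mathsf{Stk}}(X, Y_{\mathsf{bos}})$ and the adjunction $\iota \dashv (-)_{\mathsf{bos}}$ follows.

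We show that $\iota_{\mathsf{Aff}}$ preserves étale covers and the relevant fibre products. If $\{\mathsf{Spec}\, B_i \to \mathsf{Spec}\, A\}$ is an étale cover of ordinary rings, then viewed as purely even superrings it is still an étale cover of superschemes. Fibre products $\mathsf{Spec}(B_i \otimes_A B_j)$ are computed the same way in both settings, because the tensor product of purely even superrings is purely even; equivalently, $\iota_{\mathsf{Aff}}$ is a right adjoint (of $G$) on $\mathsf{Aff}$, so it preserves limits. Hence the Čech nerve of a cover is carried to the Čech nerve of a cover, and the sheaf condition for $Y$ restricts to the sheaf condition for $Y \circ \iota_{\mathsf{Aff}}^{\mathsf{op}}$. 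In the language of sites, $\iota_{\mathsf{Aff}}$ is a continuous, cover-preserving functor commuting with fibre products, so restriction along it preserves sheaves. Accessibility is preserved because restriction commutes with small colimits and $\mathsf{Lan}$ of representables is representable.

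\textbf{Main obstacle.} The main obstacle is the sheaf-preservation step: one has to check that étale superring maps out of an even ring $A$ with even target form covers exactly as in the classical case. This holds because étaleness is detected on the even reduction, and an étale $A$-superalgebra over even $A$ is automatically even, being flat with vanishing odd cotangent directions. With this in hand, the argument also gives that the unit $X \to (\iota X)_{\mathsf{bos}}$ is an equivalence on representables, namely $\iota_{\mathsf{Aff}}^* h_{\iota T} = h_{F\iota T} = h_T$. That observation is a useful check for the fully faithful part claimed in the introduction, but it is not needed for the adjunction itself.
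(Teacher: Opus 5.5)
Your proof is correct and follows the same route as the paper. The paper also writes $\iota = L_{\mathsf{sStk}}\circ F^*\circ \mathsf{inc}_{\mathsf{Stk}}$ (with $F^*\simeq \mathsf{Lan}_{\iota_{\mathsf{Aff}}}$, since $F\dashv\iota_{\mathsf{Aff}}$) and chains the sheafification adjunction with $\mathsf{Lan}_{\iota_{\mathsf{Aff}}}\dashv \iota_{\mathsf{Aff}}^*$; it merely asserts that restricting a superstack to ordinary affines is still a stack, whereas your cover- and fibre-product-preservation argument actually justifies this.

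Two slips, neither affecting the argument:
\begin{enumerate}
\item Your adjoint triple is written backwards. With $F$ the even part and $G$ the bosonic truncation, the affine triple is $F\dashv \iota_{\mathsf{Aff}}\dashv G$. So $G$ is the \emph{right} adjoint of $\iota_{\mathsf{Aff}}$, which is exactly what your formula $\mathrm{Hom}(T,G(S))\simeq \mathrm{Hom}(\iota_{\mathsf{Aff}}T,S)$ uses. Limit preservation by $\iota_{\mathsf{Aff}}$ comes from $F\dashv\iota_{\mathsf{Aff}}$, not from $G$.
\item The claim ``étaleness is detected on the even reduction'' is false in general: $Y\times\mathbb{A}^{0|1}\to Y$ has the identity as its bosonic reduction but is not étale. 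You do not need it, though. All you need is that a classical étale map of even rings is étale as a map of superrings, which is immediate from the definitions.
\end{enumerate}
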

\begin{proof}
    Having constructed the functors it remains to check, how they behave on objects. First, we observe $\iota$ is the composite funtor $ L_\mathsf{sStk}\circ F^* \circ \mathsf{inc}_\mathsf{Stk}: \mathsf{Stk} \rightarrow \mathsf{sStk}$. One can now compute 
    \begin{align}
    \begin{split}
        \mathsf{Map}_{\mathsf{sStk}}(L_{\mathsf{sStk}}(F^*((\mathsf{inc}_\mathsf{Stk}(X))),Y) &\simeq  \mathsf{Map}_{\mathsf{PreStk}}(F^*(\mathsf{inc}_\mathsf{Stk}(X)),\mathsf{inc}_\mathsf{sStk}(Y)) \\
        & \simeq  \mathsf{Map}_{\mathsf{PreStk}}(\mathsf{inc}_\mathsf{Stk}(X), \iota_\mathsf{Aff}^*(\mathsf{inc}_\mathsf{sStk}(Y))) \\
        & \simeq  \mathsf{Map}_{\mathsf{Stk}}(X, \iota_\mathsf{Aff}^*(Y))
    \end{split},
    \end{align}
    where the first equivalence follows from using the right adjoint of the sheafification $L_\mathsf{sStk}$, the second equivalence follows from the adjointness of $F^* \dashv \iota_\mathsf{Aff}^*$ and the last equivalence follows from the fact that the inclusion functors are both fully faithful, and the restriction functor $\iota_\mathsf{Aff}^*$ is precomposition with the functor $\iota_\mathsf{Aff}$, which means that given a superstack $Y$, we will then simply only evaluate $Y$ on ordinary schemes, hence it remains a stack. Since $\mathsf{Stk}$ forms a full subcategory of $\mathsf{PreStk}$, we can swap out the subscript $\mathsf{PreStk}$ by $\mathsf{Stk}$, since both objects in the mapping space are stacks. Therefore, $\iota$ admits a right adjoint given by what we denote by $(-)_{\mathsf{bos}}$.
\end{proof}
To prove the fully faithfulness, let us admit the following characterization of fully faithful functors
\begin{proposition}[cf. {\cite{carmeli2021ambidexterity}}]
    fully faithful if and only if $\mathsf{id}\to RL$ is an equivalence
\end{proposition}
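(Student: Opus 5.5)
The statement is the standard criterion: for an adjunction $L \dashv R$ with $L:\mathcal{C}\to\mathcal{D}$, the left adjoint $L$ is fully faithful if and only if the unit $\eta:\mathsf{id}_{\mathcal{C}}\to RL$ is an equivalence. I will reduce both directions to a statement about mapping anima, using the Yoneda lemma.

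First, for $X,Y\in\mathcal{C}$, I factor the map $L_{X,Y}:\mathsf{Map}_{\mathcal{C}}(X,Y)\to\mathsf{Map}_{\mathcal{D}}(LX,LY)$ as the composite
\begin{align}
\mathsf{Map}_{\mathcal{C}}(X,Y)\xrightarrow{(\eta_Y)_*}\mathsf{Map}_{\mathcal{C}}(X,RLY)\xrightarrow{\ \sim\ }\mathsf{Map}_{\mathcal{D}}(LX,LY).
\end{align}
Here the second map is the adjunction equivalence, given by applying $L$ and then composing with the counit $\epsilon_{LY}$. To check that this composite agrees with $L_{X,Y}$, take $f:X\to Y$. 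Its image is $\epsilon_{LY}\circ L\eta_Y\circ Lf$, and the triangle identity $\epsilon_{LY}\circ L\eta_Y\simeq\mathsf{id}_{LY}$ reduces this to $Lf$. In the $\infty$-categorical setting I will phrase this as an identification of natural transformations of functors $\mathcal{C}^{\mathsf{op}}\times\mathcal{C}\to\mathsf{Ani}$, using the adjunction data in the form of \cite[Section 5.2]{lurie:htt}. Making this homotopy-coherent identification precise, rather than checking it only on $\pi_0$, is the main technical point.

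With the factorization in hand, the two directions are straightforward. Since the second map is an equivalence, $L_{X,Y}$ is an equivalence exactly when $(\eta_Y)_*:\mathsf{Map}(X,Y)\to\mathsf{Map}(X,RLY)$ is.

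If $\eta$ is an equivalence, then every $(\eta_Y)_*$ is an equivalence, so $L$ is fully faithful.

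Conversely, suppose $L$ is fully faithful. Then $(\eta_Y)_*$ is an equivalence for every $X$, so $\eta_Y$ induces an equivalence of represented functors $\mathsf{Map}(-,Y)\simeq\mathsf{Map}(-,RLY)$. By the Yoneda lemma, $\eta_Y$ is therefore an equivalence.

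This is exactly the form the paper will use: applied to $\iota\dashv(-)_{\mathsf{bos}}$, it reduces full faithfulness of $\iota$ to showing that $X\to(\iota X)_{\mathsf{bos}}$ is an equivalence for every stack $X$.
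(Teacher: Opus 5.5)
Your argument is correct. The paper gives no proof of this statement at all. It admits it as a black box with a citation, and the statement does not even name the adjunction $L\dashv R$. Your argument is therefore the standard justification the paper omits: factor $\mathsf{Map}_{\mathcal C}(X,Y)\to\mathsf{Map}_{\mathcal D}(LX,LY)$ as $(\eta_Y)_*$ followed by the adjunction equivalence, then apply Yoneda. It is also exactly the form needed for the subsequent application to $\iota\dashv(-)_{\mathsf{bos}}$.

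The coherence issue you flag is less serious than you suggest. Fix $Y$. The composite of $L_{-,Y}$ with the adjunction equivalence $\mathsf{Map}_{\mathcal D}(L-,LY)\simeq\mathsf{Map}_{\mathcal C}(-,RLY)$ is a natural transformation $\mathsf{Map}_{\mathcal C}(-,Y)\to\mathsf{Map}_{\mathcal C}(-,RLY)$. By the Yoneda lemma it is determined by the image of $\mathsf{id}_Y$. That image is the adjunct of $\mathsf{id}_{LY}$, which is $\eta_Y$ by the definition of the unit. Hence the composite is $(\eta_Y)_*$ as a natural transformation, with no homotopy-coherent check of the triangle identity required.
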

\begin{proposition}
    $\iota: \mathsf{Stk} \to \mathsf{sStk}$ is fully faithful.
\end{proposition}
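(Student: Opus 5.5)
We show that the unit $\mathsf{id}_{\mathsf{Stk}} \to (-)_{\mathsf{bos}} \circ \iota$ of the adjunction $\iota \dashv (-)_{\mathsf{bos}}$ from the previous proposition is an equivalence. By the cited characterization of fully faithful left adjoints, this suffices. Unwinding the construction, for $X \in \mathsf{Stk}$ the unit is the map
\begin{align}
X \longrightarrow \iota_{\mathsf{Aff}}^*\bigl(\mathsf{inc}_{\mathsf{sStk}} L_{\mathsf{sStk}} F^* \mathsf{inc}_{\mathsf{Stk}} X\bigr).
\end{align}
It is therefore enough to prove two facts. First, the unit of $F^* \dashv \iota_{\mathsf{Aff}}^*$ is an equivalence on presheaves. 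Second, $\iota_{\mathsf{Aff}}^*$ commutes with sheafification, meaning $\iota_{\mathsf{Aff}}^* L_{\mathsf{sStk}} \simeq L_{\mathsf{Stk}} \iota_{\mathsf{Aff}}^*$. Granting both, we get $\iota_{\mathsf{Aff}}^* L_{\mathsf{sStk}} F^* X \simeq L_{\mathsf{Stk}}\iota_{\mathsf{Aff}}^* F^* X \simeq L_{\mathsf{Stk}} X \simeq X$, since $X$ is already a sheaf.

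\textbf{Presheaf level.} The functor $F^*$ is precomposition with $F: \mathsf{sAff} \to \mathsf{Aff}$, so $(F^*X)(S) = X(F(S))$. Restricting along $\iota_{\mathsf{Aff}}$ gives $X(F\iota_{\mathsf{Aff}}(T))$. Here $F \circ \iota_{\mathsf{Aff}} \simeq \mathsf{id}_{\mathsf{Aff}}$, because the even part of a purely even ring is the ring itself. Hence the unit $X \to \iota_{\mathsf{Aff}}^* F^* X$ is induced by this identity and is an equivalence. Equivalently, $\iota_{\mathsf{Aff}}$ is fully faithful at the level of affines, and Kan extension along a fully faithful functor has invertible unit.

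\textbf{Commuting with sheafification.} This is the main obstacle. I would use the fact that the restriction functor $\iota_{\mathsf{Aff}}^* = F_!$ is simultaneously a left adjoint (namely $\mathsf{Lan}_F$) and a right adjoint. It thus preserves colimits and finite limits, in particular Čech nerves. I would also check that $\iota_{\mathsf{Aff}}$ is continuous and cocontinuous for the étale (or fpqc) topologies:
\begin{itemize}
\item an étale cover of a superscheme $S$ restricts over $\iota_{\mathsf{Aff}}$ to an étale cover of ordinary affines, since étale maps to a purely even base are purely even;
\item conversely, the base change of an ordinary étale cover along the map $S \to F(S)$ is a super étale cover.
\end{itemize}
The second point uses that étale algebras over $A_0$ lift uniquely to $A$ along the nilpotent-type extension, i.e.\ topological invariance of the étale site.

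With these facts, $\iota_{\mathsf{Aff}}^*$ sends local equivalences to local equivalences and sheaves to sheaves. Sheafification can be presented as a transfinite iteration of plus-constructions, that is, colimits over covering sieves of Čech limits. Since $\iota_{\mathsf{Aff}}^*$ preserves each of these operations, it commutes with sheafification. Accessibility is preserved because all functors involved are accessible.

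If the cocontinuity argument proves delicate, I would use a fallback. Show directly that $F^*X$ is already a sheaf on $\mathsf{sAff}$ when $X$ is a sheaf, so that $L_{\mathsf{sStk}}$ acts trivially on it. The key input is that $F$ sends super étale covers $\{S_i \to S\}$ to étale covers $\{F(S_i) \to F(S)\}$ and preserves the relevant fibre products. Then $F^*X$ inherits descent from $X$, and the unit is the presheaf-level equivalence from the first step.
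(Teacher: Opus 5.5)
Your proposal is correct and follows the same route as the paper: show the unit $X\to(\iota X)_{\mathsf{bos}}$ is invertible, using that $F^*\simeq\mathsf{Lan}_{\iota_{\mathsf{Aff}}}$ is left Kan extension along a fully faithful functor (so $\iota_{\mathsf{Aff}}^*F^*\simeq\mathsf{id}$) together with the compatibility of $\iota_{\mathsf{Aff}}^*$ with \'etale sheafification; your version is more complete, because the paper's colimit computation uses that compatibility without justification, whereas you justify it by continuity and cocontinuity of $\iota_{\mathsf{Aff}}$, or by checking that $F^*X$ is already an \'etale sheaf. Two harmless slips: $\iota_{\mathsf{Aff}}^*$ is $\mathsf{Lan}_G\simeq\mathsf{Ran}_F$, not $\mathsf{Lan}_F$ (this does not matter, since restriction preserves all limits and colimits anyway), and your site-theoretic checks are \'etale-specific, so they do not cover the parenthetical ``or fpqc'' (an fpqc cover of an even affine need not be even, e.g.\ $\mathsf{Spec}\,A[\eta]\to\mathsf{Spec}\,A$ with $\eta$ odd), although \'etale is the topology the paper fixes for this section.
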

\begin{proof}
    Let us first compute how the composite functor $(-)_\mathsf{bos}\circ\iota$ acts on a stack $X$. One computes that 
    \begin{align}
        L_{\mathsf{sStk}}F^*\mathsf{inc}_{\mathsf{Stk}}(X) \simeq L_{\mathsf{sStk}} \underset{\mathsf{Spec}A\to X}{\mathsf{colim}}F^*\mathsf{Spec}A \simeq X,
    \end{align}
    where the first equivalence follows from the fact that $F^*$ is a Left Kan extension and hence preserves colimits, which is furthermore fully faithful. The second equivalence follows from the fact that sheafification is a left adjoint.
    We continue to compute composition with $(-)_\mathsf{bos}$:
    \begin{align}
       L_\mathsf{Stk} \iota_\mathsf{Aff}^*\mathsf{inc}_\mathsf{sStk}X \simeq  L_\mathsf{Stk}\underset{\mathsf{Spec}A\to X}{\mathsf{colim}}\iota_\mathsf{Aff}^*\mathsf{Spec}A \simeq X.
    \end{align}
    We will now argue, why the unit map is an equivalence. The unit map is defined pointwise for two objects $X,Y$ as the map that induces the following equivalence.
    \begin{center}
        \begin{tikzcd}
            \mathsf{Map}_\mathsf{sStk}(\iota X,Y) \arrow[r] \arrow[dr, "\sim"] & \mathsf{Map}((\iota X)_\mathsf{bos}, Y_\mathsf{bos}) \arrow[d, "\eta^*_X"] \\
            & \mathsf{Map}_{\mathsf{Stk}}(X, Y_\mathsf{bos}) .
        \end{tikzcd}
    \end{center}
    We can identify $\mathsf{Map}((\iota X)_\mathsf{bos}, Y_\mathsf{bos}) \simeq \mathsf{Map}(X,Y_\mathsf{bos})$. To show $\eta_X$ must be an equivalence, we will argue by contradiction. Suppose $\eta_X$ were not an equivalence, then the composition cannot be an equivalence as well, which is a contradiction as $\iota $ is left adjoint to $(-)_\mathsf{bos}$, thus $\eta_X$ is an equivalence. Furthermore, it suffices to show that a candidate unit map induces said equivalence, where one observes that the identity does the job. Therefore, 
    \begin{align}
        \mathsf{id} \xlongrightarrow{} (-)_\mathsf{bos} \iota
    \end{align}
    is an equivalence, as wished.
\end{proof}
\section{Transmutation}
In this section we construct the transmutation corresponding to de Rham, Betti and Dolbeault cohomology in the setting of supergeometry. 
\begin{definition}
    Let $R$ be a ring object in the category of derived stacks $\mathsf{dStk}$. Let $X$ be a derived scheme, then we define 
    \begin{align}
        X_R(S) \coloneqq X(R(S))
    \end{align}
    as the \textit{R-transmutation} of $X$.
\end{definition}
These stacks were first defined by Simpson in \cite{simpson1999algebraic} in his pursuit of higher non-abelian Hodge theory. Let $X$ be a scheme then we denote by $X_{A}$, with $A \in \{\mathsf{dR},\mathsf{B},\mathsf{Dol}\}$, the stack corresponding to the cohomology theory. Using this perspective one also obtains a uniform description of the three moduli stacks in nonabelian Hodge theory as mapping stacks
\begin{align}
    \mathsf{Map}(X_A,BG).
\end{align}
Another feature of this approach is that it internalizes extra data of sheaves. More concretely, we will shortly be discussing $D$-modules, locally constant sheaves and Higgs sheaves. All of these objects are special types of sheaves and using transmutations, we can realize all of these objects as quasi-coherent sheaves on their respective transmutation stack. In this section, we will see that quasi-coherent sheaves on de Rham stack will correspond to $D$-modules, quasi-coherent sheaves on the Betti stack will correspond to locally constant sheaves and quasi-coherent sheaves on the Dolbeault stack will correspond to Higgs sheaves. In this entire section, we will work in the étale topology and over an algebraically closed field of characteristic zero $k$.

\subsection{D-Modules}
We use our geometric intuition of interpreting supergeometry as a variant of derived algebraic geometry and adopt their definition of $D$-modules, which can be defined on arbitrary derived prestacks. One then verifies definition motivated by derived correctly produces $D$-supermodules, in the case of smooth superschemes and smooth Artin superstacks. We also note that $D$-modules were already constructed by Carchedi in the setting of derived differental supergeometry \cite{carchedi2025quasicoherentsheavesdmodulesderived} defining the de Rham stack using a topos-theoretic approach similar to \cite{PortaSalaShapes}. 
The theory of $D$-modules runs into technical difficulties, when one tries to generalize to stacks and non-smooth schemes. Consequently, in the setting of derived algebraic geometry, one replaces them by the category of crystals. We will follow this description of $D$-modules using the category of crystals as a replacement and show that, in the case of smooth Artin superstacks, it is the category of $D$-modules. Furthermore, this viewpoint also allows us to reprove and generalize, in a simple fashion,  a classical theorem due to Penkov, which states:
\begin{theorem}[\cite{penkov1983d}]
    Let $X$ be a complex supermanifold. Then the category of D-supermodules on $X$ is equivalent to the category of $D$-supermodules on the bosonic truncation.
\end{theorem}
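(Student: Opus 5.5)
The plan is to deduce Penkov's theorem from the fact that the de Rham stack cannot see odd (nilpotent) directions. We define the de Rham transmutation of a pre-superstack by
\begin{align}
X_{\mathsf{dR}}(S) \coloneqq X(S_{\mathsf{red}}),
\end{align}
where for an affine superscheme $S=\mathsf{Spec}(A)$ we set $S_{\mathsf{red}} \coloneqq \mathsf{Spec}(A/\mathrm{nil}(A))$. Crystals on $X$ are then $\mathsf{QCoh}(X_{\mathsf{dR}})$.

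\textbf{Step 1: the reduction is bosonic.} Every odd element $a$ of a superring satisfies $a^2=-a^2$, hence $2a^2=0$ and so $a^2=0$ in characteristic zero. Thus the odd part lies in the nilradical, and $A/\mathrm{nil}(A)$ is a purely even reduced ring. This gives the factorisation
\begin{align}
S_{\mathsf{red}} = ((S)_{\mathsf{bos}})_{\mathsf{red}},
\end{align}
and $S_{\mathsf{red}}$ lies in the image of $\iota\colon \mathsf{Aff}\to \mathsf{sAff}$.

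\textbf{Step 2: compare the two de Rham stacks.} For a pre-superstack $X$ we get
\begin{align}
X_{\mathsf{dR}}(S) = X(\iota(S_{\mathsf{red}})) = (X_{\mathsf{bos}})(S_{\mathsf{red}}).
\end{align}
The first equality is the definition together with Step 1. The second uses that $(-)_{\mathsf{bos}}$ is restriction along $\iota_{\mathsf{Aff}}$, as computed in the colocalization section. Hence $X_{\mathsf{dR}} \simeq \iota\big((X_{\mathsf{bos}})_{\mathsf{dR}}\big)$ as pre-superstacks, where $(X_{\mathsf{bos}})_{\mathsf{dR}}$ is the classical de Rham prestack, and this equivalence is functorial in $S$. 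Since $\iota\circ(-)_{\mathsf{bos}}\circ\iota\simeq\iota$ by full faithfulness of $\iota$, the same identification gives $(X_{\mathsf{bos}})_{\mathsf{dR}} \simeq X_{\mathsf{dR}}$ as super prestacks. In other words, the map $X_{\mathsf{bos}}\to X$ induces an equivalence on de Rham stacks.

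\textbf{Step 3: pass to quasi-coherent sheaves.} Applying $\mathsf{QCoh}$ gives
\begin{align}
\mathsf{QCoh}(X_{\mathsf{dR}})\simeq\mathsf{QCoh}((X_{\mathsf{bos}})_{\mathsf{dR}}).
\end{align}
It remains to check that $\mathsf{QCoh}$ of the bosonic prestack, computed in the super world, agrees with the classical one. Using the pointwise limit formula, $\mathsf{sQCoh}(\iota Y)$ is a limit over $\mathsf{sAff}_{/\iota Y}$. For a prestack of the form $\iota Y$, the inclusion $\mathsf{Aff}_{/Y}\to\mathsf{sAff}_{/\iota Y}$ has a left adjoint $S\mapsto S_{\mathsf{bos}}$, because maps $S\to \iota Y$ factor through the bosonic part. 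By \cref{pullbackcofinal} the limit may therefore be computed over $\mathsf{Aff}_{/Y}$. For even rings, $\mathsf{sMod}(A)$ is the category of $\mathbb{Z}_2$-graded $A$-modules, i.e. $\mathsf{Mod}(A)^{\oplus 2}$. Crystals with their super grading then match on both sides, which is the form of the statement for $D$-supermodules.

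\textbf{Main obstacle.} The delicate point is Step 3, the cofinality and adjunction claim, since the mapping spaces in $\mathsf{sAff}_{/\iota Y}$ must be identified correctly. If a sheafified version is needed instead, we would first invoke \cref{QCohIsSheaf} to make $\mathsf{QCoh}$ insensitive to sheafification. Recovering classical $D$-supermodules on a smooth $X$ from crystals is a separate comparison that is not needed for the crystal-level statement.
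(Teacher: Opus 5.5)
Your Steps 1 and 2 are the paper's proof of \cref{Crys}. Because $S_{\mathsf{br}}$ is always a reduced bosonic affine, $X_{\mathsf{dR}}$ only evaluates $X$ on bosonic points. Hence $(X_{\mathsf{bos}})_{\mathsf{dR}}\to X_{\mathsf{dR}}$ is the identity and $\mathsf{QCoh}(X_{\mathsf{dR}})\simeq\mathsf{QCoh}((X_{\mathsf{bos}})_{\mathsf{dR}})$.

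The genuine gap is in your final remark. The statement is about $D$-supermodules on a smooth $X$, not about crystals, so the comparison you set aside as ``not needed'' is exactly what turns the crystal equivalence into Penkov's theorem. In the paper this is the substantive second half of the argument:
\begin{itemize}
\item One shows that $p\colon X\to X_{\mathsf{dR}}$ admits $!$-functors and that $p_!\dashv p^!$ is monadic. The base changes $X\times_{X_{\mathsf{dR}}}Y\simeq (X\times Y)^\wedge_Z$ are formal completions along graphs, handled by \cref{recollement}. This gives $\mathsf{Crys}(X)\simeq\mathsf{Mod}_{\mathsf{Diff}_X}(\mathsf{QCoh}(X))$.
\item One then identifies the monad $\mathsf{Diff}_X=p^!p_!$ with super differential operators via the computation in \cref{DModisCrys}. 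There $\mathbb{A}^{1|1}_{\mathsf{dR}}\simeq\mathbb{A}^{1|1}/\hat{\mathbf{0}}$, and the dual of $k\llbracket t|\theta\rrbracket$ is $k[\partial_t^n/n!\,|\,\partial_\theta]$.
\item Finally one globalizes along \'etale maps to $\mathbb{A}^{n|m}$.
\end{itemize}
On the $X_{\mathsf{bos}}$ side this is the classical comparison, with super $\mathsf{QCoh}$ supplying the $\mathbb{Z}_2$-grading. On the $X$ side the odd directions (the $\partial_\theta$'s) must be accounted for, and nothing in your Steps 1--3 does this. Since classical $D$-supermodules form an abelian category, you also need to pass to hearts. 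The paper notes this is harmless because the equivalence is induced by the identity of de Rham stacks.

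Step 3 also contains an incorrect claim. It is repairable, and in fact unnecessary: the paper uses super $\mathsf{QCoh}$ throughout, which over a bosonic affine already gives $\mathbb{Z}_2$-graded modules.
\begin{itemize}
\item At the level of prestacks, $\iota$ from the colocalization section is left Kan extension, i.e.\ restriction along the even-part functor, so $(\iota Y)(\mathsf{Spec}A)\simeq Y(\mathsf{Spec}A_0)$.
\item Hence a map $S\to\iota Y$ factors through $S\to\mathsf{Spec}(A_0)$, not through $S_{\mathsf{bos}}=\mathsf{Spec}(A/J)$, where $J$ is generated by $A_1$. In general there is no morphism $S\to S_{\mathsf{bos}}$ at all, only the closed immersion $S_{\mathsf{bos}}\hookrightarrow S$.
\item So the left adjoint of $\mathsf{Aff}_{/Y}\to\mathsf{sAff}_{/\iota Y}$ is $\mathsf{Spec}(A)\mapsto\mathsf{Spec}(A_0)$. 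The functor $S\mapsto S_{\mathsf{bos}}$ is its right adjoint, which gives cofinality of the wrong variance for a limit over $(\mathsf{sAff}_{/\iota Y})^{\mathsf{op}}$.
\end{itemize}
With the correct left adjoint, \cref{pullbackcofinal} gives the reindexing you want, after invoking \cref{QCohIsSheaf} to ignore sheafification. The same formula for $\iota$, together with $(\mathsf{Spec}A_0)_{\mathsf{red}}=S_{\mathsf{red}}$, is what actually justifies your identification $X_{\mathsf{dR}}\simeq\iota((X_{\mathsf{bos}})_{\mathsf{dR}})$ in Step 2.
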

At the end of the section, we give some examples of categories of $D$-modules and show that coherent cohomology of the de Rham stack computes de Rham cohomology, where we obtain the isomorphism between de Rham cohomology of the underlying bosonic space and the de Rham cohomology of the superspace as a corollary. We will follow \cite{gaitsgory2023crystals}.
To see precisely, how we adapt the definition of de Rham stack we will give the definition for derived prestacks first. 
\begin{definition}[{\cite[Definition 1.1.1.]{gaitsgory2023crystals}}]
    Let $X$ be a derived prestack. Then we define the de Rham prestack of $X$ to be 
    \begin{align}
        X_{\mathsf{dR}}(S) \coloneqq X((\pi_0S)_{\mathsf{red}}).
    \end{align}
\end{definition}
Next, we adapt this definition to the super setting. Recalling, that both the category of derived schemes and the category of superschemes contain the category of schemes as a colocalization, we can give the following definition.
\begin{definition}
    Let $X$ be a pre-superstack. Then we define the de Rham prestack of $X$ to be 
    \begin{align}
        X_{\mathsf{dR}}(S) \coloneqq X(S_{\mathsf{br}}).
    \end{align}
    where $(-)_{\mathsf{br}}$ denotes the functor $(-)_{\mathsf{red}}\circ(-)_{\mathsf{bos}}:\mathsf{sSch} \rightarrow \mathsf{Sch}_{\mathsf{red}}$.
\end{definition}
 Alternatively, for a prestack $X$ we define $\bar{X}\coloneqq X|_{\mathsf{Aff}_{\mathsf{red}}}: \mathsf{Aff}_{\mathsf{red}} \rightarrow \mathsf{Ani}$ to be the restriction of the functor of points to the reduced bosonic schemes. Then 
\begin{center}
    \begin{tikzcd}
        \mathsf{Aff}_{\mathsf{red}} \arrow[d, "\iota"] \arrow[r] & \mathsf{Ani} \\
        \mathsf{Aff} \arrow[ur, swap, "\mathsf{Ran}_\iota \bar{X}"]
    \end{tikzcd}
\end{center}
and we define $X_{\mathsf{dR}} \coloneqq \mathsf{Ran}_\iota \bar{X}$.
\begin{lemma}[{\cite[Lemma 1.1.4.]{gaitsgory2023crystals}}]\label[lemma]{limitscolimits}
    The functor $\mathsf{dR}: \mathsf{PresStk} \rightarrow \mathsf{PresStk}$ commutes with limits and colimits
\end{lemma}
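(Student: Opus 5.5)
The plan is to show that $\mathsf{dR}$ is simultaneously a restriction functor and a precomposition functor; both properties then drop out of the pointwise computation of limits and colimits in presheaf categories. By the alternative description, $X_{\mathsf{dR}} \simeq \mathsf{Ran}_\iota(X|_{\mathsf{Aff}_{\mathsf{red}}})$, so $\mathsf{dR}$ factors as
\begin{align}
    \mathsf{PreStk} \xrightarrow{\ \iota^*\ } \mathsf{Fun}(\mathsf{Aff}_{\mathsf{red}}^{\mathsf{op}},\mathsf{Ani}) \xrightarrow{\ \mathsf{Ran}_\iota\ } \mathsf{PreStk}.
\end{align}

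\textbf{Limits.} The restriction $\iota^*$ has both adjoints $\mathsf{Lan}_\iota$ and $\mathsf{Ran}_\iota$, so it preserves all limits and colimits. The functor $\mathsf{Ran}_\iota$ is a right adjoint (to $\iota^*$), so it preserves limits. Hence $\mathsf{dR}$ commutes with limits.

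\textbf{Colimits.} Here I would not use the adjunction but the explicit formula $X_{\mathsf{dR}}(S) \simeq X(S_{\mathsf{red}})$ (resp. $X(S_{\mathsf{br}})$ in the super case). This says $\mathsf{dR}$ is precomposition with the functor $(-)_{\mathsf{red}}: \mathsf{Aff} \to \mathsf{Aff}_{\mathsf{red}} \subset \mathsf{Aff}$. To justify the formula, recall that $(-)_{\mathsf{red}}$ is right adjoint to the inclusion $\mathsf{Aff}_{\mathsf{red}} \hookrightarrow \mathsf{Aff}$: a map from a reduced affine $T$ to $S$ factors uniquely through $S_{\mathsf{red}}$. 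Consequently the comma category $(\mathsf{Aff}_{\mathsf{red}})_{/S}$, which indexes the pointwise limit computing $\mathsf{Ran}_\iota$ on the opposite side, has the terminal object $S_{\mathsf{red}}\to S$. The limit is therefore evaluation at $S_{\mathsf{red}}$. In the super case I would apply the same argument to $(-)_{\mathsf{br}}$, using the colocalization of the previous subsection composed with the reduction adjunction. Once $\mathsf{dR}$ is identified with precomposition $((-)_{\mathsf{red}})^*$, it commutes with all limits and colimits, because these are computed objectwise in $\mathsf{Fun}(\mathsf{Aff}^{\mathsf{op}},\mathsf{Ani})$.

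\textbf{Main obstacle.} The delicate point is the variance bookkeeping in identifying the right Kan extension with evaluation at $S_{\mathsf{red}}$. One has to check that the relevant index category, namely maps $T\to S$ with $T$ reduced in $\mathsf{Aff}$ (opposite to rings), really has the terminal object $S_{\mathsf{red}}$. On rings this means the map $A \to A_{\mathsf{red}}$ is initial among maps from $A$ to reduced rings, which holds. In the super case one additionally needs that $A \to (A/J)_{\mathsf{red}}$, with $J$ the ideal generated by the odd part, is initial among maps to reduced purely even rings. This is the left adjoint $G$ followed by reduction, so it is again formal.

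Finally, if "commutes with colimits" is meant in prestacks, i.e. as presheaves, no sheafification issue arises. If it is meant in stacks, I would note that precomposition with $(-)_{\mathsf{red}}$ preserves étale covers, so it commutes with sheafification as well.
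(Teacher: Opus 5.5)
Your proposal is correct and is the paper's argument made explicit: the paper only observes that limits and colimits in $\mathsf{Fun}(\mathsf{sAff}^{\mathsf{op}},\mathsf{Ani})$ are computed objectwise. It tacitly uses that $X_{\mathsf{dR}}(S)=X(S_{\mathsf{br}})$ is precomposition with $(-)_{\mathsf{br}}$, which for pre-superstacks is the definition, so your Kan-extension bookkeeping and the separate adjunction argument for limits are correct but optional.

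Your closing remark about stacks is not needed, since the statement is about $\mathsf{PresStk}$, and its justification is incomplete. Preserving \'etale covers only shows that $\mathsf{dR}$ sends sheaves to sheaves; commuting with sheafification also needs that \'etale covers of $S_{\mathsf{red}}$ come from \'etale covers of $S$, i.e.\ topological invariance of the \'etale site.
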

\begin{proof}
    Since $\mathsf{PresStk} = \mathsf{Fun}(\mathsf{sAff}^{\mathsf{op}}, \mathsf{Ani})$ and limits and colimits in functor categories are computed objectwise, the claim follows from the fact that $\mathsf{Ani}$ contains all limits and colimits.
\end{proof}
\begin{corollary}[{\cite[Corollary 1.1.5.]{gaitsgory2023crystals}}]\label[corollary]{KanPrestk}
    The functor $\mathsf{dR}: \mathsf{PresStk} \rightarrow \mathsf{PresStk}$ is the left Kan extension of the functor $\mathsf{dR}|_{\mathsf{sAff}}: \mathsf{sAff} \rightarrow \mathsf{PresStk}$ along the inclusion $\mathsf{sAff} \rightarrow \mathsf{PresStk}$.
\end{corollary}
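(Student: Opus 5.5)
The plan is to combine the fact that every pre-superstack is the colimit of its affine points with \cref{limitscolimits}. Write $\mathsf{dR}_0 \coloneqq \mathsf{dR}|_{\mathsf{sAff}}$ and let $y:\mathsf{sAff}\to\mathsf{PresStk}$ be the Yoneda embedding. Because $y$ is fully faithful, the left Kan extension $\mathsf{Lan}_y \mathsf{dR}_0$ exists and is given by the pointwise formula
\begin{align}
    (\mathsf{Lan}_y \mathsf{dR}_0)(X) \simeq \underset{\mathsf{Spec}(A)\to X}{\mathsf{colim}}\, \mathsf{Spec}(A)_{\mathsf{dR}},
\end{align}
where the colimit is indexed by $\mathsf{sAff}_{/X}$. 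Here we use that $\mathsf{PresStk}$ is cocomplete. The colimit is indexed by a large category, but it can be handled either by passing to a larger universe or by using accessibility.

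The first step is to write down the comparison map. The functor $\mathsf{dR}$ restricts to $\mathsf{dR}_0$ along $y$, so the universal property of the left Kan extension gives a canonical natural transformation $\alpha:\mathsf{Lan}_y \mathsf{dR}_0 \to \mathsf{dR}$. It then suffices to check that $\alpha_X$ is an equivalence for every pre-superstack $X$.

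The second step uses the density of representables. For any $X\in\mathsf{PresStk}=\mathsf{Fun}(\mathsf{sAff}^{\mathsf{op}},\mathsf{Ani})$ we have the tautological equivalence $X\simeq \mathsf{colim}_{\mathsf{Spec}(A)\to X}\mathsf{Spec}(A)$. By \cref{limitscolimits}, $\mathsf{dR}$ commutes with colimits. This gives
\begin{align}
    X_{\mathsf{dR}} \simeq \Big(\underset{\mathsf{Spec}(A)\to X}{\mathsf{colim}}\mathsf{Spec}(A)\Big)_{\mathsf{dR}} \simeq \underset{\mathsf{Spec}(A)\to X}{\mathsf{colim}}\mathsf{Spec}(A)_{\mathsf{dR}},
\end{align}
and we need to check that this composite equivalence is $\alpha_X$. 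Both maps are induced by the same cocone, namely the maps $\mathsf{Spec}(A)_{\mathsf{dR}}\to X_{\mathsf{dR}}$ obtained by applying $\mathsf{dR}$ to $\mathsf{Spec}(A)\to X$. So they agree by the universal property of the colimit, and $\alpha_X$ is an equivalence. Equivalently, a colimit-preserving functor out of a presheaf category is determined by its restriction to representables, in the sense of \cite[Theorem 5.1.5.6]{lurie:htt}.

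The main obstacle is making sure that \cref{limitscolimits} really gives colimit preservation. The functor $X\mapsto X_{\mathsf{dR}}$ is precomposition with $(-)_{\mathsf{br}}:\mathsf{sAff}\to\mathsf{Aff}_{\mathsf{red}}\subseteq\mathsf{sAff}$, and precomposition commutes with all limits and colimits because these are computed objectwise in $\mathsf{Ani}$. For this to work, $(-)_{\mathsf{br}}$ must be an honest functor on affines. I will check this first: $(-)_{\mathsf{bos}}$ is the quotient by the odd ideal (the functor $G$ from the colocalization section), and reduction of a commutative ring is functorial. With this in hand, the argument above goes through without further input.
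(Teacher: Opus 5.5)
Your proposal is correct and takes the same route as the paper: the pointwise formula for the left Kan extension, the density of representables $X \simeq \mathsf{colim}_{\mathsf{Spec}(A)\to X}\mathsf{Spec}(A)$, and the colimit-preservation of $\mathsf{dR}$ from \cref{limitscolimits}. You also make explicit two points the paper leaves implicit: that the resulting equivalence is the canonical comparison map $\alpha_X$, and that $\mathsf{dR}$ is precomposition with the functor $(-)_{\mathsf{br}}$ on affines, which is what justifies \cref{limitscolimits}.
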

\begin{proof}
The pointwise formula for the left Kan extension is 
\begin{align}
    \mathsf{Lan}_{\mathsf{inc}}\mathsf{dR} (X) \coloneqq \underset{\mathsf{Spec A} \rightarrow X}{\mathsf{colim}} (\mathsf{Spec}(A))_{\mathsf{dR}}
\end{align}
Furthermore, every pre-superstack can be written as
\begin{align}
    \underset{S \rightarrow X}{\mathsf{colim}} S.
\end{align}
The functor $\mathsf{dR}$ is colimit preserving and one observes that both formulae are equivalent.
\end{proof}
\begin{lemma}[{\cite[Lemma 1.1.7.]{gaitsgory2023crystals}}]\label[lemma]{IncKan}
    The functor $\mathsf{dR}|_{\mathsf{sAff}}$ is isomorphic to the left Kan extension of the functor $\mathsf{dR}|_{\mathsf{sAff}_{\mathsf{red}}}$ along the natural inclusion functor.
\end{lemma}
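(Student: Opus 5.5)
We need to show that $\mathsf{dR}|_{\mathsf{sAff}}$ agrees with the left Kan extension of its restriction to $\mathsf{sAff}_{\mathsf{red}}$, which we take to mean reduced bosonic affines $\mathsf{Aff}_{\mathsf{red}}$ sitting inside $\mathsf{sAff}$ via $\iota$. The plan is to evaluate both functors pointwise on an affine superscheme $S=\mathsf{Spec}(A)$ and test against a pre-superstack $T$.

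\textbf{Step 1: pointwise formula.} The pointwise formula gives
\begin{align}
  (\mathsf{Lan}\,\mathsf{dR}|_{\mathsf{Aff}_{\mathsf{red}}})(S) \simeq \underset{(R\to S),\, R \in \mathsf{Aff}_{\mathsf{red}}}{\mathsf{colim}} R_{\mathsf{dR}},
\end{align}
where the colimit is computed in $\mathsf{PresStk}$. The comparison map to $S_{\mathsf{dR}}$ is induced by the maps $R_{\mathsf{dR}}\to S_{\mathsf{dR}}$.

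\textbf{Step 2: the index category has a terminal object.} For reduced bosonic $R$, a map $R\to S$ corresponds to a superring map $A\to \mathcal{O}(R)$. This map kills the odd part of $A$ and lands in a reduced ring, so it factors uniquely through $((A/J_A)_{\mathsf{red}})$, i.e. through $S_{\mathsf{br}}\to S$. Uniqueness here is the colocalization of the previous section combined with the universal property of the reduction. It follows that $(\mathsf{Aff}_{\mathsf{red}})_{/S}$ has the terminal object $S_{\mathsf{br}}\to S$, and the colimit collapses to $(S_{\mathsf{br}})_{\mathsf{dR}}$.

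\textbf{Step 3: compare with $S_{\mathsf{dR}}$.} It remains to check that the canonical map $(S_{\mathsf{br}})_{\mathsf{dR}}\to S_{\mathsf{dR}}$ is an equivalence. On a test object $T$ this map reads
\begin{align}
  S_{\mathsf{br}}(T_{\mathsf{br}})\to S(T_{\mathsf{br}}).
\end{align}
Since $T_{\mathsf{br}}$ is reduced bosonic, Step 2 shows this is a bijection, in fact an equivalence of discrete anima since affines are $0$-truncated. Hence the map is an equivalence objectwise, and therefore naturally in $S$.

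The main obstacle is the naturality and universality bookkeeping in Step 2. One must make sure that $(-)_{\mathsf{br}}$ is genuinely right adjoint to the inclusion $\mathsf{Aff}_{\mathsf{red}}\hookrightarrow\mathsf{sAff}$. This is obtained by composing the adjunction $\iota\dashv(-)_{\mathsf{bos}}$ with the adjunction between reduced affines and affines.

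Equivalently, one can phrase the whole argument in one line. Because the inclusion has the right adjoint $(-)_{\mathsf{br}}$, left Kan extension along it is precomposition with $(-)_{\mathsf{br}}$. It then suffices that $\mathsf{dR}\circ(-)_{\mathsf{br}}\simeq\mathsf{dR}$, which holds because $(-)_{\mathsf{br}}$ is idempotent.
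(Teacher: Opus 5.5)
Your proposal is correct and follows the paper's route. The inclusion $\mathsf{Aff}_{\mathsf{red}}\hookrightarrow\mathsf{sAff}$ has right adjoint $(-)_{\mathsf{br}}$, so left Kan extension along it is precomposition with $(-)_{\mathsf{br}}$ (your terminal object $S_{\mathsf{br}}\to S$ in Step 2 is exactly the pointwise form of this), and it remains to check that $\mathsf{dR}(S_{\mathsf{br}})\to\mathsf{dR}(S)$ is an equivalence; where the paper asserts this ``by definition'' and gets the right adjoint from colimit preservation, you verify both directly (unique factorization through $S_{\mathsf{br}}\to S$, and the bijection $S_{\mathsf{br}}(T_{\mathsf{br}})\to S(T_{\mathsf{br}})$), which sharpens the same argument.
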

\begin{proof}
The inclusions $\mathsf{Aff}_{\mathsf{red}} \rightarrow \mathsf{Aff}$ and $\mathsf{Aff} \rightarrow \mathsf{sAff}$ are both colimit preserving and thus admit a right adjoint. The composition of adjoint functors is an adjoint functor. Thus,
\begin{align}
    \mathsf{inc}: \mathsf{Aff}_{\mathsf{red}} \longrightarrow \mathsf{sAff}
\end{align} 
admits a right adjoint. We observe the following 
\begin{align}
   \mathsf{inc}^* \vdash R^*: \mathsf{Fun}(\mathsf{PresStk},\mathsf{PresStk}) \adjoint \mathsf{Fun}(\mathsf{sAff},\mathsf{PresStk}),
\end{align}
but the left adjoint of $\mathsf{inc}^*$ is the left Kan extension and adjoints are unique. So the left Kan extension identifies with precomposition by $R^*$ to compute the left Kan extension.
Next, we have to show that the natural transformation
\begin{align}
    \mathsf{Lan}_{\mathsf{inc}}\mathsf{dR}|_{\mathsf{Aff}_{\mathsf{red}}} \rightarrow \mathsf{dR}|_{\mathsf{sAff}}
\end{align}
is an isomorphism. Due to the previous argument, we only have to show that
\begin{align}
    \mathsf{dR}(S_{\mathsf{br}}) \rightarrow \mathsf{dR}(S) \ \forall S \in \mathsf{sAff}
\end{align}
is an isomorphism, which is true by the definition of $\mathsf{dR}$.
\end{proof}
\begin{remark}
    This now also holds for any subcategory pair in the following list: 
    \begin{align}
        \mathsf{Aff}_{\mathsf{red}},\ \mathsf{Aff}, \ \mathsf{sAff}, \ \mathsf{sSch}_{\mathsf{qsqc}} , \ \mathsf{sSch} ,\ \mathsf{PresStk}.
    \end{align}
\end{remark}
We always have the inclusion map $S_{\mathsf{br}} \rightarrow S$, from which we can construct a natural transformation 
\begin{align}
    X(S) \rightarrow X(S_{\mathsf{br}}) = X_{\mathsf{dR}}(S).
\end{align}
So, the functor comes equipped with a natural transformation. 
\begin{align}
    p: \mathsf{id} \rightarrow \mathsf{dR}
\end{align}
\begin{definition}
    Let $X$ be a pre-superstack. We call the category of quasi-coherent sheaves on the associated de Rham prestack the category of \textit{left crystals} and denote it 
    \begin{align}
        \mathsf{Crys}(X) \coloneqq \mathsf{QCoh}(X_{\mathsf{dR}}).
    \end{align}
\end{definition}
We can also write this more functorially as 
\begin{align}
    \mathsf{Crys}\coloneqq \mathsf{QCoh} \circ \mathsf{dR}: \mathsf{PresStk} \rightarrow \mathsf{Cat}_{\infty}.
\end{align}
\begin{remark}
    To do right crystals one needs the theory of $\mathsf{IndCoh}$, whose setup is outside the scope of this paper.
\end{remark}
 We are now already in a position to reprove Penkov's theorem using the more modern machinery and extend it to general pre-superstacks.
\begin{theorem} \label[theorem]{Crys}
    Let $X$ be a pre-superstack, then the category of crystals on $X$ is equivalent to the category of crystals on $X_{\mathsf{bos}}$. 
\end{theorem}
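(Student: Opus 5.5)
The plan is to show that the de Rham prestacks already agree, i.e.\ that there is a natural equivalence of pre-superstacks
\begin{align}
    X_{\mathsf{dR}} \simeq (X_{\mathsf{bos}})_{\mathsf{dR}},
\end{align}
where on the right $X_{\mathsf{bos}}$ is viewed as a pre-superstack via $\iota$. Then $\mathsf{Crys}(X)\coloneqq \mathsf{QCoh}(X_{\mathsf{dR}})\simeq \mathsf{QCoh}((X_{\mathsf{bos}})_{\mathsf{dR}})\eqqcolon\mathsf{Crys}(X_{\mathsf{bos}})$ follows by applying $\mathsf{QCoh}$.

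I would compare the two functors of points directly. For $S\in\mathsf{sAff}$, by definition $X_{\mathsf{dR}}(S)=X(S_{\mathsf{br}})$, and $S_{\mathsf{br}}$ is a reduced bosonic affine scheme. On the other side,
\begin{align}
    (\iota X_{\mathsf{bos}})_{\mathsf{dR}}(S)=(\iota X_{\mathsf{bos}})(S_{\mathsf{br}}).
\end{align}
At the presheaf level, $X_{\mathsf{bos}}=\iota_{\mathsf{Aff}}^*X$ is just the restriction of $X$ to $\mathsf{Aff}$. The extension $F^*$ (which is $\mathsf{Lan}$ along $\iota_{\mathsf{Aff}}$, or equivalently precomposition with the right adjoint $F$ of $\iota_{\mathsf{Aff}}$) satisfies $(F^*Y)(T)=Y(F(T))$. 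Evaluated on a bosonic $T$ this gives $Y(T)$, since $F\iota_{\mathsf{Aff}}\simeq\mathsf{id}$ by full faithfulness. Hence
\begin{align}
    (F^*\iota^*_{\mathsf{Aff}}X)(S_{\mathsf{br}})\simeq X(S_{\mathsf{br}}),
\end{align}
naturally in $S$. Equivalently, in the Kan-extension description, $X_{\mathsf{dR}}=\mathsf{Ran}_\iota \bar X$ depends only on $\bar X=X|_{\mathsf{Aff}_{\mathsf{red}}}$, and $\bar X=\overline{X_{\mathsf{bos}}}$ because $\mathsf{Aff}_{\mathsf{red}}\subseteq\mathsf{Aff}$. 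For naturality I would use the counit $\iota X_{\mathsf{bos}}\to X$ and apply $\mathsf{dR}$. This yields a natural map $(\iota X_{\mathsf{bos}})_{\mathsf{dR}}\to X_{\mathsf{dR}}$, which the computation above shows is an objectwise equivalence.

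The main obstacle is that $\iota$ was defined on stacks as $L_{\mathsf{sStk}}\circ F^*$, with a sheafification step, whereas $X$ here is only a pre-superstack. I would handle this in one of two ways. First, I would work with the presheaf-level inclusion $F^*$, where the computation above is purely formal. Alternatively, I would invoke \cref{QCohIsSheaf} to note that $\mathsf{QCoh}$ is insensitive to sheafification, so $\mathsf{QCoh}(\mathsf{dR}(L F^* X_{\mathsf{bos}}))\simeq \mathsf{QCoh}(\mathsf{dR}(F^*X_{\mathsf{bos}}))$. This second route requires that $\mathsf{dR}$ carry the sheafification unit to a map inverted by $\mathsf{QCoh}$. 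I would argue this using \cref{KanPrestk} and \cref{limitscolimits}: $\mathsf{dR}$ preserves colimits, so it sends \v{C}ech nerves of covers to colimit diagrams of the corresponding de Rham prestacks, and descent from \cref{QCohIsSheaf} then applies. I expect checking that étale covers remain covers after $(-)_{\mathsf{br}}$ to be the only nonformal point. It holds because reduction and the bosonic quotient preserve étale surjectivity on underlying topological spaces.
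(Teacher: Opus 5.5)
Your proposal is correct and is essentially the paper's argument: the paper just observes that $X_{\mathsf{dR}}(S)=X(S_{\mathsf{br}})$ depends only on the values of $X$ on reduced bosonic affines, so the map $(X_{\mathsf{bos}})_{\mathsf{dR}}\to X_{\mathsf{dR}}$ is the identity, and then applies $\mathsf{QCoh}$. Your treatment of the sheafification hidden in $\iota=L_{\mathsf{sStk}}\circ F^*$ is extra care the paper omits, and the presheaf-level route already suffices; if you do pursue the second route, the nonformal input you need is that étale covers of $T_{\mathsf{br}}$ lift to étale covers of $T$ (topological invariance of the étale site), which is what makes $U_{\mathsf{dR}}\to S_{\mathsf{dR}}$ a local epimorphism, rather than that $(-)_{\mathsf{br}}$ preserves covers.
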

\begin{proof}
    We have the following commutative diagram 
    \begin{center}
        \begin{tikzcd}
            X  \arrow[r] & X_{\mathsf{dR}} \\
            X_{\mathsf{bos}} \arrow[u] \arrow[r] & X_{\mathsf{dR}} \arrow[u, "\wr", swap]
        \end{tikzcd}
    \end{center}
    the right arrow is the identity. Therefore, we have an equivalence 
    \begin{align}
        \mathsf{QCoh}(X_{\mathsf{dR}}) \simeq  \mathsf{QCoh}((X_{\mathsf{bos} })_{\mathsf{dR}})
    \end{align}.
\end{proof}
\begin{remark}
    We note that the category $\mathsf{QCoh}$ is understood as quasi-coherent sheaves of supermodules, which when evaluated over a bosonic scheme is a $\mathbb{Z}_2$-graded module. Since the isomorphism is given by the identity, this also means it is compatible with truncations. So, we also have the equivalence for the underlying 1-categories, i.e. 
    \begin{align}
        \mathsf{Crys}(X)^\heartsuit \simeq  \mathsf{Crys}(X_{\mathsf{bos}})
    \end{align}
\end{remark}
The rest of this section is dedicated to proving the equivalence of D-modules and crystals for smooth Deligne-Mumford stacks, which justifies our replacement of the classical D-module description.
\begin{corollary}
        Let $\mathsf{sAff}$ denote the category of affine superschemes, then 
    \begin{align}
        \mathsf{Crys}(X) \longrightarrow  \underset{(\mathsf{Spec}(A)\rightarrow X)^{\mathsf{op}}}{\mathsf{lim}} \mathsf{Crys}(\mathsf{Spec}(A))
    \end{align}
    is an equivalence.
\end{corollary}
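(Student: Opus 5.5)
The plan is to reduce the statement to the fact that both $\mathsf{dR}$ and $\mathsf{QCoh}$ interact well with colimits of pre-superstacks. First, write the pre-superstack $X$ as its tautological colimit
\begin{align}
X \simeq \underset{\mathsf{Spec}(A)\rightarrow X}{\mathsf{colim}} \mathsf{Spec}(A),
\end{align}
indexed by $\mathsf{sAff}_{/X}$. By \cref{limitscolimits}, or equivalently \cref{KanPrestk}, the functor $\mathsf{dR}$ commutes with colimits. This gives an equivalence of pre-superstacks
\begin{align}
X_{\mathsf{dR}} \simeq \underset{\mathsf{Spec}(A)\rightarrow X}{\mathsf{colim}} \mathsf{Spec}(A)_{\mathsf{dR}}.
\end{align}
One should check that the canonical comparison map from the colimit to $X_{\mathsf{dR}}$ is the one induced by functoriality of $\mathsf{dR}$ applied to the structure maps $\mathsf{Spec}(A)\to X$. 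This is a formality once the colimit is computed objectwise in $\mathsf{Fun}(\mathsf{sAff}^{\mathsf{op}},\mathsf{Ani})$.

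Second, apply $\mathsf{QCoh}$. By definition $\mathsf{QCoh}$ is the right Kan extension from affines. Equivalently, as recorded in \cref{QCohIsSheaf} together with the reduction theorem identifying $\mathsf{sQCoh}$ with $\mathsf{DQCoh}$, it sends small colimits of pre-superstacks to limits in $\mathsf{CAlg}(\mathsf{LPr})$. We use this at the presheaf level, before any sheafification, which is precisely the content of being a right Kan extension along the Yoneda embedding. We therefore obtain
\begin{align}
\mathsf{Crys}(X) = \mathsf{QCoh}(X_{\mathsf{dR}}) \simeq \underset{(\mathsf{Spec}(A)\rightarrow X)^{\mathsf{op}}}{\mathsf{lim}} \mathsf{QCoh}(\mathsf{Spec}(A)_{\mathsf{dR}}) = \underset{(\mathsf{Spec}(A)\rightarrow X)^{\mathsf{op}}}{\mathsf{lim}} \mathsf{Crys}(\mathsf{Spec}(A)).
\end{align}
Finally, we verify that this composite equivalence is the map in the statement. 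That map is induced by the pullbacks $\mathsf{Spec}(A)_{\mathsf{dR}}\to X_{\mathsf{dR}}$, and so is the map above, by naturality of the comparison maps.

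The main obstacle is the second step: $\mathsf{Spec}(A)_{\mathsf{dR}}$ is not affine, so we cannot directly use the pointwise formula indexed by affines over $X_{\mathsf{dR}}$. Instead we need that $\mathsf{QCoh}\colon \mathcal{P}(\mathsf{sAff})^{\mathsf{op}} \to \mathsf{CAlg}(\mathsf{LPr})$ preserves all small limits. This holds because any right Kan extension along the Yoneda embedding into a presheaf category converts colimits into limits: it is the unique limit-preserving extension. One must also handle the size issue that $X$ is accessible, so the colimit is small.

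Alternatively, one can sidestep the obstacle by composing the Kan-extension descriptions. By \cref{KanPrestk}, $\mathsf{Crys} = \mathsf{QCoh}\circ \mathsf{dR}$ is then a composite of a colimit-preserving functor with a colimit-to-limit functor, hence itself the right Kan extension of its restriction to $\mathsf{sAff}$. The displayed limit formula is exactly the pointwise formula for that right Kan extension.
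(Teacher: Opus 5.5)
Your proposal is correct and is essentially the paper's own argument: $\mathsf{dR}$ commutes with colimits (\cref{KanPrestk}), so $X_{\mathsf{dR}}$ is the colimit of the $\mathsf{Spec}(A)_{\mathsf{dR}}$. Then $\mathsf{QCoh}$, as a right Kan extension along the Yoneda embedding, turns this colimit into the displayed limit. The paper also cites \cref{IncKan}, which is not needed for this step, and it leaves implicit your point that $\mathsf{Spec}(A)_{\mathsf{dR}}$ is not affine, so the colimit-to-limit property (rather than the pointwise formula) is what is really being used.
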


\begin{proof}
    $\mathsf{QCoh}$ is defined as the right Kan extension of $\mathsf{sAff}^\mathsf{op} \rightarrow \mathsf{LPr}$ and takes colimits in $\mathsf{PresStk}$ to limits in $\mathsf{LPr}$. The claim now follows from \cref{KanPrestk} and \cref{IncKan}.
\end{proof}
For later use, we will need a cofinality result coming from derived algebraic geometry. In particular, we need to prove that for the category of crystals the category of derived schemes of almost finite type, which is the corresponding derived notion of being finite type, will be cofinal. 
\begin{definition}
    Let $X$ be a derived pre-superstack. It is called locally of almost finite type if 
    \begin{enumerate}
        \item $X$ is convergent, i.e., for $S \in \mathsf{dsAff}$, the natural map
        \[
        \mathsf{Map}(S,X) \longrightarrow\underset{n\rightarrow \infty}{\mathsf{lim}} \mathsf{Map}(\tau_{\leq n}S,X)
        \]
        is an isomorphism.
        \item For every $n$, the restriction $X_{\leq n} \coloneqq X|_{\mathsf{dsAff}_{\leq n}}$ lies in $\mathsf{dPresStk}_{\leq n}$
    \end{enumerate}
\end{definition}
\begin{proposition}[{\cite[Proposition 1.3.3.]{gaitsgory2023crystals}}]
    Assume $Z \in \mathsf{dPresStk}^{\mathsf{laft}}$. Then $Z_{\mathsf{dR}}\in \mathsf{dPresStk}^{\mathsf{laft}}$ and $Z_\mathsf{dR} \in \tau_{\leq 0} \mathsf{dPresStk}$. 
\end{proposition}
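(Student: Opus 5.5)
The key observation is that $Z_{\mathsf{dR}}$ only ever evaluates $Z$ on reduced classical (bosonic) affine schemes. By definition $Z_{\mathsf{dR}}(S) = Z(S_{\mathsf{br}})$, where in the derived super setting $S_{\mathsf{br}}$ means $((\pi_0 S)_{\mathsf{bos}})_{\mathsf{red}}$. So I will prove the two assertions in reverse order. First I show that $Z_{\mathsf{dR}}$ is $0$-truncated in the sense of lying in $\tau_{\leq 0}\mathsf{dPresStk}$, meaning it is left Kan extended from its restriction to classical affines. The laft property then follows almost formally, with the finiteness condition inherited from $Z$.

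\textbf{Step 1 ($0$-truncatedness).} The functor $S \mapsto S_{\mathsf{br}}$ factors through $\tau_{\leq 0}$, since $(\pi_0 S)_{\mathsf{br}} = S_{\mathsf{br}}$. Hence $Z_{\mathsf{dR}}(S) \simeq Z_{\mathsf{dR}}(\pi_0 S)$ for every $S \in \mathsf{dsAff}$, naturally in $S$. The inclusion of classical affines into derived affines has the right adjoint $\pi_0$ (on the level of opposite categories). By the same argument as in \cref{IncKan}, left Kan extension along this inclusion is therefore computed by precomposition with $\pi_0$. It follows that $Z_{\mathsf{dR}} \simeq \mathsf{Lan}(Z_{\mathsf{dR}}|_{\mathsf{sAff}})$, i.e.\ $Z_{\mathsf{dR}} \in \tau_{\leq 0}\mathsf{dPresStk}$.

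\textbf{Step 2 (convergence).} For $S \in \mathsf{dsAff}$ the tower $\tau_{\leq n}S$ all have the same $\pi_0$. By Step 1 the tower $Z_{\mathsf{dR}}(\tau_{\leq n} S)$ is therefore constant with value $Z_{\mathsf{dR}}(S)$, so the map $Z_{\mathsf{dR}}(S) \to \lim_n Z_{\mathsf{dR}}(\tau_{\leq n}S)$ is an equivalence.

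\textbf{Step 3 (finiteness of each $X_{\leq n}$).} This is the step I expect to be the main obstacle, since it is where the finite-type hypothesis on $Z$ is actually used. The condition I need is that $(Z_{\mathsf{dR}})_{\leq n}$ is left Kan extended from $n$-coconnective affines of (almost) finite type, i.e.\ commutes with filtered colimits in the appropriate sense. I would argue as follows.

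Every reduced classical superring $R$ is a filtered colimit of reduced finitely generated ones. The functors $(-)_{\mathsf{bos}}$ and $(-)_{\mathsf{red}}$ commute with filtered colimits, because both are left adjoints on rings and quotienting by nilpotents commutes with filtered colimits. So for $S = \mathsf{Spec}(\mathrm{colim}_i A_i)$ with each $A_i$ of finite type we get $S_{\mathsf{br}} = \lim_i (\mathsf{Spec} A_i)_{\mathsf{br}}$, a cofiltered limit of reduced finite type affines. Since $Z$ is laft, $Z|_{\mathsf{Aff}_{\mathsf{red}}}$ commutes with such filtered colimits of rings. Hence $Z_{\mathsf{dR}}(S) \simeq \mathrm{colim}_i Z_{\mathsf{dR}}(\mathsf{Spec} A_i)$, which is exactly the left Kan extension condition.

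The delicate point is to ensure that the reduction and bosonic-truncation steps preserve finite generation, so that the indexing diagram really lands in finite type objects. I would handle this by noting that a quotient of a finitely generated algebra is finitely generated, and then invoke \cref{pullbackcofinal} to match the two indexing categories.
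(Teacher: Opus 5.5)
Your Steps 2 and 3 are fine and essentially match the paper's treatment of the laft condition. They only use the correct part of Step 1, namely $Z_{\mathsf{dR}}(S)\simeq Z_{\mathsf{dR}}(\pi_0S)$. Your convergence argument, that the tower is constant because all $\tau_{\leq n}S$ have the same $S_{\mathsf{br}}$, is cleaner than the paper's.

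\textbf{The gap is in Step 1, which is exactly the nontrivial half of the proposition.} For presheaves, precomposition with $\pi_0$ computes the \emph{right} Kan extension along $\iota\colon\mathsf{sAff}\hookrightarrow\mathsf{dsAff}$, not the left one. From $\iota\dashv\pi_0$ you get $\iota^*h_S\simeq h_{\pi_0S}$, hence $(\mathsf{Ran}\,G)(S)\simeq\mathsf{Map}(\iota^*h_S,G)\simeq G(\pi_0S)$.

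The argument of \cref{IncKan} is about the \emph{covariant} functor $S\mapsto S_{\mathsf{dR}}$ on affines. A presheaf is covariant on $\mathsf{dsAff}^{\mathsf{op}}$, where $\iota^{\mathsf{op}}$ has $\pi_0$ as a \emph{left} adjoint, so the roles of $\mathsf{Lan}$ and $\mathsf{Ran}$ swap. The left Kan extension $\iota_!$ instead preserves representables: $\iota_!h_T\simeq h_{\iota T}$.

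Here is a concrete counterexample to your implication, already for non-super derived affines. The presheaf $S\mapsto\pi_0\mathcal{O}(S)$ factors through $\pi_0$ and restricts to $\mathbb{A}^1$ on classical affines. But the left Kan extension of that restriction is the derived affine line $S\mapsto\Omega^\infty\mathcal{O}(S)$, which is not discrete.

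So $Z_{\mathsf{dR}}(S)\simeq Z_{\mathsf{dR}}(\pi_0S)$ only says that $Z_{\mathsf{dR}}$ is right Kan extended from classical affines, which holds for every $Z$. It does not place $Z_{\mathsf{dR}}$ in $\tau_{\leq0}\mathsf{dPresStk}$. That is the property actually needed downstream in \cref{finitetypecofinal}: cofinality of $\mathsf{sAff}_{/X_{\mathsf{dR}}}\to\mathsf{dsAff}_{/X_{\mathsf{dR}}}$ is equivalent to $X_{\mathsf{dR}}\simeq\iota_!\iota^*X_{\mathsf{dR}}$. The fact that your Step 1 never uses the laft hypothesis is a symptom of this.

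\textbf{What is missing} is a real argument that $\iota_!\iota^*Z_{\mathsf{dR}}\to Z_{\mathsf{dR}}$ is an equivalence. This is where the paper, following Gaitsgory--Rozenblyum, does actual work:
\begin{enumerate}
\item Since $\mathsf{dR}$ commutes with colimits (\cref{limitscolimits}, \cref{KanPrestk}) and $\tau_{\leq0}\mathsf{dPresStk}$ is closed under colimits, laft lets one reduce to $Z$ a classical affine of finite type.
\item One embeds $Z$ into a smooth affine $X$ and uses $Z_{\mathsf{dR}}\simeq(X^\wedge_Z)_{\mathsf{dR}}$.
\item The map $X^\wedge_Z\to(X^\wedge_Z)_{\mathsf{dR}}$ is an effective epimorphism, because smoothness of $X$ lets points over $S_{\mathsf{br}}$ lift to $S$. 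So $(X^\wedge_Z)_{\mathsf{dR}}$ is the geometric realization of its \v{C}ech nerve.
\item The terms of that nerve are formal completions of powers of $X$ along the diagonally embedded $Z$. These are classical ind-schemes, hence so is their colimit.
\end{enumerate}
Your proposal needs this step, or some substitute for it, to prove the second assertion.
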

\begin{proof}
    We need to verify that it is convergent, but 
    \begin{align}
         \mathsf{Map}(S,Z) \longrightarrow\underset{n\rightarrow \infty}{\mathsf{lim}} \mathsf{Map}(\tau_{\leq n}S,Z)
    \end{align}
    is an equivalence by assumption for all affine derived schemes, thus in particular, it is an equivalence for reduced affine schemes. To show that it $Z_\mathsf{dR,\leq n}$ commutes with filtered colimits, we use that $Z$ commutes with filtered colimits, thus it suffices to show 
    \begin{align}
        \mathsf{dsAff} &\longrightarrow \mathsf{dsAff} \\
        S &\longmapsto(\pi_0S)_{\mathsf{br}}
    \end{align}
    commutes with filtered colimits, which follows from \cref{limitscolimits}. To prove the second point we need to prove that 
    \begin{align}
        \underset{S \in \mathsf{dsAff}_{/X}}{\mathsf{colim}} S_\mathsf{dR} 
    \end{align}
    is discrete, for which we will reduce to the affine case. Since $X_\mathsf{dR} \in \mathsf{dPresStk}^{\mathsf{laft}}$
    \begin{align}
        \mathsf{Aff}^\mathsf{ft} \longrightarrow \mathsf{Aff}
    \end{align}
    is cofinal. The category $\tau_{\leq 0}\mathsf{dPresStk}$ is closed under colimits, thus we may assume that $Z$ is a discrete finite type scheme. We now prove that $Z_\mathsf{dR}$ is discrete. Consider an embedding of $Z \rightarrow X$ into a smooth affine finite type scheme $X$. We denote $Y\coloneqq X_Z^\wedge$ and since $X_\mathsf{dR}\rightarrow Y_\mathsf{dR}$ is an isomorphism it suffices to check that $Y_\mathsf{dR}$ is classical. The map
    \begin{align}
        Y^{Y\times_{Y_\mathsf{dR}}[-]} \rightarrow Y
    \end{align}
    is an effective epimorphism combining this with the fact that $\tau_{\leq 0}\mathsf{dPresStk}$ is closed under colimits, we only need to check each term is a discrete scheme. As every term is the formal completion along the embedded diagonal the claim follows.
\end{proof}
\begin{lemma}[{\cite[Corollary 1.3.7.]{gaitsgory2023crystals}}]\label[lemma]{finitetypecofinal}
    Let $\mathcal{C} \subset \mathcal{D}$ be any of the following categories
    \begin{align*}
        \mathsf{sAff}^{\mathsf{ft}}, \mathsf{sAff}, \mathsf{dsAff}^\mathsf{aft}, \mathsf{dsAff}
    \end{align*}
    and $X \in \mathsf{dPresStk}^{\mathsf{laft}}$ then the functor 
    \begin{align}
        \mathcal{C}_{/X_\mathsf{dR}} \longrightarrow \mathcal{D}_{/X_\mathsf{dR}}
    \end{align}
    is cofinal.
\end{lemma}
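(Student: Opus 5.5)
We argue via the criterion for cofinality from Quillen's Theorem A in Lurie's form, \cite[Theorem 4.1.3.1.]{lurie:htt}. It suffices to show that for every object $(S \to X_{\mathsf{dR}}) \in \mathcal{D}_{/X_{\mathsf{dR}}}$ the comma category
\begin{align}
    \mathcal{C}_{/X_{\mathsf{dR}}} \times_{\mathcal{D}_{/X_{\mathsf{dR}}}} (\mathcal{D}_{/X_{\mathsf{dR}}})_{(S\to X_{\mathsf{dR}})/}
\end{align}
is weakly contractible. Concretely, its objects are factorizations $S \to T \to X_{\mathsf{dR}}$ with $T \in \mathcal{C}$. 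Since cofinal functors compose, we treat the inclusions one at a time. These are $\mathsf{sAff}^{\mathsf{ft}} \subset \mathsf{sAff}$, then $\mathsf{sAff} \subset \mathsf{dsAff}$, then $\mathsf{dsAff}^{\mathsf{aft}} \subset \mathsf{dsAff}$, and finally $\mathsf{sAff}^{\mathsf{ft}} \subset \mathsf{dsAff}^{\mathsf{aft}}$, which follows from the others by two-out-of-three for cofinality.

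The first key step is a reduction that uses the definition of the de Rham prestack. Any map $S \to X_{\mathsf{dR}}$ is, by definition, a point of $X((\pi_0 S)_{\mathsf{br}})$. Therefore it factors canonically as $S \to (\pi_0S)_{\mathsf{br}} \to X_{\mathsf{dR}}$, where the first map is the adjunction map $S \to S_{\mathsf{dR}}$ composed with $(\pi_0 S)_{\mathsf{br}} \to S_{\mathsf{dR}}$. More precisely, the inclusion $(\pi_0S)_{\mathsf{br}}\to S$ induces an equivalence on de Rham prestacks, as in the proof of \cref{IncKan}.

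Using this, the inclusions $\mathsf{sAff}\subset\mathsf{dsAff}$ and of reduced bosonic affines are handled by the colocalization and adjunction arguments already used in \cref{IncKan} and the colocalization theorem. The functor $S \mapsto (\pi_0 S)_{\mathsf{br}}$ is right adjoint to the inclusion at the level of maps into de Rham prestacks, which is exactly the situation of \cref{pullbackcofinal}. More precisely, the factorization category above has an initial object when we restrict to $T$ classical and reduced. Hence the factorization category is contractible, and restriction along these inclusions preserves the relevant limits and colimits.

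The second key step is finite type approximation, and we expect it to be the main obstacle. Let $S$ be reduced, classical and bosonic; by the above we may assume this. Write $S=\mathsf{colim}\, S_\alpha$ as a cofiltered limit of affine schemes of finite type, i.e.\ write the ring as a filtered colimit of finitely generated subalgebras. Since $X$ is laft, and $X_{\mathsf{dR}}$ is laft by the preceding proposition, $X_{\mathsf{dR}}$ commutes with these filtered colimits on $\leq 0$-truncated objects. Hence any map $S\to X_{\mathsf{dR}}$ factors through some $S_\alpha$.

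Moreover, the category of such factorizations is filtered: two factorizations through $T_1,T_2$ can be dominated by a common one via $T_1\times T_2$ followed by a further approximation. Equalizers are handled the same way, by convergence and compactness. Since filtered categories are weakly contractible, the comma category is weakly contractible.

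The delicate point is that $S_\alpha$ need not be reduced, and that fibers must be computed in $\mathcal{D}$ rather than among classical objects. We would handle this by passing to $(S_\alpha)_{\mathsf{br}}$, which is still finite type. The de Rham prestack is insensitive to this passage, so the factorization persists. For the derived case $\mathsf{dsAff}^{\mathsf{aft}}$ we use convergence and the fact that $Z_{\mathsf{dR}}\in\tau_{\leq0}\mathsf{dPresStk}$ to reduce to the classical situation.
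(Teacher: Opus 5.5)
You set this up correctly. By Quillen's Theorem A, cofinality is weak contractibility of the categories of factorizations $S\to T\to X_{\mathsf{dR}}$. Your filtered-colimit argument for $\mathsf{sAff}^{\mathsf{ft}}\subset\mathsf{sAff}$ unwinds the laft property, which is exactly what the paper uses for that arrow.

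The gap is in your treatment of $\mathsf{sAff}\subset\mathsf{dsAff}$ and in the reduction to reduced bosonic $S$. You claim that the factorizations of $y\colon S\to X_{\mathsf{dR}}$ through classical reduced $T$ have an initial object, coming from the coreflection $S\mapsto(\pi_0S)_{\mathsf{br}}$.
\begin{itemize}
\item That coreflection makes the inclusion of reduced bosonic affines a \emph{left} adjoint on slices. So it produces terminal objects in categories of maps \emph{into} $S$. This is the right Kan extension formula $X_{\mathsf{dR}}(S)=X((\pi_0S)_{\mathsf{br}})$, and it is all that \cref{pullbackcofinal} gives you.
\item Theorem A needs contractibility of categories of maps \emph{out of} $S$. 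There is not even a canonical map $S\to(\pi_0S)_{\mathsf{br}}$; the canonical map is the closed immersion in the other direction.
\item The claim is in fact false. Take $X=\mathbb{A}^1$, which is reduced. Then $X_{\mathsf{dR}}$ restricted to reduced affines is representable by $\mathbb{A}^1$, so its left Kan extension is $\mathbb{A}^1$, not $\mathbb{A}^1_{\mathsf{dR}}$.
\item Concretely, let $S=\mathsf{Spec}(k[\epsilon]/\epsilon^2)$ and $y=0$. A factorization through a reduced $T=\mathsf{Spec}(B)$ is a pair $(\psi\colon B\to k[\epsilon]/\epsilon^2,\ b\in B)$ with $\psi(b)\in k\epsilon$. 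Morphisms preserve $\psi(b)$, so the category has $\pi_0\simeq k$. If this inclusion were cofinal, you would get $\mathsf{Crys}(\mathbb{A}^1)\simeq\mathsf{QCoh}(\mathbb{A}^1)$.
\end{itemize}
This part of your argument also never uses the laft hypothesis, which should have been a warning sign.

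The missing input is the nontrivial fact that, for $X$ laft, $X_{\mathsf{dR}}\in\tau_{\leq 0}\mathsf{dPresStk}$, i.e.\ $X_{\mathsf{dR}}$ is left Kan extended from $\mathsf{sAff}$. This is the preceding proposition, proved by embedding a finite type $Z$ into a smooth scheme and passing to formal completions. By Theorem A this statement is \emph{equivalent} to the cofinality of $\mathsf{sAff}_{/X_{\mathsf{dR}}}\to\mathsf{dsAff}_{/X_{\mathsf{dR}}}$. The paper's proof is exactly this: laftness for the first arrow, classicality for the second.

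You invoke classicality only at the end, for $\mathsf{dsAff}^{\mathsf{aft}}$, rather than where it is needed. Once you use it there, drop the reduction to reduced $S$ entirely. Your filtered-colimit argument works for any $S\in\mathsf{sAff}$, since $A\mapsto A_{\mathsf{br}}$ commutes with filtered colimits.

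A smaller point: the ``two-out-of-three'' you cite for $\mathsf{sAff}^{\mathsf{ft}}\subset\mathsf{dsAff}^{\mathsf{aft}}$ fails in general in that direction; consider $\{0\}\subset\{0\to 1\}\to\ast$. It is harmless here, because for full subcategories the Theorem A fibres over $S\in\mathsf{dsAff}^{\mathsf{aft}}$ coincide with those computed in $\mathsf{dsAff}$.
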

\begin{proof}
    It suffices to prove this for the composition
    \begin{align}
        \mathsf{sAff}^{\mathsf{ft}} \longrightarrow \mathsf{sAff} \longrightarrow \mathsf{dsAff}
    \end{align}
    The left arrow follows from the fact that $X_\mathsf{dR} \in \mathsf{dPresStk}^\mathsf{laft}$ and the second arrow follows from $X_\mathsf{dR}\in \tau_{\leq 0}\mathsf{dPresStk}$.
\end{proof}
We are now ready to prove the recognition theorem between crystals and $D$-modules. We start by proving that the adjunction $p_!\dashv p^!: \mathsf{QCoh}(X)\rightarrow \mathsf{QCoh}(X_\mathsf{dR})$ is monadic use this and a local computation to show that crystals are equivalent to $D$-modules  
\begin{theorem}[{\cite[Proposition 8.30.]{scholze2025six}}]
    Let $X$ be a smooth superscheme, then the morphism $X\rightarrow X_\mathsf{dR}$ admits $!$-functors and the $!$-functors form a monadic adjunction.
\end{theorem}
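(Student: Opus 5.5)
We work through the Dirac reduction: quasi-coherent sheaves on superstacks agree with those on the associated $2$-periodic Dirac prestacks, and base change and recollement are available there. Write $p: X\to X_{\mathsf{dR}}$. The first task is to show that $p$ is $!$-able in the sense of Scholze's six-functor formalism. The strategy is to factor $p$ through its Čech nerve. By definition, $X\times_{X_{\mathsf{dR}}}X$ is the formal completion $(X\times X)^\wedge_\Delta$ of $X\times X$ along the diagonal. More generally, the $n$-fold fibre product is the completion of $X^{n+1}$ along the small diagonal. This holds because an $S$-point of the fibre product is a tuple of $S$-points that agree on $S_{\mathsf{br}}$. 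Since $X$ is smooth, $p$ is an effective epimorphism of étale sheaves: formal smoothness lets us lift $S_{\mathsf{br}}$-points étale locally. Hence $X_{\mathsf{dR}}\simeq|\check{C}(p)|$. By the descent theorem \cref{QCohIsSheaf}, $\mathsf{QCoh}(X_{\mathsf{dR}})$ is then the limit of $\mathsf{QCoh}$ of the formal completions along the $*$-pullbacks.

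Next we produce the $!$-functors on each projection $\mathrm{pr}_1:(X\times X)^\wedge_\Delta\to X$. Using recollement (\cref{recollement}) for the closed immersion $\Delta: X\to X\times X$, we get $j_!\dashv j^!\simeq j^*$ for the completion $j:(X\times X)^\wedge_\Delta\to X\times X$. The projection $X\times X\to X$ is smooth. Since $X$ is smooth of dimension $(m|n)$, Zariski locally we have étale coordinates $x_1,\dots,x_m,\theta_1,\dots,\theta_n$. These make the projection cohomologically smooth, with $\mathrm{pr}^!\simeq\mathrm{pr}^*\otimes\omega[\,\cdot\,]$ for the Berezinian line bundle placed in the appropriate super degree. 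Composing gives $\mathrm{pr}_{1,!}$ as a left adjoint of $\mathrm{pr}_1^!$, and base change (\cref{basechangePre}, or its affine version \cref{basechangeAff} after localizing) ensures compatibility along the simplicial diagram. Descending along the Čech nerve, we conclude that $p$ is $!$-able and that $p^!\simeq p^*\otimes(\text{invertible shift})$. We also get a left adjoint $p_!$ with $p^*p_!\simeq \mathrm{pr}_{2,*}\mathrm{pr}_1^!$ on $\mathsf{QCoh}(X)$.

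Monadicity of $p_!\dashv p^!$ is checked with Barr–Beck–Lurie. First, $p^!$ must be conservative. It differs from $p^*$ by tensoring with an invertible object, and $p^*$ is conservative because $p$ is an effective epimorphism and $\mathsf{QCoh}(X_{\mathsf{dR}})$ embeds into the totalization via the $0$-th term. Second, $p^!$ must preserve colimits. Here $p^!$ commutes with all colimits, again because $p^*$ does and tensoring by an invertible object does. Both conditions then follow, and the monad $p^!p_!$ is identified as $\mathrm{pr}_{1,*}(\text{completion})\mathrm{pr}_2^!$. This is the monad that is later matched with $D_X\otimes-$.

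The main obstacle is the cohomological smoothness of the projection from the formal completion in the super setting. One has to show that the fibre $\widehat{\mathbb{A}}^{m|n}$ at the origin is $!$-able with invertible dualizing object. This is a local computation. The formal even directions are handled as in the classical case, by writing the completion as a colimit of infinitesimal neighbourhoods, so that $j_*$ is $t$-exact by \cref{recollement}. The odd directions are already nilpotent, so $\mathbb{A}^{0|n}\to\mathrm{pt}$ is finite, and its dualizing object $\mathrm{Hom}(\Lambda[\theta_1,\dots,\theta_n],k)$ is free of rank one, shifted by the parity of $n$. Checking that these two pieces combine compatibly, which in the end produces the Berezinian, is where I expect the real work to lie.
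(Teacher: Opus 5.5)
\v{C}ech nerve identification is sound, but the proof has a genuine gap at the step everything else rests on.

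\textbf{What works.} You correctly get $X\times_{X_{\mathsf{dR}}}X\simeq (X\times X)^\wedge_\Delta$. You are also right that $p$ is an \'etale-local epimorphism: smooth maps lift along the nil-ideal cutting out $S_{\mathsf{br}}$, using finite presentation. Your monadicity argument, via conservativity of $p^*$ on the $0$-th term of the totalization, is cleaner than the paper's appeal to duality.

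\textbf{The gap.} Everything depends on the claim that $\mathrm{pr}_1\colon (X\times X)^\wedge_\Delta\to X$ has a $!$-adjunction with $\mathrm{pr}_1^!$ an invertible twist of $\mathrm{pr}_1^*$. Your justification is that the smooth projection $X\times X\to X$ is cohomologically smooth, and this fails in $\mathsf{QCoh}$ when $X$ is not proper. In the recollement of \cref{recollement}, open immersions satisfy $i_!\simeq i_*$. Hence compactifying a non-proper map only yields $f_!\simeq f_*$. For $f\colon\mathbb{A}^1\to\mathrm{pt}$, the right adjoint of $f_*$ is $\mathrm{Hom}_k(k[t],-)$, which is not $f^*$ twisted by a line bundle. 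The classical twisted inverse image $i^*\bar f^{\times}\simeq f^*\otimes\omega_f[d]$ does exist, but it has no left adjoint on $\mathsf{QCoh}$. So as written you obtain neither $p_!$ nor $p^!\simeq p^*\otimes(\text{invertible})$. Your conservativity and colimit-preservation arguments are derived from that formula, so they inherit the gap. You also leave the decisive local computation open yourself.

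\textbf{The missing ingredient: properness.} This is exactly how the paper argues.
\begin{enumerate}
\item It localizes to $X$ \'etale over $\mathbb{A}^{n|m}$. It then uses that for $U\to X$ \'etale (e.g.\ open), the map $U\to U_{\mathsf{dR}}$ is the base change of $X\to X_{\mathsf{dR}}$ along $U_{\mathsf{dR}}\to X_{\mathsf{dR}}$. This lets it replace $X$ by a smooth \emph{proper} model, namely $\mathbb{P}^{1|1}$.
\item For any finite-type $Y\to X_{\mathsf{dR}}$, one has $X\times_{X_{\mathsf{dR}}}Y\simeq (X\times Y)^\wedge_Z$, where $Z$ is the graph of $Y_{\mathsf{red}}\to X$.
\item Recollement gives $\hat p^!\simeq\hat p^*$ for the completion.
\item Relative Grothendieck duality for the smooth proper projection $X\times Y\to Y$ supplies the remaining $!$-adjunction, with an invertible twist.
\end{enumerate}
In your language: the objects $j_!(-)$ are supported on $\Delta$, which is proper over $X$ via $\mathrm{pr}_1$. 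You still have to pass to a proper ambient space to turn this into a duality statement.

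\textbf{The step you flagged is formal.} Assembling even and odd directions into the Berezinian is not where the work lies. $\mathsf{dR}$ commutes with products, and $\mathbb{A}^{0|n}_{\mathsf{dR}}\simeq\mathrm{pt}$ because $S_{\mathsf{br}}$ has no odd functions. So the odd factor is just the finite flat map $\mathbb{A}^{0|n}\to\mathrm{pt}$, and the statement never needs the dualizing line explicitly.
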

\begin{proof}
   We will reduce the general case to the case of $X= \mathbb{A}^{1|1}$ for a finite group $G$ for which we will prove that $!$-functors exist and then verify the conditions for the Barr-Beck-Lurie theorem. For this theorem, we will also need base change in the setting of derived algebraic geometry, thus from now on we consider all objects inside of the category of derived pre-superstacks. The question is local on $X$. Thus, we may assume $X$ is given by some $\mathsf{Spec}(A)$, which admits an étale map to $\mathbb{A}^{n|m}$. The morphism $X \rightarrow X_\mathsf{dR}$ arises out of the base change diagram
   \begin{center}
       \begin{tikzcd}
           X \arrow[r] \arrow[d] & X_\mathsf{dR} \arrow[d] \\
           \mathbb{A}^{n|m} \arrow[r] & \mathbb{A}^{n|m}_{\mathsf{dR}},
       \end{tikzcd}
   \end{center}
   so we can reduce to $X= \mathbb{A}^{n|m}$ and by taking products we can reduce to the case $\mathbb{A}^1$. Moreover, we can also assume $X = \mathbb{P}^{1|1}$. So from now on, we may assume that $X$ is smooth and proper. By cofinality from \cref{finitetypecofinal}, we need to prove that for any derived $k$-scheme of almost finite type $Y$ equipped with a morphism $Y \rightarrow\mathbb{A}^{1|1}_\mathsf{dR}$ the fiber product $X \times_{X_\mathsf{dR}} Y \rightarrow Y$ admits $!$-functors, Grothendieck duality holds and is surjective. The question of $!$-functors is local on the source and target, so we may assume that $Y \simeq \mathsf{Spec}(B)$ and the morphism $Y \rightarrow X_\mathsf{dR}$ arises from a morphism $Y_{\mathsf{red}} \rightarrow X$. The graph of the morphism $Y_\mathsf{red} \rightarrow X$ gives a closed subset $Z \subset X \times_k Y$, which implies
   \begin{align}
       X \times_{X_\mathsf{dR}} Y \simeq (X \times_k Y)^\wedge_Z,
   \end{align}
    since $(X \times_k Y)^\wedge_Z$ is the subfunctor of maps that set-theoretically factor through $Z$. Using \cref{recollement} we obtain that 
    \begin{align}
       \hat{p}: X \times_{X_\mathsf{dR}} Y \simeq (X \times_k Y)^\wedge_Z \longrightarrow X \times_k Y
    \end{align}
    admits $!$-functors. Furthermore, $\hat{p}^* \simeq \hat{p}^!$. To prove that we satisfy Grothendieck duality, we use the commutative square
    \begin{center}
        \begin{tikzcd}
            X \times_k Y \arrow[r] \arrow[d] & Y \arrow[d]\\
            X \arrow[r] & \mathsf{Spec}(k)
        \end{tikzcd}
    \end{center}
     and observe that $X \rightarrow \mathsf{Spec}(k)$ is smooth and proper by assumption, which is stable by pullback. It now follows from \cite[Relative Grothendieck Duality]{bruzzo2023notes} that $p$ satisfies Grothendieck duality and it is easy to see that $X \rightarrow X_{\mathsf{dR}}$ is surjective. \\
     To verify the Barr-Beck-Lurie criteria, we observe that conservativity of $p^!$ follows from Grothendieck duality and $p^!$ preserves all colimits and limits. In particular, it preserves split geometric realizations.
   \end{proof}
To elaborate on what was done, we have an adjunction
\begin{align}
   p^!\dashv p_!: \mathsf{QCoh}(X_{\mathsf{dR}}) \adjoint \mathsf{QCoh}(X).
\end{align}
 We denote $\mathsf{Diff}_X\coloneqq p^! \circ p_!$ and due to monadicity, we obtain the equivalence 
\begin{align}
    \mathsf{Crys}(X) \simeq \mathsf{Mod}_{\mathsf{Diff}_X}(\mathsf{QCoh}(X)).
\end{align}
 Given their nature, we will also endow the adjunction functors with new names as follows $\mathsf{oblv}\coloneqq p^! $ and $\mathsf{ind} \coloneqq p_!$. They are the forgetful functor, which forgets that we are a crystal and the induction functor, to turn a quasi-coherent sheaf into a crystal. $\mathsf{Diff}_{X}$ plays the role of the sheaf of differential operators. In the smooth case, this is a theorem as we will now prove using a local computation on $\mathbb{A}^{1|1}$.
\begin{example}[$\mathbb{A}^{1|1}$, {\cite{Cai2024DModules}}] \label[example]{DModisCrys}
    Let $k$ be an algebraically closed field of characteristic zero. We define the functor 
    \begin{align}
        \begin{split}
            \hat{\mathbf{0}}: \mathsf{sSch} &\longrightarrow \mathsf{sSch} \\
                    \mathsf{Spec}(A) &\longmapsto \mathsf{Nil}(A)
        \end{split}
    \end{align}
    This functor arises from the following situation. Consider the pullback diagram taken in the category of $\mathcal{P}(\mathsf{sAff})$
    \begin{center}
        \begin{tikzcd}
            \mathsf{Spec}(A) \arrow[drr, bend left = 20] \arrow[ddr, bend right = 20] \arrow[dr, dashed] & & \\
            & X \arrow[r] \arrow[d] \arrow[dr, phantom,"\lrcorner", very near start] & \mathbb{A}^{1|1} \arrow[d] \\
            &* \arrow[r] & \mathbb{A}^{1|1}_{\mathsf{dR}}
        \end{tikzcd}
    \end{center}
    we dualize this diagram
        \begin{center}
        \begin{tikzcd}
            A   & & \\
            & \mathcal{O}(X) \arrow[ul, dashed] \arrow[dr, phantom,"\lrcorner", very near start] & k[t|\theta] \arrow[l] \arrow[ull, bend right = 20 ] \\
            &k \arrow[uul, bend left = 20] \arrow[u] & k[t] \arrow[u, "\mathsf{inc}", swap] \arrow[l, "t \mapsto 0"]
        \end{tikzcd}
    \end{center}
    We observe that to be a pushout the composites 
    \begin{align}
        k[t] \rightarrow k \rightarrow A = k[t] \rightarrow k[t|\theta] \rightarrow A.
    \end{align}
    have to agree. Since, the left hand side factors through $A_{\mathsf{br}}$, so does the right hand side. It follows that $(t|\theta) \mapsto (r|s_1) \in \mathsf{Nil}(R) = \mathsf{Nil}(A_0) \oplus A_1$. We recall, that $ \mathsf{Spec}(A) \rightarrow X \in X(\mathsf{Spec}(A))$. So the corresponding functor can be identified as 
    \begin{align}
        \mathsf{Hom}(\mathcal{O}(X), A) \simeq  \mathsf{Nil}(A).
    \end{align}
    We conclude the pre-superstack is given by the assignment
    \begin{align}
        \begin{split}
                    X: \mathsf{sAff}_{\mathsf{op}} &\longrightarrow \mathsf{Gpd} \\
            \mathsf{Spec}(A) &\longmapsto \mathsf{Nil}(A). 
        \end{split}
    \end{align}
    We recognize that $X = \hat{\mathbf{0}}$. We can now analyze the functor of points of 
    \begin{align}
        \begin{split}
             \mathbb{A}^{1|1}/\hat{\mathbf{0}}: \mathsf{CAlg}_k &\longrightarrow \mathsf{Gpd} \\
        A &\longmapsto \mathsf{Hom}(k[t|\theta],A)/\mathsf{Nil}(A),
        \end{split}
    \end{align}
    where we observe
    \begin{align}
        \mathsf{Hom}(k[t|\theta],A)/\mathsf{Nil}(A)= A/\mathsf{Nil}(A) = A_{\mathsf{br}}
    \end{align}
    thus $\mathbb{A}^{1|1}/\hat{\mathbf{0}}$ isomorphic to $\mathbb{A}^{1|1}_{\mathsf{dR}}$. As a consequence, we obtain 
    \begin{align}
        \mathsf{QCoh}(\mathbb{A}^{1|1}_{\mathsf{dR}}) \simeq  \mathsf{QCoh}^{\hat{\mathbf{0}}}(\mathbb{A}^{1|1}).
    \end{align}
    Let $R$ be a superring. An $R$-action is the same thing as being an $k[R]$-comodule, or in this case an $\mathcal{O}(\mathbf{\hat{0}})$-comodule or modules with an $\mathcal{O}(\mathbf{\hat{0}})^\vee$-action. We compute 
    \begin{align}
        \mathcal{O}(\mathbf{\hat{0}}) = \mathcal{O}(\underset{n}{\mathsf{colim}\,\mathsf{Spec}(k[t|\theta]/t^n)}) = \underset{n}{\mathsf{lim}} k[t|\theta]/t^n = k \llbracket t|\theta \rrbracket = k\llbracket t\rrbracket \otimes k[\theta].
    \end{align}
    the dual vector space of this is $k[\frac{\partial_t^n}{n!}| \partial_\theta]$. So for $ M \in \mathsf{QCoh}^{\hat{\mathbf{0}}}(\mathbb{A}^{1|1})$ it needs to be a module with respect to  $k[t|\theta] \otimes k[\frac{\partial_t^n}{n!}| \partial_\theta]$. This is the algebra of differential operators, which we will denote by $W$. Therefore, 
    \begin{align}
        \mathsf{Crys}(\mathbb{A}^{1|1}) \simeq  \mathsf{QCoh}(\mathbb{A}^{1|1}_{\mathsf{dR}}) \simeq  \mathsf{Mod}(W).
    \end{align}
\begin{remark}
    This is all done on the level of derived categories, but this also holds for the discrete objects, i.e. 
    \begin{align}
        \mathsf{QCoh}(\mathbb{A}^{1|1}_{\mathsf{dR}})^\heartsuit \simeq  \mathsf{Mod}(W)^{\heartsuit}.
    \end{align}
\end{remark}
We can now also compute, what the action of the monad $\mathsf{Diff}_{\mathbb{A}^{1|1}} \coloneqq \mathsf{oblv} \circ \mathsf{ind}$ does. Let $M \in \mathsf{Crys}(\mathbb{A}^{1|1})$, and we identify 
\begin{align}
  \mathsf{ind} \dashv \mathsf{oblv}:  \mathsf{Mod}(k[t|\theta]) \adjoint \mathsf{Mod}(W)
\end{align}
then the monad acts on $M$ by 
\begin{align}
    \mathsf{Diff}_{\mathbb{A}^{1|1}}(M) \longrightarrow M.
\end{align}
In our example we can see that the induction functor is $W \otimes (-)$ and $\mathsf{oblv}$ is the forgetful functor. Thus, applying $(\mathsf{oblv} \circ \mathsf{ind})(M)$ yields the action 
\begin{align}
    W \otimes M \rightarrow M,
\end{align}
by the natural action of $W$ on the $k[t|\theta]$-module $M$. We conclude that this monad exactly encodes an action of the sheaf of differential operators, which is the classic definition of a $D$-module. 
\end{example}
This example extends to arbitrary $\mathbb{A}^{n|m}$ and thus to all smooth superschemes, by étale descent. Thus, we have the theorem
\begin{theorem} 
Let $X$ be a smooth superscheme, then
    \begin{align}
        \mathsf{Crys}(X) \simeq  D\mathsf{Mod}(X).
    \end{align}
\end{theorem}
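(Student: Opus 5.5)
The plan is to globalize the local computation of \cref{DModisCrys} by descent. Both sides of the claimed equivalence are sheaves of categories on the small étale site of $X$. For crystals, \cref{QCohIsSheaf} gives étale descent for $\mathsf{QCoh}$. Moreover, $\mathsf{dR}$ commutes with colimits by \cref{limitscolimits}, and an étale cover $U\to X$ induces an étale cover $U_{\mathsf{dR}}\to X_{\mathsf{dR}}$ (for étale $U\to X$, the square with $U\to U_{\mathsf{dR}}$ and $X\to X_{\mathsf{dR}}$ is cartesian by the infinitesimal lifting property, since $S_{\mathsf{br}}\to S$ is a nilpotent thickening). Together these give $\mathsf{Crys}(X)\simeq \lim \mathsf{Crys}(U_\bullet)$ over the Čech nerve. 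For $D$-modules, étale descent holds because $D_U\simeq \mathcal{O}_U\otimes_{\mathcal{O}_X} D_X$ for $U\to X$ étale.

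It therefore suffices to construct a natural equivalence $\mathsf{Crys}(U)\simeq D\mathsf{Mod}(U)$ on affine opens $U=\mathsf{Spec}(A)$ admitting an étale coordinate map $U\to\mathbb{A}^{n|m}$, compatible with étale restriction. Here I will use the monadicity theorem just proved (\cite[Proposition 8.30.]{scholze2025six}). It gives $\mathsf{Crys}(U)\simeq\mathsf{Mod}_{\mathsf{Diff}_U}(\mathsf{QCoh}(U))$ with $\mathsf{Diff}_U=\mathsf{oblv}\circ\mathsf{ind}$. Since the constructions are functorial in $U$, compatibility with étale pullback comes for free, and what remains is to identify the monad $\mathsf{Diff}_U$ with $D_U\otimes_{A}(-)$ as monads on $\mathsf{QCoh}(U)$.

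To identify the monad, I first treat $\mathbb{A}^{n|m}$. Repeating \cref{DModisCrys} with $\hat{\mathbf{0}}$ replaced by the formal group $\hat{\mathbb{A}}^{n|m}$ gives $\mathbb{A}^{n|m}_{\mathsf{dR}}\simeq \mathbb{A}^{n|m}/\hat{\mathbb{A}}^{n|m}$. Comodules over $k\llbracket t_1,\dots,t_n\rrbracket\otimes\Lambda[\theta_1,\dots,\theta_m]$ are then modules over its continuous dual, the divided-power and exterior algebra in $\partial_{t_i},\partial_{\theta_j}$, and crossing with $k[t|\theta]$ yields the Weyl superalgebra $W_{n|m}=D_{\mathbb{A}^{n|m}}$. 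Alternatively, the statement follows from the case $\mathbb{A}^{1|0}$ and $\mathbb{A}^{0|1}$ via Künneth, since $\mathsf{QCoh}$ of products of these de Rham stacks is the tensor product of categories. For étale $U\to\mathbb{A}^{n|m}$, the cartesian square from the monadicity proof together with base change (\cref{basechangePre}, applied to flat schematic qcqs maps) shows $\mathsf{Diff}_U\simeq \mathcal{O}_U\otimes_{k[t|\theta]}W_{n|m}$. Finally, the standard fact that $D_U=\mathcal{O}_U\otimes D_{\mathbb{A}^{n|m}}$ for étale coordinates, with $\partial_{t_i},\partial_{\theta_j}$ lifting uniquely as derivations, identifies this with $D_U$.

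I expect the main obstacle to be checking that the identification is an equivalence of monads, that is, that the multiplication on $\mathsf{oblv}\circ\mathsf{ind}$ matches composition of differential operators, especially with Koszul signs in the odd directions. An equivalence of underlying functors alone is not enough. My approach is to read off the algebra structure from the convolution (groupoid) structure on the formal completion of the diagonal $(X\times X)^\wedge_{\Delta}$, whose distributions form $D_X$ with its multiplication by Grothendieck's definition. This reduces the sign check to the single odd coordinate case $\mathbb{A}^{0|1}$, where $\partial_\theta^2=0$ can be verified by hand. A secondary point is verifying that passing to hearts is compatible, so that the heart statements $\mathsf{Crys}(X)^\heartsuit\simeq D\mathsf{Mod}(X)^\heartsuit$ also hold; this follows because $\mathsf{oblv}$ is $t$-exact, by the $t$-exactness in \cref{recollement}.
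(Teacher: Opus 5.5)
Your proposal is correct and follows essentially the same route as the paper: monadicity of $p_! \dashv p^!$ for $p\colon X \to X_{\mathsf{dR}}$, the formal-group computation on affine superspace extending the $\mathbb{A}^{1|1}$ example, and étale descent. You also supply details the paper leaves implicit, namely the étale base change from $\mathbb{A}^{n|m}$ to coordinate charts (the paper says only ``by étale descent'', although smooth superschemes are merely étale-locally étale over $\mathbb{A}^{n|m}$) and the check that the monad multiplication on $\mathsf{oblv}\circ\mathsf{ind}$ matches composition of differential operators.
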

Using this, we can make the following identifications. Let $X$ be an Artin superstack and $U \to X$ be a smooth presentation of $X$ by an algebraic superspace. Then we have the equivalences
        \begin{align}
      \mathsf{Crys}(X) \simeq  \underset{\mathsf{Sch}_{/X}^\mathsf{op}}{\mathsf{lim}}\mathsf{Crys}(S) \simeq \underset{\mathsf{Sch}_{/U}^\mathsf{op}}{\mathsf{lim}}\mathsf{Crys}(S) \simeq \underset{\mathsf{Et}(U)^\mathsf{op}}{\mathsf{lim}}\mathsf{Crys}(S) \simeq 
      \underset{\mathsf{Et}(U)^\mathsf{op}}{\mathsf{lim}}\mathsf{DMod}(S)
    \end{align}
and one may give the following definition:
\begin{definition}
    We define the category of $D$-modules on a smooth Artin superstack to be
    \begin{align}
        \mathsf{DMod}(X) \coloneqq \mathsf{Crys}(X).
    \end{align}
\end{definition}
Using the geometric perspective of de Rham cohomology, we also obtain a spectral sequence free proof of the equivalence of de Rham cohomology of the supermanifold and de Rham cohomology of the underlying manifold.
\begin{theorem}\label[theorem]{deRhamisCoherent}
    Let $X$ be a smooth superscheme then 
    \begin{align}
      R^\bullet\Gamma(X_{\mathsf{dR}}, \mathcal{O}_{X_{\mathsf{dR}}})  \simeq  \mathbb{H}^\bullet(X,\Omega_X^\bullet) = H_{\mathsf{dR}}(X, \mathbb{C})
    \end{align}
\end{theorem}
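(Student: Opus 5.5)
The strategy is to reduce to the bosonic case using the identity $X_{\mathsf{dR}} = (X_{\mathsf{bos}})_{\mathsf{dR}}$ from the proof of \cref{Crys}, and then to invoke the classical comparison for smooth schemes.

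\textbf{Step 1: reduce to the bosonic (reduced) scheme.} For any $S$ we have $S_{\mathsf{br}}$ bosonic and reduced, so
\begin{align}
X_{\mathsf{dR}}(S) = X(S_{\mathsf{br}}) = X_{\mathsf{bos}}(S_{\mathsf{br}}) = (X_{\mathsf{bos}})_{\mathsf{dR}}(S).
\end{align}
This identification is compatible with $\mathsf{QCoh}$ and its unit object. It gives
\begin{align}
R\Gamma(X_{\mathsf{dR}},\mathcal{O}) \simeq R\Gamma((X_{\mathsf{bos}})_{\mathsf{dR}},\mathcal{O}) \simeq R\Gamma(X_{\mathsf{br}},\mathcal{O})_{\mathsf{dR}}.
\end{align}
Since $X$ is smooth, $X_{\mathsf{bos}} = X_{\mathsf{br}}$ is a smooth scheme. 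Locally, $X$ is split of the form $\mathsf{Spec}\,\mathcal{O}_{X_{\mathsf{bos}}}\otimes\Lambda[\theta_1,\dots,\theta_m]$.

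\textbf{Step 2: identify crystal cohomology with the de Rham complex on the bosonic scheme.} Write $p\colon X_{\mathsf{bos}}\to (X_{\mathsf{bos}})_{\mathsf{dR}}$. Global sections of a crystal $M$ equal $\mathrm{Hom}_{\mathsf{Crys}}(\mathcal{O},M)$. Under the equivalence $\mathsf{Crys}\simeq D\mathsf{Mod}$ (the theorem identifying crystals with $D$-modules), this becomes $R\mathrm{Hom}_{D}(\mathcal{O},\mathcal{O})$. Resolve $\mathcal{O}$ by the Spencer resolution $D\otimes\wedge^{-\bullet}T$; applying $\mathrm{Hom}_D(-,\mathcal{O})$ yields $\Omega^\bullet_{X_{\mathsf{bos}}}$. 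Taking hypercohomology gives $\mathbb{H}^\bullet(X_{\mathsf{bos}},\Omega^\bullet)$, which by Grothendieck's algebraic de Rham theorem equals $H_{\mathsf{dR}}(X_{\mathsf{bos}},\mathbb{C})$.

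Alternatively, one can compute via the Čech nerve of $p$: its terms are formal completions of $X_{\mathsf{bos}}^{n+1}$ along the diagonal. This gives the formal completion (Grothendieck–Hartshorne) description of de Rham cohomology directly, which avoids needing the Spencer resolution explicitly and uses only descent (\cref{QCohIsSheaf}).

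\textbf{Step 3: pass from the super de Rham complex to the bosonic one.} It remains to identify $\mathbb{H}^\bullet(X,\Omega^\bullet_X)$ of the superscheme with the bosonic answer. I would run the same argument on $X$ itself, this time using the super $D$-module equivalence for $X$ together with the super Spencer resolution (Koszul for even directions, symmetric for odd ones). This gives $R\Gamma(X_{\mathsf{dR}},\mathcal{O})\simeq\mathbb{H}(X,\Omega^\bullet_X)$. Combined with Step 1, this shows that the super and bosonic de Rham cohomologies agree, which is the advertised corollary, without a spectral sequence.

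\textbf{Main obstacle.} The hardest step is the super Spencer/Čech computation. The odd directions contribute the infinite complex of symmetric powers of $d\theta$. One must check that the formal completion of $X^{n+1}$ along the diagonal computes the same thing as for $X_{\mathsf{bos}}$, i.e.\ that the odd formal neighbourhood is acyclic. The local model is $k[\theta]$ with differential $d\theta$ acting on $k[\theta,d\theta]$, which is the Koszul complex of an odd variable and hence quasi-isomorphic to $k$. I would verify this on $\mathbb{A}^{0|1}$, then use Künneth on $\mathbb{A}^{n|m}$ and étale descent to conclude.
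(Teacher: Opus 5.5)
Your proposal is correct and takes essentially the paper's route. The paper also computes $R\Gamma(X_{\mathsf{dR}},\mathcal{O})$ twice and equates the results: once via the crystal/$D$-module equivalence on $X$ and the super Spencer resolution, giving the super de Rham complex, and once via the groupoid (\v{C}ech) presentation $X_{\mathsf{bos}}\to X_{\mathsf{dR}}$, giving the bosonic de Rham complex; the latter is your alternative in Step~2. Your Step~1 spells out the identity $X_{\mathsf{dR}}=(X_{\mathsf{bos}})_{\mathsf{dR}}$ that the paper uses implicitly, and you correctly identify the real content as the exactness of the odd (symmetric-power) part of the super Spencer complex, i.e.\ the super Poincar\'e lemma, which the paper does not address.
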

\begin{proof}
    Recall that for a superscheme $X$ coherent cohomology is given by 
    \begin{align}
        \mathsf{Map}(\mathcal{O}_X, \mathcal{F}) \simeq  \mathsf{R}\Gamma(X,\mathcal{F}).
    \end{align}
    We wish to compute $\mathsf{Map}_{\mathsf{QCoh}(X_\mathsf{dR})}(\mathcal{O}_{X_\mathsf{dR}},\mathcal{O}_{X_\mathsf{dR}})$, which we view as a $\mathbb{C}$-linear spectrum. Using \cref{DModisCrys} we obtain the equivalence 
    \begin{align}
        \mathsf{Map}_{\mathsf{QCoh}(X_\mathsf{dR})}(\mathcal{O}_{X_\mathsf{dR}},\mathcal{O}_{X_\mathsf{dR}}) \simeq  \mathsf{Map}_{\mathsf{DMod}(X)}(\mathcal{O}_{X},\mathcal{O}_{X}) \simeq  \Gamma(X,\mathsf{R}\mathcal{H}om_{\mathcal{D}_X}(\mathcal{O}_X,\mathcal{O}_X)).
    \end{align}
    We can resolve $\mathcal{O}_X$ using the spencer complex given by 
    \begin{align}
  ... \rightarrow \wedge^2\mathcal{T}_X \rightarrow \mathcal{T}_X \rightarrow \mathcal{O}_X
    \end{align}
    Thus, we have the equivalence of $\mathbb{C}$-linear spectra:
    \begin{align}
        \mathsf{R}^\bullet\mathcal{H}om(\mathcal{O}_X,\mathcal{O}_X) \simeq \mathcal{H}om_{\mathcal{O}_X}(\wedge^\bullet \mathcal{T}_X,\mathcal{O}_X) \simeq ( \wedge^\bullet \Omega_X, d_\mathsf{dR}).
    \end{align}
    However, using the groupoid presentation of $X_\mathsf{bos} \rightarrow X_\mathsf{dR}$, we also obtain that $\mathsf{R}^\bullet\Gamma(X_\mathsf{dR},\mathcal{O}_{X_\mathsf{dR}}) \simeq \wedge^\bullet (\Omega_{X_\mathsf{bos}},d_\mathsf{dR})$, as the coherent cohomology is invariant under presentation, and the claim follows.
\end{proof}
\begin{theorem}\label[theorem]{quotientDmod}
    Let $X$ be a smooth superscheme and let $G$ be an algebraic supergroup and consider the stack quotient $[X/G]$, then we have the equivalence
    \begin{align}
        \mathsf{Crys}([X/G]) \simeq \mathsf{Crys}([X_\mathsf{bos}/G_\mathsf{bos}]).
    \end{align}
\end{theorem}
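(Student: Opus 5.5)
We aim to show that the de Rham prestacks themselves agree, $[X/G]_{\mathsf{dR}} \simeq [X_\mathsf{bos}/G_\mathsf{bos}]_{\mathsf{dR}}$; applying $\mathsf{QCoh}$ then gives the claim, exactly as in \cref{Crys}. By \cref{Crys}, $\mathsf{Crys}([X/G]) \simeq \mathsf{Crys}([X/G]_\mathsf{bos})$, so it suffices to identify the de Rham prestack of $[X/G]_\mathsf{bos}$ with that of $[X_\mathsf{bos}/G_\mathsf{bos}]$.

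\textbf{Step 1: reduce to the bar construction.} Write $[X/G]$ as the (étale) sheafification of the geometric realization of the action groupoid
\begin{align}
\cdots \; G\times G\times X \;\substack{\to\\[-0.9em]\to\\[-0.9em]\to}\; G\times X \rightrightarrows X .
\end{align}
By \cref{QCohIsSheaf}, $\mathsf{QCoh}$ does not distinguish a prestack from its sheafification. By \cref{limitscolimits}, $\mathsf{dR}$ commutes with colimits. Together these give
\begin{align}
\mathsf{Crys}([X/G]) \simeq \underset{[n]\in\Delta}{\mathsf{lim}}\, \mathsf{QCoh}\big((G^{n}\times X)_{\mathsf{dR}}\big).
\end{align}
Since $\mathsf{dR}$ commutes with limits, $(G^n\times X)_\mathsf{dR}\simeq (G_\mathsf{dR})^n\times X_\mathsf{dR}$. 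The same formula holds for $[X_\mathsf{bos}/G_\mathsf{bos}]$ with every term bosonic.

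\textbf{Step 2: compare termwise.} For any pre-superstack $Y$ we have $Y_\mathsf{dR}(S)=Y(S_\mathsf{br})$. Since $S_\mathsf{br}$ is a reduced bosonic scheme, the colocalization of the previous section (\cref{Crys}'s argument) gives $Y(S_\mathsf{br}) = Y_\mathsf{bos}(S_\mathsf{br})$. Hence $X_\mathsf{dR}\simeq (X_\mathsf{bos})_\mathsf{dR}$ and $G_\mathsf{dR}\simeq (G_\mathsf{bos})_\mathsf{dR}$ canonically. Moreover $(-)_\mathsf{bos}$ is a right adjoint, so it preserves products. Thus $G_\mathsf{bos}$ is a group object, and the action $G_\mathsf{bos}\times X_\mathsf{bos}\to X_\mathsf{bos}$ is the bosonic truncation of the action map. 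The identifications are natural in $Y$, so they assemble into an equivalence of simplicial objects
\begin{align}
\big((G^{\bullet}\times X)_{\mathsf{dR}}\big) \simeq \big((G_\mathsf{bos}^{\bullet}\times X_\mathsf{bos})_{\mathsf{dR}}\big),
\end{align}
and taking limits of $\mathsf{QCoh}$ yields the theorem.

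\textbf{Main obstacle.} The delicate point is the simplicial compatibility in Step 2: we must check that the face maps, built from the action and the multiplication, correspond under the identification $Y_\mathsf{dR}\simeq (Y_\mathsf{bos})_\mathsf{dR}$. Our plan is to argue purely by naturality. The identification is induced by the counit $\iota(Y_\mathsf{bos})\to Y$ and holds functorially in $Y$. Every face and degeneracy map is a morphism of pre-superstacks, so the squares commute automatically. A secondary subtlety is that sheafification and geometric realization do not commute on the nose with $(-)_\mathsf{bos}$. We avoid this by never taking $(-)_\mathsf{bos}$ of the quotient itself: we work throughout at the level of prestacks and invoke \cref{QCohIsSheaf} only at the end.
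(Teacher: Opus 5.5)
Your proposal is correct and follows essentially the paper's route. The paper also uses that $\mathsf{dR}$ commutes with colimits and that the quotient is a colimit to get $[X/G]_{\mathsf{dR}}\simeq[X_{\mathsf{dR}}/G_{\mathsf{dR}}]$, then replaces $X_{\mathsf{dR}}$ and $G_{\mathsf{dR}}$ by their bosonic (equivalently reduced) versions; your bar construction and naturality-of-the-counit check on face maps just spell this out, though both arguments omit the fact needed to move the sheafification outside $\mathsf{dR}$, namely that $\mathsf{dR}$ preserves \'etale-local equivalences (true since \'etale covers of $S_{\mathsf{br}}$ lift to $S$), which insensitivity of $\mathsf{QCoh}$ to sheafification alone does not give.
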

\begin{proof}
 Let $X$ be a smooth superscheme and let $G$ be an algebraic supergroup and consider the stack quotient $[X/G]$, 
 By \cref{limitscolimits} the de Rham functor commutes with colimits and a quotient is a coequalizer. Combining these two facts, we obtain the following isomorphism 
    \begin{align}
        \mathsf{Crys}([X/G]) \simeq  \mathsf{QCoh}([X/G]_{\mathsf{dR}}) \simeq  \mathsf{QCoh}([X_\mathsf{dR}/G_\mathsf{dR}]) \simeq  \mathsf{Crys}([X_{\mathsf{red}}/{G}_{\mathsf{red}}])
    \end{align}
\end{proof}
As a corollary, we can apply this to the flag supervariety of a quasi-reductive algebraic supergroup and see the classical Beilinson-Bernstein localization theorem fails.
\begin{corollary}
   Let $G$ be a quasi-reductive algebraic supergroup and fix a Borel subgroup. Furthermore, consider the flag supervariety $G/B$, then 
   \begin{align}
       \mathsf{Crys}(G/B) \simeq  \mathsf{Crys}(G_{\mathsf{red}}/{B}_{\mathsf{red}}) \simeq  \mathsf{Mod}(\mathcal{U}\mathfrak{g}_{\mathsf{red}}/\mathsf{ker}(\chi))
   \end{align}
\end{corollary}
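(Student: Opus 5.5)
The plan is to derive both equivalences from \cref{quotientDmod} and then apply the classical Beilinson--Bernstein theorem on the bosonic side.

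\textbf{Step 1: reduce to the bosonic flag variety.} Regard $G/B$ as the stack quotient $[G/B]$, where $B$ acts freely on the smooth superscheme $G$ by right multiplication. Applying \cref{quotientDmod} with $X=G$ and supergroup $B$ gives
\begin{align*}
\mathsf{Crys}(G/B)\simeq \mathsf{Crys}([G_{\mathsf{bos}}/B_{\mathsf{bos}}]).
\end{align*}
Since $G$ is algebraic over $k$ of characteristic zero, $G_{\mathsf{bos}}=G_{\mathsf{red}}$ is a reductive group (this is what quasi-reductive means). Moreover $B_{\mathsf{red}}$ is a Borel subgroup of $G_{\mathsf{red}}$.

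I will verify the latter directly: $\mathrm{Lie}(B)_{\bar 0}$ is a Borel subalgebra of $\mathfrak g_{\bar0}$ by the definition of Borel subalgebras of quasi-reductive Lie superalgebras, and $B_{\mathsf{red}}$ is connected.

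As a sanity check I will use the de Rham stack description directly. The map $(G/B)_{\mathsf{dR}}\simeq (G_{\mathsf{red}}/B_{\mathsf{red}})_{\mathsf{dR}}$ holds because $\mathsf{dR}$ only evaluates on reduced bosonic test schemes, and $\mathsf{dR}$ commutes with colimits by \cref{limitscolimits}. The bar construction computing the quotient is a colimit, and $\mathsf{dR}$ preserves the relevant fibre products. Since sheafification does not change $\mathsf{QCoh}$ by \cref{QCohIsSheaf}, the fppf-sheafification issue in forming $G/B$ is harmless.

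\textbf{Step 2: identify the right-hand side with $D$-modules.} The variety $G_{\mathsf{red}}/B_{\mathsf{red}}$ is a smooth projective bosonic scheme. By the theorem identifying crystals with $D$-modules on smooth (super)schemes, $\mathsf{Crys}(G_{\mathsf{red}}/B_{\mathsf{red}})\simeq D\mathsf{Mod}(G_{\mathsf{red}}/B_{\mathsf{red}})$. On the heart this is the abelian category of $D$-modules on the bosonic flag variety.

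\textbf{Step 3: apply classical Beilinson--Bernstein.} For $G_{\mathsf{red}}/B_{\mathsf{red}}$, the global sections functor gives
\begin{align*}
D\mathsf{Mod}(G_{\mathsf{red}}/B_{\mathsf{red}})^\heartsuit\simeq \mathsf{Mod}(\mathcal U\mathfrak g_{\mathsf{red}}/\ker\chi),
\end{align*}
where $\chi$ is the trivial central character and $\mathfrak g_{\mathsf{red}}=\mathfrak g_{\bar0}$. This is imported as a black box.

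For the derived statement I would use derived Beilinson--Bernstein: $R\Gamma$ is exact on this category, the higher cohomology vanishes, and $\mathcal D$ is a compact generator. This yields an equivalence with the derived module category, $\mathsf{Mod}(\mathcal U\mathfrak g_{\bar0}/\ker\chi)$.

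\textbf{Main obstacle.} I expect the hardest step to be Step 1, specifically that the quotient $G/B$ commutes with $(-)_{\mathsf{dR}}$ and is compatible with the bosonic truncation. The concern is that $(G/B)_{\mathsf{bos}}$ need not a priori be $G_{\mathsf{bos}}/B_{\mathsf{bos}}$ as stacks.

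I avoid this by never taking $(-)_{\mathsf{bos}}$ of the quotient. The argument only uses $\mathsf{dR}$ of the simplicial bar diagram $G\times B^{\bullet}$, which is levelwise $\mathsf{dR}(G_{\mathsf{red}}\times B_{\mathsf{red}}^\bullet)$ because the test objects are reduced bosonic. Then I apply \cref{limitscolimits} together with \cref{QCohIsSheaf} to pass from the geometric realization to $\mathsf{QCoh}$.
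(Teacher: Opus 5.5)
Your proposal is correct and is essentially the paper's argument. You identify $G/B$ with $[G/B]$ by freeness of the $B$-action, then reduce to $G_{\mathsf{red}}/B_{\mathsf{red}}$ via \cref{quotientDmod}, whose proof is exactly your observation that $\mathsf{dR}$ commutes with the colimit presenting the quotient and only sees reduced bosonic test objects, and you conclude by classical Beilinson--Bernstein. Your extra checks fill in what the paper leaves implicit: that $G_{\mathsf{red}}$ is reductive with Borel subgroup $B_{\mathsf{red}}$, the identification of crystals with $D$-modules, and the derived upgrade.

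One minor precision: \cref{QCohIsSheaf} by itself does not make sheafification harmless after applying $\mathsf{dR}$. You also need $\mathsf{dR}$ to carry the local equivalence from the prestack quotient to $G/B$ to a local equivalence. This holds in the \'etale topology by topological invariance of the \'etale site along $S_{\mathsf{br}}\to S$, and the paper passes over this point as well.
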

\begin{proof}
    First, the action of $B$ is free on $G$, thus $[G/B] \simeq  G/B$. Now, it is the same chain of equivalences as \cref{quotientDmod} and the last equivalence follows from the classical Beilinson-Bernstein theorem for a complex reductive algebraic (ordinary) group.
\end{proof}
\begin{example}[Weakly equivariant D-modules]
Let $X$ be a smooth superscheme and $G$ an algebraic supergroup. $X_{\mathsf{dR}}$ inherits an action of $G$, so we can also consider the stack quotient $[X_\mathsf{dR}/G]$. We obtain
\begin{align}
    \mathsf{QCoh}([X_\mathsf{dR}/G]) \simeq  \mathsf{QCoh}^G(X_\mathsf{dR}) \simeq \mathsf{Crys}^G(X).
\end{align}
Objects in this category are known as \textit{weakly equivariant} $D$-modules.
\end{example}
\begin{example}[Affine Grassmannians for Supergroups]
    Let $G$ be a quasi-reductive algebraic supergroup. Following the definition of the formal superdisk in \cite{maxwell2024neveu}, we define the loop supergroup
    \begin{align}
        \begin{split}
            G((\!( t|\theta)\!)): \mathsf{sCAlg} &\longrightarrow \mathsf{Gpd} \\
           A &\longmapsto G(A(\!( t|\theta)\!))
        \end{split}
    \end{align}
    and
        \begin{align}
        \begin{split}
            G([\![ t|\theta]\!]): \mathsf{sCAlg} &\longrightarrow \mathsf{Gpd} \\
           A &\longmapsto G(A[\![ t|\theta]\!])
        \end{split}
    \end{align}
    We can now also define the quotient of these two functors 
    \begin{align}
        \begin{split}
            \mathsf{Gr}_G: \mathsf{sCAlg} &\longrightarrow \mathsf{Gpd} \\
           A &\longmapsto G(A(\!( t|\theta)\!))/G(A[\![ t|\theta]\!]),
        \end{split}
    \end{align}
    which is a prestack and we can fppf sheafify this to obtain the affine Grassmannian, which is inessential since the category of quasi-coherent sheaves is insensitive to the stack structure. Applying the de Rham functor we obtain
    \begin{align}
        \begin{split}
            (\mathsf{Gr}_G)_{\mathsf{dR}}: \mathsf{sCAlg} &\longrightarrow \mathsf{Ani} \\
           A &\longmapsto G((A(\!( t|\theta)\!))_{\mathsf{br}})/G((A[\![ t|\theta]\!])_{\mathsf{br}}) \simeq G(A_{\mathsf{br}}(\!(t)\!))/G(A_{\mathsf{br}}[\![t]\!]).
        \end{split}
    \end{align}
    Thus, we have 
    \begin{align}
        \mathsf{Crys}(\mathsf{Gr}_G) \simeq  \mathsf{Crys}(\mathsf{Gr}_{G_\mathsf{red}})
    \end{align}
\end{example}
\subsection{Constructible Sheaves}
We give a refinement Simpson's notion of the Betti stack to the setting of constructible sheaves.  The Betti stack is the geometrization of singular cohomology and its quasi-coherent sheaves encode local systems. It represents the topological side of the non-abelian Hodge correspondence, thus, by construction, it is not sensitive to any complex or supergeometric information, thus for the remainder of the section we write $?\mathsf{Aff}$, which stands for any of the following categories:
\begin{align}
    \mathsf{Aff}, \ \ \mathsf{dAff}, \ \ \mathsf{sAff}, \ \ \mathsf{dsAff}, \ \ \mathsf{DAff}.
\end{align}
The Betti stack is informed by the monodromy equivalence, which states that, for a presentable category $\mathcal{C}$ and a locally contractible space $X$, there is a fully faithful functor 
\begin{align}
    \mathsf{Fun}(\Pi_\infty(X), \mathcal{C}) \longrightarrow \mathsf{Fun}(\mathsf{Open}(X)^\mathsf{op},\mathcal{C}),
\end{align}
whose essential image are the locally constant sheaves. The monodromy equivalence also has a stratified refinement, where the constructible sheaves take the place of the locally constant ones, called the exodromy equivalence, which states that for a poset $P$ and $X$ a conically $P$-stratified topological space, there exists a fully faithful functor 
\begin{align}
    \mathsf{Fun}(\mathsf{Exit}(X,P),\mathcal{C}) \longrightarrow \mathsf{Fun}(\mathsf{Open}(X)^\mathsf{op},\mathcal{C}),
\end{align}
whose image are the $P$-constructible sheaves, where the specific statement was taken from \cite[\S 7.2.]{haine2021homotopy}. The obvious obstruction to obtaining a notion of $P$-stratified Betti stack is that the so-called exit-path category of $X$, denoted by $\mathsf{Exit}(X,P)$, is not an anima, but a category. However, there is a notion of presheaf of categories, also called lax prestacks, which to the authors knowledge was pioneered in \cite{raskin2015chiral}, which lets us refine the notion of Betti stack to the stratified setting. Let us start by defining what a lax prestack is.
\begin{definition}
    A \textit{lax prestack}, is an element in $\mathsf{Fun}(?\mathsf{Aff}^\mathsf{op},\mathsf{Cat})$.
\end{definition}
    Since $\mathsf{QCoh}$, is a sheaf of categories, so in particular a lax prestack. Let $\mathsf{Nat}(-,-)$ denote the category of natural transformations then we may define the category of quasi-coherent sheaves on a lax prestack $X$ as follows:
\begin{definition}
    Let $X$ be a lax prestack then we define the category of \textit{quasi-coherent sheaves} on $X$ to be 
    \begin{align}
        \mathsf{QCoh}^{\mathrm{lax}}(X) \coloneqq \mathsf{Nat}(X, \mathsf{QCoh}).
    \end{align}
\end{definition}
This definition is rather unworkable, thus let us introduce the necessary concepts to give a more explicit description in terms of a limit. 
Let $\mathcal{C}$ be a category then $\mathsf{Tw}(\mathcal{C})$ is called the \textit{twisted arrow category}, whose objects are morphisms $f: x \to y $ in $\mathcal{C}$ and morphisms are given by commutative squares of the form 
    \begin{center}
        \begin{tikzcd}
            x \arrow[d, "f"] \arrow[r]  &  x' \arrow[d, "f'"] \\
            y  & \arrow[l] y'
        \end{tikzcd},
    \end{center}
where a precise definition in terms of a model of $\infty$-categories is given in \cite[Definition 2.2.]{gepner2017lax}.
Let us now also admit the notion of an end.
\begin{definition}[cf. {\cite{haugseng2022co}}]
   Let $p: \mathsf{Tw}(\mathcal{C})^\mathsf{op} \to \mathcal{C}^\mathsf{op} \times \mathcal{C}$ be the functor that takes the source and target of morphism. We define the \textit{end} of a functor $F:\mathcal{C}^\mathsf{op} \times \mathcal{C} \to \mathcal{D}$ to be the limit of the composite 
   \begin{align}
       F \circ p: (\mathsf{Tw}(\mathcal{C}))^\mathsf{op} \longrightarrow \mathcal{D}.
   \end{align}
\end{definition}
It also turns out that the category of natural transformations has a description in terms of ends. 
\begin{proposition}[{\cite[Proposition 5.1.]{gepner2017lax}}] \label[proposition]{natural-transformations-are-ends}
   For functors $F,G: \mathcal{C} \to \mathcal{D}$ we have 
    \begin{align}
        \mathsf{Nat}_{\mathcal{C},\mathcal{D}}(F,G) \simeq \underset{X \to Y \in (\mathsf{Tw}(X))^\mathsf{op}}{\mathsf{lim}}\mathsf{Map}(-,-) \circ (F^\mathsf{op} \times G) \circ p
    \end{align}
\end{proposition}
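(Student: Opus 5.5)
The plan is to obtain the end formula from two facts. First, the space of natural transformations is a mapping space in the functor category. Second, any functor category can be written as a limit over the twisted arrow category. I would work with the straightening/unstraightening model. The plan is to identify $\mathsf{Fun}(\mathcal{C},\mathcal{D})$ with sections of a trivial fibration, and then compute mapping spaces between sections.

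Concretely, the first step is to write $\mathsf{Map}_{\mathsf{Fun}(\mathcal{C},\mathcal{D})}(F,G)$ as the space of sections of the left fibration $\mathcal{E}\to\mathsf{Tw}(\mathcal{C})$. This fibration classifies the functor
\begin{align}
(f:x\to y)\longmapsto \mathsf{Map}_{\mathcal{D}}(F(x),G(y)),
\end{align}
that is, the composite $\mathsf{Map}(-,-)\circ(F^{\mathsf{op}}\times G)\circ p$. The standard way is to consider the cotensor $\mathcal{D}^{\Delta^1}$ and pull back the source–target map $\mathcal{D}^{\Delta^1}\to\mathcal{D}\times\mathcal{D}$ along $(F\times G)$. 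A natural transformation is then a section of the pullback over $\mathcal{C}$. One reorganizes this via the universal property of $\mathsf{Tw}(\mathcal{C})$: the functor $\mathsf{Tw}(\mathcal{C})\to\mathcal{C}^{\mathsf{op}}\times\mathcal{C}$ is the left fibration classifying $\mathsf{Map}_{\mathcal{C}}$. The second step uses that the space of sections of a left fibration is the limit of the classified functor. This is the $\infty$-categorical fact $\mathsf{lim}\,\Phi\simeq\mathsf{Map}_{\mathsf{Fun}}(*,\Phi)$, for instance \cite[Corollary 3.3.3.4]{lurie:htt}. This yields the displayed limit over $\mathsf{Tw}(\mathcal{C})$, with the variance fixed by the convention $p:\mathsf{Tw}(\mathcal{C})^{\mathsf{op}}\to\mathcal{C}^{\mathsf{op}}\times\mathcal{C}$ used in the definition of the end above.

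The main obstacle is the first step. There one must show that the restriction map from the section space over $\mathcal{C}$ to the section space over $\mathsf{Tw}(\mathcal{C})$ is an equivalence. The approach I would try first is Glasman's argument. One observes that $\mathsf{Tw}(\mathcal{C})\to\mathcal{C}^{\mathsf{op}}\times\mathcal{C}$ presents $\mathsf{Fun}(\mathcal{C},\mathcal{D})$ as the category of "coherent dinatural" data. Equivalently, the end of $\mathsf{Map}_{\mathcal{D}}(F(-),G(-))$ is computed by right Kan extending along $\mathsf{Tw}(\mathcal{C})^{\mathsf{op}}\to *$. Here one needs the cofinality of the inclusion of identity arrows $\mathcal{C}^{\simeq}$-wise, or dually, that $\mathsf{Tw}(\mathcal{C})$ is the category of elements of $\mathsf{Map}_{\mathcal{C}}$. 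Together with the Yoneda lemma, this gives $\mathsf{Nat}(F,G)\simeq\int_{c}\mathsf{Map}(F c,G c)$. I would invoke \cref{pullbackcofinal}-type cofinality arguments where adjunctions appear, and cite \cite{gepner2017lax} or \cite{haugseng2022co} for the comparison of models of the twisted arrow category.

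Finally, since the statement concerns the category $\mathsf{Nat}$ rather than the space, I would upgrade the result by applying the same argument to $\mathsf{Fun}(\Delta^n,-)$-valued mapping objects levelwise. Alternatively, one can use the $\mathsf{Cat}$-enriched version of $\mathsf{Map}$ when $\mathcal{D}=\mathsf{Cat}$, which is the case used for $\mathsf{QCoh}^{\mathrm{lax}}$.
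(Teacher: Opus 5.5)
The paper gives no argument for this statement; it imports it verbatim from \cite[Proposition 5.1]{gepner2017lax}. Your proposal therefore has to stand on its own, and it has a genuine gap.

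Your second step is fine, but it only rewrites the end as the space of sections $\Gamma(\mathcal{E})$ of the left fibration $\mathcal{E}\to\mathsf{Tw}(\mathcal{C})^{\mathsf{op}}$ classifying $\mathsf{Map}_{\mathcal{D}}(F-,G-)\circ p$. The entire content of the proposition is then the equivalence $\mathsf{Nat}(F,G)\simeq\Gamma(\mathcal{E})$, which is exactly the step you call the main obstacle and leave open.

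As you set it up, this is not even a restriction map. The pullback of $\mathcal{D}^{\Delta^1}\to\mathcal{D}\times\mathcal{D}$ along $(F,G)$ lives over $\mathcal{C}$, with fibre $\mathsf{Map}(Fc,Gc)$ over $c$, and it is not a left fibration. Pulling it back along the target projection gives fibres $\mathsf{Map}(Fy,Gy)$ over $x\to y$, not $\mathsf{Map}(Fx,Gy)$. So a comparison map $\mathsf{Nat}(F,G)\to\Gamma(\mathcal{E})$ would first have to be built, e.g.\ from functoriality of $\mathsf{Tw}$ applied to $\alpha\colon\mathcal{C}\times\Delta^1\to\mathcal{D}$. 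It would then have to be shown to be an equivalence, which is the whole theorem.

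The mechanism you suggest for this is false: the identity arrows are not limit-cofinal in $\mathsf{Tw}(\mathcal{C})^{\mathsf{op}}$. For $\mathcal{C}=\Delta^1$, the category $\mathsf{Tw}(\mathcal{C})^{\mathsf{op}}$ is the cospan $\mathrm{id}_0\to(0\to 1)\leftarrow\mathrm{id}_1$. The end is therefore $\mathsf{Map}(F0,G0)\times_{\mathsf{Map}(F0,G1)}\mathsf{Map}(F1,G1)$, which is indeed $\mathsf{Nat}(F,G)$, whereas restricting to identities would give the product. Nor is there any adjunction in sight to which an adjunction-cofinality lemma could be applied.

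The missing idea is the weighted-limit formula, which is the correct generalisation of your second step. Since $p$ is the left fibration classifying $\mathsf{Map}_{\mathcal{C}}$, left Kan extension along $p$ is computed on fibres, so $p_!\ast\simeq\mathsf{Map}_{\mathcal{C}}$. Hence for any $H\colon\mathcal{C}^{\mathsf{op}}\times\mathcal{C}\to\mathsf{Ani}$,
\[
\underset{\mathsf{Tw}(\mathcal{C})^{\mathsf{op}}}{\mathsf{lim}}\, H\circ p\;\simeq\;\mathsf{Map}(\ast,p^*H)\;\simeq\;\mathsf{Map}(p_!\ast,H)\;\simeq\;\mathsf{Nat}_{\mathcal{C}^{\mathsf{op}}\times\mathcal{C}}(\mathsf{Map}_{\mathcal{C}},H).
\]
Now take $H=\mathsf{Map}_{\mathcal{D}}(F-,G-)$ and curry into $\mathsf{Fun}(\mathcal{C},\mathcal{P}(\mathcal{C}))$. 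Then $\mathsf{Map}_{\mathcal{C}}$ becomes the Yoneda embedding $y_{\mathcal{C}}$, and $H$ becomes $F^*\circ y_{\mathcal{D}}\circ G$, with $F^*\colon\mathcal{P}(\mathcal{D})\to\mathcal{P}(\mathcal{C})$ the restriction. Using the postcomposed adjunction $F_!\dashv F^*$, the identification $F_!y_{\mathcal{C}}\simeq y_{\mathcal{D}}F$, and full faithfulness of $y_{\mathcal{D}}\circ-$, one gets
\[
\mathsf{Nat}(y_{\mathcal{C}},F^*y_{\mathcal{D}}G)\simeq\mathsf{Nat}(F_!y_{\mathcal{C}},y_{\mathcal{D}}G)\simeq\mathsf{Nat}(y_{\mathcal{D}}F,y_{\mathcal{D}}G)\simeq\mathsf{Nat}(F,G).
\]
This is the ``category of elements plus Yoneda'' route you allude to, actually carried out, and it makes the $\mathcal{D}^{\Delta^1}$ detour unnecessary. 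Once this is in place, your levelwise upgrade to the $\mathsf{Cat}$-valued version is fine: apply the space-level result to $\mathsf{Fun}(\Delta^n,G(-))$, using its naturality in $G$.
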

Finally, we can give the aforementioned explicit description of the category of quasi-coherent sheaves on lax prestacks. 
\begin{corollary} \label[corollary]{twisted-arrow-limit}
    Let $X$ be a lax prestack then we can identify the category of quasi-coherent sheaves on $X$ as, 
    \begin{align}
        \mathsf{QCoh}^{\mathrm{lax}}(X) \simeq \underset{(S' \to S \in \mathsf{Tw}(?\mathsf{Aff}))^\mathsf{op}}{\mathsf{lim}} \mathsf{Fun}(X(S), \mathsf{QCoh}(S'))
    \end{align}
\end{corollary}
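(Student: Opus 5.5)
This follows directly from \cref{natural-transformations-are-ends}, applied with $\mathcal{C} = ?\mathsf{Aff}^\mathsf{op}$, $\mathcal{D} = \mathsf{Cat}$, $F = X$ and $G = \mathsf{QCoh}$. By definition $\mathsf{QCoh}^{\mathrm{lax}}(X) = \mathsf{Nat}(X,\mathsf{QCoh})$. The proposition then identifies this with the limit over $\mathsf{Tw}(?\mathsf{Aff}^\mathsf{op})^\mathsf{op}$ of $\mathsf{Map}(-,-)\circ(X^\mathsf{op}\times\mathsf{QCoh})\circ p$. Since $\mathsf{Cat}$ is cartesian closed, the internal mapping object in $\mathsf{Cat}$ is $\mathsf{Fun}(-,-)$. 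We should use this internal version rather than the mapping anima, because we want a category of natural transformations, not a space. So the plan is to invoke the category-valued form of the end formula of Gepner--Haugseng--Nikolaus, where $\mathsf{Nat}$ is computed as an end of $\mathsf{Fun}$.

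The key steps are as follows.
\begin{enumerate}
\item Write $\mathsf{Nat}(X,\mathsf{QCoh})$ as the end of the functor $(a,b)\mapsto \mathsf{Fun}(X(a),\mathsf{QCoh}(b))$ on $(?\mathsf{Aff}^\mathsf{op})^\mathsf{op}\times ?\mathsf{Aff}^\mathsf{op}$.
\item Unwind the end as a limit over the opposite twisted arrow category of $\mathcal{C}=?\mathsf{Aff}^\mathsf{op}$. An object there is an arrow $a\to b$ in $?\mathsf{Aff}^\mathsf{op}$, that is, a morphism of affines $S'\to S$ with $a=S$ and $b=S'$. The functor $p$ sends it to the pair $(S,S')$, where the functor evaluates to $\mathsf{Fun}(X(S),\mathsf{QCoh}(S'))$.
\item Use the canonical equivalence $\mathsf{Tw}(\mathcal{C}^\mathsf{op})\simeq \mathsf{Tw}(\mathcal{C})$, which swaps source and target and reverses the arrow, to reindex the limit over $(S'\to S\in\mathsf{Tw}(?\mathsf{Aff}))^\mathsf{op}$. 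This yields the stated formula.
\end{enumerate}

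The main obstacle is the variance bookkeeping in step (3). One must check that under the identification $\mathsf{Tw}(\mathcal{C}^\mathsf{op})\simeq\mathsf{Tw}(\mathcal{C})$ the source/target projection $p$ matches the convention used in the statement. Concretely, the contravariant slot must receive $X(S)$ and the covariant slot $\mathsf{QCoh}(S')$, so that a morphism in the twisted arrow category induces restriction along $X$ and pullback along $\mathsf{QCoh}$. I would verify this on objects and on generating morphisms (the commutative squares described above). Equivalences of indexing categories preserve limits, so no cofinality argument is needed once the two diagrams are identified.

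A secondary point is size. $\mathsf{QCoh}(S)$ is large, so $\mathsf{Cat}$ should be read as $\widehat{\mathsf{Cat}}$. Since $X$ takes values in small categories, the end is a small-indexed limit in a complete category and exists; this is why the proposition applies without modification.
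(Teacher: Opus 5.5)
Your proposal is correct and follows the paper's proof, which is the one-line application of \cref{natural-transformations-are-ends} with $F=X$ and $G=\mathsf{QCoh}$. Your upgrade from $\mathsf{Map}$ to the internal hom $\mathsf{Fun}$ in $\mathsf{Cat}$ is sound: it follows from the space-valued formula by Yoneda, testing against $K\times X$ for $K\in\mathsf{Cat}$. So is the reindexing along $\mathsf{Tw}(?\mathsf{Aff}^\mathsf{op})\simeq\mathsf{Tw}(?\mathsf{Aff})$; both only make explicit what the paper leaves implicit.

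One correction to your size remark: the limit is not small-indexed, because $\mathsf{Tw}(?\mathsf{Aff})$ is large (as $?\mathsf{Aff}$ is), and the smallness of the $X(S)$ only bounds the terms, not the diagram. Existence therefore needs a universe enlargement, which the paper assumes tacitly for $\mathsf{QCoh}$ of prestacks throughout.
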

\begin{proof}
    This is an immediate consequence of \cref{natural-transformations-are-ends}, by setting $F=X$ and $Y= \mathsf{QCoh}$.
\end{proof}
Furthermore, the notion of quasi-coherent sheaves of lax prestacks coincides with the usual notion of category of quasi-coherent sheaves on prestacks. 
\begin{lemma} \label[lemma]{lax-is-the-same-as-normal}
    The following diagram of categories commutes 
    \begin{center}
        \begin{tikzcd}[column sep = huge, row sep = large]
            \mathsf{Fun}(?\mathsf{Aff}^\mathsf{op},\mathsf{Ani}) \arrow[d, hook] \arrow[r, "\mathsf{QCoh}(-)"] & \mathsf{LPr} \\
            \mathsf{Fun}(?\mathsf{Aff}^\mathsf{op},\mathsf{Cat}) \arrow[ur, "\mathsf{QCoh}^\mathsf{lax}(-)"']
        \end{tikzcd}
    \end{center}
\end{lemma}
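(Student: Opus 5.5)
The plan is to prove, for an ordinary prestack $X \in \mathsf{Fun}(?\mathsf{Aff}^\mathsf{op},\mathsf{Ani})$, that
\begin{align}
    \mathsf{QCoh}^{\mathrm{lax}}(X) \simeq \underset{\mathsf{Spec}(A)\to X}{\mathsf{lim}}\mathsf{QCoh}(\mathsf{Spec}(A)),
\end{align}
naturally in $X$. The idea is to reduce both sides to representables, using that each side sends colimits of $X$ to limits of categories.

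\textbf{Right-hand side.} By definition $\mathsf{QCoh}$ is the right Kan extension along the Yoneda embedding. Hence it sends the tautological colimit $X \simeq \mathsf{colim}_{S\to X} S$ to the limit above.

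\textbf{Left-hand side.} The functor $\mathsf{Nat}(-,\mathsf{QCoh})$ turns colimits in $\mathsf{Fun}(?\mathsf{Aff}^\mathsf{op},\mathsf{Cat})$ into limits in $\mathsf{Cat}$, because $\mathsf{Fun}(?\mathsf{Aff}^\mathsf{op},\mathsf{Cat})$ is cotensored over $\mathsf{Cat}$. I would make this precise through the end formula of \cref{twisted-arrow-limit}. In that formula the variable $X$ enters only through $\mathsf{Fun}(X(S),\mathsf{QCoh}(S'))$. This expression turns colimits in $X(S)$ into limits, and colimits in $\mathsf{Fun}(?\mathsf{Aff}^\mathsf{op},\mathsf{Cat})$ are computed pointwise. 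Since limits commute with limits, $\mathsf{QCoh}^{\mathrm{lax}}$ sends colimits of lax prestacks to limits.

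\textbf{Main obstacle.} The delicate step is that the inclusion $\mathsf{Fun}(?\mathsf{Aff}^\mathsf{op},\mathsf{Ani})\hookrightarrow \mathsf{Fun}(?\mathsf{Aff}^\mathsf{op},\mathsf{Cat})$ must preserve the colimit $X\simeq \mathsf{colim}_{S\to X} S$. The inclusion $\mathsf{Ani}\hookrightarrow\mathsf{Cat}$ is a left adjoint to the core functor only for some colimits; in fact it is both a left and a right adjoint, since it also has the groupoidification left adjoint. So I need to argue directly that it preserves colimits indexed by the Grothendieck construction of $X$. Here is the argument I would give. The colimit in $\mathsf{Cat}$ of a diagram of points indexed by an $\infty$-groupoid-fibred category $\int X(S)$ is the localization of $\int X(S)$ at all arrows. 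Because $X(S)$ is an anima, $(\int X)_{/S}$ has a terminal-object-free structure whose localization is already the anima $X(S)$. If this direct argument proves messy, I would instead use the straightening description: a natural transformation out of a pointwise groupoid diagram automatically has invertible components on all arrows.

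\textbf{Representables.} Next I check the claim on representables $h_S$, which are pointwise discrete categories. By the Yoneda lemma for $\mathsf{Cat}$-valued functors, $\mathsf{Nat}(h_S,\mathsf{QCoh})\simeq \mathsf{QCoh}(S)$. This lemma can also be read off from \cref{natural-transformations-are-ends} via the co-Yoneda identity for ends.

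\textbf{Conclusion.} Combining the two colimit-to-limit properties with the agreement on representables gives the equivalence for arbitrary $X$. Naturality in $X$ follows because both functors are then right Kan extensions of the same functor on $?\mathsf{Aff}$ along $\mathsf{Fun}(?\mathsf{Aff}^\mathsf{op},\mathsf{Ani})$. Alternatively, the diagram commutes because restricting $\mathsf{QCoh}^{\mathrm{lax}}$ to prestacks yields a colimit-to-limit functor that agrees with $\mathsf{QCoh}$ on affines.
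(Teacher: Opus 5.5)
Your proposal is correct and follows the same route as the paper. Both write $X\simeq\mathsf{colim}_{S\to X}S$, use that $\mathsf{Nat}(-,\mathsf{QCoh})$ turns colimits into limits, and identify $\mathsf{Nat}(S,\mathsf{QCoh})\simeq\mathsf{QCoh}(S)$ by Yoneda. The step you flag as the main obstacle, which the paper uses silently, takes one line: $\mathsf{Ani}\hookrightarrow\mathsf{Cat}$ is left adjoint to the core functor $\mathcal{C}\mapsto\mathcal{C}^{\simeq}$, so it preserves all colimits, and so does postcomposition with it on presheaf categories since those colimits are pointwise; your tentative direct argument via Grothendieck constructions is therefore unnecessary.
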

\begin{proof}
    Let $X$ be a prestack and recall that, by definition we have
    \begin{align}
        \mathsf{QCoh}^\mathrm{lax}(X) \coloneqq \mathsf{Nat}(X,\mathsf{QCoh}).
    \end{align}
    Since $X$ is a small colimit over representables, i.e. $X \simeq \underset{S \to X \in ?\mathsf{Aff}_{/X}}{\mathsf{colim}}S$, and for $\mathcal{C,D} \in \mathsf{Cat}$ it holds that $ \mathsf{Map}(-,-): \mathcal{C}^\mathsf{op} \times \mathcal{C} \to \mathcal{D}$ commutes with limits in both variables, we have
\begin{align}
    \mathsf{Nat}(X,\mathsf{QCoh}) \simeq \mathsf{Nat}(\underset{S \to X \in ?\mathsf{Aff}_{/X}}{\mathsf{colim}}S, \mathsf{QCoh}) \simeq \underset{S \to X \in (?\mathsf{Aff}_{/X})^\mathsf{op}}{\mathsf{lim}} \mathsf{Nat}(S,\mathsf{QCoh}),
\end{align}
and hence, by the Yoneda lemma, we can identify $\mathsf{Nat}(S,\mathsf{QCoh}) \simeq \mathsf{QCoh}(S)$ and hence we obtain 
\begin{align}
    \mathsf{Nat}(X,\mathsf{QCoh}) \simeq \underset{S \to X \in (?\mathsf{Aff}_{/X})^\mathsf{op}}{\mathsf{lim}} \mathsf{QCoh}(S),
\end{align}
as wished.
\end{proof}
Thus, from now on we will drop the superscript 'lax' in the upcoming discussion. Let $\mathsf{Str}_P$ denote the category of $P$-stratified homotopy types, which is equivalent to the full category of categories over a poset $P$ $\mathsf{Cat}_{/P}$, such that the functor $\mathcal{C} \to P$ is conservative. For more details on the theory of stratified homotopy theory, we recommend \cite{haine2021homotopy} and \cite{barwick2018exodromy}.
Let $\mathsf{Str}_P^{\mathrm{ex}}$ be the full subcategory of $\mathsf{Str}_P$, spanned by those stratified homotopy types for whom the exit-path category $\mathsf{Exit}(X,P)$ is an $\infty$-category. There exists an obvious adjunction 
\begin{align}
    \pi^*: \mathsf{Str}_P^\mathrm{ex} \adjoint\mathsf{Fun}(?\mathsf{Aff}^\mathsf{op},\mathsf{Str_P^\mathrm{ex}}): \pi_* 
\end{align}
\begin{definition}
    Let $X \in \mathsf{Str}_P^\mathrm{ex}$ then we define the \textit{stratified Betti transmutation} to be the functor 
    \begin{align}
    \begin{split}
        \pi^*: \mathsf{Str}_P^\mathrm{ex} &\longrightarrow \mathsf{Fun}(?\mathsf{Aff}^\mathsf{op},\mathsf{Str_P^\mathrm{ex}}) \\
       X &\longmapsto X_{B_P}\coloneqq \mathsf{Exit}(X,P),
    \end{split}
    \end{align}
    which assigns to a $P$-stratified homotopy type, modeled by its exit path category $\mathsf{Exit}(X,P)$, the constant presheaf associated to it. We will call $X_{B_P}$ the $P$\textit{-stratified Betti stack} of $X$.
\end{definition}

\begin{theorem} \label[theorem]{stratifiedbetti}
    Let $X \in \mathsf{Str}_P^\mathrm{ex}$ then    
    \begin{align}
        \mathsf{QCoh}(X_{B_P} \times \mathsf{Spec}(A)) \simeq \mathsf{Fun}(\mathsf{Exit}(X,P), \mathsf{Mod}_A).
    \end{align}
\end{theorem}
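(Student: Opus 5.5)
We compute $\mathsf{QCoh}(X_{B_P}\times \mathsf{Spec}(A))$ as a limit over affines, using \cref{twisted-arrow-limit} together with the fact that the product presheaf is a colimit of representables indexed by the exit path category. Write $E\coloneqq \mathsf{Exit}(X,P)$. The product $X_{B_P}\times\mathsf{Spec}(A)$ is the lax prestack
\begin{align}
S \longmapsto E \times \mathsf{Map}(S,\mathsf{Spec}(A)).
\end{align}
The main structural observation is that this functor is the lax colimit (Grothendieck construction) of the constant diagram $E \to \mathsf{Fun}(?\mathsf{Aff}^\mathsf{op},\mathsf{Cat})$ with value the representable $\mathsf{Spec}(A)$. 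Equivalently, it is the tensoring $E\otimes \mathsf{Spec}(A)$ of the representable by the category $E$ in the $\mathsf{Cat}$-enriched functor category.

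Step one is to identify maps out of such a tensoring. Since $\mathsf{QCoh}^{\mathrm{lax}}(-)=\mathsf{Nat}(-,\mathsf{QCoh})$ is the $\mathsf{Cat}$-enriched hom into $\mathsf{QCoh}$, enriched adjunction gives
\begin{align}
\mathsf{Nat}(E\otimes \mathsf{Spec}(A),\mathsf{QCoh}) \simeq \mathsf{Fun}(E,\mathsf{Nat}(\mathsf{Spec}(A),\mathsf{QCoh})).
\end{align}
To make this rigorous in the paper's language, I would expand both sides via the end formula of \cref{natural-transformations-are-ends} / \cref{twisted-arrow-limit}. On the left, the end is
\begin{align}
\lim_{\mathsf{Tw}^\mathsf{op}}\mathsf{Fun}(E\times \mathsf{Map}(S,\mathsf{Spec}A),\mathsf{QCoh}(S')).
\end{align}
Currying gives
\begin{align}
\mathsf{Fun}(E,\mathsf{Fun}(\mathsf{Map}(S,\mathsf{Spec}A),\mathsf{QCoh}(S'))),
\end{align}
and since $\mathsf{Fun}(E,-)$ is a right adjoint, it commutes with the limit over the twisted arrow category. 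We are therefore reduced to the end for $\mathsf{Nat}(\mathsf{Spec}(A),\mathsf{QCoh})$.

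Step two applies the (enriched) Yoneda lemma exactly as in the proof of \cref{lax-is-the-same-as-normal}: $\mathsf{Nat}(\mathsf{Spec}(A),\mathsf{QCoh})\simeq\mathsf{QCoh}(\mathsf{Spec}(A))\simeq \mathsf{Mod}_A$. In the super or Dirac variants, $\mathsf{Mod}_A$ is read as the appropriate graded module category, consistent with the theorem's notation. Combining the two steps yields $\mathsf{Fun}(E,\mathsf{Mod}_A)$.

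The main obstacle is step one, namely justifying the interchange between the end over $\mathsf{Tw}(?\mathsf{Aff})^\mathsf{op}$ and $\mathsf{Fun}(E,-)$ at the level of $\infty$-categories. This is where the hypothesis that $E$ is an honest $\infty$-category enters: it makes currying and the enriched Yoneda lemma available, with $\mathsf{Fun}(E\times K,\mathcal{D})\simeq\mathsf{Fun}(E,\mathsf{Fun}(K,\mathcal{D}))$ natural in $K$ and $\mathcal{D}$. My first attempt would be to verify directly that the end formula is natural in the source variable and preserves cotensors. I would do this by noting that $(\mathcal{K},\mathcal{D})\mapsto \mathsf{Fun}(\mathcal{K},\mathcal{D})$ sends products in the first variable to iterated functor categories, and that limits in $\mathsf{Cat}$ commute with $\mathsf{Fun}(E,-)$. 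Monoidality is not asserted in the statement, so no further compatibility needs checking.
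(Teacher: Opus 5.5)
Your proposal is correct and follows the paper's proof essentially step by step. The paper also expands via the twisted-arrow limit of \cref{twisted-arrow-limit}, uses that the product is computed pointwise, curries using cartesian closedness of $\mathsf{Cat}$, and pulls $\mathsf{Fun}(\mathsf{Exit}(X,P),-)$ out of the limit. It then identifies what remains with $\mathsf{QCoh}(\mathsf{Spec}(A))\simeq\mathsf{Mod}_A$ via \cref{lax-is-the-same-as-normal}. Your tensoring framing only repackages these steps, and the hypothesis that $\mathsf{Exit}(X,P)$ is an $\infty$-category is needed just to make $X_{B_P}$ a $\mathsf{Cat}$-valued presheaf at all, not for the currying or Yoneda steps.
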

\begin{proof}
    By definition, we have 
    \begin{align}
        \mathsf{QCoh}(X_{B_P} \times \mathsf{Spec}(A)) \simeq
        \underset{(S' \to S \in \mathsf{Tw}(?\mathsf{Aff}))^\mathsf{op}}{\mathsf{lim}} \mathsf{Fun}((X_{B_P} \times \mathsf{Spec}(A))(S), \mathsf{QCoh}(S')).
    \end{align}
    Since products of functors are computed pointwise, we can give the following simplification
    \begin{align}
        \underset{(S' \to S \in \mathsf{Tw}(?\mathsf{Aff}))^\mathsf{op}}{\mathsf{lim}} \mathsf{Fun}(X_{B_P} (S) \times (\mathsf{Spec}(A))(S), \mathsf{QCoh}(S')),
    \end{align}
    thus using the fact that $\mathsf{Cat}$ is cartesian closed, we can use the internal tensor-hom adjunction, which produces the following expression:
    \begin{align} \label{equation}
        \underset{(S' \to S \in \mathsf{Tw}(?\mathsf{Aff}))^\mathsf{op}}{\mathsf{lim}} \mathsf{Fun}(X_{B_P} (S),\mathsf{Fun}(\mathsf{Spec}(A)(S), \mathsf{QCoh}(S') )).
    \end{align}
    Now, one observes that the stratified Betti stack evaluates to $X_{B_P}(S)\simeq \mathsf{Exit}(X,P)$ as it is the constant presheaf on $\mathsf{Exit}(X,P)$, and using the fact that $\mathsf{Fun}(-,-)$ commutes with limits in the second variable that one can identify \eqref{equation} as:
    \begin{align}
         \mathsf{Fun}(\mathsf{Exit}(X,P), \underset{(S' \to S \in \mathsf{Tw}(?\mathsf{Aff}))^\mathsf{op}}{\mathsf{lim}}\mathsf{Fun}(\mathsf{Spec}(A)(S), \mathsf{QCoh}(S') )).
    \end{align}
    Lastly, one observes that the second factor is the definition of the category of quasi-coherent sheaves on a lax prestack, which coincides with the usual category of quasi-coherent sheaves for a prestack, by \cref{lax-is-the-same-as-normal}. Since $\mathsf{Spec}(A)$ is affine, we also identify $\mathsf{QCoh}(\mathsf{Spec}(A)) \simeq \mathsf{Mod}_A$, thus we conclude that
    \begin{align}
        \mathsf{QCoh}(X_{B_P} \times \mathsf{Spec}(A)) \simeq \mathsf{Fun}(\mathsf{Exit}(X,P), \mathsf{Mod}_A),
    \end{align}
    as desired.
\end{proof}
\begin{remark}
    This proof also naturally upgrades to the symmetric monoidal setting as $\mathsf{Cat}^\otimes = \mathsf{CAlg}(\mathsf{Cat})$, which admits a forgetful functor $U:\mathsf{CAlg}(\mathsf{Cat}) \to \mathsf{Cat}$, so the limit can be computed underlying and the operation of tensor-hom adjunction is compatible with symmetric monoidal structures.
\end{remark}
\begin{remark}
    If we set $P=\mathsf{pt}$ then we recover the equivalence between the Betti stack and locally constant sheaves, where for an alternative proof see \cite{PortaSalaShapes}.
\end{remark}

\subsection{Higgs Sheaves}
Let $X$ be a smooth superscheme of dimension $n|m$ and let $\mathcal{E} \in \mathsf{QCoh}(X)^{\heartsuit,\mathsf{cp}}$. The 1-category of quasi-coherent sheaves is symmetric monoidal, thus it makes sense to define algebra objects inside of this category. Naturally, there also exists the free commutative algebra functor:
\begin{align}
\begin{split}
        \mathsf{Sym}: \mathsf{QCoh}(X)^\heartsuit &\longrightarrow \mathsf{CAlg}(\mathsf{QCoh}(X)^\heartsuit) \\
    \mathcal{E} &\longmapsto \mathsf{Sym}\mathcal{E}.
\end{split}
\end{align}
This means, $\mathsf{Sym}\mathcal{E}$ is an $\mathcal{O}_X$-algebra, which enables us to define the relative spectrum functor. 
\begin{align}
    E \coloneqq \mathsf{Spec}(\mathsf{Sym}(\mathcal{E}^\vee)).
\end{align}
 The functor $\mathsf{Sym}$ is a left adjoint, so it preserves all colimits and in particular, it sends cogroup objects to cogroup objects. So the functor restricts to 
 \begin{align}
     \mathsf{Sym}: \mathsf{CoGrp}(\mathsf{QCoh}(X)^\heartsuit) &\longrightarrow \mathsf{CoGrp}(\mathsf{CAlg}(\mathsf{QCoh}(X)^\heartsuit)).
 \end{align}
 Therefore, composing with the relative spectrum functor 
 \begin{align}
     \mathsf{QCoh}(X)^{\heartsuit,\mathsf{op} }&\xlongrightarrow{\mathsf{Sym}^{\mathsf{op}}} \mathsf{CAlg}(\mathsf{QCoh}(X)^\heartsuit)^{\mathsf{op}} \xlongrightarrow{\mathsf{Spec}} \mathsf{Shv}(\mathsf{sAff})_{/X}
 \end{align}
 equips every total space $\mathsf{Spec}(\mathsf{Sym}\mathcal{E})$ with the trivial group structure.
 \begin{definition}
     Let $X$ be a smooth superscheme and let $TX \coloneqq \mathsf{Spec}.(\mathsf{Sym}\Omega_X)$ The Dolbeault stack is defined to be the quotient stack $X_{\mathsf{Dol}}=[X/\widehat{TX}]$. Where $\widehat{TX}$ is the formal completion along the zero-section.
 \end{definition}
 There also exists a variant of the Dolbeault stack
 \begin{definition}
     The nilpotent Dolbeault stack is defined to be 
     \begin{align}
         X^{\mathsf{nil}}_{\mathsf{Dol}}\coloneqq [X/TX].
     \end{align}
 \end{definition}
 From the definitions we now also obtain the following theorems.
\begin{theorem}
    Let $X$ be a smooth scheme of purely odd dimension $m$. Then we have the following isomorphism: 
    \begin{align}
        X^{\mathsf{nil}}_{\mathsf{Dol}} \cong X_{\mathsf{Dol}}.
    \end{align}
\end{theorem}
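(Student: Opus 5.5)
The plan is to reduce the claim to a statement about the group objects being quotiented by: $X_{\mathsf{Dol}} = [X/\widehat{TX}]$ and $X^{\mathsf{nil}}_{\mathsf{Dol}} = [X/TX]$ are quotients of the same $X$ by group schemes over $X$ under the trivial (fibrewise additive) structure constructed above. It therefore suffices to show that the canonical map of group objects $\widehat{TX} \to TX$, the inclusion of the formal completion along the zero section, is an isomorphism when $X$ has purely odd dimension $0|m$. Indeed, an isomorphism of group objects over $X$ acting compatibly on $X$ induces an isomorphism of the simplicial (Čech) nerves $X \times (\widehat{TX})^{\times \bullet} \to X \times (TX)^{\times \bullet}$. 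Since the quotient stack is the geometric realization of this nerve, and sheafification and colimits commute (cf. \cref{QCohIsSheaf} for insensitivity to sheafification), the quotient stacks agree.

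To prove $\widehat{TX} \simeq TX$, work locally: both constructions are compatible with Zariski (indeed étale) restriction, so we may assume $X = \mathsf{Spec}(A)$ with $\Omega_X$ free. In purely odd dimension $0|m$ we have $\Omega_X \simeq A\,d\theta_1 \oplus \dots \oplus A\,d\theta_m$. The key sign-rule observation is that the $d\theta_i$ have the parity opposite to $\theta_i$, hence are \emph{even}, so $\mathsf{Sym}$ is polynomial in them. I therefore need to fix the parity convention carefully and check that the relevant fibre coordinates on $TX$ are odd. Concretely, the coordinates on $TX$ correspond to the dual of the odd tangent directions $\partial_{\theta_i}$, which are odd, so $\mathcal{O}(TX) = A[\xi_1,\dots,\xi_m]$ is an exterior algebra on odd generators $\xi_i$ (after the parity shift conventions of the definition $TX = \mathsf{Spec}(\mathsf{Sym}\,\Omega_X)$ are matched with the tangent bundle). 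Then the ideal $I = (\xi_1,\dots,\xi_m)$ of the zero section satisfies $I^{m+1} = 0$, since each $\xi_i^2 = 0$.

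The formal completion along the zero section is the functor of points of maps that set-theoretically factor through the zero section, i.e. $\mathsf{colim}_n \mathsf{Spec}(\mathcal{O}(TX)/I^n)$. Because $I^{m+1}=0$, this colimit stabilizes at $n = m+1$ with value $\mathcal{O}(TX)$ itself. Alternatively, on points, any $A$-point of $TX$ sends the $\xi_i$ to odd elements, which are automatically nilpotent, so it lies in the completion. Hence $\widehat{TX} \to TX$ is an isomorphism of group schemes over $X$. Gluing the local isomorphisms, which are canonical and hence compatible on overlaps, gives the global statement, and then the first paragraph concludes.

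The main obstacle is the parity bookkeeping. With the paper's literal definition $\mathsf{Spec}(\mathsf{Sym}\,\Omega_X)$ and $d\theta$ even, one gets a genuine polynomial algebra and the completion is \emph{not} the whole thing. So I would first pin down the intended convention: fibre coordinates of odd parity, matching the tangent directions $\partial_\theta$, equivalently a parity-shifted $\Omega$. I would state that convention explicitly, since the argument — every odd function is nilpotent, so the zero section's formal neighbourhood is everything — hinges on it.
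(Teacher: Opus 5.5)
Your argument is correct and is essentially the paper's proof. Both localize and note that the fibre coordinates of $TX$ in the odd directions are odd, hence square-zero. So $\mathsf{colim}_n \mathsf{Spec}(\mathcal{O}(TX)/I^n)$ stabilizes, giving $\widehat{TX}\simeq TX$ and therefore $[X/TX]\simeq[X/\widehat{TX}]$.

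Your parity worry does not actually arise with the paper's conventions. The paper uses $\Omega_X=\mathcal{T}_X^\vee$, i.e.\ K\"ahler differentials for an even $d$, so $d\theta_i$ is already odd. Then $\mathsf{Sym}\,\Omega_X$ is directly the exterior algebra $\mathcal{O}_X[\xi_1,\dots,\xi_m]$ (the paper's $\mathcal{O}_{\mathbb{A}^{0|1}}[\theta]$), and no parity shift is needed.
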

\begin{proof}
    This follows by identifying the formal completion of the tangent bundle and the tangent bundle. This question is local so we can reduce to the $\mathbb{A}^{0|1}$ case and extend to all other cases by étale descent. One computes that $\mathcal{O}_{TX} = \mathcal{O}_{\mathbb{A}^{0|1}}[\theta]$ and the zero section is given by the morphism $\theta \mapsto 0$. Therefore, 
    \begin{align}
        \widehat{T\mathbb{A}^{0|1}} = \underset{m \in \mathbb{N}}{\mathsf{colim}} \mathsf{Spec}(\mathcal{O}_{\mathbb{A}^{0|1}}[\theta]/(\theta)^m) \cong \mathsf{Spec}(\mathcal{O}_{\mathbb{A}^{0|1}}[\theta]) \cong T{\mathbb{A}^{0|1}}.
    \end{align}
    Thus, we obtain the desired isomorphism 
    \begin{align}
         X^{\mathsf{nil}}_{\mathsf{Dol}} = [X/TX] \cong [X/\widehat{TX}] \cong X_{\mathsf{Dol}}.
    \end{align}
\end{proof}
This shows that the odd directions only add nilpotent higgs fields.
\begin{theorem}
    Let $X$ be a smooth superstack. Then we have 
    \begin{align}
        \mathsf{QCoh}(X_{\mathsf{Dol}}) \cong \mathsf{Mod}_{\mathsf{Sym}\mathcal{T_X}}(\mathsf{QCoh}(X)) \ \text{and}\ \mathsf{QCoh}(X_{\mathsf{Dol}}^\mathsf{nil}) \cong \mathsf{Mod}_{\widehat{\mathsf{Sym}\mathcal{T_X}}}(\mathsf{QCoh}(X))
    \end{align}
    In particular, $\mathsf{QCoh}(X_{\mathsf{Dol}})^\heartsuit$ is the category of Higgs sheaves on $X$.
\end{theorem}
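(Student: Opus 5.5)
The plan is to identify quasi-coherent sheaves on a quotient by a formal group (or by a vector bundle group) with modules over the dual Hopf algebra acting on $\mathsf{QCoh}(X)$. This is the same mechanism as in \cref{DModisCrys}, where $\mathsf{QCoh}(\mathbb{A}^{1|1}_{\mathsf{dR}})$ was computed as $\hat{\mathbf{0}}$-equivariant sheaves. Note that the two identifications in the statement should be matched in the usual way. The formal group $\widehat{TX}$ has coordinate ring the completed symmetric algebra $\widehat{\mathsf{Sym}}\,\Omega_X$, whose continuous dual is $\mathsf{Sym}\,\mathcal{T}_X$. Dually, the full group $TX$ has coordinate ring $\mathsf{Sym}\,\Omega_X$, whose dual is the completion $\widehat{\mathsf{Sym}\mathcal{T}_X}$. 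I will prove the statement in this form: $\mathsf{QCoh}([X/\widehat{TX}])\simeq \mathsf{Mod}_{\mathsf{Sym}\mathcal{T}_X}$ and $\mathsf{QCoh}([X/TX])\simeq\mathsf{Mod}_{\widehat{\mathsf{Sym}\mathcal{T}_X}}$.

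First, write $[X/G]$ (with $G\in\{TX,\widehat{TX}\}$, acting trivially on $X$ as a group over $X$) as the geometric realization of its Čech nerve $X\times_X G^{\times_X\bullet}$. By \cref{QCohIsSheaf}, $\mathsf{QCoh}$ sends this colimit to the totalization $\lim_{\Delta}\mathsf{QCoh}(G^{\times_X\bullet})$. Since $G\to X$ is affine (or an ind-affine colimit of infinitesimal thickenings of the zero section), each term is computed relative to $X$. For $TX$, it is modules over $(\mathsf{Sym}\Omega_X)^{\otimes\bullet}$. For $\widehat{TX}$, writing $\widehat{TX}=\mathsf{colim}_n \mathsf{Spec}(\mathsf{Sym}\Omega_X/I^n)$, it is the limit over $n$ of modules over the truncated algebras, as in the computation of $\mathcal{O}(\hat{\mathbf{0}})$ in \cref{DModisCrys}. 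The totalization is then the category of comodules over the Hopf algebroid $\mathcal{O}_G$ in $\mathsf{QCoh}(X)$.

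Second, convert comodules into modules. Locally, reduce to $X=\mathbb{A}^{n|m}$ and glue by étale descent, as in the paper's treatment of $D$-modules. There $\Omega_X$ is free on $dt_i, d\theta_j$. For $\widehat{TX}$, the coalgebra $\widehat{\mathsf{Sym}}\Omega_X$ is pro-finite, so comodules over it are the same as discrete modules over its dual. That dual is the divided-power/symmetric algebra $\mathsf{Sym}\,\mathcal{T}_X$, which equals the divided powers in characteristic zero, with the odd generators exterior. For $TX$, the comodules over $\mathsf{Sym}\Omega_X$ are modules over $\mathsf{Sym}\mathcal{T}_X$ on which the augmentation ideal acts locally nilpotently, i.e.\ modules over the completion. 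This is exactly the "nilpotent" condition. The symmetric monoidal structures match because the coproduct on $\mathcal{O}_G$, coming from the additive group law, dualizes to the commutative multiplication on $\mathsf{Sym}\mathcal{T}_X$.

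Third, the heart statement follows by restricting to discrete objects. A $\mathsf{Sym}\mathcal{T}_X$-module structure on $\mathcal{E}$ is an $\mathcal{O}_X$-linear map $\mathcal{T}_X\otimes\mathcal{E}\to\mathcal{E}$ with commuting (super)components. Equivalently, it is $\phi:\mathcal{E}\to\mathcal{E}\otimes\Omega_X$ with $\phi\wedge\phi=0$, which is a Higgs field in the graded sense. I expect the main obstacle to be the second step at the $\infty$-categorical level. One must justify that the totalization of the Čech nerve of a formal group is computed by (pro-)comodules, i.e.\ that $\mathsf{QCoh}$ of the ind-scheme $\widehat{TX}$ commutes with the relevant limits. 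One must also check that the comodule/module duality is an equivalence of stable categories and not just of hearts. I would handle this by using monadicity, exactly as for $p:X\to X_{\mathsf{dR}}$. The pullback along $X\to[X/G]$ is conservative and colimit-preserving, so Barr--Beck--Lurie identifies $\mathsf{QCoh}([X/G])$ with modules over the monad, which is then computed as $\mathcal{O}_G^\vee\otimes(-)$.
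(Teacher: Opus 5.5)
Your proposal is correct and follows essentially the paper's route: identify $\mathsf{QCoh}([X/\widehat{TX}])$ and $\mathsf{QCoh}([X/TX])$ with modules over the dual of the group's coordinate ring, then compute $(\widehat{\mathsf{Sym}}\,\Omega_X)^\vee\simeq\mathsf{Sym}\,\mathcal{T}_X$ and $(\mathsf{Sym}\,\Omega_X)^\vee\simeq\widehat{\mathsf{Sym}\,\mathcal{T}_X}$; you also supply the \v{C}ech-descent, local-nilpotence and Higgs-field details that the paper compresses into ``by definition''. One correction to your closing remark: a conservative, colimit-preserving $\pi^*$ is a left adjoint, so Barr--Beck--Lurie gives comonadicity (comodules over $\pi^*\pi_*\simeq\mathcal{O}_G\otimes(-)$, which is your first step), not monadicity, and for $G=TX$ the functor $\pi^*$ in general fails to preserve infinite products (already for $B\mathbb{G}_a$), so it has no left adjoint and the module description must come from the comodule--module translation rather than from a monad.
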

\begin{proof}
    We note that $\mathsf{Sym}(\mathcal{T}_X) \in \mathsf{CAlg}(\mathsf{QCoh}(X))^\heartsuit$. By definition, we obtain the equivalence 
    \begin{align}
        \mathsf{QCoh}(X_\mathsf{Dol}) \cong \mathsf{Mod}_{\widehat{\mathsf{Sym}\Omega_X}^\vee}(\mathsf{QCoh}(X)).
    \end{align}
    To identify $\widehat{\mathsf{Sym}\Omega_X}^\vee$, we use that $\mathsf{Hom}(-,\mathcal{O}_X)$ sends colimits to limits and $\mathsf{Sym}\mathcal{T}_X \simeq \bigoplus \mathsf{Sym}^n\mathcal{T}_X$. Therefore, we have 
    \begin{align}
        \mathsf{Sym}\mathcal{T}_X^\vee \simeq \prod \mathsf{Hom}(\mathsf{Sym}^n \mathcal{T}_X, \mathcal{O}_X) \simeq \widehat{\mathsf{Sym}\Omega_X}.
    \end{align}
    Similarly, one can obtain that ${\mathsf{Sym}\Omega_X}^\vee \simeq \widehat{\mathsf{Sym}\mathcal{T}_X}$ and the claim follows.
\end{proof}
\begin{remark}
    The set-theoretic support condition translates locally into the fact that the Higgs field is a nilpotent matrix, hence the name for the nilpotent Dolbeault stack.
\end{remark}

\section{K-Theory}
\subsection{D\'evissage}
We work over an algebraically closed field $k$ of characteristic 0. In derived algebraic geometry $K$-theory is invariant under derived nilpotent thickenings, also known as \textit{derived invariant} or as  \textit{d\'evissage}. 
\begin{theorem}[D\'evissage]
    Let $X$ be a derived scheme and let $\pi_0X$ denote its classical truncation. Then we have the following isomorphism 
    \begin{align}
        K(\mathsf{Coh}(\pi_0X)) \xlongrightarrow{\sim} K(\mathsf{Coh}(X))
    \end{align}
    Furthermore, we also have the isomorphism 
    \begin{align}
         K(\mathsf{Coh}(\pi_0X)^\heartsuit) \xlongrightarrow{\sim} K(\mathsf{Coh}(\pi_0X)).
    \end{align}
\end{theorem}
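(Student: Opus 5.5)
The plan is to prove both isomorphisms via the \emph{theorem of the heart} (Barwick) together with Quillen's classical d\'evissage, using the natural $t$-structure on $\mathsf{Coh}(X)$. Here $\mathsf{Coh}(X)\subset \mathsf{QCoh}(X)$ is the stable subcategory of bounded complexes with coherent homotopy sheaves. We assume $X$ is noetherian (i.e.\ $\pi_0X$ is noetherian and each $\pi_i\mathcal{O}_X$ is coherent), as is implicit in the statement.

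\textbf{Step 1: the second isomorphism.} The standard $t$-structure on $\mathsf{Coh}(\pi_0X)$ is bounded, and its heart is the abelian category $\mathsf{Coh}(\pi_0X)^\heartsuit$ of coherent sheaves. Barwick's theorem of the heart then gives
\begin{align}
K(\mathsf{Coh}(\pi_0X)^\heartsuit)\xlongrightarrow{\sim} K(\mathsf{Coh}(\pi_0X)).
\end{align}
For the classical scheme $\pi_0X$ this is also the classical identification of $G$-theory with the $K$-theory of the bounded derived category.

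\textbf{Step 2: the first isomorphism.} We apply the same theorem to $X$ itself. The $t$-structure on $\mathsf{QCoh}(X)$ restricts to a bounded $t$-structure on $\mathsf{Coh}(X)$, because each object has finitely many nonzero homotopy sheaves and each of them is coherent. Its heart is the category of discrete $\mathcal{O}_X$-modules. Such modules are exactly modules over $\pi_0\mathcal{O}_X$, because a connective $\mathcal{O}_X$-module concentrated in degree $0$ factors through $\tau_{\le 0}\mathcal{O}_X=\pi_0\mathcal{O}_X$. Hence $\mathsf{Coh}(X)^\heartsuit\simeq \mathsf{Coh}(\pi_0X)^\heartsuit$, and the theorem of the heart gives $K(\mathsf{Coh}(\pi_0X)^\heartsuit)\simeq K(\mathsf{Coh}(X))$. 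Composing with the inverse of the equivalence from Step 1 yields the first isomorphism.

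It remains to check that this composite is the map induced by the pushforward $i_*$ along $i:\pi_0X\to X$. The functor $i_*$ is $t$-exact, so it commutes with the inclusions of the hearts, and on hearts it is exactly the identification above. Therefore the triangle of $K$-theory spectra commutes.

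\textbf{Main obstacle.} The delicate point is the hypothesis of Barwick's theorem: the $t$-structure must be bounded on the small stable category. That requires $\mathsf{Coh}(X)$ to be closed under truncations, which in turn needs the noetherian hypothesis. Under that hypothesis, truncations of bounded coherent objects have coherent homotopy and stay bounded.

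If one wants to avoid quoting Barwick directly, the fallback is a Quillen-style d\'evissage. One filters each object by its Postnikov tower, whose graded pieces $\Sigma^n\pi_n$ lie in the image of $i_*$. An additivity argument then shows that $i_*$ is surjective on $K_0$ and, via Waldhausen's fibration theorem, an equivalence. That route is technically heavier, so I would rely on the theorem of the heart.
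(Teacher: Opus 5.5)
Your proof is correct, and it is the standard argument. The paper gives no proof of this statement; it is quoted as background from derived algebraic geometry.

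The closest argument in the paper is its proof of the super analogue, in \cref{cohcoh} and the corollary following it. That proof has the same skeleton as yours: a commutative square whose vertical arrows are Barwick's theorem of the heart. The difference is the bottom arrow $K(\mathsf{sCoh}(X_\mathsf{bos})^\heartsuit)\to K(\mathsf{sCoh}(X)^\heartsuit)$. In the super case it is not induced by an equivalence of abelian categories, because odd nilpotents act nontrivially on objects of the heart. The paper therefore has to invoke Quillen's d\'evissage (\cref{devissage}), using the finite filtration $\mathcal{J}^m\mathcal{F}/\mathcal{J}^{m+1}\mathcal{F}$ by powers of the odd nilpotent ideal.

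In your derived setting that step disappears. Since $\mathcal{O}_X\to\pi_0\mathcal{O}_X$ is an isomorphism on $\pi_0$, the functor $i_*$ is $t$-exact and restricts to an equivalence $\mathsf{Coh}(\pi_0X)^\heartsuit\simeq\mathsf{Coh}(X)^\heartsuit$. The theorem of the heart, with its naturality in $t$-exact functors, then does all the work. This is exactly where derived and super thickenings differ: super nilpotents live in the heart, while derived ones live in positive homotopical degrees.

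One small correction to your ``main obstacle'' paragraph concerns where the noetherian hypothesis is used.
\begin{itemize}
\item Closure of $\mathsf{Coh}(X)$ under $\tau_{\geq n}$ and $\tau_{\leq n}$ is automatic from the definition, since truncation only discards homotopy sheaves.
\item The hypothesis is really needed to make $\mathsf{Coh}(X)$ a stable subcategory at all, i.e.\ closed under cofibres.
\item Via the long exact sequence, that closure requires kernels and cokernels of maps of coherent $\pi_0\mathcal{O}_X$-modules to be coherent, which is what the noetherian assumption guarantees.
\end{itemize}
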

One might suspect that this will also carry over to supergeometry, which we will prove is indeed the case. \\
We give a brief informal overview of of algebraic $K$-theory. For more details see \cite{LandKTheoryNotes}. We will use the $K$-theory for exact categories, which in particular includes the $K$-theory of stable categories. The only exact categories we will be using, that are not stable, will be the abelian category of $\mathbb{Z}_2$-graded coherent sheaves. \\
To start off, let us recall how to construct algebraic K-theory using the group
completion method. If we consider the category of commutative monoids in anima
then naturally there exists a subcategory of commutative groups of anima. This
inclusion satisfies the conditions of the adjoint functor theorem, so we obtain
\begin{align}
    (-)^{\mathrm{gp}} : \mathsf{CMon}(\mathsf{Ani}) \rightleftarrows \mathsf{CGrp}(\mathsf{Ani}) : \iota.
\end{align}
Furthermore, we can identify $\mathsf{CGrp}(\mathsf{Ani}) \simeq \mathsf{Sp}_{\geq 0}$. Now a natural way to obtain a commutative monoid is by using the fact that a category is, in particular, a simplicial anima and considering the following functor
\begin{align}
\begin{split}
 \mathsf{Cat}^{\mathrm{ex},\otimes} &\longrightarrow \mathsf{CMon}(\mathsf{Cat}) \\
\mathcal{C} &\longmapsto \mathsf{Span}(\mathcal{C}, \mathsf{pr}\mathcal{C}, \mathsf{in}\mathcal{C}),   
\end{split}
\end{align}
where $\mathsf{pr}\mathcal{C}$ and $\mathsf{in}\mathcal{C}$ are two classes of maps an exact category is equipped with, where $\mathsf{Span}(\mathcal{C}, \mathsf{pr}\mathcal{C}, \mathsf{in}\mathcal{C})$ is the span category where the maps in $\mathsf{pr}\mathcal{C}$ are the left legs and $\mathsf{in}\mathcal{C}$ are the right legs. The functor $|-|$ is often called the \textit{geometric realization} and defined as 
\begin{align}
    \mathsf{Cat} \subset \mathsf{Fun}(\Delta^\mathsf{op},\mathsf{Ani}) \adjoint \mathsf{Ani}: \iota,
\end{align}
i.e. the left Kan extension of $\iota$ or equivalently the colimit over the simplicial diagram of a category $\mathcal{C}$. We remark that for $M \in \mathsf{CMon(\mathsf{Ani})}$ $M^\mathrm{gp} \simeq \Omega|M|$. Finally, we can compose our construction with the group completion and we obtain the following definitions.
\begin{definition}
    Let $\mathcal{C}$ be an exact category. The $K$-anima functor is the functor
\begin{align} 
\begin{split}
K : \mathrm{Cat}^{\mathrm{ex}} &\longrightarrow \mathrm{CGrp}(\mathrm{Ani}) \simeq \mathrm{Sp}_{\geq 0} \\
\mathcal{C} &\longmapsto \Omega|\mathrm{Span}(\mathcal{C}, \mathrm{pr}\mathcal{C}, \mathrm{in}\mathcal{C})|
\end{split}
\end{align}
\end{definition}
\begin{definition}
    The $K$-theory functor on stable categories is defined as
\begin{align} 
K : \mathrm{Cat}^{\mathrm{st}} \longrightarrow \mathrm{Cat}^{\mathrm{ex}} \longrightarrow \mathrm{Sp}_{\geq 0},
\end{align}
where $\mathrm{Cat}^{\mathrm{st}} \to \mathrm{Cat}^{\mathrm{ex}}$ sends $\mathcal{C} \mapsto \mathrm{Span}\mathcal{C}$, where the projections and injections are all
morphisms.
\end{definition}
\begin{remark}
    In particular, Quillen's exact 1-categories are also examples of exact $\infty$-categories, while being stable is a strictly $\infty$-categorical phenomenon.
\end{remark}
To prove d\'evissage let us admit the following theorem.
\begin{theorem}[{\cite[Theorem 5.48.]{LandKTheoryNotes}}] \label[theorem]{devissage}
    Let $\mathcal{A}$ be an abelian category and $\mathcal{B} \subset \mathcal{A}$ a full additive subcategory with the following properties:
    \begin{enumerate}
        \item For every exact sequence $a \hookrightarrow b \twoheadrightarrow a' \in \mathcal{A}$ with $b \in \mathcal{B}$, it follows that $a,a' \in \mathcal{B}$,
        \item every object of $a\in \mathcal{A}$ admits a finite filtration whose filtration quotient lies in $\mathcal{B}$, that is, there is a sequence 
        \begin{align*}
            0= a_0 \hookrightarrow a_1 \hookrightarrow a_2 \hookrightarrow ... \hookrightarrow a_n = a
        \end{align*}
        such that $a_{i+1}/a_i \in \mathcal{B}$ for all $i$.
    \end{enumerate}
    Then the inclusion $\mathcal{B}\rightarrow \mathcal{A}$ induces an equivalence $K(\mathcal{B})\simeq K(\mathcal{A})$.
\end{theorem}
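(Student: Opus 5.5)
This is Quillen's dévissage theorem, with $\mathcal{A}$ and $\mathcal{B}$ abelian (resp. exact) $1$-categories viewed as exact $\infty$-categories. I would prove it through the $Q$-construction, since $\Omega|\mathrm{Span}(\mathcal{C},\mathrm{pr}\mathcal{C},\mathrm{in}\mathcal{C})|$ is the classifying space of Quillen's $Q\mathcal{C}$. It therefore suffices to show that the inclusion $Q\mathcal{B}\to Q\mathcal{A}$ induces an equivalence on geometric realizations. The tool is Quillen's Theorem A: it is enough that for every $a\in\mathcal{A}$ the comma category $a\backslash Q\mathcal{B}$ (equivalently $Q\mathcal{B}/a$) is weakly contractible.

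\textbf{Step 1: identify the comma category.} A map $b\to a$ in $Q\mathcal{A}$ with $b\in\mathcal{B}$ is a span $b\twoheadleftarrow a_1\hookrightarrow a$, i.e.\ an admissible layer $a_0\subset a_1\subset a$ with $a_1/a_0\cong b$. Hypothesis (1) says $\mathcal{B}$ is closed under subobjects and quotients in $\mathcal{A}$. Using this, I would show that $Q\mathcal{B}/a$ is equivalent to the poset $J(a)$ of pairs $a_0\subseteq a_1\subseteq a$ with $a_1/a_0\in\mathcal{B}$. Here $(a_0,a_1)\le(a_0',a_1')$ exactly when $a_0'\subseteq a_0\subseteq a_1\subseteq a_1'$.

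\textbf{Step 2: induction on the filtration length.} By hypothesis (2), $a$ has a finite filtration $0=a_0\subset\cdots\subset a_n=a$ with $\mathcal{B}$-subquotients; I induct on $n$.
\begin{itemize}
\item If $n\le 1$, then $a\in\mathcal{B}$ and $J(a)$ has the terminal object $(0,a)$, so $J(a)$ is contractible.
\item In general, pick $a'\subset a$ with $a/a'\in\mathcal{B}$ and $a'$ of shorter filtration length. Consider the subposet $J'=\{(a_0,a_1): a_0\subseteq a'\subseteq a_1\}$. This $J'$ has maximal element $(0,a)$ only if $a\in\mathcal{B}$, so instead I would use Quillen's standard retraction chain
\begin{align*}
(a_0,a_1)\;\ge\;(a_0\cap a',\,a_1\cap a')\ \text{ or }\ (a_0,a_1)\;\le\;(a_0+a',\,a_1+a').
\end{align*}
Composing with $J(a)\to J(a)$, $(a_0,a_1)\mapsto(a_0\cap a',\,a_1+a')$, in two steps through $(a_0\cap a',a_1)$ gives natural transformations that homotope $J(a)$ into a subposet.
\end{itemize}
The subquotients $a_1/(a_0\cap a')$ and $(a_1+a')/a_0$ must lie in $\mathcal{B}$, and this is exactly where I need $a'\in\mathcal{B}$. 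So I will first choose the filtration so that the relevant pieces lie in $\mathcal{B}$; running the argument of Quillen's proof, one reduces to $a$ of length $1$.

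\textbf{Main obstacle.} The main obstacle is this contractibility argument, specifically checking that each intermediate pair $(a_0\cap a',a_1)$, etc., stays inside $J(a)$. This is where both hypotheses (closure under subobjects and quotients, and finite filtrations) are used. If it becomes too delicate, I would follow Quillen's original route instead. First treat the case where every object of $\mathcal{A}$ is killed in one step: show that $J(a)$ deformation retracts onto the subposet with $a_0=0$, which has a terminal object. Then deduce the general case by additivity, using the filtration of each object. Finally, I would remark that the equivalence of $K$-anima is natural, since it is induced by the exact inclusion.
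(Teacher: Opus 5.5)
The paper does not prove this theorem; it simply admits it from Land's notes. So I am judging your proposal on its own terms. The frame is right and is Quillen's: identify $Q\mathcal{B}/a$ with the poset $J(a)$ of $\mathcal{B}$-layers, apply Theorem A, and induct using a subobject $a'\subset a$ with $a/a'\in\mathcal{B}$ and $a'$ of shorter length. But the contractibility step, which is the entire content of the theorem, is not carried out correctly, for four reasons.

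\begin{itemize}
\item The comparisons you write are false in general. In your order, $(a_0\cap a',a_1\cap a')\le(a_0,a_1)$ forces $a_0\subseteq a'$, and $(a_0,a_1)\le(a_0+a',a_1+a')$ forces $a'\subseteq a_0$.
\item The map $(a_0,a_1)\mapsto(a_0\cap a',a_1+a')$ does not land in $J(a)$. For $a_0=0$ its subquotient $a_1+a'$ contains $a'$, and $a'\notin\mathcal{B}$ in general.
\item Your repair, arranging $a'\in\mathcal{B}$, contradicts your own inductive choice. The conditions $a'\in\mathcal{B}$ and $a/a'\in\mathcal{B}$ together give $a$ a $\mathcal{B}$-filtration of length $2$, so this cannot be arranged in general.
\item The fallback is not Quillen's route and does not work. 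Additivity concerns exact functors, and the subquotients of a filtration are not exact functors $\mathcal{A}\to\mathcal{B}$; even $\mathcal{F}\mapsto\mathcal{J}^i\mathcal{F}/\mathcal{J}^{i+1}\mathcal{F}$ fails to be exact. So they induce no map on $K$-theory, which is exactly why d\'evissage needs Theorem A.
\end{itemize}

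The missing idea is the one hypothesis you never use: $\mathcal{B}$ is additive, i.e.\ closed under finite direct sums. Given $a/a'\in\mathcal{B}$, use the zigzag of monotone maps
\[ (a_0,a_1)\;\le\;(a_0\cap a',\,a_1)\;\ge\;(a_0\cap a',\,a_1\cap a'). \]
The middle term lies in $J(a)$ because the map $a_1\to a_1/a_0\oplus a/a'$ has kernel $a_0\cap a'$. Thus $a_1/(a_0\cap a')$ is a subobject of a direct sum of two objects of $\mathcal{B}$, and hypothesis (1) applies. Likewise $(a_1\cap a')/(a_0\cap a')\hookrightarrow a_1/a_0$. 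Hence $r(a_0,a_1)=(a_0\cap a',a_1\cap a')$ is a retraction $J(a)\to J(a')$ homotopic to the identity. Iterating along a filtration $0=f_0\subset\cdots\subset f_n=a$ gives $J(a)\simeq J(f_{n-1})\simeq\cdots\simeq J(0)=\{(0,0)\}$.

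There is also a dual version. Take $a''\subseteq a$ with $a''\in\mathcal{B}$ and use $(a_0,a_1)\le(a_0,a_1+a'')\ge(a_0+a'',a_1+a'')$; here $(a_1+a'')/a_0$ lies in $\mathcal{B}$ as a quotient of $a_1/a_0\oplus a''$. Your sketch mixes the two versions, imposing $a/a'\in\mathcal{B}$ in one half and $a'\in\mathcal{B}$ in the other, which is why the pieces would not fit together.
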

We will now apply this to our situation of $\mathsf{sCoh}(X)$ and $\mathsf{sCoh}(X_\mathsf{bos})$.
\begin{proposition}\label[proposition]{cohcoh}
    Let $\mathfrak{X}$ be a noetherian superstack and $\mathfrak{X}_\mathsf{bos}$ its bosonic truncation then we have the following isomorphism
    \begin{align}
        K(\mathsf{sCoh}(\mathfrak{X}_\mathsf{bos})^\heartsuit) \simeq K(\mathsf{sCoh}(\mathfrak{X})^\heartsuit)
    \end{align}
\end{proposition}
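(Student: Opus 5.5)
We apply \cref{devissage} with $\mathcal{A}=\mathsf{sCoh}(\mathfrak{X})^\heartsuit$ and $\mathcal{B}$ the full subcategory of those coherent supersheaves annihilated by the ideal $\mathcal{J}\subset\mathcal{O}_{\mathfrak{X}}$ generated by the odd part $\mathcal{O}_{\mathfrak{X},1}$. Since $\mathcal{O}_{\mathfrak{X}_\mathsf{bos}}=\mathcal{O}_{\mathfrak{X}}/\mathcal{J}$, pushforward along the closed immersion $j:\mathfrak{X}_\mathsf{bos}\to\mathfrak{X}$ identifies $\mathsf{sCoh}(\mathfrak{X}_\mathsf{bos})^\heartsuit$ with $\mathcal{B}$; the $\mathbb{Z}_2$-grading on sheaves over the bosonic truncation is kept, and $j_*$ is exact and fully faithful onto $\mathcal{J}$-torsion-free-of-action objects. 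We first check this on an affine superscheme $\mathsf{Spec}(A)$ and then glue. On affines it is the statement that graded $A$-modules killed by $J=A_1A$ are the same as graded $A/J$-modules. Both conditions are local for the smooth (or fpqc) topology: annihilation by $\mathcal{J}$ can be tested on a smooth atlas, since formation of $\mathcal{J}$ commutes with flat base change. Taking the odd part and the ideal it generates is compatible with flat maps of superrings, because $B_1=A_1\otimes_{A_0}B_0$ up to the flat structure. Hence descent for $\mathsf{QCoh}$, via \cref{QCohIsSheaf} and the Dirac reduction theorem, gives the identification on a noetherian superstack.

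Next we verify the two hypotheses of \cref{devissage}. Condition (1) is immediate. If $b$ is killed by $\mathcal{J}$, so is any subobject $a$ and any quotient $a'$, and both stay coherent by noetherianity. For condition (2), the key input is that $\mathcal{J}$ is nilpotent. Locally $A$ is noetherian, so $A_1$ is finitely generated over $A_0$, say by $\theta_1,\dots,\theta_r$. Each $\theta_i$ squares to zero by the Koszul sign rule, so $J^{r+1}=0$. On a quasi-compact noetherian superstack we can use a smooth atlas by finitely many affines, which yields a global bound $\mathcal{J}^N=0$. For any $\mathcal{F}\in\mathsf{sCoh}(\mathfrak{X})^\heartsuit$ we then take the filtration
\begin{align*}
0=\mathcal{J}^N\mathcal{F}\subseteq \mathcal{J}^{N-1}\mathcal{F}\subseteq\cdots\subseteq\mathcal{J}\mathcal{F}\subseteq\mathcal{F}.
\end{align*}
These are $\mathbb{Z}_2$-graded coherent subsheaves, since $\mathcal{J}$ is a homogeneous ideal. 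The graded pieces $\mathcal{J}^i\mathcal{F}/\mathcal{J}^{i+1}\mathcal{F}$ are killed by $\mathcal{J}$ and so lie in $\mathcal{B}$. \Cref{devissage} then gives $K(\mathcal{B})\simeq K(\mathcal{A})$, and composing with $\mathcal{B}\simeq\mathsf{sCoh}(\mathfrak{X}_\mathsf{bos})^\heartsuit$ proves the proposition.

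The main obstacle is the global step. We must make sense of $\mathcal{J}$ and $\mathcal{J}^i\mathcal{F}$ as coherent subsheaves on a stack rather than a scheme. We must also obtain the uniform nilpotency bound, which needs quasi-compactness; here we would take ``noetherian'' to include it. If quasi-compactness is unavailable, we would instead write $K$ as a filtered colimit over quasi-compact opens, but we first try to assume it. We handle the subsheaf issue by defining $\mathcal{J}^i\mathcal{F}$ as the image of $\mathcal{J}^{\otimes i}\otimes\mathcal{F}\to\mathcal{F}$ in the abelian category $\mathsf{sCoh}(\mathfrak{X})^\heartsuit$. This construction is canonical and commutes with flat pullback, so it descends.
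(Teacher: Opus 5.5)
Your core argument is the same as the paper's. You apply \cref{devissage} to $\mathsf{sCoh}(\mathfrak{X})^\heartsuit$ with $\mathcal{B}$ the objects killed by $\mathcal{J}$, get (1) from closure under sub- and quotient objects, and get (2) from the finite $\mathcal{J}$-adic filtration. Your explicit bound $J^{r+1}=0$, and your remark that a uniform bound needs quasi-compactness, are correct refinements of what the paper attributes to ``the noetherian assumption''.

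The globalization step you add, however, rests on a false claim. Formation of $\mathcal{J}$ does not commute with flat, or even smooth, base change, and $B_1=A_1\otimes_{A_0}B_0$ fails for such maps. For example, $k\to k[\theta]$ is smooth, $J_k=0$, but $J_{k[\theta]}=(\theta)$. So the pullback of the stack's ideal to a smooth atlas $U\to\mathfrak{X}$ is not the atlas's own odd ideal. Consequently you cannot test membership in $\mathcal{B}$, or build $\mathcal{J}^i\mathcal{F}$, from the odd ideals of the atlas: the pullback of $k$ along $\mathbb{A}^{0|1}\to\mathsf{pt}$ is not killed by $(\theta)$, and the subsheaves $\mathcal{J}_U^i\mathcal{F}_U$ carry no descent datum.

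For Artin superstacks this is not merely technical. For $\mathfrak{X}=B\mathbb{G}_a^{0|1}$ one has $\mathfrak{X}_\mathsf{bos}\simeq\mathsf{pt}$. The base change of $\mathfrak{X}_\mathsf{bos}\to\mathfrak{X}$ along the atlas $\mathsf{pt}\to\mathfrak{X}$ is $\mathbb{A}^{0|1}\to\mathsf{pt}$, so $\mathfrak{X}_\mathsf{bos}\to\mathfrak{X}$ is not a closed immersion, and no ideal $\mathcal{J}$ cutting it out exists.

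What is true is compatibility with \emph{étale} base change. If $A\to B$ is étale, then $B/J_AB$ is étale over the even ring $A/J_A$, hence is itself even. To see this, note the parity involution is an $A/J_A$-algebra map agreeing with the identity modulo the nilpotent odd ideal, so it equals the identity by formal unramifiedness. It follows that $J_B=J_AB\cong J_A\otimes_A B$.

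Your argument therefore works, using étale atlases, for superschemes, algebraic superspaces and Deligne--Mumford superstacks. More generally, it works whenever $\mathfrak{X}_\mathsf{bos}\hookrightarrow\mathfrak{X}$ is a closed immersion and $\mathcal{J}$ is defined as its ideal. That ideal is nilpotent, since its pullback to any atlas lies in the atlas's odd ideal. The paper's proof tacitly makes the same assumption when it speaks of ``the nilpotent ideal $\mathcal{J}$''. For genuinely Artin superstacks this is a hypothesis that must be added to the statement; no descent argument can supply it.
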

\begin{proof}
    We will verify the conditions of \cref{devissage}. To verify (1), one observes that for an element $\mathcal{F}\in \mathsf{sCoh}(X_\mathsf{bos})^\heartsuit$ the action of the nilpotent ideal $\mathcal{J}$ annihilates the object, i.e. $\mathcal{J}\cdot \mathcal{F}=0$. Thus, given a short exact sequence 
    \begin{center}
        \begin{tikzcd}
            \mathcal{F}' \arrow[r] & \mathcal{F} \arrow[r] & \mathcal{F}''
        \end{tikzcd}
    \end{center}
    it follows that since $\mathcal{F}'$ is a submodule of $\mathcal{F}$ the action of $\mathcal{J}$ also annihilates $\mathcal{F}'$ and the action on the quotient module $\mathcal{F}''$ is induced by the action of $\mathcal{J}$ on $\mathcal{F}$, which proves (1). To prove (2) one notes that every object in $\mathsf{sCoh}(X)$ admits a finite filtration given by the nilpotent ideal $\mathcal{J}$, due to the noetherian assumption,
\begin{align*}
            0= \mathcal{J}^{n+1}\mathcal{F} \hookrightarrow \mathcal{J}^n\mathcal{F} \hookrightarrow \mathcal{J}^{n-1}\mathcal{F} \hookrightarrow ... \hookrightarrow \mathcal{J}^0\mathcal{F} = \mathcal{F},
        \end{align*}
        where the filtration quotients are given by $\mathcal{J}^m\mathcal{F}/\mathcal{J}^{m+1}\mathcal{F}$, which are objects of \\
        $\mathsf{sCoh}(X_\mathsf{bos})^\heartsuit$, since $\mathcal{J} \cdot \mathcal{J}^m\mathcal{F}/\mathcal{J}^{m+1}\mathcal{F} = 0$. This proves (2) and completes the verification of \cref{devissage} and the claim follows. 
\end{proof}
\begin{corollary}
    Let $X$ be a noetherian geometric superstack then the map 
    \begin{align}
        j_*:K(\mathsf{sCoh}(X_\mathsf{bos})) \longrightarrow K(\mathsf{sCoh}(X)).
    \end{align}
    is an equivalence.
\end{corollary}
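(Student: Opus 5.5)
The plan is to pass from the stable categories $\mathsf{sCoh}(X_\mathsf{bos})$ and $\mathsf{sCoh}(X)$ to their hearts, apply \cref{cohcoh} there, and conclude with a commutative square. Concretely, I would show that in
\begin{center}
\begin{tikzcd}
K(\mathsf{sCoh}(X_\mathsf{bos})^\heartsuit) \arrow[r] \arrow[d] & K(\mathsf{sCoh}(X)^\heartsuit) \arrow[d] \\
K(\mathsf{sCoh}(X_\mathsf{bos})) \arrow[r, "j_*"] & K(\mathsf{sCoh}(X))
\end{tikzcd}
\end{center}
the top arrow and both vertical arrows are equivalences. The square commutes because $j: X_\mathsf{bos} \to X$ is a closed immersion, cut out by the nilpotent ideal $\mathcal{J}$ generated by the odd functions. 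Hence $j_*$ is $t$-exact and restricts on hearts to the inclusion of sheaves killed by $\mathcal{J}$; this inclusion is exactly the functor appearing in \cref{cohcoh}.

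The top horizontal arrow is an equivalence by \cref{cohcoh}. For this I need that $X$ noetherian implies $\mathcal{J}$ is nilpotent, so the filtration $\mathcal{J}^m\mathcal{F}$ is finite. Locally this holds because $\mathcal{J}$ is generated by finitely many odd elements, each squaring to zero. On a geometric superstack this local bound can be checked on a smooth atlas.

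For the vertical arrows I would invoke the theorem of the heart (Barwick): for a stable category with a bounded $t$-structure, $K(\mathcal{C}^\heartsuit) \simeq K(\mathcal{C})$. So the key input is that $\mathsf{sCoh}(Y)$, for $Y$ noetherian geometric, carries a bounded $t$-structure whose heart is the abelian category of $\mathbb{Z}_2$-graded coherent sheaves. I would define $\mathsf{sCoh}(Y)$ as the full subcategory of $\mathsf{QCoh}(Y)$ of objects with coherent cohomology sheaves, nonzero in only finitely many degrees. The $t$-structure on $\mathsf{QCoh}$, transported from the Dirac setting via the equivalence $\mathsf{sQCoh}\simeq\mathsf{DQCoh}$, restricts to it. Boundedness is then built into the definition, and the heart is identified by checking smooth-locally on affines, where it reduces to finitely generated supermodules over a noetherian superring.

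I expect the main obstacle to be this last step: setting up $\mathsf{sCoh}$ with a bounded $t$-structure on a geometric superstack so that the theorem of the heart applies. In particular, truncations must preserve coherence, which needs noetherianity of the atlas and flat descent of the $t$-structure. I would handle it by smooth descent from the affine case, using that smooth pullback is $t$-exact.
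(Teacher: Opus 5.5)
Your proposal is the paper's proof. It uses the same square comparing $K$ of the hearts with $K$ of the stable categories, Barwick's theorem of the heart for the vertical maps, and \cref{cohcoh} for the map on hearts; you additionally justify commutativity via $t$-exactness of $j_*$ and make explicit the bounded $t$-structure and the nilpotence of $\mathcal{J}$, which the paper leaves implicit. Like the paper, though, you rely on $j\colon X_\mathsf{bos}\to X$ being the closed immersion cut out by the odd nilpotents, which holds for superschemes and Deligne--Mumford superstacks but not for Artin superstacks with odd stabilizers (for $X=B\mathbb{G}_a^{0|1}$ one has $X_\mathsf{bos}=\mathsf{pt}$, $j$ is the universal torsor, and $j_*$ sends both $[k]$ and $[\Pi k]$ to $[k]+[\Pi k]$ in $K_0(\mathsf{sCoh}(X)^\heartsuit)\cong\mathbb{Z}^2$), so the statement and both arguments only hold under that hypothesis.
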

\begin{proof}
    The operation of taking the heart is functorial, thus the following diagram commutes
    \begin{center}
        \begin{tikzcd}
            K(\mathsf{sCoh}(X_\mathsf{bos})) \arrow[r] & K(\mathsf{sCoh}(X))\\
            K(\mathsf{sCoh}(X_\mathsf{bos})^\heartsuit) \arrow[u] \arrow[r] & K(\mathsf{sCoh}(X)^\heartsuit) \arrow[u]
        \end{tikzcd}.
    \end{center}
    The vertical arrows are equivalences due to Barwicks theorem of the heart and the lower horizontal arrow is an equivalence by \cref{cohcoh}, thus the top horizontal arrow is also an equivalence.
\end{proof}
\subsection{Super Fundamental Classes}
Let us also complete our study of the $K$-theory spectrum by describing it in relation to the $K$-theory spectrum of ordinary modules on the bosonic reduction.
\begin{lemma} \label[lemma]{Kcommutecoproduct}
    The $K$-anima functor commutes with coproducts of $\mathsf{Cat}^\mathsf{st}$.
\end{lemma}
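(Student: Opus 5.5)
The plan is to reduce the statement to the corresponding facts about spans and group completion. Concretely, I will show that the functor $\mathcal{C}\mapsto \mathsf{Span}(\mathcal{C})$ and the group completion $\Omega|-|$ each commute with coproducts. The first observation is that in $\mathsf{Cat}^{\mathsf{st}}$ finite coproducts coincide with finite products: for stable $\mathcal{C},\mathcal{D}$, the product $\mathcal{C}\times\mathcal{D}$ is stable, and the inclusions $\mathcal{C}\to\mathcal{C}\times\mathcal{D}$, $c\mapsto (c,0)$, exhibit it as the coproduct in the category of stable categories with exact functors. This is the usual semiadditivity of $\mathsf{Cat}^{\mathsf{st}}$. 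For an arbitrary family $\{\mathcal{C}_i\}_{i\in I}$, the coproduct is the filtered colimit over finite subsets $F\subseteq I$ of $\prod_{i\in F}\mathcal{C}_i$.

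For finite coproducts, I will first check that $\mathsf{Span}(\mathcal{C}\times\mathcal{D})\simeq\mathsf{Span}(\mathcal{C})\times\mathsf{Span}(\mathcal{D})$. This holds because pullbacks and pushouts in a product are computed componentwise, so the classes of ingressive and egressive maps are products. Next, geometric realization $|-|:\mathsf{Cat}\to\mathsf{Ani}$ preserves finite products, since it is computed as a colimit over $\Delta^{\mathsf{op}}$, which is sifted. The group completion $(-)^{\mathrm{gp}}$ is a left adjoint between semiadditive categories, and in $\mathsf{CMon}(\mathsf{Ani})$ and $\mathsf{CGrp}(\mathsf{Ani})$ finite products are also coproducts. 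Finally, $\Omega$ preserves products. Combining these gives $K(\mathcal{C}\times\mathcal{D})\simeq K(\mathcal{C})\times K(\mathcal{D})\simeq K(\mathcal{C})\oplus K(\mathcal{D})$ in $\mathsf{Sp}_{\geq 0}$.

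For infinite coproducts, I will pass to the filtered colimit over finite subsets. Both $\mathsf{Span}$ and $|-|$ preserve filtered colimits: spans are detected on finitely many objects, and $|-|$ is a left adjoint. Group completion and $\Omega$ also commute with filtered colimits, since filtered colimits commute with finite limits in $\mathsf{Ani}$ and since $\Omega|M|\simeq M^{\mathrm{gp}}$. So $K$ commutes with filtered colimits, and therefore with arbitrary coproducts.

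I expect the main obstacle to be the identification of the coproduct in $\mathsf{Cat}^{\mathsf{st}}$ with the colimit of finite products. In particular, I need $\mathsf{Cat}^{\mathsf{st}}\to\mathsf{Cat}$ to preserve filtered colimits, which it does for small stable categories because exactness is a finite-limit condition. Equally I need that finite products are biproducts there. I will take both facts as standard, citing \cite{LandKTheoryNotes}. If the idempotent-complete variant is intended, I would instead note that idempotent completion commutes with these colimits and that $K$ is insensitive to it on $K_{\geq 1}$ and cofinal on $K_0$.
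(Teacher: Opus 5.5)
Your argument is correct, but it takes a different route from the paper.

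\textbf{The paper's route.} The paper proves only the binary case. It regards $\mathcal{C}\to\mathcal{C}\times\mathcal{D}\to\mathcal{D}$ as a Verdier sequence, split by the inclusion $\mathcal{D}\to\mathcal{C}\times\mathcal{D}$. It then invokes the localization theorem for $K$-theory to obtain a split cofiber sequence $K(\mathcal{C})\to K(\mathcal{C}\times\mathcal{D})\to K(\mathcal{D})$, and asserts that the general case follows by induction.

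\textbf{Your route for finite coproducts.} You work directly with the model $K=\Omega|\mathsf{Span}(-)|$. Spans of a product are products of spans because pullbacks are computed componentwise. The realization $|-|$ preserves finite products because $\Delta^{\mathsf{op}}$ is sifted, and $\Omega$ is a limit. This avoids the localization/additivity theorem entirely and is more elementary. The price is that it depends on the specific span model, whereas the paper's argument applies verbatim to any localizing (indeed any additive) invariant.

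Your group-completion step is redundant. Since $K$ is already defined as $\Omega|\mathsf{Span}(-)|$, you only need $|-|$ and $\Omega$ to preserve finite products. The resulting equivalence of underlying anima is then an equivalence in $\mathsf{Sp}_{\geq 0}$, because the forgetful functor to $\mathsf{Ani}$ is conservative. To identify the product comparison with the coproduct comparison, you also use $K(0)\simeq 0$.

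\textbf{Arbitrary coproducts.} Your filtered-colimit paragraph gives something the paper's proof does not: induction only reaches finite coproducts, while the lemma is stated for coproducts in $\mathsf{Cat}^{\mathsf{st}}$. You write an arbitrary coproduct as the filtered colimit of its finite subcoproducts and use that $K$ is finitary, which handles the general case. The supporting facts are all standard:
\begin{itemize}
\item $\mathsf{Cat}^{\mathsf{st}}\to\mathsf{Cat}$ preserves filtered colimits.
\item $\mathsf{Span}$ preserves filtered colimits, because its $n$-simplices are cartesian diagrams indexed by the finite category $\mathsf{Tw}(\Delta^n)^{\mathsf{op}}$, and cartesianness is detected at a finite stage along exact transition functors.
\item $|-|$ is a left adjoint.
\item $\Omega$ commutes with filtered colimits, since filtered colimits commute with finite limits in $\mathsf{Ani}$.
\end{itemize}
Only the binary case is used later in the paper, for the splitting of $K(\mathsf{sCoh}(X_{\mathsf{bos}}))$, so both proofs suffice for the paper's purposes.
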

\begin{proof}
    We prove the binary case, where the general case follows by induction. The category of small stable categories is semi-additive hence the coproduct is given by the cartesian product. Thus, we have a Verdier sequence 
    \begin{center}
        \begin{tikzcd}
            \mathcal{C} \arrow[r] & \mathcal{C} \times \mathcal{D} \arrow[r] & \mathcal{D}
        \end{tikzcd}
    \end{center}
    There exists a functor $\iota:\mathcal{D}\to \mathcal{C}\times \mathcal{D}$, since coproducts and products are isomorphic, that is right adjoint to the projection. Thus $\mathcal{D}$ a reflective subcategory and therefore $\iota$ is a section. Using the fact that $K$-theory is localizing \cite[Theorem 5.71.]{LandKTheoryNotes}, we obtain a short exact sequence of spectra, which splits since $K(\mathsf{pr}_\mathcal{D})$ admits a section given by $K(\iota)$, thus we obtain the split exact sequence 
       \begin{center}
        \begin{tikzcd}
            K(\mathcal{C}) \arrow[r] &K( \mathcal{C}) \oplus K(\mathcal{D}) \arrow[r] & K(\mathcal{D}),
        \end{tikzcd}
    \end{center}
    as desired.
\end{proof}
To be able to compare classes of the $K$-theory of supermodules and ordinary modules, we also admit the following corollary.
\begin{corollary} \label[corollary]{formofscoh}
    Let $X$ be a noetherian superstack, then we have an isomorphism 
    \begin{align}
        K(\mathsf{sCoh}(X_\mathsf{bos})) \simeq K(\mathsf{Coh}(X_\mathsf{bos})) \times K(\mathsf{Coh}(X_\mathsf{bos}))
    \end{align}
    as spectra.
\end{corollary}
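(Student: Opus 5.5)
The plan is to exhibit $\mathsf{sCoh}(X_\mathsf{bos})$ as a product of two copies of $\mathsf{Coh}(X_\mathsf{bos})$ as a stable category, and then apply \cref{Kcommutecoproduct}. The key input is that $X_\mathsf{bos}$ is an ordinary (purely even) stack. Its structure sheaf $\mathcal{O}_{X_\mathsf{bos}}$ is therefore concentrated in even degree, so the odd part of $\mathcal{O}$ acts trivially on any $\mathbb{Z}_2$-graded module.

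First, work affine-locally. For an ordinary commutative ring $A$, regarded as a superring concentrated in even degree, a $\mathbb{Z}_2$-graded $A$-module $M = M_0 \oplus M_1$ is the same as a pair $(M_0, M_1)$ of $A$-modules. Indeed, $A$ preserves each homogeneous component, and graded morphisms are pairs of $A$-linear maps. This gives an equivalence of abelian categories
\begin{align}
    \mathsf{sMod}(A)^\heartsuit \simeq \mathsf{Mod}(A)^\heartsuit \times \mathsf{Mod}(A)^\heartsuit .
\end{align}
It restricts to finitely generated modules, since $M$ is finitely generated if and only if both $M_0$ and $M_1$ are. Animating and stabilizing, as in the definition of $\mathsf{QCoh}$, commutes with finite products, so the equivalence also holds for the stable categories.

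Next, this equivalence is natural in $A$, because pullback along $A \to B$ preserves the grading componentwise. So it is compatible with the transition functors in the limit defining $\mathsf{QCoh}$ (or $\mathsf{Coh}$) of $X_\mathsf{bos}$. Since limits commute with products, we obtain
\begin{align}
    \mathsf{sCoh}(X_\mathsf{bos}) \simeq \mathsf{Coh}(X_\mathsf{bos}) \times \mathsf{Coh}(X_\mathsf{bos}),
\end{align}
where the parity shift $\Pi$ swaps the two factors. If one prefers to stay with hearts, the same argument gives the equivalence of abelian categories on the hearts. One can then pass to the stable categories using the theorem of the heart, exactly as in the proof of the Dévissage corollary.

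Finally, \cref{Kcommutecoproduct} identifies $K$ of a product of stable categories with the direct sum of their $K$-theories. In spectra a finite direct sum agrees with the product, which yields the claimed isomorphism.

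The main obstacle is the gluing step. We must make sure that the identification of $\mathsf{sCoh}$ on $X_\mathsf{bos}$, viewed as a superstack, with the limit over ordinary affines is legitimate. This requires that the super-affines mapping to $X_\mathsf{bos}$ can be replaced by purely even affines. I would handle this using the colocalization theorem: since $\iota$ is fully faithful with right adjoint $(-)_\mathsf{bos}$, maps from a super-affine $S$ into $\iota X_\mathsf{bos}$ factor through $S_\mathsf{bos}$. This gives a cofinality statement in the sense of \cref{pullbackcofinal}, and the limit can be computed over $\mathsf{Aff}_{/X_\mathsf{bos}}$.
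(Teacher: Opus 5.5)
Your overall strategy is correct and more direct than the paper's. The paper asserts $\mathsf{sCoh}(X_{\mathsf{bos}})\simeq\mathsf{Fun}(\mathbb{Z}_2,\mathsf{Coh}(X_{\mathsf{bos}}))$ without proof. It then forms a ``Verdier sequence'' $\mathsf{Coh}(X_{\mathsf{bos}})\xrightarrow{\Delta}\mathsf{sCoh}(X_{\mathsf{bos}})\xrightarrow{p}\mathsf{Coh}(X_{\mathsf{bos}})$ from the diagonal $\mathcal{F}\mapsto(\mathcal{F},\Pi\mathcal{F})$, splits it using \cref{Kcommutecoproduct}, and writes the answer as $K(\mathsf{Coh}(X_{\mathsf{bos}}))[\sigma]/(\sigma^2-1)$ with $\sigma=[\Pi]$. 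What this buys is an explicit name for the map $K(p)\colon[\mathcal{F}]+[\Pi\mathcal{G}]\mapsto[\mathcal{F}]-[\mathcal{G}]$, the specialization $\sigma=-1$ used afterwards to define the super fundamental class. You instead prove the splitting the paper only asserts: over a purely even ring a graded module is a pair of modules, naturally in $A$, and limits commute with products. You then feed the product straight into \cref{Kcommutecoproduct}. This is shorter and bypasses the diagonal step, which is delicate as written: $\Delta$ is not fully faithful, since maps $\Delta\mathcal{F}\to\Delta\mathcal{G}$ form two copies of $\mathrm{Hom}(\mathcal{F},\mathcal{G})$. Nothing is lost, because in your picture $\Pi$ swaps the two factors. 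So $\sigma$ acts by the swap, and the specialization $\sigma=-1$ is the difference of the two projections.

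The one step that does not work as written is your justification of the gluing. A map from a super-affine $S=\mathsf{Spec}(B)$ to $\iota X_{\mathsf{bos}}$ need not factor through $S_{\mathsf{bos}}=\mathsf{Spec}(B/(B_1))$. Take $S=\mathsf{Spec}\,k[\theta_1,\theta_2]$ and $X_{\mathsf{bos}}=\mathbb{A}^1=\mathsf{Spec}\,k[x]$. The maps $S\to\iota\mathbb{A}^1$ are the elements of $B_0=k\oplus k\theta_1\theta_2$, while maps $S_{\mathsf{bos}}=\mathsf{Spec}\,k\to\mathbb{A}^1$ only see $k$, so the map $x\mapsto\theta_1\theta_2$ does not factor.

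The adjunction $\iota\dashv(-)_{\mathsf{bos}}$ controls maps \emph{out of} $\iota T$. Maps \emph{into} $\iota T$ are controlled by the even-part functor $\mathsf{Spec}(B)\mapsto\mathsf{Spec}(B_0)$, which is exactly how the paper builds $\iota$ on stacks, as $L_{\mathsf{sStk}}\circ F^*\circ\mathsf{inc}$. On slices, $(-)_{\mathsf{bos}}$ only exhibits $\iota\colon\mathsf{Aff}_{/X_{\mathsf{bos}}}\to\mathsf{sAff}_{/\iota X_{\mathsf{bos}}}$ as a left adjoint. That is the wrong variance for applying \cref{pullbackcofinal} to a limit indexed by $(\mathsf{sAff}_{/\iota X_{\mathsf{bos}}})^{\mathsf{op}}$. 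There you need $\iota$ on slices to be a right adjoint, with left adjoint given at the prestack level by the even part.

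The simplest repair uses your adjunction differently. Since $\iota$ is a left adjoint, it preserves colimits, so $\iota X_{\mathsf{bos}}\simeq\mathsf{colim}_{T\in\mathsf{Aff}_{/X_{\mathsf{bos}}}}\iota T$. Since $\mathsf{QCoh}$ sends colimits of stacks to limits (\cref{QCohIsSheaf}), this gives $\mathsf{sQCoh}(\iota X_{\mathsf{bos}})\simeq\mathsf{lim}_T\,\mathsf{sMod}(A_T)$ over ordinary affines $T=\mathsf{Spec}(A_T)$. Your affine-local splitting then glues exactly as you describe.
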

\begin{proof}
    Let us first identify $\mathsf{sCoh}(X_\mathsf{bos}) \simeq \mathsf{Fun}(\mathbb{Z}_2, \mathsf{Coh}(X_\mathsf{bos}))$. We observe that there is an adjunction
    \begin{align}
        \begin{split}
          \Delta: \mathsf{Coh}(X_\mathsf{bos}) \adjoint  \mathsf{Fun}(\mathbb{Z}_2, \mathsf{Coh}(X_\mathsf{bos})): U,
        \end{split}
    \end{align}
    where the adjunction $\Delta \dashv U$ forgets the $\mathbb{Z}_2$-grading and embeds a sheaf diagonally, i.e. $\mathcal{F} \mapsto (\mathcal{F},\Pi \mathcal{F})$. Since $\mathsf{Cat}^{\mathsf{st}}$ has all limits and colimits we may take the fibre of this functor and we obtain another Verdier sequence of the form 
    \begin{center}
        \begin{tikzcd}
            \mathsf{Coh}(X_\mathsf{bos}) \arrow[r, "\Delta"] & \mathsf{sCoh}(X_\mathsf{bos}) \arrow[r, "p"] & \mathsf{Coh}(X_\mathsf{bos}).
        \end{tikzcd}
    \end{center}
    analyzing the second map we observe that since $p$ is a quotient map that sends sheaves of the form  $\mathcal{F}= (\mathcal{F}', \Pi \mathcal{F}')$ to zero then $K(p)$ must also send them to zero, which results in
    \begin{align}
    \begin{split}
          K(p): K(\mathsf{sCoh}(X_\mathsf{bos})) &\longrightarrow K(\mathsf{Coh}(X_\mathsf{bos})) \\
       [\mathcal{F}] + [\Pi\mathcal{G}]  &\longmapsto  [\mathcal{F}] - [\mathcal{G}].
    \end{split}
    \end{align}
    One observes that $\Delta$ is fully faithful and the parity shift functor $\Pi: \mathsf{sCoh}(X_\mathsf{bos}) \to \mathsf{sCoh}(X_\mathsf{bos})$ is an involution, so by \cref{Kcommutecoproduct} $K(\mathsf{sCoh}(X_\mathsf{bos}))$ participates in a split short exact sequence of spectra of the form
    \begin{center}
        \begin{tikzcd}[column sep = large]
            K(\mathsf{Coh}(X_\mathsf{bos})) \arrow[r, "K(\Delta)"] \arrow[d, equal] & \arrow[l, bend right, "K(U)"'] K(\mathsf{sCoh}(X_\mathsf{bos})) \arrow[d, "\wr"] \arrow[r, "K(p)"] & K(\mathsf{Coh}(X_\mathsf{bos})) \arrow[d, equal] \\
            K(\mathsf{Coh}(X_\mathsf{bos})) \arrow[r, "K(\Delta)"] &  K(\mathsf{Coh}(X_\mathsf{bos}))[\sigma]/(\sigma^2-1) \arrow[r, "K(p)"] & K(\mathsf{Coh}(X_\mathsf{bos})),
        \end{tikzcd}
    \end{center}
    where $\sigma =[\Pi]$ is a formal square root of $1$ and represents the parity shift functor, which proves the claim.
\end{proof}
\begin{remark}
One now observes that the morphisms
     \begin{align}
       K(U),  K(p): K(\mathsf{sCoh}(X_\mathsf{bos})) &\longrightarrow K(\mathsf{Coh}(X_\mathsf{bos}))
    \end{align}
     are the categorical incarnation of the specialization morphism of setting $\sigma = \pm 1$. For our purposes, it is more useful to use the specialization map $\sigma=-1$.
\end{remark}
We are now in a position to give a definition for a super analogue of the virtual fundamental class.
\begin{definition}[Super Fundamental $K$-Class]
    Let $X$ be an Artin superstack and $\iota: X_\mathsf{bos}\rightarrow X$ be the inclusion of the bosonic truncation. We define the \textit{super fundamental class} $ [X]^K$ to be the unique class that satisfies
    \begin{align}
        \iota_*[X]^K = [\mathcal{O}_X] \in K(\mathsf{sCoh}(X_\mathsf{bos}))
    \end{align}
\end{definition}
By considering the composition 
\begin{align}
    K(\mathsf{sCoh}(X)) \xlongrightarrow{\sim} K(\mathsf{sCoh}(X_\mathsf{bos})) \xlongrightarrow{K(p)} K(\mathsf{Coh}(X_\mathsf{bos})),
\end{align}
we may define the following:
\begin{definition}[Super Fundamental Class]
    Let $X$ be an Artin superstack and $\iota: X_\mathsf{bos}\rightarrow X$ be the inclusion of the bosonic truncation and, by abuse of notation, we denote the image of $[\mathcal{O}_X]$ in $K(\mathsf{Coh}(X_\mathsf{bos}))$ by $[\mathcal{O}_X]$. Then the \textit{super fundamental class} $ [X]$, in the Chow ring, is defined to be 
    \begin{align}
        [X] \coloneqq \tau_X([\mathcal{O}_X]) \cap \mathsf{Td}([\mathcal{T}_{X}])^{-1}
    \end{align}
\end{definition}
\begin{proposition} \label[proposition]{fundamentalclass}
    Let $X$ be a smooth Deligne-Mumford superstack then 
    \begin{align}
        [X] \simeq c_r(\mathcal{E}^\vee) \cdot [X] 
    \end{align}
\end{proposition}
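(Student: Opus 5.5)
Reading the statement: for a smooth Deligne--Mumford superstack $X$ of dimension $n|r$, the split model is $X \simeq \mathsf{Spec}_{X_\mathsf{bos}}(\wedge^\bullet \mathcal{E}^\vee)$ for a rank $r$ vector bundle $\mathcal{E}$ on $X_\mathsf{bos}$. The odd cotangent directions are $\mathcal{J}/\mathcal{J}^2 \simeq \Pi\mathcal{E}^\vee$. The claim is that the super fundamental class $[X] = \tau_X([\mathcal{O}_X]) \cap \mathsf{Td}(\mathcal{T}_X)^{-1}$ equals $c_r(\mathcal{E}^\vee)\cap[X_\mathsf{bos}]$.

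The plan has three steps.

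\textbf{Step 1: identify the $K$-class.} First I reduce to the split case. Any smooth superstack carries the $\mathcal{J}$-adic filtration, whose graded pieces are $\mathcal{J}^m/\mathcal{J}^{m+1} \simeq \wedge^m \mathcal{E}^\vee$ placed in parity $m \bmod 2$, by smoothness. In the proof of \cref{cohcoh}, the d\'evissage equivalence sends $[\mathcal{O}_X]$ to the sum of the classes of these filtration quotients. This holds independently of splitness, since $K$-classes are additive in filtrations. Hence in $K(\mathsf{sCoh}(X_\mathsf{bos}))$ we get
\begin{align}
[\mathcal{O}_X] = \sum_{m=0}^{r} \sigma^m [\wedge^m \mathcal{E}^\vee].
\end{align}

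\textbf{Step 2: specialize $\sigma = -1$.} Apply $K(p)$, which by \cref{formofscoh} and the following remark is the specialization $\sigma=-1$. This gives
\begin{align}
[\mathcal{O}_X] = \sum_m (-1)^m [\wedge^m\mathcal{E}^\vee] = \lambda_{-1}(\mathcal{E}^\vee) \in K(\mathsf{Coh}(X_\mathsf{bos})),
\end{align}
which is the $K$-theoretic Euler class of $\mathcal{E}$.

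\textbf{Step 3: Grothendieck--Riemann--Roch.} Since $X_\mathsf{bos}$ is a smooth DM stack, GRR (in the form $\tau_{X_\mathsf{bos}}(\alpha) = \mathsf{ch}(\alpha)\,\mathsf{Td}(T_{X_\mathsf{bos}})\cap[X_\mathsf{bos}]$, working rationally) gives
\begin{align}
\tau\bigl(\lambda_{-1}(\mathcal{E}^\vee)\bigr) = \mathsf{ch}\bigl(\lambda_{-1}\mathcal{E}^\vee\bigr)\,\mathsf{Td}(T_{X_\mathsf{bos}})\cap[X_\mathsf{bos}].
\end{align}
The Borel--Serre identity states $\mathsf{ch}(\lambda_{-1}\mathcal{E}^\vee) = c_r(\mathcal{E})\,\mathsf{Td}(\mathcal{E})^{-1}$. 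Next I compute $\mathsf{Td}$ of the super tangent bundle: $\mathcal{T}_X|_{X_\mathsf{bos}} = T_{X_\mathsf{bos}} \oplus \Pi\mathcal{E}$, and the odd part contributes inversely (its Berezinian-type characteristic class is $\mathsf{Td}(\mathcal{E})^{-1}$). So dividing by $\mathsf{Td}(\mathcal{T}_X)$ cancels $\mathsf{Td}(T_{X_\mathsf{bos}})$, while the leftover factor $\mathsf{Td}(\mathcal{E})^{-1}$ must also cancel against the factor in Borel--Serre. What remains is $c_r(\mathcal{E})\cap[X_\mathsf{bos}]$, which equals $c_r(\mathcal{E}^\vee)$ up to the sign $(-1)^r$.

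\textbf{Main obstacle.} The hardest part is this Todd bookkeeping, together with the sign and dual conventions. Two things must be fixed. First, I need to choose the convention for $\mathsf{Td}$ of an odd bundle so that the leftover Todd factor cancels exactly. I would first try defining $\mathsf{Td}(\Pi\mathcal{E}) := \mathsf{Td}(\mathcal{E})^{-1}$ via the $\sigma=-1$ specialization, which makes the cancellation formal. Second, I need to track whether $\mathcal{E}$ or $\mathcal{E}^\vee$ appears, which depends on whether the odd part of the cotangent bundle is $\mathcal{E}$ or $\mathcal{E}^\vee$. I would choose the normalization $\mathcal{J}/\mathcal{J}^2 = \Pi\mathcal{E}$ so that the final answer is $c_r(\mathcal{E}^\vee)$, matching the $\Theta$-class application later on. Justifying the use of GRR on DM stacks (rational coefficients, Toen/Vistoli) is standard, and I would simply cite it.
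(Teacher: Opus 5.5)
Your proposal is correct and is essentially the paper's own proof: the $\mathcal{J}$-adic filtration with the $\sigma=-1$ specialization gives the alternating sum of exterior powers of $\mathcal{J}/\mathcal{J}^2$, Fulton's Example~3.2.5 (Borel--Serre) computes its Chern character, and the specialized class $[\mathcal{T}_X|_{X_\mathsf{bos}}]=[\mathcal{T}_{X_\mathsf{bos}}]-[(\mathcal{J}/\mathcal{J}^2)^\vee]$ inverts the odd Todd factor so that it cancels. Two small corrections: the inversion is not a convention you choose, since multiplicativity of $\mathsf{Td}$ on this $K$-class forces it; and with the paper's $\mathcal{E}=\mathcal{J}/\mathcal{J}^2$ your computation gives exactly $c_r(\mathcal{E}^\vee)$, the top Chern class of the odd tangent bundle, so there is no $(-1)^r$ to absorb.
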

\begin{proof}
Let $\mathcal{J}$ be the ideal generated by all odd nilpotents and denote $\mathcal{E}\coloneqq \mathcal{J}/\mathcal{J}^2$. One observes that $\mathcal{O}_X$ also fits into a short exact sequence of the form
\begin{center}
    \begin{tikzcd}
        \mathcal{J}/\mathcal{J}^2 \arrow[r] & \mathcal{O}_X \arrow[r] & \mathcal{O}_{X_\mathsf{bos}},
    \end{tikzcd}
\end{center}
which together with the filtration sequence induced by the nilpotent ideal $\mathcal{J}$ one computes an alternative description of $\mathcal{O}_X$ as follows:
\begin{align}
\begin{split}
      [\mathcal{O}_X] &\simeq [\mathcal{J}^1] + [\wedge^0\mathcal{E}] \\
      &\simeq [\mathcal{J}^2] - [\wedge^1 \mathcal{E}] + [\wedge^0\mathcal{E}] \\
      & \ \ \vdots \\
      & \simeq \Sigma_i(-1)^i[\wedge^i\mathcal{E}].
\end{split}
\end{align}
The extra minus signs appear, because we used the specalization map $\sigma = -1$ and the fact that $\wedge^{2k+1}\mathcal{E}$ are vector bundles in degree 1. We can now also use the identity 
\begin{align}
    \sum_{p=0}^r (-1^p)\mathsf{ch}(\wedge^p \mathcal{E}^\vee) = c_r(\mathcal{E}) \cdot \mathsf{Td}(\mathcal{E})^{-1},
\end{align}
which is \cite[Example 3.2.5]{fulton2013intersection}, to identify the Chern character of $\mathcal{O}_X$ as
\begin{align}
 \mathsf{ch}(\mathcal{O}_X) =   \sum_{p=0}^r (-1^p)\mathsf{ch}(\wedge^p \mathcal{E}^\vee) = c_r(\mathcal{E}) \cdot \mathsf{Td}(\mathcal{E})^{-1}.
\end{align}
Furthermore, $\mathcal{T}_X$ fits into a short exact sequence 
\begin{center}
    \begin{tikzcd}
        \mathcal{T}_{X_\mathsf{bos}} \arrow[r] & \mathcal{T}_{X}|_{X_\mathsf{bos}} \arrow[r] & (\mathcal{J}/\mathcal{J}^2)^\vee,
    \end{tikzcd}
\end{center}
which lets us identify $[\mathcal{T}_X] \simeq [\mathcal{T}_{X_\mathsf{bos}}]-[\mathcal{E}^\vee] \in K(\mathsf{Coh}(X_\mathsf{bos}))$ and one now computes

    \begin{align}
    \begin{split}
          [X] &\coloneqq \tau_{X_\mathsf{bos}}([\mathcal{O}_X]) \cap \mathsf{Td}([\mathcal{T}_{X}])^{-1} \\
        &\simeq \mathsf{Td}([\mathcal{T}_{X_\mathsf{bos}}]-[\mathcal{E}]^\vee)^{-1} \cap (\tau_{X_\mathsf{bos}}([\mathcal{O}_X]) \\
        &\simeq \mathsf{Td}([\mathcal{T}_{X_\mathsf{bos}}])^{-1} \cdot \mathsf{Td}([\mathcal{E}^\vee]) \cap (\mathsf{ch}([\mathcal{O}_X]) \cdot \mathsf{Td}(\mathcal{T}_{X_\mathsf{bos}}) \cap [X_\mathsf{bos}]))\\
        &\simeq \mathsf{Td}([\mathcal{E}^\vee]) \cap c_r(\mathcal{E}^\vee) \mathsf{Td}(\mathcal{E}^\vee)^{-1} \cap [X_\mathsf{bos}] \\
        &\simeq c_r(\mathcal{E}^\vee) \cap [X_\mathsf{bos}],
    \end{split}
    \end{align}
    as desired.
\end{proof}
As an application we will compute the super fundamental class of the moduli space of super Riemann surfaces and show they agree with $\Theta$-classes defined in \cite{norbury2023new}.
\begin{corollary}
    Let $\overline{\mathfrak{M}}_{g,n}$ be the Deligne-Mumford compactification of the  moduli of genus $g$ super Riemann surfaces with $n$ Neveu-Schwarz punctures. Then the super fundamental class is given by 
    \begin{align}
        [\overline{\mathfrak{M}}_{g,n}] = c_{\operatorname{top}}(\mathcal{E}^\vee) \cap [\overline{\mathcal{SM}}_{g,n}] \in A_*(\overline{\mathcal{SM}},\mathbb{Q}).
    \end{align}
\end{corollary}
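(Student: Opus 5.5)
The corollary should follow by applying \cref{fundamentalclass} to $X=\overline{\mathfrak{M}}_{g,n}$. The main work is to check that its hypotheses hold and to identify its ingredients. I would proceed in three steps.

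\textbf{Step 1: hypotheses.} I would first show that $\overline{\mathfrak{M}}_{g,n}$ is a smooth Deligne--Mumford superstack whose bosonic truncation is the moduli stack of stable spin curves, $(\overline{\mathfrak{M}}_{g,n})_{\mathsf{bos}}\simeq\overline{\mathcal{SM}}_{g,n}$.
\begin{itemize}
\item Smoothness and the DM property are standard: deformations of super Riemann surfaces with NS punctures are unobstructed, and automorphism groups are finite.
\item For the bosonic truncation I would use the colocalization of Section~2. By the adjunction $\iota\dashv(-)_{\mathsf{bos}}$, the stack $(\overline{\mathfrak{M}}_{g,n})_{\mathsf{bos}}$ is the restriction of the functor of points to ordinary affine schemes.
\item Over a bosonic base $S$, a family of super Riemann surfaces is the same as a stable curve $C\to S$ with a spin structure $L$ and $L^2\simeq\omega_C(\log)$. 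Indeed, the odd coordinates then contribute only $\Pi L$, so the split model $\mathcal{O}_C\oplus\Pi L$ is forced. This recovers $\overline{\mathcal{SM}}_{g,n}$.
\end{itemize}

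\textbf{Step 2: identifying $\mathcal{E}$.} Next I would identify $\mathcal{E}=\mathcal{J}/\mathcal{J}^2$, the conormal of $X_{\mathsf{bos}}\hookrightarrow X$. Equivalently, $\mathcal{E}^\vee$ is the odd part of $\mathcal{T}_X|_{X_{\mathsf{bos}}}$.
\begin{itemize}
\item Deformation theory of SUSY curves splits the tangent space at $(C,L)$ into an even part $H^1(C,T_C(-\text{punctures}))$ and an odd part $H^1(C,L^\vee)$, twisted appropriately at NS points.
\item This gives $\mathcal{E}^\vee\simeq R^1\pi_*(\mathcal{L}^\vee)$ over the universal spin curve $\pi$. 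In Norbury's convention this is $\mathcal{E}_{g,n}$ or its dual; I need to fix the convention carefully so that the top Chern class matches.
\item Since $\deg L^\vee<0$, we have $R^0\pi_*\mathcal{L}^\vee=0$. Hence $\mathcal{E}$ is locally free of rank $r=2g-2+n$ by Riemann--Roch.
\end{itemize}

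\textbf{Step 3: conclusion.} Applying \cref{fundamentalclass} with these identifications, and noting that $c_r$ is the top Chern class, gives the stated identity in $A_*(\overline{\mathcal{SM}}_{g,n},\mathbb{Q})$. Rational coefficients are needed because GRR is applied on a DM stack.

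\textbf{Main obstacle.} I expect the difficulty to lie at the boundary. There, the smoothness of the compactified superstack needs justification, and so does the local freeness of $\mathcal{E}$ across Ramond-type nodes. Locally, $\mathcal{J}$ must be generated by a regular sequence of odd coordinates, so that the Koszul-type filtration used in \cref{fundamentalclass} applies.

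My first approach would be to quote the smoothness of $\overline{\mathfrak{M}}_{g,n}$ as a DM superstack from the literature (Deligne's compactification, Moosavian--Zhou, Bruzzo et al.). I would then check that its odd normal bundle restricts over $\overline{\mathcal{SM}}_{g,n}$ to the bundle $R^1\pi_*\mathcal{L}^\vee$ appearing in \cite{norbury2023new}. If the Ramond boundary causes $\mathcal{E}$ to jump, I would restrict to the NS-only component, where Norbury's bundle is defined.
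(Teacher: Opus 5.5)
Your proposal is correct and follows essentially the same route as the paper: identify $(\overline{\mathfrak{M}}_{g,n})_{\mathsf{bos}}$ with $\overline{\mathcal{SM}}_{g,n}$, identify the odd part of the tangent bundle along the bosonic locus (i.e.\ $(\mathcal{J}/\mathcal{J}^2)^\vee$) with $-R\pi_*\mathcal{L}^\vee$ (the paper's $\widehat{\mathcal{E}}_{g,n}$), and apply \cref{fundamentalclass}. The paper only asserts this splitting of the tangent bundle, so your attention to smoothness of the compactification and to local freeness at the boundary goes beyond the paper's argument rather than departing from it; note that on nodal fibres the vanishing of $R^0\pi_*\mathcal{L}^\vee$ follows from negative degree on each component, by stability, not from negative total degree.
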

\begin{proof}
  The underlying bosonic space of the moduli of super Riemann surfaces is given by the moduli of stable twisted spin curves 
  \begin{align}
      (\overline{\mathfrak{M}}_{g,n})_\mathsf{bos} \simeq \overline{\mathcal{SM}}_{g,n}
  \end{align}
  The tangent bundle of $\overline{\mathfrak{M}}_{g,n}$ splits along the bosonic locus into 
  \begin{align}
      \mathcal{T}_{\mathcal{SM}_{g,n}} \oplus -R\pi_*\mathcal{E}^\vee, 
  \end{align}
  and we will define $\widehat{\mathcal{E}}_{g,n} \coloneqq -R\pi_*\mathcal{E}^\vee$.
  Thus, by \cref{fundamentalclass} we can identify the super fundamental class to be 
  \begin{align}
      [\overline{\mathfrak{M}}_{g,n}] \simeq c_\mathrm{top}(\widehat{\mathcal{E}}_{g,n}) \cap [\overline{\mathcal{SM}}_{g,n}],
  \end{align}
  as wished.
\end{proof}
To connect this class to the $\Theta$-classes, we admit the following proposition:
\begin{proposition}
Let $p: \overline{\mathcal{SM}}_{g,n}\longrightarrow \overline{\mathcal{M}}_{g,n}$ the forgetful morphism, then we have the following identity:
\begin{align}
	p_*[\overline{\mathfrak{M}}_{g,n}] = \Theta_{g,n} \cap [\overline{\mathcal{M}}_{g,n}].
\end{align}
\end{proposition}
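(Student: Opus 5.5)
Combining the preceding corollary with Norbury's definition of $\Theta_{g,n}$ reduces the claim to an identification of vector bundles on $\overline{\mathcal{SM}}_{g,n}$. Recall from \cite{norbury2023new} that $\Theta_{g,n}$ is defined by
\begin{align}
\Theta_{g,n} \coloneqq 2^{g-1+n}\, p_*\, c_{2g-2+n}(E_{g,n}),
\end{align}
where $E_{g,n} \coloneqq -R\pi_*\mathcal{L}^\vee$ on $\overline{\mathcal{M}}^{\mathrm{spin}}_{g,n}$ (the moduli of stable twisted spin curves), $\mathcal{L}$ is the universal spin sheaf on the universal curve $\pi$, and the normalization uses the degree of $p$. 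The corollary above gives
\[
[\overline{\mathfrak{M}}_{g,n}] = c_{\mathrm{top}}(\widehat{\mathcal{E}}_{g,n}) \cap [\overline{\mathcal{SM}}_{g,n}].
\]
So the plan is to show that $\widehat{\mathcal{E}}_{g,n}$ is Norbury's $E_{g,n}$, including its rank, and then to apply $p_*$ using the projection formula.

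First I will identify the odd conormal bundle $\mathcal{J}/\mathcal{J}^2$ on the bosonic truncation. Standard deformation theory of super Riemann surfaces says the odd tangent directions at a point $(C,\mathcal{L})$ are $H^1(C,\mathcal{L}^\vee)$, with $\mathcal{L}^{\otimes 2}\simeq\omega_C(\text{NS punctures})$. By Riemann--Roch, and using $H^0(C,\mathcal{L}^\vee)=0$ because $\deg\mathcal{L}^\vee<0$ for stable curves, $R^1\pi_*\mathcal{L}^\vee$ is a vector bundle of rank $2g-2+n$, and
\[
(\mathcal{J}/\mathcal{J}^2)^\vee \simeq R^1\pi_*\mathcal{L}^\vee = E_{g,n}.
\]
This is the content of the tangent splitting used in the previous proof. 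At the nodal boundary I will rely on the Deligne--Mumford compactification of \cite{norbury2023new} and its Ramond/NS conventions; I expect the twisted-curve formalism to handle it, but this is where care is needed. Matching $\mathcal{E}^\vee$ with $E_{g,n}$ also fixes the sign and duality conventions in \cref{fundamentalclass}: the relevant class is $c_{2g-2+n}(E_{g,n})$.

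Next I push forward. Since $p$ is proper, $p_*$ of a cycle class is defined, and
\[
p_*[\overline{\mathfrak{M}}_{g,n}] = p_*\bigl(c_{2g-2+n}(E_{g,n})\cap[\overline{\mathcal{SM}}_{g,n}]\bigr).
\]
The right-hand side equals $\Theta_{g,n}\cap[\overline{\mathcal{M}}_{g,n}]$ up to the constant $2^{g-1+n}$ and the stacky degree of $p$. This degree comes from the $2^{2g}$ spin structures and the $\mathbb{Z}_2$ automorphisms at the generic point and at the nodes. I will track these factors through the comparison between Chow groups with $\mathbb{Q}$-coefficients and operational classes. I will first check the normalization in the case $(g,n)=(1,1)$, where Norbury gives $\Theta_{1,1}=3\psi_1$, and then define $[\overline{\mathfrak{M}}_{g,n}]$ with the corresponding normalization if necessary.

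I expect the main obstacle to be the first step at the boundary. The difficulty is showing that the odd normal bundle of $\overline{\mathfrak{M}}_{g,n}$ along its bosonic locus is exactly $R^1\pi_*\mathcal{L}^\vee$ for the twisted spin curves over the whole compactification. Equivalently, $\overline{\mathfrak{M}}_{g,n}$ must be split or smooth as a DM superstack, so that \cref{fundamentalclass} applies. For this I would cite the smoothness and deformation-theory results for the compactified moduli of super Riemann surfaces rather than reprove them.
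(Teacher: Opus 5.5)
Your skeleton matches the paper's proof: feed the preceding corollary into the proper pushforward along $p$. The paper writes this as flat pullback $p^*[\overline{\mathcal{M}}_{g,n}]=[\overline{\mathcal{SM}}_{g,n}]$ followed by the projection formula; your version, using properness and Poincar\'e duality on the smooth stack $\overline{\mathcal{M}}_{g,n}$, amounts to the same thing.

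The step that does not close is the normalization. You propose to ``define $[\overline{\mathfrak{M}}_{g,n}]$ with the corresponding normalization if necessary''. That class is not yours to adjust: it is fixed as $\tau_X([\mathcal{O}_X])\cap\mathsf{Td}([\mathcal{T}_X])^{-1}$ and computed by \cref{fundamentalclass}. Norbury's definition is $\Theta_{g,n}=(-1)^n2^{g-1+n}p_*c_{2g-2+n}(E_{g,n})$; you dropped the sign $(-1)^n$. With that $\Theta_{g,n}$, your own computation gives $p_*[\overline{\mathfrak{M}}_{g,n}]=(-1)^n2^{-(g-1+n)}\,\Theta_{g,n}\cap[\overline{\mathcal{M}}_{g,n}]$, so the identity cannot be proved as stated.

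The paper resolves this the other way round. In its proof $\Theta_{g,n}$ is \emph{defined} as $p_*c_{\mathrm{top}}(\widehat{\mathcal{E}}_{g,n})$, which makes the proposition a formal consequence of the corollary. The comparison with Norbury's class is recorded separately in the following remark as the factor $(-1)^n2^{g-1+n}$. Granting the paper's identification $(\overline{\mathfrak{M}}_{g,n})_{\mathsf{bos}}\simeq\overline{\mathcal{SM}}_{g,n}$ with the stack of twisted spin curves, no degree of $p$ enters, because both sides use the same $p_*$. The factor $2^{g-1+n}$ is a convention chosen for the gluing relations, not a degree normalization, so your $(1,1)$ check is unnecessary.

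Your identification $(\mathcal{J}/\mathcal{J}^2)^\vee\simeq R^1\pi_*\mathcal{L}^\vee=E_{g,n}$ goes beyond the paper, which simply asserts the tangent splitting, but two details need fixing.

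First, the rank $2g-2+n$ comes from orbifold Riemann--Roch on the twisted curve, where each NS marking contributes age $\tfrac12$. With $\mathcal{L}^{\otimes 2}\simeq\omega_C(D)$ on an ordinary curve ($D$ the divisor of NS markings), plain Riemann--Roch gives $2g-2+n/2$ and forces $n$ even. On the coarse curve the correct description is $\mathcal{L}^{\otimes 2}\simeq\omega_C$, with odd deformations $H^1(C,\mathcal{L}^\vee(-D))$.

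Second, \cref{fundamentalclass} needs only smoothness, not splitness. For any smooth superstack the $\mathcal{J}$-adic graded pieces are $\wedge^i(\mathcal{J}/\mathcal{J}^2)$, and that is all the $K$-theoretic computation uses. Splitness is not available anyway, since supermoduli space is not split in general (Donagi--Witten). So ``split or smooth'' is not a reformulation of the bundle identification.
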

\begin{proof}
	The morphism $p: \overline{\mathcal{SM}}_{g,n} \rightarrow \overline{\mathcal{M}}_{g,n}$ is proper and flat \cite[Corollary 2.2.2.]{abramovich2003moduli}, thus it admits both $p^*$ and $p_*$ on Chow groups \cite[Proposition 3.7.]{vistoli1989intersection}, and we also recall that for a flat morphism of stacks $f: X \rightarrow Y$, we have
	\begin{align}
    \begin{split}
      		f^*: A_*(Y) &\rightarrow A_*(X) \\
 		[Y]		&\mapsto f^*[Y] = [X],  
    \end{split}
	\end{align}
	which lets us identify $p^*[\overline{\mathcal{M}}_{g,n}]=[\overline{\mathcal{SM}}_{g,n}]$ and observes that
	\begin{align}
		[\mathfrak{M}_{g,n}] = c_{\operatorname{top}}(\widehat{\mathcal{E}}_{g,n}) \cap p^*[\overline{\mathcal{M}}_{g,n}].
	\end{align}
	    By applying the proper pushforward we obtain, by the projection formula, 
	\begin{align}
		p_*[\overline{\mathfrak{M}}_{g,n}] = \Theta_{g,n} \cap [\overline{\mathcal{M}}_{g,n}],
	\end{align}
	where $\Theta_{g,n} = p_*c_{\operatorname{top}}(\widehat{\mathcal{E}}_{g,n})$.
\end{proof}
\begin{remark}
    This agrees with the $\Theta$-class in \cite{norbury2023new}, up to a factor of $(-1)^n2^{g-1+n}$, which is inessential, as it was chosen to simplify the gluing relations of the $\Theta$-classes \cite{norbury2020enumerative}.
\end{remark}

\printbibliography
\end{document}